   \def\MR#1{}
\numberwithin{equation}{section}
\numberwithin{figure}{section}
\theoremstyle{plain}
\newtheorem{theorem}[equation]{Theorem}
\newtheorem{corollary}[equation]{Corollary}
\newtheorem{lemma}[equation]{Lemma}
\newtheorem{proposition}[equation]{Proposition}
\newtheorem*{namedtheorem}{\theoremname}
\newcommand{\theoremname}{testing}
\newenvironment{named}[1]{\renewcommand{\theoremname}{#1}\begin{namedtheorem}}{\end{namedtheorem}}
\theoremstyle{definition}
\newtheorem{definition}[equation]{Definition}
\newtheorem{remark}[equation]{Remark}
\newtheorem{construction}[equation]{Construction}
\newcommand{\from}{\colon} 
\newcommand{\HH}{{\mathbb{H}}}
\newcommand{\RR}{{\mathbb{R}}}
\newcommand{\ZZ}{{\mathbb{Z}}}
\newcommand{\CC}{{\mathbb{C}}}
\newcommand{\QQ}{{\mathbb{Q}}}
\newcommand{\calA}{{\mathcal{A}}}
\newcommand{\calV}{{\mathcal{V}}}
\newcommand{\refthm}[1]{Theorem~\ref{Thm:#1}}
\newcommand{\reflem}[1]{Lemma~\ref{Lem:#1}}
\newcommand{\refprop}[1]{Proposition~\ref{Prop:#1}}
\newcommand{\refcor}[1]{Corollary~\ref{Cor:#1}}
\newcommand{\refeqn}[1]{\eqref{Eqn:#1}}
\newcommand{\refdef}[1]{Definition~\ref{Def:#1}}
\newcommand{\refsec}[1]{Section~\ref{Sec:#1}}
\newcommand{\reffig}[1]{Figure~\ref{Fig:#1}}
\newcommand{\bdy}{\partial}
\newcommand{\len}{\operatorname{len}}
\title[Geometric triangulations and highly twisted links]{Geometric triangulations and highly twisted links}
\author{Sophie L. Ham}
\address{School of Mathematics,
Monash University,
VIC 3800, Australia}
\email{sophie.ham@monash.edu}
\author{Jessica S. Purcell}
\address{School of Mathematics,
Monash University,
VIC 3800, Australia}
\email{jessica.purcell@monash.edu}
\begin{document}

\begin{abstract}
  It is conjectured that every cusped hyperbolic 3-manifold admits a geometric triangulation, i.e.\ it is decomposed into positive volume ideal hyperbolic tetrahedra.
  Here, we show that sufficiently highly twisted knots admit a geometric triangulation.
  In addition, by extending work of Gu{\'e}ritaud and Schleimer, we also give quantified versions of this result for infinite families of examples. 
\end{abstract}

\maketitle

\section{Introduction}\label{Sec:Intro}

A \emph{topological triangulation} of a $3$-manifold $M$ is a decomposition of $M$ into tetrahedra or ideal tetrahedra such that the result of gluing yields a manifold homeomorphic to $M$. Every compact 3-manifold with boundary consisting of tori has interior that admits a topological ideal triangulation~\cite{Moise, Bing}. 

A \emph{geometric triangulation} is a much stronger notion. It is an ideal triangulation of a cusped hyperbolic 3-manifold $M$ such that each tetrahedron is positively oriented and has a hyperbolic structure of strictly positive volume, and such that the result of gluing gives $M$ a smooth manifold structure with its complete hyperbolic metric. It is still unknown whether every finite volume hyperbolic 3-manifold admits a geometric triangulation, and there are currently only a few families which provably admit one. These include 2-bridge knots and punctured torus bundles, due to Gu{\'e}ritaud and Futer~\cite{GueritaudFuter}, and all the manifolds of the SnapPy census~\cite{SnapPy}, as well as manifolds built from isometric Platonic solids~\cite{GoernerI,GoernerII}. On the other hand, Choi has shown that there exists an orbifold with no geometric triangulation~\cite{Choi}.

In this paper, we prove that a large family of knots admit geometric triangulations. To state the main result, we recall the following definitions. 

A \emph{twist region} of a link diagram consists of a portion of the diagram  where two strands twist around each other maximally. More carefully,
let $D(K)$ be a diagram of a link $K\subset S^3$. Two distinct crossings of the diagram are \emph{twist equivalent} if there exists a simple closed curve on the diagram that runs transversely through the two crossings, and is disjoint from the diagram elsewhere.
The collection of all twist equivalent crossings forms a \emph{twist region}.

Note that one can perform flypes on a link diagram until all twist equivalent crossings line up in a row, forming bigons between them. Suppose every simple closed curve that meets the diagram transversely only in two crossings has the property that it bounds a region of the diagram consisting only of bigons, or possibly contains no crossings. If this holds for every simple closed curve, the diagram is called \emph{twist-reduced}.
Figure~\ref{Fig:AugLinkConstruction}, left, shows an example of a twist-reduced diagram with exactly two twist regions.

The first main result of this paper is the following.

\begin{named}{\refthm{HighlyTwisted}}
For every $n\geq 2$, there exists a constant $A_n$ depending on $n$, such that if $K$ is a link in $S^3$ with a prime, twist-reduced diagram with $n$ twist regions, and at least $A_n$ crossings in each twist region, then $S^3-K$ admits a geometric triangulation.
\end{named}

The proof uses links called \emph{fully augmented links}. These are obtained by starting with a twist-reduced diagram of any knot or link, and for each twist-region, adding a simple unknot called a \emph{crossing circle} encircling the twist-region. We further remove all pairs of crossings in each twist region; see \reffig{AugLinkConstruction}. 
\begin{figure}
  \includegraphics{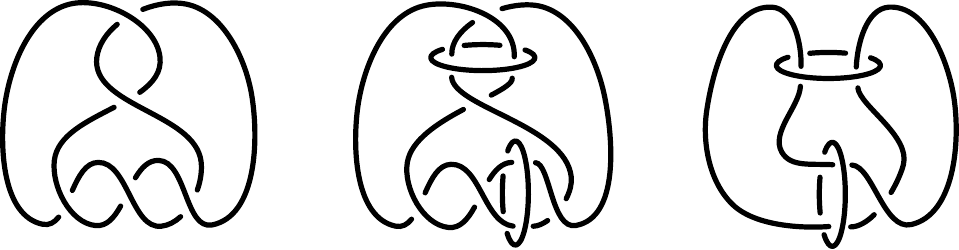}
  \caption{Constructing a fully augmented link.}
  \label{Fig:AugLinkConstruction}
\end{figure}
The result has explicit geometric properties, and can be subdivided into geometric tetrahedra.
The original link complement is obtained by Dehn filling the crossing circles. We complete the proof of \refthm{HighlyTwisted} by arguing that Dehn filling can be performed in a way that gives a geometric triangulation.

In fact, we can prove a result that is more general than \refthm{HighlyTwisted}, allowing any Dehn fillings on crossing circles and indeed leaving some crossing circles unfilled. This is the following theorem.

\begin{named}{\refthm{DehnFillingGeneral}}
  Let $L$ be a hyperbolic fully augmented link with $n\geq 2$ crossing circles. Then there exist constants $A_1, \dots, A_n$ such that if $M$ is a manifold obtained by Dehn filling the crossing circle cusps of $S^3-L$ along slopes $s_1, \dots, s_n$ whose lengths satisfy $\len(s_i)\geq A_i$ for each $i=1, \dots, n$, then $M$ admits a geometric triangulation. Allowing some collection of $s_i=\infty$, i.e.\ leaving some crossing circle cusps unfilled, also admits a geometric triangulation. 
\end{named}

Gu{\'e}ritaud and Schleimer considered geometric triangulations and Dehn filling~\cite{GueritaudSchleimer}. They showed that if a cusped manifold satisfies certain `genericity' conditions, then Dehn filling can be performed via geometric triangulation.\footnote{In fact, they showed something stronger: that Dehn filling gives a triangulation that is actually \emph{canonical}, i.e.\ dual to the Ford-Voronoi domain, but we will not consider canonical decompositions here.} Unfortunately, the usual geometric decomposition of a fully augmented link, as in \cite[Appendix]{Lackenby:AltVolume} or \cite{FuterPurcell, Purcell:FullyAugmented}, fails Gu{\'e}ritaud--Schleimer's genericity conditions. Nevertheless, we may adjust the decomposition to give a triangulation satisfying the Gu{\'e}ritaud--Schleimer conditions. This is the idea of the proof of \refthm{HighlyTwisted}.

Highly twisted knots, as in \refthm{HighlyTwisted}, are known to have other useful geometric properties. For example, they can be shown to be hyperbolic when there are at least six crossings in each twist region~\cite{FuterPurcell}. When there are at least seven, there are bounds on the volumes of such knots and links~\cite{FKP:DFVJP}. With at least 116 crossings per twist region, there are bounds on their cusp geometry~\cite{Purcell:Cusps}. The results of \refthm{HighlyTwisted} are not as nice as these other results, because we do not have an effective universal bound on the number of crossings per twist region required to guarantee that a knot admits a geometric triangulation. Nevertheless, we conjecture such a bound holds.

To obtain effective results, we need to generalise and sharpen results of Gu{\'e}ritaud and Schleimer, and we do this in the second half of the paper. This allows us to present two effective results, which guarantee geometric triangulations of new infinite families of cusped hyperbolic 3-manifolds. The first result is the following.

\begin{named}{\refthm{MainBorromean}}
Let $L$ be a fully augmented link with exactly two crossing circles. Let $M$ be a manifold obtained by Dehn filling the crossing circles of $S^3-L$ along slopes $m_1, m_2 \in (\QQ\cup \{1/0\}) - \{0, 1/0, \pm 1, \pm 2\}$.
Then $M$ admits a geometric triangulation.   
\end{named}

There are three fully agumented links with exactly two crossing circles; one is the Borromean rings and the others are closely related; these are shown in \reffig{BorromeanRings}. The Dehn fillings of these links include double twist knots, which were already known to admit geometric triangulations by~\cite{GueritaudFuter}. They also include large families of cusped hyperbolic manifolds that do not embed in $S^3$. More generally, we also show the following.

\begin{named}{\refthm{Main2Bridge}}
Let $L$ be a result of taking the standard diagram of a 2-bridge link, and then fully augmenting the link, such that $L$ has $n>2$ crossing circles (and no half-twists). Let $s_1, s_2, \dots, s_n \in \QQ\cup\{1/0\}$ be slopes, one for each crossing circle, that are all positive or all negative. 
Suppose finally that $s_1$ and $s_n$ are the slopes on the crossing circles on either end of the diagram, and the slopes satisfy: 
\[ s_1, s_n \notin \{0/1, 1/0, \pm 1/1, \pm 2/1\}, \mbox{ and }
s_2, \dots, s_{n-1} \notin \{0/1, 1/0, \pm 1/1\}.\]
Then the manifold obtained by Dehn filling $S^3-L$ along these slopes on its crossing circles admits a geometric triangulation.
\end{named}

For ease of notation, we will refer to a link such as $L$ in the above theorem as a \emph{fully augmented 2-bridge link}. That is, a fully augmented 2-bridge link is obtained by fully augmenting the standard diagram of a 2-bridge link. 

The Borromean rings and other links of \refthm{MainBorromean} form examples of fully-augmented 2-bridge links, and therefore instances of \refthm{MainBorromean} also follow from \refthm{Main2Bridge}. However, we prove the theorems separately to build up tools.

The manifolds included in \refthm{Main2Bridge} include many 2-bridge links, obtained by setting each $s_j = 1/m_j$ where $m_j$ is an integer with appropriate sign. Such a Dehn filling gives a 2-bridge link with a diagram with at least two crossings per twist region, an even number of crossings in each twist region, and conditions on signs of twisting. All 2-bridge links were already known to admit geometric triangulations~\cite{GueritaudFuter}. However, again \refthm{Main2Bridge} also includes infinitely many additional manifolds obtained by different Dehn fillings.

\subsection{More on geometric triangulations}

It is known that every cusped hyperbolic $3$-manifold has a decomposition into convex ideal polyhedra, due to work of Epstein and Penner~\cite{EpsteinPenner}. The convex polyhedra may be further subdivided into tetrahedra, but the result may not give a geometric triangulation. 
The difficulty is that the subdivision involves triangulating the polygonal faces of the polyhedra, and these triangulations may not be consistent with each other under gluing. To solve this problem, flat tetrahedra are inserted between identified faces of the polyhedra; see Petronio and Porti for more discussion~\cite{PetronioPorti}. 

If we pass to finite covers, then geometric triangulations exist by work of Luo, Schleimer, and Tillmann~\cite{LuoSchleimerTillmann}: Every cusped hyperbolic 3-manifold admits a finite cover with a geometric triangulation. 

If we relax the restriction that the tetrahedra glue to give a complete hyperbolic metric, and only require that the dihedral angles of each tetrahedra are strictly positive, and sum to $2\pi$ around each edge of the triangulation, then the result is called an \emph{angle structure} (or sometimes a \emph{strict angle structure}). Geometric triangulations admit angle structures. Moreover, Hodgson, Rubinstein, and Segerman show that many 3-manifolds admit an angle structure, including all hyperbolic link complements in $S^3$~\cite{HRS:StrictAngleStructs}. However, they note that the triangulations they find are not generally geometric.

There was some hope in the past that a class of triangulations introduced by Agol~\cite{Agol:Veering}, called \emph{veering triangulations}, give geometric triangulations. Indeed it was shown by Hodgson, Rubinstein, Segerman and Tillmann~\cite{HRST:Veering} and by Futer and Gu{\'e}ritaud~\cite{FuterGueritaud:Veering} that veering triangulations admit  angle structures. However Hodgson, Issa, and Segerman found a 13-tetrahedron veering triangulation that is not geometric~\cite{HodgsonIssaSegerman}, and recently Futer, Taylor, and Worden showed that a random veering triangulation is not geometric~\cite{FuterTaylorWorden}. Thus tools to exhibit geometric triangulations must come from other directions.

Why geometric triangulations? 
Various results become easier with geometric triangulations. For example, Neumann and Zagier showed that certain useful bounds exist on the volume of a hyperbolic 3-manifold that admits a geometric triangulation~\cite{NeumannZagier}, although this can be proven in general with more work; see Petronio--Porti~\cite{PetronioPorti}. Similarly, Benedetti and Petronio give a straightforward proof of Thurston's hypebolic Dehn surgery theorem using geometric triangulations~\cite{BenedettiPetronio}. Choi finds nice conditions on the deformation variety for manifolds admitting geometric triangulations~\cite{Choi}. In summary, such triangulations seem to lead to simpler proofs, and more manageable geometry.

\subsection{Organisation}
The paper is organised as follows. Sections~\ref{Sec:FullyAugLinks} through~\ref{Sec:DehnFilling} give the proof of \refthm{HighlyTwisted}, on more general highly twisted knots. We recall fully augmented links in \refsec{FullyAugLinks}, layered solid tori in Sections~\ref{Sec:LST} and~\ref{Sec:AngleStructures}, and put this together with Dehn filling in \refsec{DehnFilling}.

Sections~\ref{Sec:Borromean} through~\ref{Sec:DFBorromean} give the proof of \refthm{MainBorromean}, on Dehn fillings of links with two crossing circles, and build up the new machinery required for both \refthm{MainBorromean} and \refthm{Main2Bridge}.

Finally, in \refsec{2Bridge}, we complete the proof of \refthm{Main2Bridge}, on Dehn fillings of fully augmented 2-bridge links.

\subsection{Acknowledgements}
We thank B.~Nimershiem for helping us to improve the exposition in \refsec{AngleStructures}. We also thank the referees for their comments, which helped us improve the paper. 
Both authors were supported in part by the Australian Research Council.

\section{Fully augmented links}\label{Sec:FullyAugLinks}

The links of the main theorem, \refthm{HighlyTwisted}, are obtained by Dehn filling a parent link, called a fully augmented link. In this section, we review fully augmented links and their geometry, and show that they admit geometric triangulations.

Begin with any twist-reduced diagram of a link. As in the middle of \reffig{AugLinkConstruction}, for each set of twist equivalent crossings, insert a single unknotted curve that encircles the bigons of the twist region. If a twist region consists of only a single crossing, there are two ways to insert this link component; either will do. These unknotted components are chosen to be disjoint, and to bound discs that are punctured by exactly two strands of the original link. We call them \emph{crossing circles}. A \emph{fully augmented link} is a link obtained by adding a single crossing circle to every twist region of a twist-reduced diagram, and then removing all crossings that bound a bigon. That is, crossings are removed in pairs. The resulting diagram consists of crossing circles that are perpendicular to the plane of projection and strands that lie on the plane of projection except possibly for single crossings in the neighbourhood of a crossing circle. See \reffig{AugLinkConstruction}, right.

Agol and Thurston studied the geometry of fully augmented links using a decomposition into ideal polyhedra~\cite[Appendix]{Lackenby:AltVolume}. In particular, they show that every fully augmented link admits a decomposition  into two identical totally geodesic polyhedra that determine a circle packing on $\CC$. The result we need is the following.

\begin{proposition}\label{Prop:FALSetup}
A fully augmented link decomposes into the union of two identical ideal polyhedra with the following properties.
\begin{enumerate}
\item Each polyhedron is convex, right-angled, with a checkerboard colouring of its faces, shaded and white. The shaded faces are all ideal triangles, each a subset of a 2-punctured disc bounded by a crossing circle.
\item Each polyhedron is determined by a circle packing on $\RR^2\cup \{\infty\}$, with white faces lifting to planes in $\HH^3$ whose boundaries are given by the circles. Shaded faces lift to planes with boundaries given by the dual circle packing.
\item Embed the ideal polyhedron in $\HH^3$ as a convex right-angled polyhedron. Each ideal vertex projects to a link component, or more precisely, the boundary of a sufficiently small horoball neighbourhood of an ideal vertex projects to a subset of a horospherical torus about a link component. 

  Apply an isometry so that an ideal vertex corresponding to a crossing circle lies at the point at infinity in the upper half space model of $\HH^3$. Then two white faces form parallel vertical planes meeting the point at infinity, with two shaded faces forming perpendicular parallel vertical planes, cutting out a rectangle. Two other white faces, defining circles tangent to the white parallel vertical planes, meet the shaded parallel vertical planes at right angles. This forms a rectangle with two circles; see \reffig{FALCirclePacking}.

  Moreover, none of the four ideal vertices on $\RR^2$ corresponding to the corners of the rectangle project to crossing circles.
  \end{enumerate}
\end{proposition}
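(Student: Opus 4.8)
The plan is to recall and carry out the Agol--Thurston construction~\cite[Appendix]{Lackenby:AltVolume}, and then read the three listed properties off it. Place the fully augmented diagram in the projection plane $\Pi\cong\RR^2\cup\{\infty\}$, so that the original link strands lie flat in $\Pi$ (all crossings having been removed in pairs) while each crossing circle $C_i$ is a round circle perpendicular to $\Pi$, bounding a vertical disc $D_i$ that meets $\Pi$ in a chord through the two strands that $C_i$ encircles. Let $B^{\pm}$ be the two balls into which $\Pi$ cuts $S^3$. Then $S^3-L$ is obtained by gluing $B^+\setminus L$ to $B^-\setminus L$ along $\Pi$, and cutting each $B^{\pm}\setminus L$ further along the half-discs $D_i\cap B^{\pm}$ turns each ball into an ideal polyhedron, $P^+$ and $P^-$, with the reflection of $S^3$ in $\Pi$ identifying $P^+\isom P^-$ combinatorially. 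Bookkeeping the boundary pieces: the \emph{white} faces are the closures of the components of $\Pi$ minus the strands and minus the chords $\Pi\cap D_i$; the \emph{shaded} faces are the two copies of each half-disc produced by the cut; each shaded face is an ideal triangle with ideal vertices at the two encircled strands and at the crossing circle $C_i$, and two such triangles reassemble the thrice-punctured sphere $D_i\setminus L$. Around every ideal vertex the incident faces alternate white and shaded, which is the checkerboard colouring. This is the combinatorial content of part (1).

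Next, assign the dihedral angle $\pi/2$ to every edge of $P^+$. The white faces together with their pattern of shared white edges form a planar graph, and by the Koebe--Andreev--Thurston circle packing theorem this graph is realised by a circle packing of $\partial\HH^3=\CC\cup\{\infty\}$, unique up to Möbius transformations. The hyperbolic planes bounded by these circles, together with the planes bounded by the dual circles --- those through the tangency points that are orthogonal to the three circles they separate --- bound a convex region that is a right-angled ideal polyhedron combinatorially identical to $P^+$, with white faces on the packing circles and shaded faces on the dual circles. This is part (2), and the same polyhedron realises $P^-$. Re-gluing $P^+$ to $P^-$ along the identified white and shaded faces, every edge of the resulting complex is surrounded by four polyhedral edges of angle $\pi/2$, so all cone angles are $2\pi$ and the gluing defines a hyperbolic structure on a manifold homeomorphic to $S^3-L$; inspecting horospherical cross-sections shows it is complete, and Mostow rigidity identifies it with the complete hyperbolic structure on $S^3-L$. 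Hence $S^3-L$ is the union of the two identical totally geodesic polyhedra $P^+$ and $P^-$.

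The step I expect to be the main obstacle is the verification underlying the previous paragraph: that the white-adjacency graph is genuinely of the type to which the circle packing theorem applies (equivalently, that the abstract right-angled polyhedron meets Andreev's hypotheses --- simple, $3$-connected, with no prismatic $3$- or $4$-circuit other than one encircling an ideal vertex), and that the re-glued manifold is $S^3-L$ rather than some other manifold. This is where primeness and twist-reducedness of the original diagram enter, and it is carried out carefully by Agol and Thurston; we only need to cite it.

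Finally, for part (3), apply an isometry of $\HH^3$ sending an ideal vertex of $P^+$ that corresponds to a crossing circle $C_i$ to $\infty$ in the upper half-space model. The cusp cross-section of $P^+$ at that vertex is a quadrilateral whose four sides alternate white, shaded, white, shaded, so the four faces incident to $C_i$ become vertical planes meeting $\CC\cup\{\infty\}$ in two pairs of parallel lines, the white pair meeting the shaded pair at right angles by the right-angle condition; these cut out a rectangle. The two white faces immediately outside this vertex share white edges with the two vertical white planes, hence lift to circles tangent to the two parallel white lines, and they meet the vertical shaded planes orthogonally, again by the right-angle condition; this reproduces \reffig{FALCirclePacking}. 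The four corners of the rectangle are the ideal vertices joined to $C_i$ by an edge of $P^+$, and those edges are exactly the edges of the shaded triangles at $C_i$ running from $C_i$ to an encircled strand; thus the four corners project to the two original link strands that $C_i$ encircles, which by construction are components of the original link and not crossing circles. So none of the four corners projects to a crossing circle, completing part (3).
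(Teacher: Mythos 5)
Your argument is sound where it applies, and parts (1) and (3) track the paper's reasoning closely (the paper makes the same 4-valent-vertex/rectangle observation and the same point that each shaded triangle has exactly one crossing-circle vertex, so the rectangle's corners project to strands of $K$). For part (2) you run the logic in the opposite direction from the paper: you build the right-angled polyhedra abstractly from the Koebe--Andreev--Thurston theorem, reglue, and invoke Mostow rigidity to identify the result with the complete structure on $S^3-L$, whereas the paper starts from the complete hyperbolic structure, identifies the $2$-punctured discs and the projection-plane surface as totally geodesic via a reflection isometry, cuts along them, and reads the circle packing off the resulting geodesic white faces. Both routes are legitimate and both ultimately lean on \cite[Appendix]{Lackenby:AltVolume} for the verifications you flag (Andreev's hypotheses, recovering $S^3-L$).

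There is, however, a genuine gap: your setup assumes that after augmentation \emph{all} crossings have been removed and the strands of $K$ lie flat in the projection plane $\Pi$. By the paper's definition, a fully augmented link may retain a single crossing (a half-twist) adjacent to any crossing circle coming from an odd twist region, and the proposition is asserted for all fully augmented links. In the presence of a half-twist the link is not preserved by reflection in $\Pi$, the projection plane is not the totally geodesic white surface, and your cutting procedure does not directly produce two polyhedra exchanged by reflection. The standard repair --- which the paper carries out explicitly --- is to cut along the shaded $2$-punctured discs first and rotate one copy of each such disc by $180^\circ$ to unwind the single crossing before slicing along $\Pi$; equivalently, the relevant isometry is reflection in $\Pi$ composed with full twists along the crossing discs. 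Your proof needs this case added; everything else goes through.
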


\begin{figure}
\includegraphics{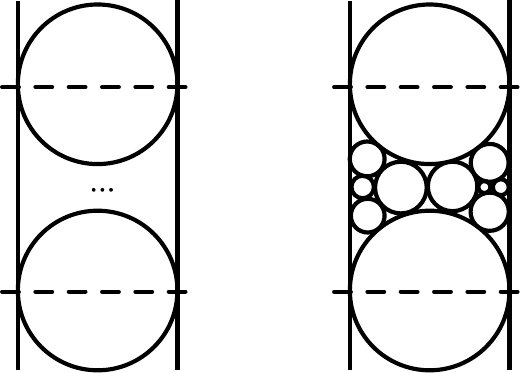}
  \caption{Left: the general form of a circle packing determining a polyhedron, with a vertex that projects to a crossing circle cusp at the point at infinity. The dashed lines show two parallel shaded faces meeting infinity. The region between the circles and lines will be filled with a circle packing. Note white and shaded faces through infinity cut out a rectangle. Right: a specific example.}
  \label{Fig:FALCirclePacking}
\end{figure}

Proofs of \refprop{FALSetup} can be found in various places, including \cite{FuterPurcell, Purcell:FullyAugmented}. We review the details briefly as it is important to our argument.

\begin{proof}[Proof of \refprop{FALSetup}]
There are two types of totally geodesic surface in the complement of a fully augmented link, and these will form the white and shaded faces of the polyhedra. The first totally geodesic surface comes from embedded 2-punctured discs bounded by crossing circles; colour each of these shaded.

The second comes from a surface related to the plane of projection. If the fully augmented link has no crossings on the plane of projection, then it is preserved by a reflection in the plane of projection and the white surface is the plane of projection. This reflection is realised by an isometry fixing the white surface pointwise, so the white surface is totally geodesic. If the link admits single crossings adjacent to crossing circles, reflection in the plane of projection will change the direction of each crossing. However, a full twist about the adjacent crossing circle is a homeomorphism of the complement, and it returns the link to its original position. The combination of reflection followed by twists is an isometry fixing a surface pointwise; this is the white surface. Again it is totally geodesic.

To obtain the decomposition into ideal polyhedra, first cut along each shaded 2-punctured disc. Near single crossings, rotate one copy of the 2-punctured disc by $180^\circ$ to remove the crossing from the diagram. The white face then lies on the projection plane. Slice along the projection plane. This process is shown in \reffig{PolyhedraConstruction}.

\begin{figure}[h]
  \includegraphics{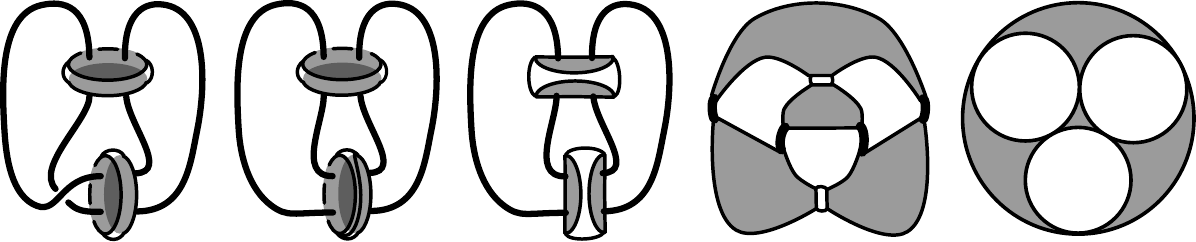}
  \caption{Left to right: Slice along shaded faces bounded by 2-punctured discs and unwind single crossings. Then slice along the white plane of projection. Shrinking remnants of the link to ideal vertices gives the ideal polyhedron. The circle packing is obtained by lifting the polyhedron to $\HH^3$, and taking boundaries of white faces.}
  \label{Fig:PolyhedraConstruction}
\end{figure}

The result is two ideal polyhedra. We now show that these satisfy the properties stated. First, the checkerboard colouring is as claimed, by construction. The involution described above is the reflection through white faces taking one polyhedron to another. Also, note that under the involution, shaded 2-punctured discs are taken to their reflection in the projection plane, hence must still be geodesic. It follows that they are perpendicular to white faces, and so the polyhedron is right-angled. 

The circle packing comes from the totally geodesic white faces. These faces are all disjoint, and correspond to regions of the plane of projection. They lift to a collection of geodesic planes in $\HH^3$, whose boundaries form a collection of circles that are tangent exactly when two white faces are adjacent across a strand of the link, or meet a common crossing circle. The shaded faces lift to ideal triangles, dual to the white circles. Thus this corresponds to a circle packing by shaded circles dual to a circle packing of white circles. The intersections of the exteriors of planes in $\HH^3$ defined by the circles gives a convex region with all right-angled dihedral angles; this is the geometric structure on the ideal polyhedron.

The fact that the cusp is as claimed follows from the fact that each ideal vertex of the polyhedron is 4-valent, so moving one to infinity gives a rectangle, and each shaded face is an ideal triangle, so beneath a vertical shaded face lies a single white circle. The shaded ideal triangle is obtained by slicing a 2-punctured disc through the projection plane. One vertex corresponds to an arc of the crossing circle above (or below) the projection plane, and the other two vertices correspond to strands of the link running through the crossing circle. Thus exactly one of the ideal vertices of the shaded triangle corresponds to a crossing circle. Because the ideal vertex corresponding to the crossing circle lies at infinity, the other two ideal vertices, lying at points of intersection of vertical shaded and white planes, must correspond to link strands on the plane of projection. These are other vertices of the rectangle on $\RR^2$.
\end{proof}

We will show that fully augmented links admit a geometric triangulation coming from the decomposition into polyhedra of \refprop{FALSetup}. To do so, we will show that appropriate neighbourhoods of crossing circles can be triangulated separately. 

Consider a polyhedron of the decomposition of a fully augmented link, as in \refprop{FALSetup}. Arrange the polyhedron in $\HH^3$ so that the point at infinity projects to a crossing circle cusp, with vertical planes cutting out a rectangle on $\bdy\HH^3$. Then there is a unique circle on $\bdy\HH^3$ meeting each vertex of the rectangle. It intersects exactly four of the circles in the circle packing corresponding to white faces of the polyhedron. See \reffig{CutOffCircle}.
The circle is the boundary of a geodesic plane in $\HH^3$.
The intersection of the plane with the polyhedron determines a totally geodesic rectangular surface.

\begin{figure}
  \includegraphics{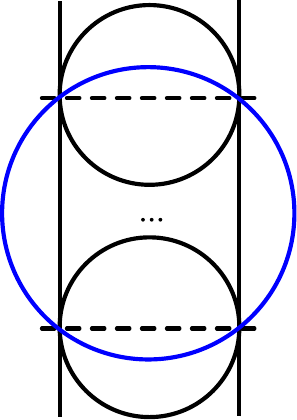}
  \caption{For each ideal vertex corresponding to a crossing circle, there exists a circle running through all four vertices of the rectangle of \refprop{FALSetup}~(3). Each of these defines a plane in $\HH^3$, and their intersection with the polyhedron defines a rectangle $R$.}
  \label{Fig:CutOffCircle}
\end{figure}

\begin{lemma}\label{Lem:CrossingCircleCutoff}
Let $R_1, \dots, R_n$ denote the totally geodesic rectangular surfaces arising as above, one for each crossing circle vertex. Then either the interiors of the rectangles are pairwise disjoint, or there exist exactly two crossing circles, each ideal polyhedron is a regular ideal octahedron, and the rectangle $R_1 = R_2$ cuts each polyhedron into two square prisms. 
\end{lemma}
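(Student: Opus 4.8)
The plan is to reduce the dichotomy to a combinatorial rigidity statement about the polyhedron $P$ of \refprop{FALSetup}, exploiting the fact that two distinct totally geodesic planes in $\HH^3$ meet in at most a geodesic. The starting point is to pin down $R_i$ precisely. Placing the crossing-circle vertex $v_i$ at infinity as in \refprop{FALSetup}(3), each of the four corners of the resulting rectangle is the intersection of a white and a shaded vertical plane, hence lies on an edge of $P$ running up to $v_i$; so these four corners are exactly the four neighbours of $v_i$ in the $1$-skeleton of $P$, and none is a crossing circle. Every other ideal vertex of $P$ is a tangency point of two of the white circles, hence lies in the closed rectangle, and indeed strictly inside its circumscribing circle $C_i=\bdy\Pi_i$ unless it is one of the four corners. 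Consequently $R_i=\Pi_i\cap P$ is an ideal quadrilateral whose four ideal vertices are precisely the four neighbours of $v_i$, and $\Pi_i$ (hence $R_i$) depends only on the unordered set of these four neighbours.

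Now suppose $\interior R_i\cap\interior R_j\neq\emptyset$ for some $i\neq j$. Since $R_k$ is a two-dimensional totally geodesic surface contained in $\Pi_k$ and distinct geodesic planes in $\HH^3$ meet in at most one geodesic, this forces $\Pi_i=\Pi_j$, hence $R_i=\Pi_i\cap P=\Pi_j\cap P=R_j$, and therefore, by the previous paragraph, $v_i$ and $v_j$ have the same set $\{a,b,c,d\}$ of four neighbours (in particular $v_i\not\sim v_j$). So it suffices to prove: \emph{if two distinct ideal vertices $v_i\neq v_j$ of $P$ share their four neighbours $\{a,b,c,d\}$, then $P$ has exactly the six vertices $v_i,v_j,a,b,c,d$, its $1$-skeleton is the octahedron graph, and $P$ is the regular ideal octahedron.}

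To prove this I would use that $P$ is $4$-valent, checkerboard-coloured, with all shaded faces ideal triangles. Around $v_i$ the two shaded faces are opposite triangles $\{v_i,x,y\}$ and $\{v_i,z,w\}$, so $\{x,y\}$ and $\{z,w\}$ partition $\{a,b,c,d\}$ and $xy,zw$ are edges of $P$; likewise at $v_j$. These two pair-partitions of $\{a,b,c,d\}$ must be different, for otherwise some edge among $\{a,b,c,d\}$ would lie on two distinct shaded faces, contradicting that every edge is white--shaded. But two distinct pair-partitions of a four-element set together furnish four edges forming a $4$-cycle on $\{a,b,c,d\}$; adjoining the eight edges $v_ia,\dots,v_jd$, each of $v_i,v_j,a,b,c,d$ already has four incident edges, so by $4$-valence and connectedness of the $1$-skeleton these are \emph{all} the vertices and edges of $P$ --- the octahedron graph. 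Finally, a right-angled ideal hyperbolic polyhedron is determined up to isometry by its combinatorial type, and the octahedral one is the regular ideal octahedron. This combinatorial step --- in particular ruling out any further vertices --- is the part I expect to be most delicate; the hypothesis that shaded faces are triangles is exactly what makes it work.

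It remains to package the dichotomy and check the last assertions. If the interiors of the $R_i$ are not pairwise disjoint, the displayed claim gives that $P$ is a regular ideal octahedron on the six vertices $v_i,v_j,a,b,c,d$; since $a,b,c,d$ are not crossing circles and exhaust the remaining vertices, $P$ has exactly two crossing-circle vertices, so $n=2$ and $\{v_1,v_2\}=\{v_i,v_j\}$, whence $R_1=R_2$. Placing $v_1$ at infinity, the order-four symmetry of the regular ideal octahedron fixing $v_1$ and $v_2$ shows the rectangle cut out is a square, $C_1$ is its circumscribing circle, and a direct computation in this model identifies $R_1=R_2$ as the totally geodesic square spanning the four equatorial edges of $P$ and exhibits the two complementary pieces as the regions described in the statement.
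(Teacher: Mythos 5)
There is a genuine gap at the very first reduction. You claim that $\interior R_i\cap\interior R_j\neq\emptyset$ forces $\Pi_i=\Pi_j$ because ``distinct geodesic planes in $\HH^3$ meet in at most one geodesic.'' But that fact does not give you the conclusion: if $\Pi_i\neq\Pi_j$, then $\Pi_i\cap\Pi_j$ is a geodesic $\gamma$, and the open rectangles $\interior R_i\subset\Pi_i$ and $\interior R_j\subset\Pi_j$ could perfectly well each contain an open arc of $\gamma$, so their intersection would be a nonempty one-dimensional set without the planes coinciding. In other words, the two rectangles could cross each other transversally. Ruling out exactly this transverse-crossing scenario is the substantive content of the lemma, and it is where the paper's proof does essentially all of its work: it shows that the circle $C_2$ is cut by the white circle packing into four arcs each lying inside one of the four circles that $C_1$ also meets, that $C_1$ and $C_2$ must then meet twice inside the same packing circle (else their ideal endpoints would interleave, which is impossible because $C_1$ already passes through the outermost ideal vertices on those circles), and that any such intersection points lie at the corners of the rectangle or outside it, hence outside the polyhedron. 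Only after that does one conclude that nontrivially intersecting rectangles must share two or four ideal vertices. Your argument silently assumes the conclusion of this analysis.

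The remainder of your proposal is sound and in one respect nicer than the paper's. Your observation that $R_i$ is determined by the four neighbours of $v_i$, and your purely combinatorial derivation that two non-adjacent $4$-valent vertices sharing all four neighbours force the $1$-skeleton to be the octahedron graph (using that the two shaded faces at each of $v_i,v_j$ give two \emph{distinct} pair-partitions of $\{a,b,c,d\}$, since every edge lies on exactly one shaded face), is a clean replacement for the paper's bare assertion that ``the polyhedron is determined: it must be a regular ideal octahedron.'' Together with Andreev--Rivin rigidity for right-angled ideal polyhedra, that endgame is fine. But as written the proof does not establish the dichotomy, because the case $\Pi_i\neq\Pi_j$ with intersecting interiors has not been excluded; you would need to import the paper's circle-packing argument (or an equivalent one) to close it.
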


\begin{proof}
Let $C$ denote the circle running through the four vertices of the rectangle of a crossing circle cusp.
Consider the intersection of this circle $C$ and the circles corresponding to white faces of the polyhedron. The circle $C$ intersects the two vertical planes that form two of the four edges of the rectangle; this gives two intersections. Additionally, the circle $C$ intersects the two hemispheres bounded by circles meeting the vertical planes, at the top and bottom of \reffig{CutOffCircle}.

The fact that $C$ cannot meet any other white faces of the polyhedron now follows from the fact that it encloses the region on $\bdy\HH^3$ bounded by the parallel vertical planes and by the two white circles tangent to them. All other white circles are completely contained in this region. Therefore, the hemispheres they determine cannot intersect $C$.

Now we consider the intersections of two rectangular surfaces $R_1$ and $R_2$ arising from circles $C_1$ and $C_2$ from different crossing circles. Arrange the polyhedron so that the crossing circle vertex corresponding to $R_1$ lies at infinity, and $R_1$ determines a circle $C_1$ running through four vertices of a rectangle.
The rectangular surface $R_2$ lies on a geodesic hemisphere $H_2$ whose boundary on $\bdy \HH^3$ is a circle $C_2$. By the argument above, $C_2$ meets exactly four of the white circles in the circle packing; these intersections cut $C_2$ into four circular arcs. Note that the endpoints of each arc are ideal vertices of the polyhedron.

If the surfaces $R_1$ and $R_2$ intersect, then the circles $C_1$ and $C_2$ intersect. Because the circular arcs of $C_2$ lie inside white circles of the circle packing, this is possible only if a point of intersection of $C_1$ and $C_2$ occurs within one of the circles of the circle packing. Because $C_1$ only meets the two parallel sides of the rectangle and two circles tangent to them, the circles where $C_1$ and $C_2$ intersect must be among these circles.

Next note that $C_1$ and $C_2$ must intersect twice within the same circle of the circle packing, else the ideal vertices met by circular arcs of $C_1$ and $C_2$ interleave on a circle. However, $C_1$ runs through the outermost ideal vertices on each of the four circles under consideration: there are no additional ideal vertices outside the rectangle for $C_2$ to meet for interleaving.

Finally, the intersection points either lie on the vertices of the rectangle defining $R_1$, or lie outside that rectangle, because $C_1$ lies outside that rectangle. But only points inside the rectangle lie inside the polyhedron, so intersections outside the rectangle cannot give intersections of $R_1$ and $R_2$, which both are embedded in the polyhedron.

The only remaining possibility is that $C_1$ and $C_2$ both run through at least two of the same ideal vertices on the rectangle defining $R_1$. If exactly two, $R_1$ and $R_2$ share an edge, but have disjoint interiors as claimed. If more than two, then they must share all four ideal vertices, and $R_1$ and $R_2$ agree. In this case, the polyhedron is determined: it must be a regular ideal octahedron with $R_1=R_2$ cutting off an ideal vertex corresponding to a crossing circle on either side. The fully augmented link can only have two crossings circles, corresponding to the ideal vertices used to define $R_1$ and $R_2$. The rectangles $R_1=R_2$ cut the octahedron into two pyramids over a square base. 
\end{proof}

\begin{proposition}\label{Prop:FALGeomTriang}
Every fully augmented link admits a geometric triangulation with the following properties.
\begin{enumerate}
\item Each crossing circle meets exactly four tetrahedra, two in each polyhedron. 
\item The triangulation is symmetric across the white faces. That is, a reflection across white faces preserves the triangulation. 
\end{enumerate}
\end{proposition}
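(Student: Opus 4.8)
The plan is to use the decomposition of \refprop{FALSetup} together with the rectangular cutoff surfaces $R_1,\dots,R_n$ of \reflem{CrossingCircleCutoff}, and triangulate each piece separately in a way compatible with the white-face reflection. First I would handle the generic case, in which \reflem{CrossingCircleCutoff} guarantees that the interiors of the $R_i$ are pairwise disjoint. Cutting one polyhedron $P$ along all of the $R_i$ splits it into a central piece $P_0$ together with $n$ ``end pieces,'' one capping off each crossing-circle ideal vertex. By \refprop{FALSetup}(3), each end piece is bounded by two white vertical planes, two shaded vertical planes, the rectangle $R_i$, and the two tangent white hemispheres; combinatorially it is a truncated pyramid (a ``wedge'' over the rectangle with apex the crossing-circle vertex). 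Each such end piece can be coned from the crossing-circle ideal vertex to a triangulation of $R_i$: pick one of the two diagonals of the rectangle $R_i$, which subdivides the end piece into two ideal tetrahedra, each having the crossing-circle vertex as one ideal vertex. That gives property (1): exactly two tetrahedra per polyhedron meeting each crossing circle, hence four in $S^3-L$.

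Next I would triangulate the central piece $P_0$. Its boundary consists of shaded triangular faces of $P$ that are not cut by any $R_i$, portions of white faces of $P$, and the rectangles $R_i$ (each now carrying the chosen diagonal). Since every ideal polyhedron with a chosen triangulation of its boundary faces can be coned off from one vertex into ideal tetrahedra, I would pick an ideal vertex $v$ of $P_0$ and cone; one must only check that $v$ can be chosen lying on a white face so that the reflection symmetry is respected (see below), and that the induced diagonals on the $R_i$ faces from the two polyhedra agree — but here there is freedom, because we get to choose the diagonal on each $R_i$ once and for all and use it in the end pieces on both sides. Positivity of volume of every resulting tetrahedron follows because each is a genuine ideal tetrahedron inscribed in $\HH^3$ with distinct ideal vertices (it is a sub-polyhedron of a finite-volume convex ideal polyhedron coned from a vertex, so it is nondegenerate), and the gluing recovers the complete hyperbolic structure because we have only subdivided the Epstein--Penner-type polyhedral decomposition along totally geodesic surfaces and then along interior cone faces, introducing no flat tetrahedra.

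For property (2), the key point is that the whole construction can be made $\ZZ/2$-equivariant for the reflection $\tau$ through the white faces exchanging the two polyhedra. The surfaces $R_i$ are defined intrinsically from the crossing-circle rectangles, so $\tau$ permutes them (in fact fixes each, as each $R_i$ meets white faces orthogonally); the chosen diagonal of each $R_i$ is $\tau$-invariant since $\tau$ fixes $R_i$ as a set and we may choose the diagonal joining the two vertices that are interchanged... more carefully, $\tau$ sends the rectangle in one polyhedron to the corresponding rectangle in the other, so it suffices to make a single choice of diagonal per crossing circle and copy it across. Likewise I would choose the coning vertex of $P_0$ to be one that lies on a white face (possible since white faces carry ideal vertices, by \refprop{FALSetup}), and let $\tau$ carry this choice to the other polyhedron; then the coning tetrahedra in $P$ are carried to those in $\tau(P)$, so $\tau$ preserves the triangulation.

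Finally I would dispose of the exceptional case of \reflem{CrossingCircleCutoff}: two crossing circles, each polyhedron a regular ideal octahedron, with $R_1=R_2$ slicing it into two square pyramids. Here I simply cone each square pyramid from its apex (the crossing-circle vertex) across a chosen diagonal of the square base $R_1=R_2$ into two tetrahedra, again making the diagonal choice $\tau$-equivariant; this gives two tetrahedra per octahedron per crossing circle and the stated symmetry, and every tetrahedron is a nondegenerate piece of the regular ideal octahedron. The main obstacle I anticipate is the bookkeeping in the generic case: verifying that the boundary triangulations induced on the shared faces (white faces cut by the $R_i$, and the $R_i$ themselves) from the two polyhedra and from the central-versus-end pieces all match up consistently, so that the local picture around every edge is an honest ideal triangulation whose dihedral angles sum to $2\pi$ — equivalently, that coning from a white-face vertex does not create an edge whose link fails to be a disk. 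Everything else is standard subdivision of convex ideal polyhedra.
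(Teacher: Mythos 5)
Your proposal is correct and follows essentially the same route as the paper: cut each polyhedron along the rectangles $R_i$ of \reflem{CrossingCircleCutoff} to get $n$ rectangular pyramids plus a central convex piece, split each pyramid into two tetrahedra along a chosen diagonal, cone the central piece from an ideal vertex, handle the degenerate octahedron case separately, and transfer the whole triangulation to the second polyhedron by the white-face reflection so that the gluing introduces no flat tetrahedra. The only slip is your initial claim that the reflection fixes each $R_i$ setwise (it carries the rectangle in one polyhedron to the corresponding rectangle in the other, as you then correct), and the paper sidesteps your equivariance worries by simply triangulating one polyhedron arbitrarily and defining the other's triangulation as its mirror image.
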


\begin{proof}
Begin with the ideal polyhedral decomposition of \refprop{FALSetup}. For either one of the two symmetric ideal polyhedra, cut off each ideal vertex corresponding to a crossing circle by cutting along the rectangles of \reflem{CrossingCircleCutoff}. This splits the polyhedron into $n$ pyramids over a rectangular base corresponding to crossing circles, where $n$ is the number of crossing circles, and one remaining convex ideal polyhedron $P$. In the case that there are just two crossing circles, the remaining convex ideal polyhedron is degenerate: it is just the rectangle $R_1=R_2$. In all other cases, it has 3-dimensional interior. 
  
Split each rectangular pyramid into two geometric tetrahedra by choosing a diagonal of the rectangle and cutting along it.

When we reglue into the fully augmented link, the choices of diagonals on the rectangles $R_1, \dots, R_n$ are mapped to ideal edges on the convex polyhedron $P$. When $R_1=R_2$, and $P$ is degenerate, choosing the same diagonal gives the desired triangulation. 

When $P$ is nondegenerate, we triangulate it by coning.
Choose any ideal vertex $w$ of $P$. For any face not containing $w$ that is not already an ideal triangle, subdivide the face into ideal triangles in any way by adding ideal edges. Then take cones from $w$ over all the triangles in faces disjoint from $w$. Because $P$ is convex, the result is a division of $P$ into geometric ideal tetrahedra.

Transfer the triangulation on the first polyhedron to the second by reflection in the white surface. This gives both polyhedra exactly the same subdivision, up to reflection.

Now note that the polyhedra glue by reflection in the white faces, so no new flat tetrahedra need to be introduced in these faces to obtain the gluing. All shaded faces are ideal triangles, which are glued by isometry and again no flat tetrahedra need to be introduced. Thus the decomposition of the polyhedra into geometric tetrahedra described above gives a decomposition of the fully augmented link complement into geometric tetrahedra.

Finally, the fact that the geometric triangulation satisfies the two properties of the theorem follows by construction: Each crossing circle meets two rectangles, hence four tetrahedra. The triangulation is preserved by the reflection in the white faces. 
\end{proof}

The previous lemma gives a geometric triangulation of a fully augmented link, but four tetrahedra meet each crossing circle. We need a triangulation for which only two tetrahedra meet each crossing circle.

\begin{proposition}\label{Prop:FALGeomTriang2Tet}
Every fully augmented link admits a geometric triangulation with the property that each crossing circle meets exactly two tetrahedra.
\end{proposition}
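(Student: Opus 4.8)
The plan is to start from the geometric triangulation of \refprop{FALGeomTriang}, in which each crossing circle meets four tetrahedra, and to perform a local modification near each crossing circle that cuts this number down to two while preserving geometricity. Fix a crossing circle with associated ideal vertex $v$. By \refprop{FALGeomTriang} the four tetrahedra meeting $v$ form a block $B$ equal to the union of two square pyramids over the totally geodesic rectangle $R=R_i$ of \reflem{CrossingCircleCutoff}: one pyramid in each of the two polyhedra of \refprop{FALSetup}, each subdivided by a diagonal of $R$. These two pyramids are glued to one another along all four of their lateral faces --- the two white lateral faces by the reflection in the white faces, and the two shaded lateral faces, which are the ideal triangles making up the two copies of the twice-punctured disc bounded by the crossing circle, by isometry. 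Hence $B$ meets the rest of $M=S^3-L$ only along the two copies of $R$, and it is a collar of the cusp at $v$.

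The heart of the argument is to replace each block $B$ by one built from just two positively oriented ideal tetrahedra, each meeting the cusp $v$ in a single ideal vertex. I would do this in the upper half-space model of $\HH^3$ with $v$ placed at infinity, using the circle-packing description of \refprop{FALSetup}: there $R$ meets the embedded polyhedron in a right-angled ideal rectangle whose four ideal vertices lie on the single circle $C$ of \reflem{CrossingCircleCutoff} through the corners of the rectangle cut out by the two white and two shaded vertical planes. Using $C$, together with the two white circles tangent to the white vertical planes, one produces a single new totally geodesic ideal quadrilateral inside $B$ --- with two of its edges lying on the two copies of $R$ and the other two being new ideal edges --- that cuts $B$ into exactly two ideal tetrahedra, glued to each other along two of their faces. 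Convexity of the polyhedra, and hence of $B$, forces these two tetrahedra to have strictly positive volume, and the reflective symmetry across the white faces keeps the construction symmetric. The case singled out by \reflem{CrossingCircleCutoff} --- exactly two crossing circles, with each polyhedron a regular ideal octahedron --- is handled on its own: each octahedron is already cut by $R_1=R_2$ into two square pyramids, one per crossing circle, and choosing for each square base a diagonal incident to the crossing-circle apex yields two tetrahedra per crossing circle directly, so here one needs only to check that these diagonals can be chosen consistently across the two octahedra.

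Finally, with a two-tetrahedron triangulation of each crossing-circle block $B_i$ fixed, I would triangulate the complementary region --- the two polyhedra of \refprop{FALSetup} with the interiors of the pyramids over the $R_i$ removed, which is still symmetric across the white faces --- by coning from a chosen ideal vertex, exactly as in the proof of \refprop{FALGeomTriang}, arranging that the new ideal edges introduced on the copies of the $R_i$ are compatible with the coning and that no flat tetrahedra are needed along the white faces (glued by reflection) or the shaded faces (glued by isometry). The main obstacle is the middle step: recutting $B$ combinatorially into two tetrahedra is easy, but one must verify that the recut can be carried out \emph{geometrically}, so that the two pieces really are positively oriented ideal tetrahedra of strictly positive volume and the resulting decomposition glues up to the complete hyperbolic structure --- which means keeping precise control of the circle packing and of the locations of the newly introduced ideal vertices.
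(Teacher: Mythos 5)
Your overall strategy is the paper's: keep the triangulation of \refprop{FALGeomTriang} away from the crossing circles, and near each crossing circle use the circle $C$ through the corners of the $1\times 2$ rectangle obtained by gluing the two polyhedra along a white face to recut the cusp block. But the central claim of your middle step --- that the new totally geodesic quadrilateral cuts the block $B$ into \emph{exactly two} ideal tetrahedra --- is false, and this is a genuine gap rather than a presentational one. The block $B$ is the union of two ideal square pyramids (apex at the crossing-circle vertex, base a rectangle inscribed in a circle); when the rectangles are squares, as in the octahedral examples, $\operatorname{vol}(B)$ equals the volume of a regular ideal octahedron, about $3.66$, whereas two positively oriented ideal tetrahedra have total volume at most $2v_3\approx 2.03$. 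So no decomposition of $B$ into two geometric tetrahedra can exist. What is actually true, and what the paper proves, is that the quadrilateral $R$ lying over $C$ separates $B$ into two pieces of different natures: the piece containing the cusp is the cone from the ideal vertex over $R$, which becomes two geometric tetrahedra once a diagonal of $R$ is chosen; the other piece is a prism over an ideal triangle, bounded by $R$, $R_1$, $R_2$ and the lateral faces, and it must be triangulated by three further tetrahedra using the diagonals already present on $R_1$ and $R_2$ (inherited from \refprop{FALGeomTriang}) together with the new diagonal on $R$. Your proposal omits this prism entirely, and it is precisely the part that makes the count come out to two tetrahedra \emph{at the cusp} while keeping the whole decomposition geometric and matching along faces.

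A second, smaller problem: your separate treatment of the two-crossing-circle case does not achieve the goal. Cutting each square pyramid of each octahedron by a diagonal of its base gives two tetrahedra at the crossing-circle apex \emph{in each octahedron}, hence four tetrahedra at each crossing circle in the glued manifold --- this is just the triangulation of \refprop{FALGeomTriang} again (compare \reflem{BorromeanTet}, where exactly this four-tetrahedron count appears). No special case is needed: in the proof of the proposition the rectangles $R_1$ and $R_2$ are the two copies of the cut-off rectangle for the \emph{same} crossing circle, one in each polyhedron, so they are always distinct and the uniform construction with the larger circle $C$ applies verbatim when the polyhedra are octahedra.
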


\begin{proof}
Begin with the geometric triangulation of \refprop{FALGeomTriang} and consider the crossing circles. These are triangulated by exactly four tetrahedra, two in each polyhedron. For each of the two tetrahedra in one polyhedron, one face lies on a totally geodesic ideal rectangle coming from \reflem{CrossingCircleCutoff} embedded in $\HH^3$. Call the two rectangles, one for each polyhedron, $R_1$ and $R_2$. 

Adjust the geometric triangulation as follows. At an ideal vertex corresponding to a crossing circle, two polyhedra are glued along a white face. The result of gluing both polyhedra together along such a face is shown in \reffig{FALGeomTriang}. Note rectangles $R_1$ and $R_2$ are glued along an edge. The boundary of the two glued polyhedra forms an even larger rectangle, with boundary the outermost parallel lines in \reffig{FALGeomTriang}, and the dashed lines.

\begin{figure}[h]
\begingroup%
  \makeatletter%
  \providecommand\color[2][]{%
    \errmessage{(Inkscape) Color is used for the text in Inkscape, but the package 'color.sty' is not loaded}%
    \renewcommand\color[2][]{}%
  }%
  \providecommand\transparent[1]{%
    \errmessage{(Inkscape) Transparency is used (non-zero) for the text in Inkscape, but the package 'transparent.sty' is not loaded}%
    \renewcommand\transparent[1]{}%
  }%
  \providecommand\rotatebox[2]{#2}%
  \newcommand*\fsize{\dimexpr\f@size pt\relax}%
  \newcommand*\lineheight[1]{\fontsize{\fsize}{#1\fsize}\selectfont}%
  \ifx\svgwidth\undefined%
    \setlength{\unitlength}{163.97723007bp}%
    \ifx\svgscale\undefined%
      \relax%
    \else%
      \setlength{\unitlength}{\unitlength * \real{\svgscale}}%
    \fi%
  \else%
    \setlength{\unitlength}{\svgwidth}%
  \fi%
  \global\let\svgwidth\undefined%
  \global\let\svgscale\undefined%
  \makeatother%
  \begin{picture}(1,0.90419766)%
    \lineheight{1}%
    \setlength\tabcolsep{0pt}%
    \put(0.16700444,0.43128414){\color[rgb]{0,0,0}\makebox(0,0)[lt]{\lineheight{1.25}\smash{\begin{tabular}[t]{l}$R_1$\end{tabular}}}}%
    \put(0.76879097,0.43359429){\color[rgb]{0,0,0}\makebox(0,0)[lt]{\lineheight{1.25}\smash{\begin{tabular}[t]{l}$R_2$\end{tabular}}}}%
    \put(0,0){\includegraphics[width=\unitlength,page=1]{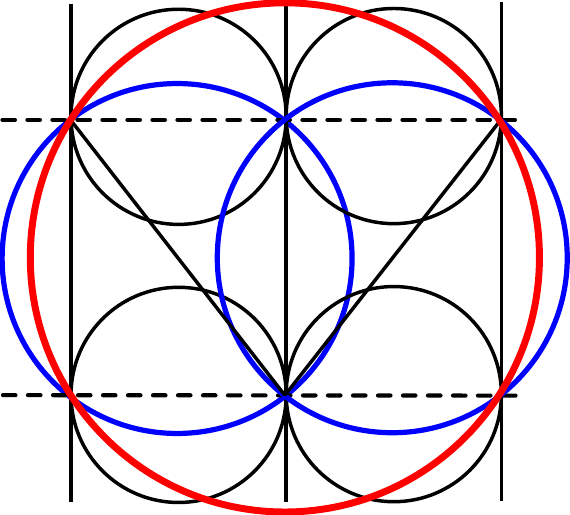}}%
    \put(0.44324936,0.03047806){\color[rgb]{0,0,0}\makebox(0,0)[lt]{\lineheight{1.25}\smash{\begin{tabular}[t]{l}$C$\end{tabular}}}}%
    \put(0,0){\includegraphics[width=\unitlength,page=2]{FALGeomTriang.pdf}}%
  \end{picture}%
\endgroup%

  \caption{Glue both polyhedra in the decomposition of a fully augmented link along a white face corresponding to a vertical plane in $\HH^3$. Two rectangles, $R_1$ and $R_2$, are glued as shown. A larger circle $C$ runs through vertices of both $R_1$ and $R_2$.}
  \label{Fig:FALGeomTriang}
\end{figure}

There is a circle $C$ running through each vertex of that rectangle, shown in red in \reffig{FALGeomTriang}. Note that the hemisphere defined by $C$ in $\HH^3$ meets an edge of $R_1$ and an edge of $R_2$. Cutting along the hemisphere cuts the two polyhedra along a rectangle $R$. The region bounded by $R_1$, $R_2$, $R$ and the two polyhedra is a solid with three ideal quadrilateral faces and two ideal triangle faces; it forms a prism over an ideal triangle. The region between $R$ and infinity forms a neighbourhood of the crossing circle vertex. Triangulate it by adding an edge along a diagonal of $R$ and then coning to infinity. This gives two geometric ideal tetrahedra meeting the crossing circle vertex. These are the only tetrahedra meeting this vertex.

It remains to triangulate the prism over the ideal triangle bounded by $R_1$, $R_2$, and $R$. The rectangles $R_1$, $R_2$, and $R$ all have been triangulated by a choice of diagonal; the one on $R$ comes from the paragraph just above, and those on $R_1$ and $R_2$ come from \refprop{FALGeomTriang}. These three edges determine two ideal triangles whose interiors are disjoint in the interior of the triangular prism. They divide the prism into three geometric tetrahedra.
\end{proof}

\section{Layered solid tori}\label{Sec:LST}

To obtain highly twisted links, we will be performing Dehn filling on the crossing circle cusps of fully augmented links, using the triangulation of \refprop{FALGeomTriang2Tet}. We need a triangulation of the solid torus used in the Dehn filling. The triangulation that will work in this setting is a  \emph{layered solid torus}, first described by Jaco and Rubinstein~\cite{JacoRubinstein:LST}. In this section, we will review the construction of layered solid tori, and how they can be used to triangulate a Dehn filling of a triangulated manifold such as a fully augmented link. 

The boundary of a layered solid torus consists of two ideal triangles whose union is a triangulation of a punctured torus. The space of all such triangulations of punctured tori is described by the Farey graph. 
Gu{\'e}ritaud and Schleimer present a description of the layered solid torus using the combinatorics of the Farey graph~\cite{GueritaudSchleimer}, and then glue this into the boundary of a manifold to be Dehn filled. We will follow their presentation.

\subsection{Review of layered solid tori}
Recall first the construction of the Farey triangulation of $\HH^2$. We
view $\HH^2$ in the disc model, with antipodal points $0/1$ and $1/0=\infty$ in $\bdy\HH^2$ lying on a horizontal line through the centre of the disc, with $1/0$ on the left and $0/1$ on the right. Put $1/1$ at the north pole, and $-1/1$ at the south pole. Two points $a/b$ and $c/d$ in $\QQ\cup \{1/0\}\subset\bdy\HH^2$ have distance measured by
\[ i(a/b, c/d) = |ad - bc|. \]
Here $i(\cdot, \cdot)$ denotes geometric intersection number of slopes on the torus.
We draw an ideal geodesic between each pair $a/b$, $c/d$ with $|ad-bc|=1$. This gives the \emph{Farey triangulation}.

Any triangulation of a once-punctured torus consists of three slopes on the boundary of the torus, with each pair of slopes having geometric intersection number $1$. Denote the slopes by $p$, $q$, $r$. Note that this triple determines a triangle in the Farey triangulation. Moving across an edge of the Farey triangulation changes the triangulation by replacing one slope with another, say $r'$ replaces $r$. See \reffig{Farey}.

\begin{figure}[h]
  \includegraphics{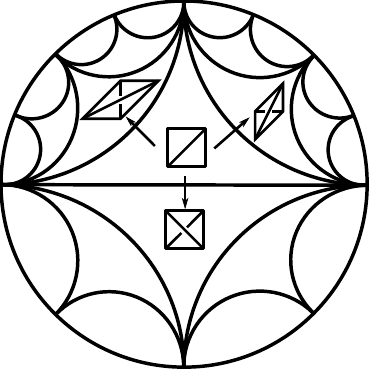}
  \caption{Each triangle in the Farey graph determines a triangulation of a punctured torus. Moving across an edge replaces one of the three slopes of the triangulation by a different slope.}
  \label{Fig:Farey}
\end{figure}

In the case that we wish to perform a Dehn filling by attaching a solid torus to a triangulated once-punctured torus, there are four important slopes involved. Three of the slopes are the slopes of the initial triangulation of the once-punctured solid torus. In our setting, these will typically either be $\{0/1, 1/0, 1/1\}$ or $\{0/1, 1/0,-1/1\}$. They form an \emph{initial triangle} in the Farey graph. The last slope is $m$, the slope of the Dehn filling. 

Now consider the geodesic in $\HH^2$ from the centre of the initial triangle to the slope $m\subset \bdy \HH^2$. This passes through a sequence of triangles in the Farey graph by crossing edges of the Farey triangulation. In particular, there will be a finite sequence of triangles, each determined by three slopes,
\[ (T_0, T_1, \dots, T_N) = ( pqr, pqr', \dots, stm), \]
with initial triangle $T_0$ and final triangle $T_N$ such that $m$ is not a slope of any previous triangle in the sequence. For our purposes, we will require that $N\geq 2$. Thus we do not allow $m$ to be a slope of the initial triangle $T_0$ nor a slope of the three triangles adjacent to $T_0$. 

We build a layered solid torus by stacking a tetrahedron onto a once punctured torus, initially triangulated by the slopes of $T_0$, and replacing one slope with another at each step as we stack. That is, two consecutive once punctured tori always have two slopes in common and two that differ by a diagonal exchange. The diagonal exchange is obtained in three-dimensions by layering a tetrahedron onto a given punctured torus such that the diagonal on one side matches the diagonal to be replaced. In \reffig{Farey}, note that the diagonal exchanges have been drawn in such a way to indicate the tetrahedra. 

For each triangle in the path from $T_0$ to $T_{N-1}$, layer on a tetrahedron, obtaining a collection of tetrahedra homotopy equivalent to $T^2\times [0,1]$. At the $k$-th step, the boundary component $T^2\times\{0\}$ has the triangulation of $T_0$ and that $T^2\times\{1\}$ has the triangulation of $T_k$. Continue until $k=N-1$, obtaining a triangulated complex with boundary consisting of two once-punctured tori, one triangulated by $T_0$ and the other by $T_{N-1}$. Recall that $m$ is a slope of $T_N$ --- notice that we are not adding on a tetrahedron corresponding to $T_N$.

If we stop at $T_{N-1}$ (not $T_N$), then one further diagonal exchange will give the slope $m$. That is, $m$ is not one of the slopes of the triangulation of $T_{N-1}$, but a single diagonal exchange replaces the triangulation $T_{N-1}$ with $T_N$, which is a triangulation consisting of two slopes $s$ and $t$ in common with $T_{N-1}$ and the slope $m$ cutting across a slope $m'$ of $T_{N-1}$.

Recall we are trying to obtain a triangulation of a solid torus for which the slope $m$ is homotopically trivial. To homotopically kill the slope $m$, fold the two triangles of $T_{N-1}$ across the diagonal slope $m'$. Gluing the two triangles on one boundary component of $T^2\times I$ in this manner gives a quotient homeomorphic to a solid torus, with boundary still triangulated by $T_0$. Inside, the slopes $t$ and $s$ are identified.
The slope $m$ has been folded onto itself, meaning it is now homotopically trivial. See \reffig{FoldM}.

\begin{figure}[h]
\import{figures/}{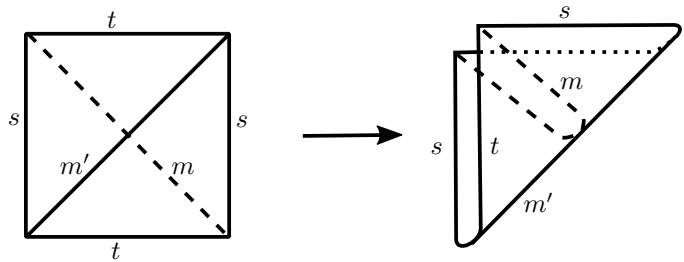}
  \caption{Folding $m$ makes it homotopically trivial.}
  \label{Fig:FoldM}
\end{figure}

\section{Angle structures}\label{Sec:AngleStructures}

In order to prove that the triangulations we construct are geometric, we will use tools from the theory of angle structures on 3-manifolds. (These are also often called \emph{strict} angle structures in the literature.)
We are following the lead of Gu{\'e}ritaud and Schleimer in \cite{GueritaudSchleimer}, who use angle structures to show that layered solid tori admit geometric triangulations. The results we need are only slight generalisations of Gu{\'e}ritaud and Schleimer's work, and the proofs follow almost immediately. However, we believe it is useful to step through the results and many of the proofs here as well. Not only does that make this paper more self-contained, but it also sets up a number of tools that we will need later in the paper when we further generalise to different triangulations of solid tori.  Thus in this section we review angle structures, relevant results such as the Casson--Rivin theorem, and we work through the proof that layered solid tori admit geometric triangulations using angle structures.

\begin{definition}\label{Def:AngleStruct}
An \emph{angle structure} on an ideal triangulation $\tau$ of a $3$-manifold $M$ (possibly with boundary) is an assignment of dihedral angles on each tetrahedron such that opposite edges of the tetrahedron carry the same angle, and such that
\begin{itemize}
\item[(1)] all angles lie in the range $(0, \pi)$,
\item[(2)] around each ideal vertex of a tetrahedron, the dihedral angles sum to $\pi$,
\item[(3)] around each edge in the interior of $M$, the dihedral angles sum to $2\pi$. 
\end{itemize}

The set of all angle structures for the triangulation $\tau$ is denoted $\calA(\tau)$. 
\end{definition}

An angle structure on an ideal tetrahedron uniquely determines a hyperbolic structure on that tetrahedron. However, an angle structure on a triangulated 3-manifold is not as restrictive as a geometric triangulation. While one can assemble a space from hyperbolic triangles determined by the angles, under the gluing there may be shearing along edges. Thus the structure does not necessarily give a hyperbolic structure on $M$. 

However, an angle structure determines a volume, by summing the volumes of the hyperbolic ideal tetrahedra with the dihedral angles given by the angle structures. That is, recall that a hyperbolic ideal tetrahedron with dihedral angles $\alpha$, $\beta$, $\gamma$ has volume $\Lambda(\alpha)+\Lambda(\beta)+\Lambda(\gamma)$, where $\Lambda$ is the Lobachevsky function. Define the volume functional $\calV\from \calA(\tau)\to\RR$ as follows. 
For $p \in \calA(\tau)\subset \RR^{3n}$, assign to the angle structure $p = (p_1, p_2, p_3, \dots, p_{3n})$ the real number
\[ \calV(p) = \Lambda(p_1)+\Lambda(p_2) + \Lambda(p_3) + \dots + \Lambda(p_{3n}). \]

The volume functional is a convex function on $\calA(\tau)$. That means it either takes its maximum on the interior of the space $\calA(\tau)$, or there is no maximum in $\calA$, and $\calV$ is maximised on the boundary of the closure $\overline{\calA(\tau)}$. See, for example, \cite{FuterGueritaud:Survey}.

The following theorem, proved independently by Casson and Rivin, will allow us to use angle structures to obtain a geometric triangulation in the case that the maximum occurs in the interior of the space $\calA(\tau)$.

\begin{theorem}[Casson, Rivin]\label{Thm:CassonRivin}
Let $M$ be an orientable $3$-manifold with boundary consisting of tori, and let $\tau$ be an ideal triangulation of $M$. Then a point $p \in \calA(\tau)$ corresponds to a complete hyperbolic metric on the interior of $M$ if and only if $p$ maximises the volume functional $\calV \from \calA(\tau) \to \RR$.
\end{theorem}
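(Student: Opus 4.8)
The plan has three moves: first reduce the phrase ``$p$ corresponds to the complete hyperbolic metric'' to Thurston's gluing equations; second establish that $\calV$ is concave on $\calA(\tau)$, so that maximising it is the same as being a critical point; third run the variational computation of Rivin that identifies the critical points. For the reduction: a point $p\in\calA(\tau)$ assigns to the $i$-th tetrahedron a dihedral angle $\theta_i\in(0,\pi)$ at one pair of opposite edges, hence a complex shape parameter $z_i$ with $\arg z_i=\theta_i$, and developing these ideal tetrahedra produces the complete hyperbolic structure on $\interior M$ exactly when, around every edge of $\tau$, the product of incident shape parameters equals $1$, and the similarity structure induced on every cusp torus is complete. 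Conditions (2) and (3) of \refdef{AngleStruct} are precisely the \emph{imaginary} (argument) parts of these equations: the vertex links are genuine Euclidean triangles, and the dihedral angles close up to $2\pi$ around each interior edge and each vertex of each cusp torus. So the theorem reduces to the claim that $p$ maximises $\calV$ if and only if the \emph{real} (log-modulus) parts of the edge equations hold together with the completeness equations at the cusps.

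For the concavity move, I would note that $\calA(\tau)$ is the interior of a convex polytope in $\RR^{3n}$, so a point of $\calA(\tau)$ maximises $\calV$ if and only if it is a critical point of $\calV|_{\calA(\tau)}$, provided $\calV$ is strictly concave there. Concavity is a short Hessian computation: for one ideal tetrahedron, write the volume $\Lambda(\alpha)+\Lambda(\beta)+\Lambda(\pi-\alpha-\beta)$ as a function of two of its three angles; using $\Lambda''(x)=-\cot x$ and the identity $\cot\alpha\cot\beta+\cot\beta\cot\gamma+\cot\gamma\cot\alpha=1$ (valid when $\alpha+\beta+\gamma=\pi$), the Hessian has determinant $1$ and negative $(1,1)$-entry, hence is negative definite; summing over tetrahedra and restricting to the affine subspace $\calA(\tau)$ preserves negative definiteness.

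The variational move is the heart of the matter, and I would follow Rivin and Gu{\'e}ritaud--Schleimer (and the Futer--Gu{\'e}ritaud exposition). Since $\Lambda'(x)=-\log|2\sin x|$, we have $d\calV_p=-\sum_i\log|2\sin\theta_i|\,d\theta_i$, summed over all angles of all tetrahedra, and $p$ is a critical point iff $d\calV_p$ annihilates $T_p\calA(\tau)$, the linear subspace of $\RR^{3n}$ along which every per-tetrahedron angle sum and every per-edge angle sum is unchanged. I would then introduce the \emph{leading--trailing deformations}: for each interior edge $e$, a tangent vector $w_e\in T_p\calA(\tau)$ supported on the tetrahedra incident to $e$, which in each such tetrahedron adds $+1$ to the angle just ``ahead of'' $e$ and $-1$ to the angle just ``behind'' $e$ while fixing the angle at $e$ itself, together with analogous vectors $w_c$ built from normal curves on the cusp tori. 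After checking that these are genuine tangent vectors and that together they span $T_p\calA(\tau)$, the decisive computation is that $\langle d\calV_p,w_e\rangle$ equals, up to an overall sign, $\sum_{t\ni e}\log|z_t|$, the log-modulus of the product of shape parameters around $e$, so it vanishes exactly when the real part of the $e$-th edge equation holds; similarly $\langle d\calV_p,w_c\rangle$ recovers the log-modulus of the holonomy around the cusp curve $c$, vanishing exactly when the completeness equation for $c$ holds. Together with the concavity move, this gives the theorem.

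The step I expect to be the main obstacle is the bookkeeping in this last move: checking that the leading--trailing deformations genuinely lie in $T_p\calA(\tau)$ and, more delicately, that the edge deformations \emph{together with} the cusp deformations span it — the edge deformations alone do not, which forces the cusp curves into the picture and requires a dimension count — and then matching the combinatorics so that the signed sum of $\log|2\sin\theta|$ selected by each $w_e$ really is the log-modulus of the corresponding shape-parameter product, and likewise for the holonomy around a cusp curve. A secondary point worth being careful about is that imposing only the angular parts of the gluing equations, which is all an angle structure does, already yields a well-defined developing map and hence a hyperbolic structure on $\interior M$; this is what makes $\calV$ ``see'' exactly the remaining real-part obstructions.
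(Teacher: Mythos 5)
The paper does not prove this theorem: it is quoted as a black box, with the proof attributed to Rivin \cite{Rivin:EuclidStructs} and to the exposition of Futer--Gu\'eritaud \cite{FuterGueritaud:Survey}. Your outline is, in structure and in every key computation, exactly the argument of those references: reduce to Thurston's gluing and completeness equations, note that an angle structure already enforces their angular parts, prove strict concavity of $\calV$ via the Hessian identity $\cot\alpha\cot\beta+\cot\beta\cot\gamma+\cot\gamma\cot\alpha=1$, and identify critical points using $\Lambda'(x)=-\log|2\sin x|$ paired against the leading--trailing deformations. Your assessment of where the real work lies is also accurate: the spanning statement for the edge and cusp deformations is the technical heart (it rests on a Neumann--Zagier-type symplectic/homological argument), and it is the one step your sketch asserts rather than proves. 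As a proof \emph{proposal} this is sound and correctly targeted.

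One side remark is wrong as stated, and you should fix it since you lean on it as the conceptual explanation of why $\calV$ detects only the real parts: an angle structure does \emph{not} ``already yield a well-defined developing map and hence a hyperbolic structure on $\interior M$.'' With only the angular parts of the edge equations imposed, the cone angle around each edge is $2\pi$ but the holonomy of a loop around an edge may still be a nontrivial translation along that edge (shearing), so the metric fails to extend across the edges; the paper itself makes this point just after \refdef{AngleStruct}. The correct statement is that the developing map is defined away from the edge set, and the vanishing of $\langle d\calV_p, w_e\rangle$ is precisely the vanishing of this shear, i.e.\ the log-modulus part of the edge equation. This does not damage the main line of your argument, which works with the full gluing equations, but the heuristic as written contradicts a standard (and, here, explicitly stated) fact.
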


The proof of \refthm{CassonRivin} follows from work in \cite{Rivin:EuclidStructs}. A different proof that includes a nice exposition is given by Futer and Gu{\'e}ritaud~\cite{FuterGueritaud:Survey}.

\subsection{Angle structures on layered solid tori}
This subsection is devoted to the following proposition and its proof, which guarantees an angle structure on a layered solid torus. The result is essentially
\cite[Proposition~10]{GueritaudSchleimer}, and the proof is very similar. However, our statement is slightly more general. Additionally, parts of the proof will be needed in a later section, so we include the full argument. 

\begin{proposition}\label{Prop:LSTAngleStructExists}
Let $p, q, r$ be slopes on the torus that bound a triangle in the Farey graph in $\HH^2$. Let $m$ be a slope separated from the triangle $(p,q,r)$ by at least one triangle; that is, the geodesic $\gamma$ in $\HH^2$ from the centre of triangle $(p,q,r)$ to $m$ intersects at least three triangles (one containing $m$, one containing $(p,q,r)$, and at least one more). Relabel $p,q,r$ if necessary so that the geodesic $\gamma$ exits the triangle $(p,q,r)$ by crossing the edge $(p,q)$, and exits the next triangle $(p,q,r')$ by crossing the edge $(q,r')$. (Thus $r$ is the first slope to disappear from the triangulation, and $p$ is the second.) Assign to $p,q,r$ \emph{exterior} dihedral angles $\theta_p, \theta_q, \theta_r$, respectively, satisfying
\begin{equation}\label{Eqn:DihedralAngleProperties}
  \theta_p + \theta_q + \theta_r = \pi, \quad
  -\pi < \theta_p,\theta_q < \pi \quad
  \mbox{ and } \quad 0 < \theta_r < \pi.
\end{equation}
Finally, consider the layered solid torus $T$ with boundary $\bdy T$ a punctured torus triangulated with slopes $p,q,r$, and meridian the slope $m$. Set interior dihedral angles at edges of slope $p$, $q$, $r$ of $\bdy T$ equal to $\pi-\theta_p$, $\pi - \theta_q$, $\pi-\theta_r$, respectively.

There exists an angle structure on $T$ with the given interior dihedral angles if and only if
\begin{equation}\label{Eqn:IntersectionCondition}
  i(m,p)\theta_p + i(m,q)\theta_q + i(m,r)\theta_r > 2\pi,
\end{equation}
where $i(a,b)$ denotes geometric intersection number. 
\end{proposition}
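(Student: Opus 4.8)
The plan is to adapt the proof of \cite[Proposition~10]{GueritaudSchleimer}, the only genuinely new feature being that $\theta_p$ and $\theta_q$ are now allowed to be negative. An angle structure on the layered solid torus $T$ realising the prescribed interior dihedral angles on $\bdy T$ is exactly a point of the polytope obtained by intersecting the open cube $(0,\pi)^{3(N-1)}$ (one coordinate per dihedral angle of each of the $N-1$ tetrahedra, with the vertex equation of \refdef{AngleStruct} built in by forcing each tetrahedron's three angles to sum to $\pi$) with the affine subspace defined by the interior-edge equations together with the boundary conditions assigning $\pi-\theta_p,\pi-\theta_q,\pi-\theta_r$ to the three edges of the outer torus $T_0=(p,q,r)$. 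So the statement to prove is that this polytope is nonempty precisely when \refeqn{IntersectionCondition} holds. First I would fix notation: label the tetrahedra $\Delta_1,\dots,\Delta_{N-1}$ so that $\Delta_k$ is layered on to pass from the Farey triangle $T_{k-1}$ to $T_k$, record which of the three opposite-edge pairs of $\Delta_k$ carries the ``pivot'' slopes (the one dropped and the one added) and which carry the two slopes common to $T_{k-1}$ and $T_k$, and then write down the edge equation at the edge of each slope: a boundary slope of $T_0$ gets its prescribed angle, while an interior slope's edge receives the dihedral angle of each tetrahedron in which it occurs as an equatorial pair (with multiplicity two, since such a pair glues up to a single edge of the complex) plus the pivot angle of the tetrahedron in which it occurs as a diagonal, and these must total $2\pi$.

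The engine of the argument is a reduction along the Farey geodesic $\gamma$. Removing the outermost tetrahedron $\Delta_1$ exchanges the boundary triangulation $T_0=(p,q,r)$ for $T_1=(p,q,r')$ and leaves a layered solid torus with one fewer tetrahedron and the same meridian $m$; the three angles of $\Delta_1$ are unconstrained except that they sum to $\pi$, and choosing them is the same as choosing new exterior angles $(\theta_p',\theta_q',\theta_{r'})$ on $T_1$ related to $(\theta_p,\theta_q,\theta_r)$ by an explicit affine map. The key algebraic facts to verify are that this map sends the admissible region \refeqn{DihedralAngleProperties} into itself, and that under it, together with the corresponding Farey transformation of the intersection numbers $i(m,\cdot)$ (governed by the Ptolemy relation among the four slopes $p,q,r,r'$), the quantity $i(m,p)\theta_p+i(m,q)\theta_q+i(m,r)\theta_r$ is invariant, so that \refeqn{IntersectionCondition} descends verbatim. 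This sets up an induction on the length $N$ of $\gamma$, with base case $N=2$, where $T$ is a single tetrahedron together with the fold of \reffig{FoldM} and the equivalence is checked directly. In the base case the fold identifies the two triangles of $T_{N-1}$ across the diagonal $m'$, producing an interior edge whose edge equation, because of the doubling caused by the fold, reorganises into exactly the inequality \refeqn{IntersectionCondition}; this is the one place the constant $2\pi$ enters. The ``only if'' direction comes out of the same machinery --- equivalently, summing the interior-edge equations with the appropriate multiplicities telescopes to $i(m,p)\theta_p+i(m,q)\theta_q+i(m,r)\theta_r=2\pi+\sum(\text{positive angles})$, forcing \refeqn{IntersectionCondition}.

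The hardest part, and the place where care beyond \cite{GueritaudSchleimer} is required, is the interplay between the fold and the relaxed hypothesis on $\theta_p,\theta_q$. On one hand, getting the combinatorics of the fold exactly right --- which interior edge it creates, the multiplicity with which the tetrahedra wrap around it, and hence the precise coefficients $i(m,p),i(m,q),i(m,r)$ in its edge equation --- is delicate and is what pins the threshold at $2\pi$. On the other hand, when $\theta_p$ or $\theta_q$ is negative the prescribed interior angle $\pi-\theta_p$ or $\pi-\theta_q$ exceeds $\pi$, so it cannot be supplied by a single tetrahedron; one must check that the excess can always be distributed over the fan of tetrahedra that pivot at that slope while keeping every individual dihedral angle strictly inside $(0,\pi)$. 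I expect this to follow from the same monotonicity of the angle sequence along $\gamma$ that underlies the original argument, but it has to be re-examined in the regime of negative exterior angles. Finally, since later sections reuse these computations, I would extract the Farey-transformation lemma and the explicit formulas for the $\Delta_k$-angles as standalone statements rather than leaving them inside the proof.
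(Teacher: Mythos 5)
There is a genuine gap at the heart of your inductive step: the quantity $Q = i(m,p)\theta_p + i(m,q)\theta_q + i(m,r)\theta_r$ is \emph{not} invariant under peeling off $\Delta_1$. First, the three angles of $\Delta_1$ are not free subject only to summing to $\pi$: the edge of slope $r$ is covered by $\Delta_1$ alone, so its pivot angle is forced to equal $\pi-\theta_r$, leaving one degree of freedom, say the angle $\xi$ that $\Delta_1$ places at the slope $p$ (and $\eta=\theta_r-\xi$ at $q$). Writing $a=i(m,p)$, $b=i(m,q)$, the Ptolemy relation gives $i(m,r)=a+b$ and $i(m,r')=|a-b|$; the new exterior angles are $\theta_p'=\theta_p+2\xi$, $\theta_q'=\theta_q+2\eta$, $\theta_{r'}=-\theta_r$, and a two-line computation gives
\[
Q'-Q \;=\; 2a\xi+2b\eta-\bigl(a+b+|a-b|\bigr)(\xi+\eta),
\]
which equals $-2(a-b)\eta$ when $a\ge b$ and $-2(b-a)\xi$ otherwise. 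Since $|a-b|=i(m,r')>0$ (because $m\neq r'$ by the hypothesis that $m$ is separated from $T_0$ by at least one triangle) and the relevant angle is strictly positive in any angle structure, $Q$ strictly decreases at every peeling step. The ``only if'' direction survives --- monotonicity, or your telescoping identity, which is correct, gives $Q>Q'>\cdots>2\pi$ --- but the ``if'' direction does not ``descend verbatim'': given $Q>2\pi$ you must exhibit a $\xi$ for which $Q'>2\pi$ \emph{and} the new exterior angles are admissible, and the admissibility constraint $\theta_p'<\pi$ forces $\eta>(\theta_r-\theta_q)/2$, so when $\theta_r>\theta_q$ the supremum of $Q'$ over admissible choices is strictly smaller than $Q$. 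Showing that this supremum still exceeds $2\pi$ whenever $Q>2\pi$ is a genuine verification, not a formal consequence of the setup.

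Carrying out that verification is essentially the content of the paper's proof, which is organised differently: all edge equations are solved simultaneously by a single sequence $(z_0=\pi+\theta_p,\,z_1=\pi-\theta_r,\,z_2,\dots,z_N=0)$, and \reflem{ZSequence} shows an angle structure exists if and only if this sequence satisfies a convexity inequality $z_{i-1}+z_{i+1}>2z_i$ at non-hinge indices and a hinge inequality $z_{i-1}>z_i+z_{i+1}$ where $\gamma$ turns in the Farey graph. That dichotomy --- entirely absent from your outline --- is where \refeqn{IntersectionCondition} actually enters: with no hinges, a convex sequence of the right length descending from $z_0$ to $0$ exists iff $Nz_1<(N-1)z_0$, which unwinds to \refeqn{IntersectionCondition} (\reflem{NoHingeCase}); with a hinge, the inequality holds automatically (\reflem{HingeImpliesInequality}) and the sequence is built by a two-sided construction with a small scaling parameter (\reflem{AngleStructHinge}). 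Your plan would have to reproduce this case analysis inside the inductive step; as written, it rests on a false identity.
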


\begin{remark}
Gu{\'e}ritaud and Schleimer actually requre $0 \leq \theta_p,\theta_q$ in the statement of their proposition. However, most of their argument applies equally well if one of $\theta_p,\theta_q$, say $\theta_p$, is negative, provided $\pi-\theta_p<2\pi$. The other conditions will then imply that $\theta_q$ is positive, and that $\theta_p +\theta_r$ is positive, and these conditions suffice to prove the proposition. 
\end{remark}

We start by setting up notation. 
Let $T$ be a layered solid torus constructed by following a geodesic $\gamma$ in the Farey graph in $\HH^2$ from the centre of triangle $(p,q,r)$ to the slope $m$, with $\gamma$ intersecting at least three triangles. Let $(T_0, T_1, \dots, T_{N-1}, T_N)$ denote the sequence of triangles. As in the statement of \refprop{LSTAngleStructExists}, we label such that $r$ is the first slope to be replaced by diagonal exchange and $p$ is the second. 

Let $\Delta_1, \dots, \Delta_{N-1}$ denote the tetrahedra in $T$, constructed as in \refsec{LST}. Thus $\Delta_1$ meets the boundary of the layered solid torus in slopes $(p,q,r)$, and the tetrahedron $\Delta_{N-1}$ is folded on itself to form the solid torus with $m$ homotopically trivial.

\begin{lemma}\label{Lem:HexagonCusp}
The solid torus $T$ has a single ideal vertex. A horosphere about this ideal vertex intersects each tetrahedron of $T$ in four triangles, arranged corner to corner such that their outer boundary forms a hexagon, with opposite angles agreeing. For tetrahedra $\Delta_1, \dots, \Delta_{N-2}$, an inner boundary is also a hexagon, with inner boundary of the triangles of $\Delta_i$ identified to the outer boundary of the triangles of $\Delta_{i+1}$.
For tetrahedron $\Delta_{N-1}$, the four triangles form a solid hexagon.
\end{lemma}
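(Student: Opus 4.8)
The plan is to analyze the layered solid torus cusp by tracking how the horospherical cross-section evolves as one layers on tetrahedra, working inductively from $\Delta_1$ up to $\Delta_{N-1}$. First I would establish that $T$ has a single ideal vertex: the once-punctured torus $\bdy T$ has a single puncture, and each layering of a tetrahedron $\Delta_i$ onto a once-punctured torus introduces no new vertices (the tetrahedron's four ideal vertices are all identified to the existing puncture), so the vertex count is unchanged throughout the construction; the final fold that kills $m$ likewise introduces no new vertex. Hence $T$ has exactly one ideal vertex, and a horosphere about it meets each $\Delta_i$ in a disjoint union of Euclidean triangles, one for each ideal vertex of $\Delta_i$ — that is, four triangles per tetrahedron.

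The heart of the argument is the combinatorial claim about how these four triangles sit together. For a single ideal tetrahedron, a horospherical cross-section at one vertex is a triangle whose angles are the three dihedral angles of the tetrahedron, and the four cross-sections (one per vertex) are all similar triangles with the same three angles, with opposite edges of the tetrahedron carrying the same angle. I would use the standard picture of layering: the tetrahedron $\Delta_i$ is glued along two of its faces to the punctured-torus triangulation $T_{i-1}$ below and presents two faces to $T_i$ above. On the cusp cross-section, this means the four triangles of $\Delta_i$ are glued in a cycle around the vertex — each sharing an edge with the next — and the "bottom" boundary of this cyclic arrangement is the cross-section curve coming from $T_{i-1}$ while the "top" boundary is the curve from $T_i$. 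Since a triangulated once-punctured torus by three slopes gives a cusp cross-section that is a hexagon (three pairs of parallel sides, one pair per slope, opposite angles equal), both the inner and outer boundaries of the four-triangle annulus of $\Delta_i$ are hexagons for the intermediate tetrahedra $\Delta_1,\dots,\Delta_{N-2}$, and the identification of the inner hexagon of $\Delta_i$ with the outer hexagon of $\Delta_{i+1}$ is exactly the statement that $T^2\times\{i\}$ is a common boundary torus between the two layers. The opposite-angles-agree statement for each hexagon is inherited from the fact that each slope of a punctured-torus triangulation contributes two parallel edges to the cross-section.

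For the final tetrahedron $\Delta_{N-1}$, I would examine the fold across the diagonal slope $m'$. Folding identifies the two triangles of $T_{N-1}$ in pairs, and on the cusp cross-section this collapses the "inner" hexagon: the four triangles of $\Delta_{N-1}$ no longer have an inner boundary curve, but instead close up into a solid hexagonal disc (two of the four triangles get folded onto / identified with structure so that the cyclic annulus becomes a disc). I would verify this by a direct check of the identification pattern on the cross-section triangles induced by the fold, confirming no interior boundary circle survives. The main obstacle I anticipate is precisely this last step — being careful about the combinatorics of the fold on the cusp cross-section and checking that the four triangles genuinely assemble into a single hexagonal disc rather than, say, a once-punctured hexagon or an immersed picture; this requires tracking edge identifications through the fold explicitly. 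Everything else follows from the standard local model of a layered tetrahedron and the description of punctured-torus cusp cross-sections as hexagons, which I would set up once and then apply inductively.
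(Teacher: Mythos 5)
Your proposal follows essentially the same route as the paper: the boundary of each intermediate layered solid torus is a once-punctured torus triangulated by two ideal triangles, so the link of its puncture is a hexagon passing through the slopes $p,q,r,p,q,r$ in order (whence opposite angles agree), the four cusp triangles of each $\Delta_i$ form an annulus between consecutive hexagons, and the final fold across $m'$ closes the innermost annulus into a solid hexagon. One detail to correct: for $i\leq N-2$ the four cusp triangles of $\Delta_i$ meet only at corners, not along edges --- no face of $\Delta_i$ is glued to another face of $\Delta_i$, so their cusp triangles cannot share edges, and indeed your own count shows they must not (four triangles each sharing an edge with the next would leave only four free edges, which cannot bound two hexagons; here all $12$ edges are needed, $6$ on each hexagon). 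Edge-sharing among a single tetrahedron's cusp triangles occurs only for $\Delta_{N-1}$, precisely because the fold glues its two inner faces to each other, and that is exactly what converts the annulus into the solid hexagon.
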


\begin{proof}
Consider the boundary of any layered solid torus. This is a 1-punctured torus triangulated by two triangles. A path that stays on the 1-punctured torus that runs once around the puncture will run over exactly six triangles; these form a hexagon in the cusp neighbourhood of the solid torus. See \reffig{Hexagon}. Stripping the $k$ outermost tetrahedra off a layered solid torus yields a smaller layered solid torus for $k<N-1$; its boundary still forms a hexagon as in \reffig{Hexagon}.

\begin{figure}[h]
  \import{figures/}{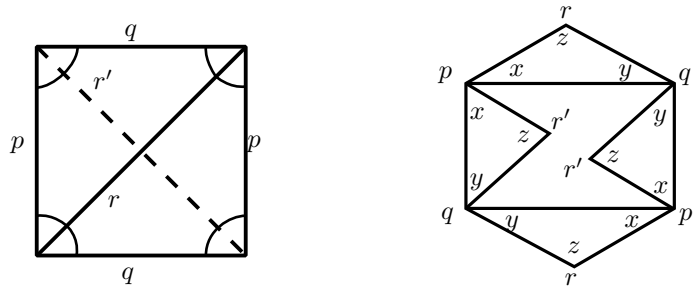}
  \caption{Left: a path encircling the puncture of the 1-punctured torus meets exactly six triangles, meeting slopes $p$, $q$, $r$, $p$, $q$, $r$ in order. Right: These triangles lift to give a hexagon in the cusp neighbourhood of a layered solid torus. The tetrahedron that effects the diagonal exchange from $r$ to $r'$ is glued to the hexagon along two faces, forming a new hexagon in the interior.}
  \label{Fig:Hexagon}
\end{figure}

The innermost tetrahedron has its two inside triangles folded together. This gives one of the hexagons shown in \reffig{InnermostHexagon}.\qedhere

\begin{figure}[h]
  \import{figures/}{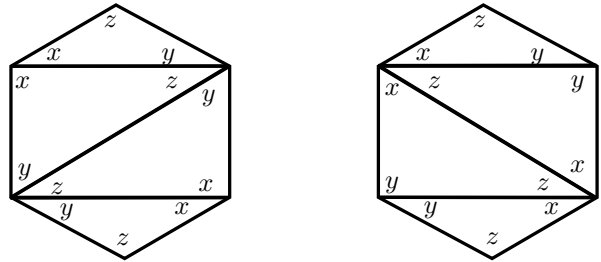}
  \caption{The last tetrahedron in the layered solid torus has two interior triangles folded together. The two possible cases are shown.}
  \label{Fig:InnermostHexagon}
\end{figure}
\end{proof}

For the tetrahedron $\Delta_i$, label the (interior) dihedral angles by $x_i$, $y_i$, $z_i$, with $x_i+y_i+z_i=\pi$. By adjusting these labels, we may ensure that $z_i$ is the angle assigned to the slope that is covered by $\Delta_i$, and that $x_i$, $y_i$ are chosen to be in alphabetical order when we run around one of the cusp triangles in anti-clockwise order. These labels agree with the choices in Figures~ \ref{Fig:Hexagon} and~\ref{Fig:InnermostHexagon}. 

Since opposite edges of a tetrahedron have the same angles, this choice of angles $x_i$, $y_i$, $z_i$ completely determines the angles on the hexagons. We summarise the result in the following lemma.

\begin{lemma}\label{Lem:HexagonAngles}
For $i\in \{1, \dots,N-2\}$, two opposite interior angles of the outer hexagon of $\Delta_i$ are $z_i$, two opposite exterior angles of the inner hexagon are $z_i$, and at the four vertices shared by both hexagons, two angles $x_i$ meet at two of the opposite vertices, and two angles $y_i$ meet at two other opposite vertices.

For $\Delta_{N-1}$, the interior angles of the solid hexagon are either $z_{N-1}$, $2x_{N-1}+z_{N-1}$, and $2y_{N-1}$ (with opposite angles agreeing), or $z_{N-1}$, $2y_{N-1}+z_{N-1}$, and $2x_{N-1}$ (with opposite angles agreeing). \qed
\end{lemma}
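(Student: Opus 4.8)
The statement is essentially a bookkeeping exercise once the combinatorial picture of the cusp cross-section from Lemma~\ref{Lem:HexagonCusp} is in hand, so the plan is to set up the labelling carefully and then read off the angles from the hexagon pictures (Figures~\ref{Fig:Hexagon} and~\ref{Fig:InnermostHexagon}). First I would recall that a horosphere meets each tetrahedron $\Delta_i$ in four triangles, each a copy of the Euclidean triangle with angles $x_i, y_i, z_i$ (the standard vertex link of an ideal tetrahedron), and that opposite edges of $\Delta_i$ carry equal angles, so there is genuinely only one triangle shape per tetrahedron, appearing four times with the two possible orientations. The convention fixed just before the lemma is that $z_i$ is the angle at the corner of the cusp triangle corresponding to the slope being replaced by the diagonal exchange at stage $i$, and $x_i, y_i$ are read off in alphabetical order going anticlockwise; I would point to Figure~\ref{Fig:Hexagon} to justify that these conventions are consistent with the picture.

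Next, for $i \le N-2$, I would trace the four triangles of $\Delta_i$ as they sit between the outer hexagon (glued to $\Delta_{i-1}$, or to $\partial T$ when $i=1$) and the inner hexagon (glued to $\Delta_{i+1}$). The tetrahedron is layered across an edge, so two of its four cusp triangles point ``outward'' and two point ``inward'', arranged corner to corner around the annular region between the two hexagons. Walking around the outer hexagon, the six corners are the angles of these triangles that face outward; by the alternation forced by the corner-to-corner gluing one sees that two opposite corners of the outer hexagon receive the angle $z_i$, and symmetrically two opposite exterior corners of the inner hexagon receive $z_i$; at each of the remaining four corners (shared by both hexagons) two triangles meet, contributing $x_i + $ (nothing) at two opposite such corners giving an interior angle $x_i$ there isn't quite right — rather, I would argue that at two opposite shared vertices the two adjacent triangle-corners both equal $x_i$ (interior angle $2x_i$ split as the hexagon corner of one triangle and the complementary corner of the next), and at the other two shared vertices they equal $y_i$. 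The cleanest way to organise this is to fix one triangle, label its three corners, and then propagate the labels by the rule ``adjacent triangles share an edge, hence share the two endpoint angles'' around the hexagon; the claimed pattern then drops out, and opposite angles agree because the hexagon has an obvious order-$2$ rotational symmetry coming from the $\mathbb{Z}/2$ symmetry of the punctured torus.

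For $\Delta_{N-1}$, the two inner cusp triangles are folded onto the two outer ones (the fold of Figure~\ref{Fig:FoldM}), so the inner hexagon degenerates and the four triangles fill a solid hexagon, as in Figure~\ref{Fig:InnermostHexagon}. Here I would simply examine the two combinatorial cases of the fold shown in that figure: in one case two corners where an $x_{N-1}$-corner and a $z_{N-1}$-corner of adjacent triangles get glued to a further $x_{N-1}$-corner produce an interior angle $2x_{N-1}+z_{N-1}$, one pair of opposite corners stays at $z_{N-1}$, and the last pair becomes $2y_{N-1}$; the other fold swaps the roles of $x_{N-1}$ and $y_{N-1}$. In each case the angle sum around the hexagon is $2(z_{N-1}) + 2(2x_{N-1}+z_{N-1}) + 2(2y_{N-1})$ divided appropriately — more simply, it is consistent with the total being a multiple of $\pi$ since $x_{N-1}+y_{N-1}+z_{N-1}=\pi$, which is a useful sanity check but not needed for the statement.

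The only real obstacle is purely expository: making the corner-to-corner propagation of angle labels around the hexagon rigorous without drawing yet another figure. I expect to handle it by referring the reader to Figures~\ref{Fig:Hexagon} and~\ref{Fig:InnermostHexagon}, in which the labels $x_i, y_i, z_i$ have already been placed precisely so that ``opposite edges of a tetrahedron carry equal angles'' forces the stated pattern; the proof is then genuinely the one-line ``this follows by inspection of the figures and the equal-opposite-edges condition,'' which is why the lemma is stated with a \qed rather than a written-out argument.
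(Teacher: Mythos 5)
Your proposal is correct and takes the same route as the paper, which states this lemma with an immediate \qed precisely because, as you observe, the conclusion is forced by the labelling convention (with $z_i$ at the covered slope and opposite edges of an ideal tetrahedron carrying equal dihedral angles) together with the hexagon pictures of Figures~\ref{Fig:Hexagon} and~\ref{Fig:InnermostHexagon}. The one mid-sentence stumble about the shared vertices is self-corrected to the right statement (two corners labelled $x_i$ at one opposite pair, two labelled $y_i$ at the other), and your count of the twelve triangle corners and the $4\pi$ angle-sum check for the folded hexagon both confirm the bookkeeping.
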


Gluing tetrahedron $\Delta_{i+1}$ to $\Delta_i$ in the construction of the layered solid torus corresponds to performing a diagonal exchange in the triangulation of the boundary. One of the three edges on the punctured torus boundary is covered by this move. In the cusp picture of the hexagons, gluing $\Delta_{i+1}$ to $\Delta_i$ glues four triangles to the inner hexagon. Two opposite vertices are covered by the triangles and two new vertices are added to the interior. See \reffig{XYinTermsOfZ} for an example. 

The labeling of \reflem{HexagonAngles} implies that two vertices of the inner hexagon formed at the $i$-th step by $\Delta_i$ have interior angle $2\pi-z_i$. These vertices were just added at the previous step by diagonal exchange.  Since the path $\gamma$ in the Farey graph is a geodesic, these vertices will not be covered in the next step. Thus there are two choices for vertices to cover. We call the choices $L$ and $R$, referring to a choice of direction in the Farey graph, as follows. After crossing the first edge in the Farey graph, $L$ and $R$ are determined by the direction the geodesic $\gamma$ takes in the Farey graph, left or right. Except in the last triangle of the Farey graph, this corresponds to attaching a tetrahedron and covering a diagonal. Label the corresponding tetrahedron $\Delta_i$ with an $L$ or $R$, for $i=2, \dots, N-1$; see \reffig{LLFarey} for an example.

\begin{figure}[h]
  \import{figures/}{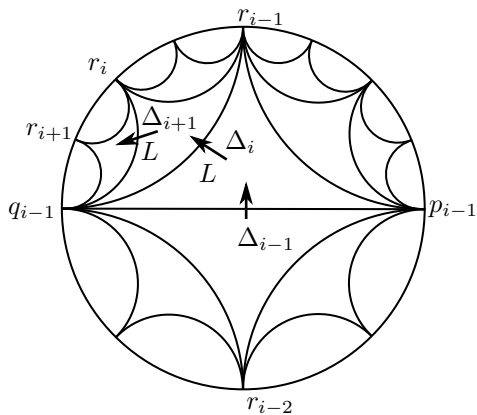}
  \caption{Turning left then left again in the Farey graph. Tetrahedron $\Delta_{i-1}$ has inner boundary with slopes $p_{i-1}$, $q_{i-1}$, and $r_{i-1}$, with angles $x_{i-1}$, $y_{i-1}$, and $z_{i-1}$, respectively. Adding $\Delta_i$ in the $L$ direction removes the slope $p_{i-1}$ from the inner boundary, replacing it with slope $r_i$, with angle $z_i$. Adding $\Delta_{i+1}$ removes slope $r_{i-1}$, replacing it with slope $r_{i+1}$, with angle $z_{i+1}$.}
  \label{Fig:LLFarey}
\end{figure}

We need to consider the interior angles of each hexagon. When values of the $z_i$ are given, we will choose the $x_i$ and $y_i$ so that the interior angles form a Euclidean hexagon at each step. 
Consider the outermost hexagon. The slopes of the edges of the outermost hexagon are $p$, $q$, and $r$, and their interior angles are $\pi-\theta_p$, $\pi-\theta_q$, and $\pi-\theta_r$, respectively, as in \refprop{LSTAngleStructExists}. These are chosen such that the sum of all interior angles is $4\pi$, as usual for a Euclidean hexagon. Since tetrahedron $\Delta_1$ covers the edge of slope $r$, the angle $z_1$ must agree with the interior angle along the slope $r$, or $z_1=\pi-\theta_r$. Now we consider the next hexagon.

\begin{lemma}\label{Lem:XYFirstTet}
Let $\theta_r$, $\theta_p$, $\theta_q$ denote exterior dihedral angles as in \refprop{LSTAngleStructExists}. In particular, recall that $r$ is the first slope covered, and $p$ is the second. 

For the first tetrahedron $\Delta_1$, set $z_1=\pi-\theta_r$. Suppose $z_2\in(0,\pi)$, and define a new variable $z_0=\pi+\theta_p$. 

The tetrahedron $\Delta_2$ has either an $L$ or an $R$ label. Assign the same label to $\Delta_1$, so both are labeled $L$ or both are labeled $R$.
\begin{itemize}
\item If both $\Delta_1$ and $\Delta_2$ are labeled $L$, set $x_1 = \pi - (z_0+z_2)/2$, and $y_1 = \pi - z_1 - x_1$.
\item If both $\Delta_1$ and $\Delta_2$ are labeled $R$, set $y_1 = \pi - (z_0+z_2)/2$, and $x_1 = \pi - z_1 - y_1$.
\end{itemize}
Then the values of the interior angles of the hexagon between $\Delta_1$ and $\Delta_2$ are $z_2$ (at the two edges of slope $p$), $2\pi-z_1$, and $z_1-z_2$.
\end{lemma}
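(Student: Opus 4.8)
The plan is to verify directly that the three claimed interior angles of the hexagon sitting between $\Delta_1$ and $\Delta_2$ are $z_2$, $2\pi - z_1$, and $z_1 - z_2$, by tracking how the four cusp triangles of $\Delta_1$ (with angles $x_1, y_1, z_1$, arranged as in \reflem{HexagonAngles}) are glued together to form the inner hexagon, and how this inner hexagon serves as the outer hexagon of the smaller layered solid torus obtained by stripping off $\Delta_1$. First I would recall from \reflem{HexagonAngles} that the inner hexagon of $\Delta_1$ has two opposite exterior angles equal to $z_1$ — so two opposite interior angles equal to $2\pi - z_1$ — and that at the four remaining vertices the angles $x_1$ and $y_1$ appear in opposite pairs. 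This already pins down one of the three interior-angle values: $2\pi - z_1$.

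Next I would use the $L$/$R$ combinatorics. Since $\Delta_2$ covers the slope that is the second to disappear, namely $p$, the two edges of slope $p$ on the inner hexagon of $\Delta_1$ receive the angle $z_2$ once $\Delta_2$ is layered on (this is exactly the statement that $z_2$ agrees with the interior angle along slope $p$, in parallel with $z_1 = \pi - \theta_r$ for the outermost hexagon). The point is then to identify which vertices of the inner hexagon of $\Delta_1$ correspond to the slope-$p$ edges: these are precisely the two vertices where the $\Delta_1$-triangles contribute an $x_1$ (in the $L$ case) or a $y_1$ (in the $R$ case), but where an additional angle from the gluing pattern of the four triangles must be added to close up the hexagon. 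Carrying out the elementary bookkeeping of which of $x_1, y_1, z_1$ meet at each of the six vertices of the inner hexagon — the same kind of local picture drawn in \reffig{XYinTermsOfZ} and \reffig{LLFarey} — shows that at the two slope-$p$ vertices the total interior angle is $z_1 - z_2$ once $x_1$ (resp. $y_1$) is fixed by the defining formula in the lemma, and at the two slope-$p$-opposite... more precisely, the remaining pair of opposite vertices carries exactly $z_2$. Symmetrically in the $L$ and $R$ cases, the roles of $x_1$ and $y_1$ swap, but the resulting hexagon angles $z_2$, $2\pi - z_1$, $z_1 - z_2$ are the same; so it suffices to do the computation in, say, the $L$ case and invoke the symmetry.

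To make the formulas match, I would finally check the arithmetic consistency: with $z_0 = \pi + \theta_p$ and $z_1 = \pi - \theta_r$, in the $L$ case we have $x_1 = \pi - (z_0 + z_2)/2$ and $y_1 = \pi - z_1 - x_1 = (z_0 + z_2)/2 - \theta_r$; one verifies that the six interior angles sum to $4\pi$ (the Euclidean hexagon condition), that the two vertices where the new $\Delta_2$-tetrahedron attaches receive $z_2$, and that the two slope-$p$ vertices receive $z_1 - z_2$. This is a short linear computation using $z_0 + z_1 = 2\pi + \theta_p - \theta_r$ together with \refeqn{DihedralAngleProperties}. The introduction of the auxiliary variable $z_0 = \pi + \theta_p$ is designed precisely so that the slope-$p$ edges of the \emph{outer} hexagon (with interior angle $\pi - \theta_p = 2\pi - z_0$) fit the same pattern as an interior hexagon edge of slope $p$ with "incoming angle" $z_0$, which is what makes the formula for $x_1$ (or $y_1$) uniform with the formulas for the later tetrahedra.

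The main obstacle I anticipate is purely combinatorial rather than analytic: correctly identifying, among the six vertices of the inner hexagon of $\Delta_1$, which two lie on the slope-$p$ edges and hence get modified by layering $\Delta_2$, and getting the $L$ versus $R$ case distinction right — i.e., confirming that choosing $x_1$ by the displayed formula in the $L$ case (and $y_1$ in the $R$ case) is exactly what forces the slope-$p$ vertex angle to be $z_1 - z_2$ and the opposite vertex to be $z_2$, rather than the other way around. Once the vertex-labelling of the hexagon is fixed unambiguously (using the alphabetical-order convention for $x_i, y_i$ around a cusp triangle and the left/right convention defining the $L, R$ labels), the remaining verification is a routine substitution.
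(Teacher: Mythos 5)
Your overall strategy is the one the paper uses: the newly created edge of slope $r'$ contributes the interior angle $2\pi-z_1$ for free, and the other two interior angles of the inner hexagon are obtained by subtracting $2x_1$ and $2y_1$ from the outer hexagon's interior angles $\pi-\theta_p$ and $\pi-\theta_q$ at the edges of slopes $p$ and $q$, after which the choice $x_1=\pi-(z_0+z_2)/2$ gives $\pi-\theta_p-2x_1=z_2$ and $y_1=\pi-z_1-x_1$ gives $\pi-\theta_q-2y_1=z_1-z_2$. Your observation about the role of $z_0$ (so that the outer angle $\pi-\theta_p=2\pi-z_0$ at slope $p$ fits the same pattern as the inductive step) is exactly the right way to see why the formula for $x_1$ is uniform with \reflem{XYinTermsOfZ}.

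However, the bookkeeping that you yourself flag as the main obstacle is garbled in your write-up, and since the entire content of the lemma is which angle lands on which edge, this must be repaired before the proof is complete. You first (correctly) say that the slope-$p$ edges receive $z_2$, but you then twice assert that the slope-$p$ vertices receive $z_1-z_2$ while ``the remaining pair'' or ``the vertices where $\Delta_2$ attaches'' receive $z_2$; yet $\Delta_2$ attaches precisely by covering the slope-$p$ edges, so these assertions contradict each other and the statement of the lemma. The edges carrying $z_1-z_2$ are the slope-$q$ edges, which your proposal never identifies. There is also an arithmetic slip: with $z_1=\pi-\theta_r$ one gets $y_1=\pi-z_1-x_1=(z_0+z_2)/2-z_1=(z_0+z_2)/2+\theta_r-\pi$, not $(z_0+z_2)/2-\theta_r$. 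Once the labels are fixed --- in the $L$ case the slope $p$ carries the angle $x_1$ of $\Delta_1$ and the slope $q$ carries $y_1$ (with the roles swapped in the $R$ case), and the two one-line computations $\pi-\theta_p-2x_1=z_2$ and $\pi-\theta_q-2y_1=z_1-z_2$ are actually carried out --- your argument coincides with the paper's.
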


\begin{proof}
One of the interior angles is immediate: the angle at the newly added edge of slope $r'$ is $2\pi-z_1$.

If $\Delta_2$ is labeled $L$, then the slope $p$ is given angle $x_1$ in $\Delta_1$, as in \reffig{Hexagon}, right. Otherwise it is given angle $y_1$ in $\Delta_1$, based on our orientation conventions. Assume first that $\Delta_2$ is labeled $L$. 

Before adding $\Delta_1$, the interior angle at the edge of slope $p$ was $\pi-\theta_p$. After adding $\Delta_1$, it decreases by $2x_1$. Thus the interior angle is
\[ \pi-\theta_p - 2x_1 = \pi-\theta_p -2\pi + (z_0+z_2)= z_2. \]

Similarly, after adding $\Delta_1$ the interior angle at the edge of slope $q$ becomes
\begin{align*}
  \pi-\theta_q - 2y_1 & = \pi - (\pi-\theta_p-\theta_r) - 2(\pi-z_1-x_1) \\
  & = \theta_p+\theta_r - 2\pi + 2z_1 + 2\pi - z_0 - z_2 = z_1-z_2.
\end{align*}

Similar equations hold, only switching the roles of $x_1$ and $y_1$, if $\Delta_2$ is labeled $R$.
\end{proof}


We will deal with the last tetrahedron $\Delta_{N-1}$ separately. For the others, we have the following result.

\begin{lemma}\label{Lem:XYinTermsOfZ}
  Let $\theta_p$, $\theta_q$, $\theta_r$ denote the exterior dihedral angles as in \refprop{LSTAngleStructExists}. Suppose $z_0=\pi+\theta_p$, $z_1=\pi-\theta_r$, and $z_2, \dots, z_{N-1}$ lie in $(0,\pi)$.

  For $i=1, \dots, N-2$, assign the angles $x_i$ and $y_i$ as below, with assignments depending on the labels ($L$ or $R$) of $\Delta_i$ and $\Delta_{i+1}$:
\begin{itemize}
\item If $\Delta_i$ and $\Delta_{i+1}$ are both labeled $L$, set
  $x_i = \pi - (z_{i-1}+z_{i+1})/2$, and $y_i=\pi-z_i-x_i$.
\item If $\Delta_i$ and $\Delta_{i+1}$ are both labeled $R$, set
  $y_i = \pi - (z_{i-1}+z_{i+1})/2$, and $x_i=\pi-z_i-y_i$. 
\item If $\Delta_i$ is labeled $L$ and $\Delta_{i+1}$ labeled $R$, set
  $y_i=(z_{i-1}-z_i-z_{i+1})/2$, and $x_i=\pi-z_i-y_i$.
\item If $\Delta_i$ is labeled $R$ and $\Delta_{i+1}$ labeled $L$, set
  $x_i=(z_{i-1}-z_i-z_{i+1})/2$, and $y_i=\pi-z_i-x_i$.
\end{itemize}
Then for $i=1, \dots, N-2$, the hexagon between tetrahedra $\Delta_i$ and $\Delta_{i+1}$ has interior angles $z_{i+1}$, $2\pi-z_i$, and $z_i-z_{i+1}$.

Moreover, for any interior edge obtained by layering tetrahedra $\Delta_1, \dots, \Delta_{N-2}$, the sum of the dihedral angles about that edge is $2\pi$. 
\end{lemma}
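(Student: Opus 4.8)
The plan is to prove the statement about the hexagon angles by induction on $i$, with \reflem{XYFirstTet} as the base case, and then to deduce the edge condition by telescoping the angles produced in the first part. For the base case, note first that the outer hexagon of $\Delta_1$, which is the cusp cross-section of $\bdy T$, has interior angles $\pi-\theta_p$, $\pi-\theta_q$, $\pi-\theta_r$ with opposite angles agreeing; using $z_0=\pi+\theta_p$, $z_1=\pi-\theta_r$ and $\theta_p+\theta_q+\theta_r=\pi$ these rewrite as $2\pi-z_0$, $z_0-z_1$, $z_1$. Thus the outer hexagon already has the shape ``$2\pi-z_i,\ z_i-z_{i+1},\ z_{i+1}$'' with $i=0$, and \reflem{XYFirstTet} is precisely the assertion that the hexagon between $\Delta_1$ and $\Delta_2$ has this shape with $i=1$.

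For the inductive step I would assume that the hexagon $H_{i-1}$ between $\Delta_{i-1}$ and $\Delta_i$ has interior angles $z_i$ on the slope $\Delta_i$ is about to cover, $2\pi-z_{i-1}$ on the slope most recently created (by $\Delta_{i-1}$), and $z_{i-1}-z_i$ on the third slope. Layering on $\Delta_i$, \reflem{HexagonAngles} immediately gives that the two newly created vertices carry $2\pi-z_i$ and the covered slope disappears, so the content is that the two surviving slopes pick up the angles $z_{i+1}$ and $z_i-z_{i+1}$. Exactly as in the proof of \reflem{XYFirstTet}, layering $\Delta_i$ decreases the angle on a surviving slope by $2x_i$ or $2y_i$ according to whether that slope is assigned $x_i$ or $y_i$ in $\Delta_i$, and the orientation conventions of Figures~\ref{Fig:Hexagon} and~\ref{Fig:LLFarey} pin this down: the label of $\Delta_{i+1}$ determines which surviving slope of $\Delta_i$ is the one $\Delta_{i+1}$ will cover, and the label of $\Delta_i$ fixes the $x_i$-versus-$y_i$ ordering. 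Running through the four pairs $(L,L)$, $(R,R)$, $(L,R)$, $(R,L)$ and substituting the stated closed forms is then a one-line check in each case --- for instance in the $(L,L)$ case the slope carrying $2\pi-z_{i-1}$ receives the reduction $2x_i=2\pi-z_{i-1}-z_{i+1}$ and becomes $z_{i+1}$, after which the last angle is forced to equal $z_i-z_{i+1}$ since the six interior angles of the hexagon sum to $4\pi$. The configuration of $H_i$ produced in this way again matches the inductive hypothesis one step on.

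For the edge condition, let $e$ be an interior edge lying in $\Delta_1,\dots,\Delta_{N-2}$: it is created by a diagonal exchange at some step $k$ and covered at some step $l$ with $1\le k<l\le N-2$, and one checks that the tetrahedra incident to $e$ are exactly $\Delta_k,\dots,\Delta_l$. The edge $e$ carries the $z$-class angle, namely $z_k$ in $\Delta_k$ and $z_l$ in $\Delta_l$ (being respectively the created and the covered slope, which are opposite edges of a tetrahedron), while for $k<j<l$ it is a surviving slope of $\Delta_j$ and so contributes $x_j$ or $y_j$. By the first part together with \reflem{HexagonAngles}, the interior angle the hexagons assign to the slope $e$ is $2\pi-z_k$ on $H_k$ and $z_l$ on $H_{l-1}$, and each intervening layering lowers it by the dihedral contribution at $e$ of the tetrahedron being layered; hence those intervening contributions telescope to $(2\pi-z_k)-z_l$, and the total dihedral angle about $e$ is $z_k+\big((2\pi-z_k)-z_l\big)+z_l=2\pi$. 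Alternatively, the same conclusion falls out of substituting the closed forms for the $x_j,y_j,z_j$ and cancelling.

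The main obstacle is the combinatorial bookkeeping in the inductive step: correctly matching each of the four $L/R$ label pairs to the right closed-form expression for $x_i$ and $y_i$, i.e.\ deciding which of the two surviving slopes of $H_{i-1}$ receives the $x_i$-reduction and which the $y_i$-reduction. Once the conventions are fixed consistently with Figures~\ref{Fig:Hexagon} and~\ref{Fig:LLFarey}, everything else is one-line arithmetic and a short telescoping.
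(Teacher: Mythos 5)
Your proposal is correct and follows essentially the same route as the paper: an induction on the hexagons with a four-way case analysis on the $L/R$ labels, in which layering $\Delta_i$ subtracts $2x_i$ or $2y_i$ from the surviving slopes' angles. Your telescoping derivation of the edge condition (total angle $z_k+\bigl((2\pi-z_k)-z_l\bigr)+z_l=2\pi$) is a slightly cleaner global repackaging of what the paper verifies locally in each case via the requirement that the contributed angles ``fit into the Euclidean hexagon,'' and your shortcut of deducing the third hexagon angle from the $4\pi$ angle sum is legitimate since that sum is preserved by each layering given $x_i+y_i+z_i=\pi$.
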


\begin{proof}
The proof is by induction. We will show that after layering tetrahedron $\Delta_{i+1}$ onto tetrahedra $\Delta_1, \dots, \Delta_i$, the interior edges of hexagons are as claimed, and the sum of dihedral angles around all interior edges is $2\pi$. 

By \reflem{XYFirstTet}, the interior angles of the hexagon are as claimed when $i=1$. When layering $\Delta_1$ onto the tetrahedra outside of the layered solid torus, there are no interior edges created, so the statement on interior edges is vacuously true. 

Now assume by induction that the interior angles of the hexagon between $\Delta_{i-1}$ and $\Delta_i$ are as claimed in the lemma, and that dihedral angles sum to $2\pi$ around any interior edges in the layering of tetrahedra $\Delta_1, \dots, \Delta_i$. Consider $\Delta_{i+1}$.

The argument is mainly a matter of bookkeeping, particularly keeping track of labels on tetrahedra when turning left or right. We have illustrated the process carefully for the case that $\Delta_i$ and $\Delta_{i+1}$ are both labeled $L$. Figure~\ref{Fig:LLFarey} shows the path in the Farey graph. What is important at each step is which slope is covered by the diagonal exchange effected by adding the next tetrahedron. Thus $\Delta_i$ covers a slope $p_{i-1}$ and $\Delta_{i+1}$ covers a slope $r_{i-1}$. 

Figure~\ref{Fig:XYinTermsOfZ} left, shows the effect on the cusp triangulation. In that figure, the outermost hexagon lies on the outside of $\Delta_{i-1}$, with the thick lines the hexagon between $\Delta_{i-1}$ and $\Delta_i$. The edges of $\Delta_{i-1}$ with slopes $r_{i-1}$ are both assigned angle $z_{i-1}$. In the figure, slope $r_{i-1}$ is marked by the red dot.

\begin{figure}[h]
  \import{figures/}{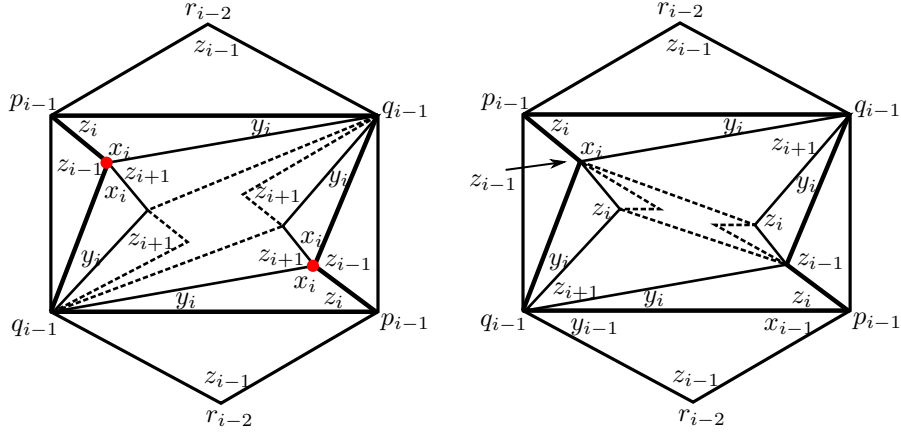}
  \caption{Left: The cusp diagram of the portion of a layered solid torus obtained by turning left, then left again. The red dot indicates an edge of the triangulation that is surrounded by the three tetrahedra $\Delta_{i-1}$, $\Delta_i$, $\Delta_{i+1}$. Right: Turning left then right. The vertices of the outer hexagon for $\Delta_{i-1}$ are adjacent to these three tetrahedra, and to no other interior tetrahedra.}
  \label{Fig:XYinTermsOfZ}
\end{figure}

Adding tetrahedron $\Delta_i$ gives a new hexagon, indicated by the thinner line in \reffig{XYinTermsOfZ}, left, between $\Delta_i$ and $\Delta_{i+1}$. The edges of $\Delta_i$ with slopes $p_{i-1}$ and $r_i$ are assigned angle $z_i$. Our orientation convention then ensures that the edge of slope $r_{i-1}$ is assigned angle $x_i$.

Finally we add tetrahedron $\Delta_{i+1}$. This gives a new innermost hexagon, indicated by the dashed lines in \reffig{XYinTermsOfZ}, left. The edge of slope $r_{i-1}$ is assigned angle $z_{i+1}$.

First we consider the interior angles of the hexagon between $\Delta_i$ and $\Delta_{i+1}$. One of these is $2\pi-z_i$, as desired. The other two are obtained by subtracting $2x_i$ and $2y_i$ from interior angles of the hexagon at the previous step. In particular, we have angles
\[ 2\pi - z_{i-1} - 2x_i = 2\pi-z_{i-1}-2\pi +z_{i-1}+z_{i+1}=z_{i+1}, \]
and
\[ z_{i-1}-z_i - 2y_i = z_{i-1}-z_i-2\pi+2z_i+(2\pi-z_{i-1}-z_{i+1})=z_i-z_{i+1},\]
as desired. 

Notice that after adding tetrahedron $\Delta_{i+1}$, the edge of slope $r_{i-1}$ is completely surrounded by tetrahedra $\Delta_{i-1}$, $\Delta_i$, and $\Delta_{i+1}$, and thus it becomes an interior edge. Notice also that this is the only new interior edge obtained by adding $\Delta_{i+1}$. Thus we only need to ensure the sum of dihedral angles about this edge is $2\pi$. We read the dihedral angles off of \reffig{XYinTermsOfZ} left:
\[ z_{i-1} + 2x_i + z_{i+1} = 2\pi.\]
This will hold if and only if $x_i$ satisfies the requirements of the lemma.

A very similar pair of pictures, Farey graph and cusp triangulation, gives the result in the case $\Delta_i$ and $\Delta_{i+1}$ are both labeled $R$. In this case, however, the slope $q_{i-1}$ will be covered by $\Delta_i$. Again $r_{i-1}$ will then be covered by $\Delta_{i+1}$, but by turning right, the angles adjacent to the slope $r_{i-1}$ in this case will be $z_{i-1}$, two copies of $y_i$, and $z_{i+1}$. Thus this case differs from the previous only by switching the roles of $x_i$ and $y_i$. 

If we first turn left then turn right, the slope $p_{i-1}$ is covered first by $\Delta_i$, then $q_{i-1}$ by $\Delta_{i+1}$; see \reffig{XYinTermsOfZ}, right.
The interior angles of the hexagon between $\Delta_i$ and $\Delta_{i+1}$ are $2\pi-z_i$, $2\pi-z_{i-1}-2x_i$, and $z_{i-1}-z_i-2y_i$. The latter two simplify as follows:
\begin{align*}
  2\pi-z_{i-1}-2x_i &  =2\pi-z_{i-1}-2\pi+2z_i+2y_i \\
  &= 2\pi-z_{i-1}-2\pi+2z_i+(z_{i-1}-z_i-z_{i+1}) = z_i-z_{i+1}.
\end{align*}
\[ z_{i-1}-z_i-2y_i = z_{i-1}-z_i - z_{i-1} + z_i + z_{i+1} = z_{i+1}. \]

Finally, in this case, none of the newly added edges are surrounded by the three tetrahedra $\Delta_{i-1}$, $\Delta_i$, and $\Delta_{i+1}$. However, adding $\Delta_{i+1}$ may have created an interior edge at $q_{i-1}$, if $q_{i-1}$ does not lie on the boundary of the layered solid torus. By induction, we know that the interior angle of the hexagon between $\Delta_{i-1}$ and $\Delta_i$ at the edge of slope $q_{i-1}$ must be $z_{i-1}-z_i$. To this we add two angles $y_i$ coming from $\Delta_i$, and one angle $z_{i+1}$ from $\Delta_{i+1}$. 

In particular, the angles will fit into the Euclidean hexagon, and therefore have the correct angle sum, if and only if 
\[ 2y_i + z_{i+1} = z_{i-1}-z_i. \]
This holds if and only if $y_i$ satisfies the requirement of the lemma.

The case of $R$ followed by $L$ is nearly identical, with the roles of $x_i$ and $y_i$ switched. Thus by induction, the result holds for $i=1, \dots, N-2$. 
\end{proof}

\begin{lemma}\label{Lem:XYLastTet}
Consider the last tetrahedron $\Delta_{N-1}$. Assign a label $L$ or $R$ to an empty tetrahedron $\Delta_N$ depending on whether $\gamma$ turns left or right when running into the final triangle $T_N$ of the Farey complex, and set $z_N=0$. Define the angles $x_{N-1}$ and $y_{N-1}$ in terms of $z_N$, $z_{N-1}$, an $z_{N-2}$ depending on the labels $L$ or $R$ on $\Delta_{N-1}$ and $\Delta_N$ exactly as in \reflem{XYinTermsOfZ}. Then the sum of dihedral angles is $2\pi$ around the interior edges in the layered solid torus that are surrounded by $\Delta_{N-2}$ and $\Delta_{N-1}$. 
\end{lemma}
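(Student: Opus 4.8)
My plan is to run the same inductive bookkeeping as in the proof of \reflem{XYinTermsOfZ}, with the folding of $\Delta_{N-1}$ playing the role that a further layering by the phantom tetrahedron $\Delta_N$ would play there. The geodesic $\gamma$ leaves the Farey triangle $T_{N-1}$ by crossing to $T_N$, dropping one slope of $T_{N-1}$ — the slope that $\Delta_N$ would cover — and adding the meridian $m$; recording whether $\gamma$ turns left or right at this last step and setting $z_N = 0$ lets us apply the formulas of \reflem{XYinTermsOfZ} at the index $i = N-1$ verbatim. This is exactly how $x_{N-1}$ and $y_{N-1}$ are defined in the statement, so all that remains is to check that the interior edges newly closed up by adding $\Delta_{N-1}$ and performing the fold carry total dihedral angle $2\pi$.

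First I would dispose of the interior edge closed up by layering $\Delta_{N-1}$ itself, before the fold. This is literally the step $i = N-2$ of the induction in \reflem{XYinTermsOfZ}: according to the $L/R$ labels of $\Delta_{N-2}$ and $\Delta_{N-1}$, adding $\Delta_{N-1}$ either surrounds an edge by $\Delta_{N-3}$, $\Delta_{N-2}$, $\Delta_{N-1}$ with angle sum $z_{N-3} + 2x_{N-2} + z_{N-1}$ (or the $y$-variant), or completes an edge by adding $2y_{N-2} + z_{N-1}$ (or the $x$-variant) to a hexagon angle supplied by the induction; in each case the sum is $2\pi$ precisely because $x_{N-2}$ (or $y_{N-2}$) is already defined by the formula of \reflem{XYinTermsOfZ}. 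This edge is surrounded by $\Delta_{N-2}$ and $\Delta_{N-1}$ among others, so it belongs to the lemma but needs no new argument. One must also note the degenerate possibility $N-1 \le 2$, where the relevant hexagon still meets the outer boundary $T_0$ and \reflem{XYFirstTet} supplies its angles; this changes nothing.

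The substantive point is the interior edge produced by the fold itself. Folding $\Delta_{N-1}$ onto itself as described in \refsec{LST} makes its four cusp triangles into the solid hexagon of \reffig{InnermostHexagon}, whose corner angles are the doubled quantities $z_{N-1}$, $2x_{N-1}+z_{N-1}$, $2y_{N-1}$ of \reflem{HexagonAngles} (or the $x \leftrightarrow y$ variant). The fold identifies a pair of edges of $\Delta_{N-1}$ carrying equal dihedral angle, so the resulting interior edge meets $\Delta_{N-1}$ with a doubled contribution $2x_{N-1}$ (or $2y_{N-1}$), together with the $z_{N-2}$ contributed by $\Delta_{N-2}$ and the earlier layers, as recorded by the hexagon between $\Delta_{N-2}$ and $\Delta_{N-1}$ in \reflem{XYinTermsOfZ}. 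Summing gives a total of the shape $z_{N-2} + 2x_{N-1} + z_N$ (or the $y$-variant), which equals $2\pi$ exactly when $x_{N-1}$ satisfies the formula of \reflem{XYinTermsOfZ} with $z_N = 0$. This is why the statement refers to edges surrounded by $\Delta_{N-2}$ and $\Delta_{N-1}$: after the fold the phantom $\Delta_N$ has collapsed, and only these two tetrahedra meet the edge.

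I expect the main obstacle to be the combinatorial matching in the last paragraph: determining, in each of the four $L/R$ combinations for $(\Delta_{N-1}, \Delta_N)$, which slope of $T_{N-1}$ is folded onto which, hence which of the three solid-hexagon angles of \reflem{HexagonAngles} is the one doubled around the new interior edge, and keeping this consistent with the orientation conventions fixing $x_i$ versus $y_i$. Once the fold combinatorics are pinned down using \reffig{InnermostHexagon}, each case reduces to a one-line identity equivalent to the relevant formula of \reflem{XYinTermsOfZ} evaluated at $z_N = 0$, so no genuinely new computation is required.
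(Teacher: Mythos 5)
Your proposal is correct and takes essentially the same route as the paper: the paper likewise reduces the claim to checking, in each $L/R$ combination for $(\Delta_{N-1},\Delta_N)$, that the folded innermost hexagon closes the new interior edge with angle sum $z_{N-2}+2x_{N-1}$ (case $LL$) or $z_{N-2}+2x_{N-1}+z_{N-1}$ (case $LR$), each a one-line identity from the formulas of \reflem{XYinTermsOfZ} with $z_N=0$, with $RR$ and $RL$ handled symmetrically. The only slip is that your generic expression $z_{N-2}+2x_{N-1}+z_N$ is really the $LL$ shape --- in the hinge cases the third summand is $z_{N-1}$ rather than $z_N$ --- but you correctly flag exactly this case-by-case matching as the remaining bookkeeping, so nothing essential is missing.
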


\begin{proof}
The proof is very similar to that of \reflem{XYinTermsOfZ}. The cusp triangulations for cases $LL$ and $LR$ are shown in \reffig{LastTetAngles}.

\begin{figure}[h]
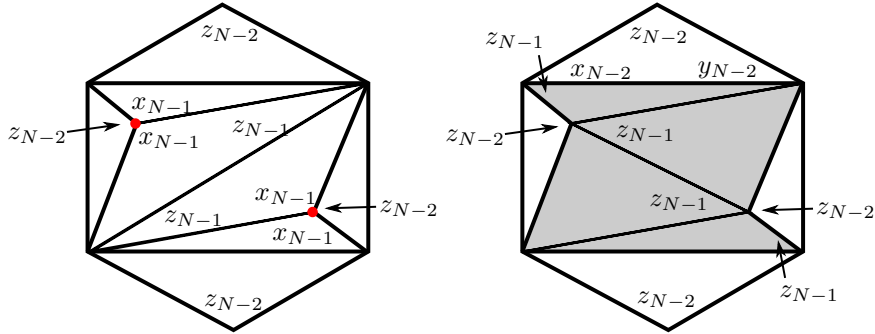

  \import{figures/}{LastTet1.pdf_tex}
  \hspace{.2in}
  \import{figures/}{LastTet2.pdf_tex}
  \caption{Shown are both cases when $\Delta_{N-1}$ is labeled $L$. On the left, the empty tetrahedron $\Delta_N$ is labeled $L$, and on the right, the empty tetrahedrahedron $\Delta_N$ is labeled $R$.}
  \label{Fig:LastTetAngles}
\end{figure}

In the case $LL$, exactly one edge in the interior of the solid torus is surrounded by $\Delta_{N-2}$ and $\Delta_{N-1}$. The sum of the angles around this edge is
\[ z_{N-2} + 2x_{N-1} = z_{N-2} + 2\pi-z_{N-2}-z_N =2\pi,\]
since $z_N=0$. Thus the sum is $2\pi$ in the $LL$ case when $i=N-1$.

In the case $LR$, an interior edge is surrounded by $\Delta_{N-1}$ and $\Delta_{N-2}$, and the sum of angles around the edge must be
\begin{align*}
z_{N-2}+2x_{N-1}+z_{N-1} &= z_{N-2}+z_{N-1}+2(\pi-z_{N-1}-y_{N-1}) \\
& = z_{N-2}+z_{N-1}+2\pi - 2z_{N-1} - z_{N-2} + z_{N-1} + z_N =2\pi.
\end{align*}

The cases $RR$ and $RL$ hold similarly. 
\end{proof}

\begin{lemma}\label{Lem:ZSequence}
Let $\theta_p$, $\theta_q$, $\theta_r$ be as in \refprop{LSTAngleStructExists}. Let
\[(z_0=\pi+\theta_p, z_1=\pi-\theta_r, z_2, \dots, z_{N-1}, z_N=0) \]
be a sequence of numbers with $z_i\in (0,\pi)$ for $i=1, \dots, N-1$.
Let $x_i$ or $y_i$ be defined in terms of the sequence of the $z_j$ via the equations of \reflem{XYinTermsOfZ}. 
Then $x_i, y_i, z_i$ give an angle structure on the layered solid torus if and only if for each $i=1, \dots, N-1$, the sequence satisfies:
\[
\begin{cases}
  z_{i-1}> z_i + z_{i+1} & \mbox{ if $\Delta_i$ and $\Delta_{i+1}$ are labeled $RL$ or $LR$ (hinge condition)} \\
  z_{i-1} + z_{i+1} > 2z_i
  & \mbox{ if $\Delta_i$ and $\Delta_{i+1}$ are labeled  $RR$ or $LL$ (convexity condition)} \\
\end{cases}
\]
and additionally $z_2<\pi-\theta_p$.

Moreover, if they give an angle structure, then the sequence is strictly decreasing.
\end{lemma}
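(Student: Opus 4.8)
The plan is to test the three conditions of \refdef{AngleStruct} directly against the explicit formulas for $x_i,y_i$ provided by \reflem{XYinTermsOfZ} and \reflem{XYLastTet}. Condition (2), that the dihedral angles around each ideal vertex of $\Delta_i$ sum to $\pi$, is immediate, since in every case those lemmas define the third angle so that $x_i+y_i+z_i=\pi$. Condition (3), that dihedral angles sum to $2\pi$ around each interior edge, is exactly the content of the last assertions of \reflem{XYinTermsOfZ} and \reflem{XYLastTet}, provided the $x_i,y_i$ are taken from those formulas, which is our hypothesis; and the fold of $\Delta_{N-1}$ yields a solid hexagon in the cusp (\reflem{HexagonCusp}) with no interior vertex, so it contributes no further edge. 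Since $z_i\in(0,\pi)$ for $i=1,\dots,N-1$ is assumed, the only remaining requirement to be an angle structure is $x_i,y_i\in(0,\pi)$ for each $i$.

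The core of the argument is a short case analysis on the labels of each consecutive pair $(\Delta_i,\Delta_{i+1})$, using the four formulas of \reflem{XYinTermsOfZ} together with the bounds $z_j<\pi$ for $1\le j\le N-1$ and $z_N=0$ (while $z_0=\pi+\theta_p$ may exceed $\pi$). In each case one reads off the four inequalities $x_i>0$, $x_i<\pi$, $y_i>0$, $y_i<\pi$. I expect the outcome to be: the two ``$<\pi$'' inequalities always hold automatically from $z_{i-1},z_i<\pi$, $z_{i+1}\ge 0$ and the relevant ``$>0$'' inequality; in a hinge pair ($LR$ or $RL$) the single nontrivial inequality is $z_{i-1}>z_i+z_{i+1}$; and in a convexity pair ($LL$ or $RR$) one nontrivial inequality is $z_{i-1}+z_{i+1}>2z_i$ while the other, $z_{i-1}+z_{i+1}<2\pi$, is automatic for $i\ge 2$ but becomes exactly $z_2<\pi-\theta_p$ when $i=1$. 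Since $\Delta_1$ and $\Delta_2$ carry the same label by \reflem{XYFirstTet}, the index $i=1$ is always a convexity index, so this gives precisely the stated system of inequalities together with $z_2<\pi-\theta_p$.

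For the last claim I would argue by downward induction using only those inequalities. At $i=N-1$, since $z_N=0$, the hinge inequality gives $z_{N-2}>z_{N-1}$ and the convexity inequality gives $z_{N-2}>2z_{N-1}>z_{N-1}$; also $z_{N-1}>0=z_N$. For the inductive step, assuming $z_i>z_{i+1}$, a hinge inequality at $i$ gives $z_{i-1}>z_i+z_{i+1}>z_i$, and a convexity inequality gives $z_{i-1}>2z_i-z_{i+1}>z_i$. Finally $z_0=\pi+\theta_p>\pi-\theta_r=z_1$ because $\theta_p+\theta_r=\pi-\theta_q>0$. Hence $z_0>z_1>\dots>z_N=0$, as claimed.

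The step I expect to be the main obstacle is the bookkeeping in the label case analysis: matching, in each of the four cases, which slope is covered by $\Delta_i$ --- hence whether $x_i$ or $y_i$ occupies which edge of the cusp hexagon --- with the orientation conventions fixed just before \reflem{XYinTermsOfZ}, and verifying cleanly that it is precisely the endpoint value $z_0=\pi+\theta_p$, which unlike the genuine tetrahedron angles need not lie in $(0,\pi)$, that makes the extra condition $z_2<\pi-\theta_p$ non-redundant at the first tetrahedron.
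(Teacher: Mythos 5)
Your proposal is correct and follows essentially the same route as the paper: conditions (2) and (3) of the angle-structure definition are discharged by the earlier lemmas, condition (1) reduces to a four-inequality case analysis on the explicit formulas for $x_i,y_i$, in which the only non-automatic inequalities are exactly the hinge/convexity conditions plus $z_2<\pi-\theta_p$ (arising at $i=1$ because $z_0=\pi+\theta_p$ need not be below $\pi$), and monotonicity follows by downward induction from $z_N=0$. Your observations that $i=1$ is always a convexity index (since $\Delta_1$ inherits the label of $\Delta_2$) and that the remaining ``$<\pi$'' bounds follow from the ``$>0$'' bounds match the paper's computations.
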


\begin{proof}
Suppose first that we have an angle structure. Then $x_i, y_i, z_i \in (0,\pi)$ for $i=1, \dots, N-1$, and $x_i+y_i+z_i=\pi$. We can use this equation along with the equations of \reflem{XYinTermsOfZ} to write both $x_i$ and $y_i$ in terms of $z_{i-1}$, $z_i$, and $z_{i+1}$. 

In the $LL$ or $RR$ case, each of $x_i$ and $y_i$ are one of
\begin{equation}\label{Eq:XYLLCase} (z_{i-1}-2z_i+z_{i+1})/2 \quad \mbox{and} \quad \pi-(z_{i-1}+z_{i+1})/2.
\end{equation}
Thus, because we are assuming we have an angle structure, we have:
\[ 0 < \frac{z_{i-1}-2z_i+z_{i+1}}{2} < \pi \quad \mbox{and} \quad
0< \pi - \frac{z_{i-1} + z_{i+1}}{2} < \pi. \]
The first inequality on the left implies the convexity equation. When $i=1$, the first inequality on the right implies $z_2<\pi-\theta_p$. 

In the $RL$ or $LR$ case, each of $x_i$ and $y_i$ are one of
\begin{equation}\label{Eq:XYLRCase} (z_{i-1}-z_i-z_{i+1})/2 \quad \mbox{and} \quad  \pi-(z_{i-1}+z_i-z_{i+1})/2.
\end{equation}
Because we have an angle structure,
\[ 0< \frac{z_{i-1}-z_i-z_{i+1}}{2}<\pi \quad \mbox{and} \quad
0< \pi-\frac{z_{i-1}+z_i-z_{i+1}}{2}<\pi. \]
Again the first inequality on the left implies the hinge equation. This concludes one direction of the proof. 

Now suppose for each $i=1, \dots, N-1$, the sequence satisfies the convexity or hinge condition. We check the conditions on an angle structure, \refdef{AngleStruct}. Condition~(2) holds by our definition of $x_i$ and $y_i$: by hypothesis we require $x_i+y_i+z_i=\pi$.

Condition~(3) follows from \reflem{XYinTermsOfZ} and \reflem{XYLastTet}. These lemmas prove that given our definitions of $x_i$ and $y_i$ in terms of $z_{i-1}$, $z_i$, and $z_{i+1}$, the sum of dihedral angles around every interior edge of the layered solid torus is $2\pi$. 

As for condition~(1), by hypothesis each $z_i\in(0,\pi)$ for $i=1, \dots, N-1$. It remains to show that $x_i$ and $y_i$ lie in $(0,\pi)$ for $i=1, \dots, N-1$. 
In the $LL$ or $RR$ case, we have noted that $x_i$ and $y_i$ are as in equation \eqref{Eq:XYLLCase}. Thus we need
\[ 0 < \frac{z_{i-1}-2z_i+z_{i+1}}{2} < \pi \quad \mbox{and} \quad
0< \pi - \frac{z_{i-1} + z_{i+1}}{2} < \pi. \]
These give four inequalities. When $i=2, \dots, N-1$, three of the inequalities are automatically satisfied when $z_{i-1}$, $z_i, z_{i+1} \in (0,\pi)$ or when $i=N-1$ and $z_N=0$. The final inequality holds if and only if $z_{i-1}+z_{i+1}>2z_i$, which is the convexity condition. 

When $i=1$, the inequalities become
\[ 0 < \frac{(\pi+\theta_p)-2(\pi-\theta_r)+z_2}{2} < \pi \quad \mbox{and} \quad
0< \pi - \frac{(\pi+\theta_p)+z_2}{2} < \pi. \]
These give four inequalities, one of which is automatically true for $0 < \pi+\theta_p < \pi$, and the other three all hold if and only if
\[ \pi-\theta_p-2\theta_r < z_2 < \pi-\theta_p. \]

For $2\leq i\leq N-1$ in the $RL$ or $LR$ case, $x_i$ and $y_i$ are as in equation \eqref{Eq:XYLRCase}. 
Thus we require
\[ 0< \frac{z_{i-1}-z_i-z_{i+1}}{2}<\pi \quad \mbox{and} \quad
0< \pi-\frac{z_{i-1}+z_i-z_{i+1}}{2}<\pi. \]
Again this gives four inequalities, two of which are automatically satisfied for $z_{i-1}, z_i, z_{i+1}\in (0,\pi)$, or when $i=N-1$, for $z_N=0$. The other two inequalities that must be satisfied are $z_{i-1}>z_{i+1}-z_i$ and $z_{i-1}>z_{i+1}+z_i$. Both hold if and only if $z_{i-1}>z_{i+1}+z_i$.

This proves the if and only if statement of the lemma. 

Now suppose we have an angle structure. At this point, we know all the inequalities of the lemma must hold, plus an extra one: $z_{i-1} > z_{i+1}$. However, the hinge and convexity equations imply that the sequence is strictly decreasing: The proof is by a downward induction starting at $z_N=0$. This finishes the lemma. 
\end{proof}

\begin{lemma}\label{Lem:NoHingeCase}
  Suppose all tetrahedra are glued via $RR$ or $LL$ and never a hinge $RL$ or $LR$. Then there exists a sequence satisfying the previous lemma if and only if
  \[ i(m,p)\theta_p + i(m,q)\theta_q + i(m,r)\theta_r > 2\pi. \]
\end{lemma}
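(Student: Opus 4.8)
The plan is to use \reflem{ZSequence} to reduce the existence of an angle structure to a purely one‑dimensional question about convex sequences, resolve that question, and then translate the resulting inequality into the claimed intersection‑number condition by reading off the Farey combinatorics of the no‑hinge case. By \reflem{ZSequence}, since in the no‑hinge case every constraint is a convexity constraint, an angle structure on the layered solid torus $T$ exists if and only if there is a sequence
\[ (z_0 = \pi + \theta_p,\ z_1 = \pi - \theta_r,\ z_2, \dots, z_{N-1},\ z_N = 0) \]
with $z_i \in (0,\pi)$ for $i = 1, \dots, N-1$, satisfying the strict convexity inequalities $z_{i-1} + z_{i+1} > 2 z_i$ for each $i = 1, \dots, N-1$, together with the extra condition $z_2 < \pi - \theta_p$.

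For necessity, the key observation is that strict convexity forces the difference sequence to be strictly increasing, so $z_i > z_0 + i(z_1 - z_0)$ for every $i \geq 2$; evaluating at $i = N$ and using $z_N = 0$ yields $(N-1)z_0 > N z_1$, which, after substituting $z_0 = \pi + \theta_p$ and $z_1 = \pi - \theta_r$, rearranges exactly to $(N-1)\theta_p + N\theta_r > \pi$. For sufficiency, assume $(N-1)z_0 > N z_1$. When $N = 2$ the sequence $(z_0, z_1, 0)$ already works: its single convexity inequality is $z_0 > 2z_1$, which is the hypothesis, and $z_2 = 0 < \pi - \theta_p$ since $\theta_p < \pi$. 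When $N \geq 3$, one checks that $2z_1 - z_0 < \pi - \theta_p$ always holds (this is equivalent to $\theta_r > 0$) and that $2z_1 - z_0 < z_1 (N-2)/(N-1)$ is equivalent to the hypothesis; hence one may pick $z_2$ in the nonempty interval $\bigl( \max(0,\, 2z_1 - z_0),\ \min(\pi - \theta_p,\ z_1 (N-2)/(N-1)) \bigr)$ and then take $z_3, \dots, z_{N-1}$ to be a slightly perturbed (strictly convex) version of the linear interpolation from $z_2$ at position $2$ down to $0$ at position $N$. All interior terms then lie in $(0, z_2] \cup \{z_1\} \subset (0,\pi)$, and the strict convexity inequalities at every position hold by construction, so \reflem{ZSequence} (together with \reflem{XYinTermsOfZ} and \reflem{XYLastTet}) produces the desired angle structure.

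It remains to identify the intersection numbers. In the no‑hinge case the geodesic $\gamma$ in the Farey graph fans around a single pivot vertex; since $r$ disappears first and $p$ second, $\gamma$ crosses the edge $(p,q)$ and then $(q,r')$, so the pivot is $q$ and all later triangles contain $q$, with $m$ the slope of the final triangle $T_N$. Applying an element of $\mathrm{PGL}_2(\ZZ)$ (which preserves $i(\cdot,\cdot)$) we may normalise $q = 0/1$, $p = 1/0$, whence $r' = 1/1$ and $r = -1/1$ (or the signs reversed), the fan through $q$ consists of the mediants $1/2, 1/3, \dots$, and after $N$ triangles $m = \pm 1/N$. Therefore $i(m,q) = 1$, $i(m,p) = N$ and $i(m,r) = N+1$. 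Finally, using $\theta_p + \theta_q + \theta_r = \pi$,
\[ i(m,p)\theta_p + i(m,q)\theta_q + i(m,r)\theta_r = N\theta_p + \theta_q + (N+1)\theta_r = (N-1)\theta_p + N\theta_r + \pi, \]
so $i(m,p)\theta_p + i(m,q)\theta_q + i(m,r)\theta_r > 2\pi$ if and only if $(N-1)\theta_p + N\theta_r > \pi$, which is precisely the condition established above (and matches \refeqn{IntersectionCondition}).

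I expect the main obstacle to be bookkeeping rather than anything conceptual: correctly pinning down that the no‑hinge fan pivots about $q$ — hence that $i(m,q)=1$ while $i(m,p)$ and $i(m,r)$ are $N$ and $N+1$ — while respecting the labelling conventions inherited from \reflem{XYFirstTet} and \reflem{XYinTermsOfZ}; and, on the analytic side, verifying in the sufficiency step that the constructed convex sequence genuinely satisfies $z_2 < \pi - \theta_p$ and keeps all interior terms in $(0,\pi)$, paying attention to the small cases $N=2$ and $N=3$ where the interpolation degenerates.
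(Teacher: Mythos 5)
Your proposal is correct and follows essentially the same route as the paper: reduce via \reflem{ZSequence} to the existence of a strictly convex sequence from $z_0=\pi+\theta_p$ through $z_1=\pi-\theta_r$ to $z_N=0$, show this is equivalent to $(N-1)\theta_p+N\theta_r>\pi$, and then identify $i(m,q)=1$, $i(m,p)=N$, $i(m,r)=N+1$ by normalising the no-hinge fan in the Farey graph (the paper sends $(p,q,r)$ to $(0,1/0,-1)$ so that $m=N/1$, which is the reciprocal of your normalisation and gives the same intersection numbers). Your necessity argument via increasing differences and your explicit perturbed-linear-interpolation construction for sufficiency are only cosmetic variants of the paper's backward induction and greedy forward construction, respectively.
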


\begin{proof}
Suppose first that such a sequence holds. 

We claim the convexity condition implies that $z_{N-k} < z_{N-(k+1)} k/(k+1)$ for $k=1, \dots, N-1$. This can be seen by induction: when $k=1$, $z_{N-2}+z_N > 2z_{N-1}$ implies $z_{N-1}<z_{N-2}/2$. Assuming $z_{N-(j-1)}<z_{N-j}(j-1)/j$ then
$z_{N-(j-1)}+z_{N-(j+1)} > 2z_{N-j}$ implies $z_{N-j}(j-1)/j + z_{N-(j+1)} > 2z_{N-j}$, which implies $j z_{N-(j+1)} > (j+1) z_{N-j}$, as desired.

Now observe that when $k=N-1$, the inequality is $N z_1 < (N-1) z_0$, which then becomes $N(\pi-\theta_r) < (N-1)(\pi+\theta_p)$. Simplifying, we obtain
\[ 
\pi < (N-1)\theta_p + N\theta_r \iff
2\pi < \theta_q + N\theta_p + (N+1)\theta_r, \]
using $\theta_p+\theta_q+\theta_r=\pi$.

Suppose that the tetrahedra are all glued in the pattern $LL\dots L$.
Apply an isometry to $\HH^2$ so that the triangle $(p,q,r)$ maps to $(0,1/0,-1)$. Then the slope $m$ is mapped to the slope $N/1\in \QQ$, and the geometric intersection numbers satisfy $i(1/0,N)=1$, $i(0,N)=N$, and $i(-1,N)=N+1$. Because applying an isometry of $\HH^2$ preserves intersection numbers, it follows that the inequality holds above if and only if
\[ i(m,p)\theta_p + i(m,q)\theta_q + i(m,r)\theta_r > 2\pi. \]
The argument in the case that all tetrahedra are glued in the pattern $RR\dots R$ is similar. It follows that if a sequence $(z_0, \dots, z_N)$ exists, then the inequality holds.

Conversely, suppose the inequality holds. Set $z_0=\pi+\theta_p$, $z_1=\pi-\theta_r$. Choose $z_2$ such that
\[\max\{0,2z_1-z_0=\pi-2\theta_r-\theta_p\} < z_2<\min\{z_1=\pi-\theta_r, \pi-\theta_p\}. \]
Inductively, choose a decreasing sequence $z_k$ such that $z_k>2z_{k-1}-z_{k-2}$ and $z_k\in(0,\pi)$. We need to ensure we can choose the sequence all the way to $z_{N-1}$ and set $z_N=0$. Note by this choice of $z_k$, we have
\[ z_k > \pi - k\theta_r - (k-1)\theta_p,\]
so when $k=N$,
\[ z_N > \pi-N\theta_r - (N-1)\theta_p.\]
But as above, the inequality on $\theta_p,\theta_q,\theta_r$ is equivalent to
\[ \pi -N \theta_r - (N-1) \theta_p < 0.\]
Thus we may set $z_N=0$ and satisfy all the required conditions. 
\end{proof}

\begin{lemma}\label{Lem:HingeImpliesInequality}
Suppose there exists a hinge $RL$ or $LR$ in the sequence of labels of $\Delta_1, \dots, \Delta_N$. Then the inequality
\[ i(m,p)\theta_p + i(m,q)\theta_q + i(m,r)\theta_r >2\pi \]
is satisfied for every $\theta_p,\theta_q,\theta_r$ as in \refprop{LSTAngleStructExists}. 
\end{lemma}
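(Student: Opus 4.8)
The plan is to reduce to the case of a single hinge occurring as early as possible in the sequence, and then exploit the fact that a hinge move in the Farey graph makes the geodesic $\gamma$ ``turn sharply'', which forces the intersection numbers $i(m,p), i(m,q), i(m,r)$ to be large enough that the inequality holds no matter what admissible values $\theta_p,\theta_q,\theta_r$ take. Concretely: suppose $\Delta_j,\Delta_{j+1}$ is the \emph{first} hinge pair in the list $\Delta_1,\dots,\Delta_N$. Everything before step $j$ is a non-hinge turn ($LL$ or $RR$), so the path from $T_0$ to $T_j$ turns consistently in one direction. By applying an isometry of $\HH^2$ (which preserves all geometric intersection numbers), I may normalise so that $(p,q,r)=(0,1/0,-1)$ and the first $j-1$ turns are all $L$, so that after $j-1$ left turns the current triangle $T_j$ has slopes $\{(j-1)/1,\,1/0,\,j/1\}$ (the analogue of the computation in \reflem{NoHingeCase}). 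The key new ingredient is that the $j$-th turn is a hinge: the geodesic $\gamma$ then crosses to a triangle on the \emph{other} side, and continuing to $m$ forces $m$ to lie in the sub-arc of $\bdy\HH^2$ cut off on the far side of the hinge edge. I would show that this geometric constraint gives lower bounds $i(m,p)\ge a$, $i(m,q)\ge b$, $i(m,r)\ge c$ for explicit $a,b,c$ with $a,b,c$ at least as large as the values $1,j,j+1$ appearing in the non-hinge case, with strict improvement in at least one coordinate coming precisely from the hinge.

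The second step is the inequality bookkeeping. Recall from \reflem{NoHingeCase} that in the pure non-hinge case the relevant threshold is exactly $i(m,p)\theta_p+i(m,q)\theta_q+i(m,r)\theta_r>2\pi$, and that this is equivalent (after the normalisation above) to $\pi-(N-1)\theta_p-N\theta_r<0$ type statements; here, however, I want the inequality to hold for \emph{all} admissible $(\theta_p,\theta_q,\theta_r)$, i.e.\ all triples with $\theta_p+\theta_q+\theta_r=\pi$, $-\pi<\theta_p,\theta_q<\pi$, $0<\theta_r<\pi$. So the plan is: write $L(\theta)=i(m,p)\theta_p+i(m,q)\theta_q+i(m,r)\theta_r$, substitute $\theta_q=\pi-\theta_p-\theta_r$, and minimise the resulting affine function of $(\theta_p,\theta_r)$ over the (open) admissible polygon. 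Since $L$ is affine, its infimum is attained at the vertices of the closure; I evaluate $L$ at each vertex, using the lower bounds $i(m,p)\ge a$ etc., and check the value is $\ge 2\pi$ at every vertex (strictly $>2\pi$ on the open region). The hinge-improved bounds on the intersection numbers are exactly what is needed to push every vertex value above $2\pi$.

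The main obstacle I anticipate is getting the intersection-number estimates clean and uniform across all four hinge types ($LR$ vs.\ $RL$) and all positions $j$ of the first hinge, including the boundary cases $j=1$ (hinge at the very first tetrahedron, where one must remember $z_0=\pi+\theta_p$ corresponds to an ``external'' slope) and $j=N-1$ (hinge at the last tetrahedron, which folds). The Farey-graph picture makes the qualitative claim obvious — a hinge means $\gamma$ bends away, so $m$ is ``far'' from all three of $p,q,r$ in the $i(\cdot,\cdot)$ metric — but turning ``far'' into the specific numerical lower bounds that survive the vertex check against the full admissible $\theta$-polygon is where the real work lies. I would handle this by a short induction on $j$ reducing to the first hinge, plus a direct Farey-graph computation (monotonicity of continued-fraction-type expansions along $\gamma$) for the single-hinge base case, and then finish with the affine-minimisation argument above. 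It may be cleanest to first dispense with the case where the hinge is not the first turn by noting that the non-hinge prefix only increases the intersection numbers relative to $T_j$, so it suffices to prove the statement assuming $\Delta_1,\Delta_2$ is already a hinge.
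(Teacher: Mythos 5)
Your overall strategy matches the paper's: normalise by an isometry of $\HH^2$ so the initial triangle is $(0,1/0,\pm 1)$, use the position of $m$ beyond the first hinge edge to bound the intersection numbers from below, and then verify the linear inequality over all admissible $(\theta_p,\theta_q,\theta_r)$. Your finish (minimising the affine form over the vertices of the admissible polygon in the $(\theta_p,\theta_r)$-plane) is a legitimate variant of the paper's algebraic shortcut, which writes the sum as $a(\theta_p+\theta_r)+b(\theta_q+\theta_r)\geq \min\{a,b\}(\pi+\theta_r)>2\pi$ using that $\theta_p+\theta_r=\pi-\theta_q$ and $\theta_q+\theta_r=\pi-\theta_p$ are both positive; either route works, and at your vertex $(\theta_p,\theta_r)=(\pi,0)$ the value is exactly $\min\{a,b\}\pi=2\pi$, so you do need the small extra observation that a non-constant affine function attaining its minimum at a boundary vertex is strictly larger on the open region.

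The genuine gap is that the entire quantitative content of the lemma --- the lower bounds on the intersection numbers --- is left as a promissory note. In the normalised frame, after $n\geq 2$ initial same-direction turns the path sits in the triangle $(0,1/(n-1),1/n)$ and the hinge forces $m=a/b$ to lie strictly between $1/(n-1)$ and $1/n$; the mediant (Stern--Brocot) structure of the Farey graph then gives $a\geq 2$ and $b\geq 2n-1\geq 3$, and these are exactly the numbers $i(m,p)=a$, $i(m,q)=b$, $i(m,r)=a+b$ that make every vertex value at least $2\pi$. Without this computation the proof does not close. Relatedly, two of your proposed simplifications are off: the ``boundary case $j=1$'' you worry about cannot occur, since by construction $\Delta_1$ and $\Delta_2$ always carry the same label (this is not a nuisance but the essential reason $n\geq 2$, hence $b\geq 3$); and the reduction ``assume $\Delta_1,\Delta_2$ is already a hinge because the non-hinge prefix only increases intersection numbers'' is both inconsistent with that setup and unjustified as stated, since the inequality concerns intersection numbers with the slopes of $T_0$, not of $T_j$, and replacing $T_0$ by $T_{j-1}$ changes the angles $\theta_p,\theta_q,\theta_r$ that must be quantified over. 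The direct computation with the full initial run of same-direction turns, as in the paper, avoids both issues.
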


\begin{proof}
If there exists a hinge, it is not in the first two labels by choice of $p$ and $r$. Suppose first that the first two labels are $RR$. Apply an isometry to $\HH^2$ taking $(p,q,r)$ to $(0, 1/0, -1)$. Then the first two steps in the Farey graph move from triangle $(0,1/0,-1)$ to $(0,1,1/2)$. There may be some additional number of $R$s in the sequence. Starting at $(0, 1/0, -1)$ and stepping through $n$ initial labels $R$ in the Farey graph puts $\gamma$ in the triangle $(0,1/(n-1), 1/n)$. At this point, the path $\gamma$ goes left,
crossing the edge $(1/(n-1), 1/n)$. Because $\gamma$ never returns to an edge, this means that the slope $m$ lies between $1/(n-1)$ and $1/n$ in the Farey complex. Write $m=a/b$ in lowest terms. The set of rational numbers between $1/(n-1)$ and $1/n$ in the Farey complex can be obtained inductively by summing numerators and denominators of $1/(n-1)$, $1/n$ and other rationals obtained in this manner. Since $a/b$ lies in this range, $a \geq 2$ and $b \geq 2n-1 >2$.

Now, note that for $(p,q,r)=(0,1/0,-1)$, $i(a/b,p)=a$, $i(a/b,q)=b$, and $i(a/b,r)=a+b$. Thus
\[ i(m,p)\theta_p + i(m,q)\theta_q + i(m,r)\theta_r =
a(\theta_p+\theta_r) + b(\theta_q+\theta_r).\]
Because $\theta_p+\theta_q+\theta_r=\pi$, and $-\pi<\theta_p,\theta_q<\pi$, both $\theta_p+\theta_r=\pi-\theta_q$ and $\theta_q+\theta_r=\pi-\theta_p$ are positive. Thus
\[ a(\theta_p+\theta_r) + b(\theta_q+\theta_r) \geq \min\{a,b\}(\theta_p+\theta_q+2\theta_r) = \min\{a,b\}(\pi+\theta_r) > 2\pi. \]
Since intersection numbers are unchanged under isometry of $\HH^2$, this proves the result when the first two labels are $RR$. 

The case that the first two labels are $LL$ is similar.
\end{proof}

\begin{lemma}\label{Lem:AngleStructHinge}
Suppose there exists a hinge $RL$ or $LR$. Then there exists a sequence satisfying \reflem{ZSequence}.
\end{lemma}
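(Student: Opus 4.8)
The plan is to write down the required sequence $(z_0,\dots,z_N)$ by hand, splitting it into a short ``head'' controlled by the convexity conditions and a rapidly decaying ``tail'' that exploits the extra freedom a hinge provides. Recall from \reflem{XYFirstTet} that $\Delta_1$ and $\Delta_2$ always carry the same label, so the first hinge occurs at some step $i_0$ with $2\le i_0\le N-1$: the pair $(\Delta_{i_0},\Delta_{i_0+1})$ is the first $RL$ or $LR$, while all of steps $1,\dots,i_0-1$ are $RR$ or $LL$.

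First I build the head $z_0,\dots,z_{i_0}$. Set $z_0=\pi+\theta_p$ and $z_1=\pi-\theta_r$ as required, and choose $z_2$ in the interval $(\max\{0,\,2z_1-z_0\},\ \min\{z_1,\,\pi-\theta_p\})$, which is nonempty exactly because $0<\theta_r$ and $\theta_p<\pi$; note $z_2<\pi-\theta_p$ and $z_0>z_1>z_2$. Then for $k=3,\dots,i_0$ pick $z_k\in(\max\{0,\,2z_{k-1}-z_{k-2}\},\ z_{k-1})$, which is nonempty since inductively $0<z_{k-1}<z_{k-2}$. This forces the convexity conditions at steps $1,\dots,i_0-1$, keeps every $z_k$ in $(0,\pi)$, and makes the head strictly decreasing: $z_0>z_1>\dots>z_{i_0}>0$ (here one uses $z_0-z_1=\theta_p+\theta_r=\pi-\theta_q>0$). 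If $i_0=N-1$, set $z_N=0$ and stop: the only remaining condition, the hinge at step $N-1$, is $z_{N-2}>z_{N-1}+z_N=z_{N-1}$, which holds by strict decrease.

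Next comes the tail, assuming $i_0\le N-2$. Fix $\epsilon$ with $0<\epsilon<\min\{\,z_{i_0-1}-z_{i_0},\ z_{i_0}/2\,\}$ — a positive bound since the head is strictly decreasing — and set $z_{i_0+1+j}=\epsilon\cdot 3^{-j}$ for $j=0,\dots,N-2-i_0$, together with $z_N=0$. Then the hinge condition at step $i_0$ reads $z_{i_0-1}>z_{i_0}+\epsilon$, which is true by the choice of $\epsilon$; the condition at step $i_0+1$ (hinge or convexity) reduces in every case to an inequality implied by $\epsilon<z_{i_0}/2$; and each condition at a step $j$ with $i_0+2\le j\le N-1$ involves only tail values, for which a geometric sequence of ratio $1/3$ satisfies both the convexity inequality $z_{j-1}+z_{j+1}>2z_j$ (since $(1-\tfrac13)^2>0$) and the hinge inequality $z_{j-1}>z_j+z_{j+1}$ (since $\tfrac13+\tfrac19<1$), the degenerate case $z_{j+1}=z_N=0$ included. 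All tail values lie in $(0,z_{i_0})\subset(0,\pi)$ and the whole sequence is strictly decreasing, so it satisfies every hypothesis of \reflem{ZSequence}.

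The one delicate point — and the place I expect to spend the most care — is the junction at step $i_0+1$, where the last head value $z_{i_0}$ is a genuine dihedral angle bounded below only in terms of the $\theta$'s and now sits next to two small tail values. Making $z_{i_0+1}$ small relative to $z_{i_0}$ fixes this, but it must be done while also keeping $z_{i_0+1}<z_{i_0-1}-z_{i_0}$, so that the hinge condition at step $i_0$ itself survives; this is precisely where the strict convexity (hence strict decrease) of the head is needed. Everything else is routine bookkeeping with the Farey-graph labels, and — unlike the no-hinge case of \reflem{NoHingeCase} — no inequality on the intersection numbers $i(m,\cdot)$ is required, since the hinge alone supplies the necessary slack (consistently with \reflem{HingeImpliesInequality}).
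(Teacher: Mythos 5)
Your argument is correct and takes essentially the same approach as the paper's proof: build a strictly decreasing, convex head $(z_0,\dots,z_{i_0})$ up to the first hinge, then append a small tail scaled by an $\epsilon$ chosen so that the hinge condition at the junction absorbs the drop. The only cosmetic difference is that the paper builds the tail inductively backwards from $z'_N=0$, $z'_{N-1}=1$ and then rescales, whereas you use an explicit geometric sequence of ratio $1/3$ that satisfies both the hinge and convexity inequalities simultaneously and so avoids the case split in the tail.
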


\begin{proof}
Let $h \in \{2, 3, \dots, N-1 \}$ be the smallest index such that $\Delta_h$ is a hinge of the form $RL$ or $LR$.  Set $z_0=\pi+\theta_p$, $z_1=\pi-\theta_r$.  We can choose inductively a positive decreasing sequence $z_k$ such that $z_2<\pi-\theta_p$, each $z_k\in(0,\pi)$, and $z_k>2z_{k-1}-z_{k-2}$ for $2 \leq k \leq h$.

The rest of the sequence $z_i$ is constructed backwards from $i=N$ to $i=h$. Consider a sequence $z'_i$. Set $z'_N=0$ and $z'_{N-1}=1$. For each $i$ such that $N-2 \geq i \geq h+1$, inductively choose $z'_i$ such that
$z'_i > z'_{i+1} + z'_{i+2}$ or
$z'_{i} + z'_{i+2} > 2z'_{i+1}$, depending on whether $\Delta_{i+1}$ has a different label ($L$ or $R$) from $\Delta_i$ or not, respectively. Observe $z_i'$ must be greater than $z'_{i+1}$ for each $i$.

Choose $\epsilon$ such that
\[ 0 < \epsilon < \dfrac{z_{h-1}-z_h}{z'_{h+1}}. \]
Set $z_i = \epsilon z'_i$ for $h+1 \leq i \leq N$. We need $z_h$ to satisfy the hinge condition $z_h < z_{h-1} - z_{h+1}$, or $z_h < z_{h-1} - \epsilon z'_{h+1}$. This holds by our choice of $\epsilon$.

Finally, we need each $z_i$ to lie in $(0,\pi)$, for $h+1\leq i \leq N-1$. Observe that $z_{h-1}<\pi$, so $0<\epsilon<\pi/z'_{h+1}$. Then $z_i=\epsilon z_i' < \pi z_i'/z'_{h+1}$. For $h+1 \leq i \leq N-1$, we know $z_i' \leq z'_{h+1}$, hence $z_i'$ is strictly less than $\pi$, as desired. Because $z_i'$ is at least $z'_{N-1}>0$, $z_i=\epsilon z_i'>0$. 
Thus we have found a sequence satisfying \reflem{ZSequence}.
\end{proof}

\begin{proof}[Proof of \refprop{LSTAngleStructExists}]
Suppose
\[ i(m,p)\theta_p + i(m,q)\theta_q + i(m,r)\theta_r \leq 2\pi. \]
By \reflem{HingeImpliesInequality}, there is no hinge $RL$ or $LR$ in the sequence of labels of tetrahedra making the layered solid torus. By \reflem{NoHingeCase}, there does not exist a sequence satisfying \reflem{ZSequence}. But such a sequence is required in an angle structure on a layered solid torus, so there is no angle structure in this case.

Now suppose $i(m,p)\theta_p + i(m,q)\theta_q + i(m,r)\theta_r > 2\pi$.
Then \reflem{AngleStructHinge} and \reflem{NoHingeCase} imply there exists a sequence satisfying \reflem{ZSequence}. It follows from that lemma that there exists an angle structure.
\end{proof}


\subsection{Volume maximisation}

We now show that the volume functional on the space of angle structures takes its maximum on the interior. This is essentially \cite[Proposition~15]{GueritaudSchleimer}, but we extract slightly more information from the proof.

\begin{lemma}\label{Lem:FlatTetrahedra}
Suppose the volume functional on the space of angle structures on a layered solid torus takes its maximum on the boundary. Then the corresponding structure consists only of flat tetrahedra, with angles $(x_i, y_i, z_i)$ a permutation of $(0,0,\pi)$ for each $i=1, \dots, N-1$.
\end{lemma}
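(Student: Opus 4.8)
The strategy is the one used by Gu{\'e}ritaud and Schleimer: analyse the critical point equations for $\calV$ on the closure $\overline{\calA(\tau)}$, and show that a maximum on the boundary forces every tetrahedron to be flat with angles a permutation of $(0,0,\pi)$. First I would recall that $\calV$ is continuous on the compact set $\overline{\calA(\tau)}$, so a maximum is attained; by convexity of $\calV$, if this maximum does not lie in the interior $\calA(\tau)$, it lies on the boundary $\bdy\overline{\calA(\tau)}$, where at least one tetrahedron has one of its angles equal to $0$ or $\pi$ (equivalently, since the angles of a tetrahedron sum to $\pi$, some $\Delta_i$ is flat with angles a permutation of $(0,0,\pi)$). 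The task is then to propagate flatness from one flat tetrahedron to all of them, using the structure of the layered solid torus — in particular, the fact that consecutive tetrahedra $\Delta_i,\Delta_{i+1}$ share a hexagonal cusp cross-section, so their angles are tied together by the relations of \reflem{XYinTermsOfZ} and \reflem{XYLastTet}, i.e.\ by the defining equations of the sequence $(z_0,z_1,\dots,z_N)$.

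The key step is a local argument at a flat tetrahedron. Parametrise the angle structures (and their closures) by the $z$-sequence $(z_0=\pi+\theta_p,\,z_1=\pi-\theta_r,\,z_2,\dots,z_{N-1},\,z_N=0)$ as in \reflem{ZSequence}, with $x_i,y_i$ determined by \reflem{XYinTermsOfZ}; the boundary of $\overline{\calA(\tau)}$ corresponds to degenerating some of the convexity/hinge inequalities to equalities, or some $z_i$ to $0$ or $\pi$. At a maximum of $\calV$ on the boundary, the Schl\"afli-type formula for the derivative of volume with respect to the $z_i$ shows that, interior to any face of $\overline{\calA(\tau)}$ on which the maximum lies, the remaining free angles satisfy the same critical-point condition; combined with the fact that $\Lambda$ has infinite slope at $0$ and $\pi$ (so $\Lambda'(0^+)=+\infty$), one deduces that the only way $\calV$ can be maximised on the boundary is if \emph{every} tetrahedron is degenerate. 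Concretely: if $\Delta_i$ is flat, then one of $x_i,y_i,z_i$ is $0$; tracing this through the linear relations of \reflem{XYinTermsOfZ} (the $z$-recursions determining $x_i,y_i$) forces the neighbouring degeneracy, and an induction along $i=1,\dots,N-1$, using the $z$-sequence being decreasing together with the boundary equalities, collapses the whole sequence so that each $(x_i,y_i,z_i)$ is a permutation of $(0,0,\pi)$. The endpoints $z_0,z_1$ and $z_N=0$ anchor this induction, much as in \reflem{NoHingeCase}.

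I would also need to handle the alternative description via the derivative of $\calV$: compute $\partial \calV/\partial z_i$ using $\calV = \sum_i \bigl(\Lambda(x_i)+\Lambda(y_i)+\Lambda(z_i)\bigr)$ and the chain rule through \reflem{XYinTermsOfZ}, obtaining an expression in terms of $\log$-sines of the angles; then observe that along any edge of $\overline{\calA(\tau)}$ leading into the interior, this derivative blows up (to $+\infty$) precisely when an adjacent tetrahedron is \emph{not} flat, which contradicts maximality unless all adjacent tetrahedra are flat too. This is essentially the content of \cite[Proposition~15]{GueritaudSchleimer} applied to our slightly more general $\theta_p,\theta_q$; the only thing to check is that allowing one of $\theta_p,\theta_q$ to be negative does not affect the argument, which it does not because those parameters enter only through the fixed values $z_0,z_1$ and the argument is about interior tetrahedra $\Delta_2,\dots,\Delta_{N-1}$ and the constraints among the free $z_i$.

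\textbf{Main obstacle.} The delicate part is the bookkeeping that turns ``some tetrahedron is flat'' into ``all tetrahedra are flat with angles a permutation of $(0,0,\pi)$'': one must show that a partial degeneration of the $z$-sequence cannot be a volume maximum, which requires carefully identifying, for each type of hinge/convexity relation in \reflem{XYinTermsOfZ} and for the last tetrahedron in \reflem{XYLastTet}, which boundary strata of $\overline{\calA(\tau)}$ are compatible and ruling out the mixed ones via the infinite slope of $\Lambda'$ at $0$ and $\pi$. Once that propagation lemma is in hand, the conclusion that each $(x_i,y_i,z_i)$ is a permutation of $(0,0,\pi)$ is immediate from the angle-sum relation $x_i+y_i+z_i=\pi$ together with all angles lying in $[0,\pi]$ and at least one being $0$.
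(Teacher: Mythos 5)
Your overall strategy --- parametrise the boundary of $\overline{\calA(\tau)}$ by the $z$-sequence, use the relations of \reflem{XYinTermsOfZ}, and propagate degeneracy from one flat tetrahedron to all of them --- is the same as the paper's. But there is a genuine logical gap at the first step, and it recurs in your closing paragraph. You assert that a boundary maximum has ``at least one tetrahedron with one of its angles equal to $0$ or $\pi$ (equivalently \dots\ some $\Delta_i$ is flat with angles a permutation of $(0,0,\pi)$)'', and later that flatness ``is immediate from the angle-sum relation $x_i+y_i+z_i=\pi$ together with all angles lying in $[0,\pi]$ and at least one being $0$.'' Neither claim is true: the triple $(0,a,\pi-a)$ with $0<a<\pi$ has angle sum $\pi$, all angles in $[0,\pi]$, and one angle zero, yet is not a permutation of $(0,0,\pi)$. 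The implication you need is not combinatorial but analytic: it is Rivin's result that at a maximum of the volume functional on the boundary of the space of angle structures, any tetrahedron with an angle $0$ must also have an angle $\pi$ (because $\Lambda'(0^+)=+\infty$, so otherwise one could increase volume by perturbing towards the interior). This is precisely the first sentence of the paper's proof, and without it the stated conclusion --- that every degenerate tetrahedron has angles a permutation of $(0,0,\pi)$ --- does not follow. You do invoke the infinite slope of $\Lambda$, but you deploy it only for the propagation step, not for this per-tetrahedron flatness, which is where it is actually indispensable.

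Second, the propagation itself is only gestured at, and the mechanism you propose (a Schl\"afli-type derivative blowing up along a deformation into the interior) is not the one the paper uses; the paper's argument is purely combinatorial. Concretely: if $\Delta_i$ is the \emph{first} flat tetrahedron, then $z_i\in\{0,\pi\}$ while $z_{i-1}\in(0,\pi)$ unless $i-1=0$; the weak hinge/convexity inequalities force all later $z_j$ into $\{0,\pi\}$ (eventually $0$); and then the formulas of Lemmas~\ref{Lem:XYFirstTet}, \ref{Lem:XYinTermsOfZ} and~\ref{Lem:XYLastTet} expressing $x_i,y_i$ in terms of $z_{i-1},z_i,z_{i+1}$ yield $z_{i-1}=2\pi$, a contradiction unless $i-1=0$. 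Hence the first flat tetrahedron is $\Delta_1$ and the whole solid torus is flat. Your sketch (``an induction \dots\ collapses the whole sequence'') points in the right direction but never identifies this contradiction, which is the step that actually rules out a partially degenerate maximiser; a derivative-based propagation would additionally require exhibiting an admissible deformation path in $\overline{\calA(\tau)}$, which you do not construct.
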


\begin{proof}
By work of Rivin~\cite{Rivin:EuclidStructs}, if the volume functional takes its maximum on the boundary of the space of angle structures, then any tetrahedron with an angle $0$ must also have an angle $\pi$. Thus those tetrahedra that do not have all angles strictly within $(0,\pi)$ must have angles $(x_i,y_i,z_i)$ a permutation of $(0,0,\pi)$; this is a flat tetrahedron. 

By \reflem{ZSequence}, a point on the boundary of the space of angle structures corresponds to a sequence $(z_0=\pi+\theta_p, z_1=\pi-\theta_r, z_2, \dots, z_{N-1},z_N=0)$ satisfying the hinge and convexity equations, except the strict inequalities will be replaced by weak inequalities. This must be a nonincreasing sequence.

Suppose the $i$-th tetrahedron is the first flat tetrahedron. Then $z_i\in \{0,\pi\}$ but $z_{i-1}\in (0,\pi)$ unless $i-1=0$. If $z_i=\pi$, then convexity implies $z_j=\pi$ for $j=i+1, \dots, h$, where $h$ is the next hinge index. The hinge condition then implies that all later $z_j=0$. Similarly, if $z_i=0$ then all later $z_j=0$.

Now consider $z_{i-1}$. We have $x_i, y_i, z_i, z_{i+1} \in \{0,\pi\}$. Thus by one of Lemmas~\ref{Lem:XYinTermsOfZ}, \ref{Lem:XYFirstTet}, or \ref{Lem:XYLastTet}, depending on the index $i$, we have $z_{i-1}=2\pi$. But $0< z_{i-1}< \pi$ unless $i-1=0$. So $i-1=0$. Then the first flat tetrahedron is the first tetrahedron, so the entire solid torus consists of flat tetrahedra. 
\end{proof}
  
\begin{corollary}\label{Cor:LSTVolumeMaximum}
Suppose the set of angle structures as in \refprop{LSTAngleStructExists} is nonempty. Then the volume functional takes its maximum on the interior of such angle structures. \qed
\end{corollary}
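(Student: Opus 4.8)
The plan is to read this off directly from \reflem{FlatTetrahedra}, which has already done all the real work. First I would observe that the closure $\overline{\calA(\tau)}$ is a compact polytope — every dihedral angle lies in the compact interval $[0,\pi]$ — and the volume functional $\calV$ is continuous, so $\calV$ attains a maximum on $\overline{\calA(\tau)}$. By the dichotomy recorded just after the definition of $\calV$ (convexity of the functional), this maximum is attained either at an interior point of $\calA(\tau)$, which is the desired conclusion, or only on the boundary $\bdy\overline{\calA(\tau)}$. So the corollary amounts to excluding the second possibility, using the hypothesis that $\calA(\tau)$ is nonempty so that an interior point exists at all.

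To rule out the boundary case, I would invoke \reflem{FlatTetrahedra}: if the maximum of $\calV$ is on the boundary, then the maximising structure consists entirely of flat tetrahedra, with $(x_i,y_i,z_i)$ a permutation of $(0,0,\pi)$ for every $i = 1,\dots,N-1$. In particular this forces $z_1 \in \{0,\pi\}$. But by our normalisation of the first tetrahedron (see \reflem{XYFirstTet} and the setup of \refprop{LSTAngleStructExists}) we always have $z_1 = \pi - \theta_r$, and \refeqn{DihedralAngleProperties} requires $0 < \theta_r < \pi$, hence $z_1 \in (0,\pi)$. This contradiction shows the maximum cannot lie on the boundary, so it is attained at an interior point of $\calA(\tau)$.

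I do not expect any genuine obstacle here: the content is entirely in \reflem{FlatTetrahedra}, and the corollary is simply the remark that the ``all flat'' configuration that lemma produces is already incompatible with the constraint $z_1 = \pi - \theta_r$ coming from $\theta_r \in (0,\pi)$. The only point requiring a little care is bookkeeping in the logical dichotomy — making sure ``maximum attained on the boundary'' is used in exactly the form in which \reflem{FlatTetrahedra} supplies its hypothesis — but this is precisely what that lemma was set up to give, so the argument is a one-line deduction.
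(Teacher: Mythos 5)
Your argument is correct and takes the paper's intended route: the corollary is read off directly from \reflem{FlatTetrahedra}, using compactness and concavity of $\calV$ to reduce to excluding a boundary maximum. The only (harmless) difference is how you close the contradiction: you note that an all-flat structure would force $z_1\in\{0,\pi\}$, which is incompatible with the fixed constraint $z_1=\pi-\theta_r\in(0,\pi)$, whereas the paper's implicit argument (compare the analogous \reflem{VolMaxBor}) is that an all-flat structure has zero volume while the nonempty interior contains angle structures of strictly positive volume; either observation finishes the proof.
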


The following follows immediately from the Casson--Rivin theorem, \refthm{CassonRivin}.

\begin{corollary}\label{Cor:LSTHypStructExists}
For slopes $p,q,r,$ and $m$ as in \refprop{LSTAngleStructExists}, and any angles $\theta_p,\theta_q,\theta_r$ satisfying \eqref{Eqn:DihedralAngleProperties}, there exists a geometric triangulation of the layered solid torus $T$ of that proposition with exterior dihedral angles $\theta_p,\theta_q,\theta_r$.
\end{corollary}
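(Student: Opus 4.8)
The plan is to obtain this corollary as a formal consequence of the work done above, feeding \refprop{LSTAngleStructExists} and \refcor{LSTVolumeMaximum} into the Casson--Rivin theorem (\refthm{CassonRivin}); there is no new geometric content to produce. First I would fix the ideal triangulation $\tau$ of the layered solid torus $T$ determined by the slope $m$, and work inside the affine slice $\calA(\tau)$ consisting of angle structures on $\tau$ whose interior dihedral angles along the boundary slopes $p,q,r$ are constrained to equal $\pi-\theta_p,\pi-\theta_q,\pi-\theta_r$, exactly as set up in \refprop{LSTAngleStructExists}. Since $\theta_p,\theta_q,\theta_r$ satisfy \refeqn{DihedralAngleProperties}, that proposition tells us $\calA(\tau)$ is nonempty as soon as the intersection inequality \refeqn{IntersectionCondition} holds, and by \reflem{HingeImpliesInequality} this is automatic whenever the Farey path defining $T$ contains a hinge.

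Next I would apply \refcor{LSTVolumeMaximum}: the volume functional $\calV$ then attains its maximum at an interior point $p^\ast$ of $\calA(\tau)$. By \reflem{FlatTetrahedra} no tetrahedron of $\tau$ is flat at $p^\ast$, so every tetrahedron is realised there by a positively oriented hyperbolic ideal tetrahedron, with all dihedral angles in $(0,\pi)$. The only remaining point is that these tetrahedra glue up consistently, and this is precisely what \refthm{CassonRivin} supplies: an interior critical point of $\calV$ on a slice of angle structures solves Thurston's edge-consistency equations, so around every interior edge of $\tau$ the dihedral holonomy is a complete $2\pi$ rotation with no shearing. Hence $p^\ast$ exhibits $\tau$ as a geometric triangulation of $T$, and by the very definition of $\calA(\tau)$ its exterior dihedral angles along $p,q,r$ are the prescribed $\theta_p,\theta_q,\theta_r$.

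The one step deserving care --- though it is a matter of setting things up correctly rather than a genuine difficulty --- is that the version of \refthm{CassonRivin} being used is the \emph{relative} one, in which the dihedral angles along the boundary once-punctured torus are held fixed rather than free: the maximiser then encodes a hyperbolic structure on the solid torus whose cusp (the puncture of $\bdy T$) has the similarity shape prescribed by $(\theta_p,\theta_q,\theta_r)$, not the complete shape. This is exactly how Gu\'eritaud and Schleimer apply it, and it follows from the same proof of \refthm{CassonRivin}, since imposing additional \emph{linear} constraints on the angle-structure space does not affect the conclusion that a volume-critical point satisfies the edge equations. One should also keep in mind that the statement only has content when $\calA(\tau)\neq\emptyset$; under the separation hypothesis on $m$ this is equivalent by \refprop{LSTAngleStructExists} to \refeqn{IntersectionCondition}, and it is precisely this inequality that the length bounds of the later sections will be arranged to guarantee.
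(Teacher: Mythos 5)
Your proof is correct and follows essentially the same route as the paper, which states the corollary as an immediate consequence of \refprop{LSTAngleStructExists}, \refcor{LSTVolumeMaximum} and the Casson--Rivin theorem (\refthm{CassonRivin}). Your two caveats are well taken and worth keeping: the version of Casson--Rivin needed here is the relative one in which the boundary dihedral angles are held fixed (the layered solid torus does not have torus boundary, so the theorem as literally stated must be adapted), and the corollary is only nonvacuous when the intersection inequality \refeqn{IntersectionCondition} holds --- a hypothesis the statement leaves implicit but which is verified in every application (e.g.\ via \refprop{SpunSolidTorus} in the proof of \refthm{DehnFillingGeneral}).
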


\section{Dehn filling}\label{Sec:DehnFilling}

In this section, we complete the proof of \refthm{HighlyTwisted}.

Let $s$ be a slope, and let $\len(s)$ denote the Euclidean length of a geodesic representative of $s$ on a horospherical cusp torus.

The following theorem is a consequence of Thurston's hyperbolic Dehn filling theorem~\cite{Thurston:Notes}. The version below can be proved assuming a geometric triangulation exists for $M$, with ideas in Benedetti and Petronio~\cite{BenedettiPetronio}, using methods of Neumann and Zagier~\cite{NeumannZagier}. 

\begin{theorem}[Hyperbolic Dehn filling theorem]\label{Thm:ThurstonHypDF}
Let $M$ be a hyperbolic 3-manifold with a geometric ideal triangulation, such that exactly two ideal tetrahedra, $\Delta$ and $\Delta'$, meet a cusp of $M$. Let $s$ be a slope on this cusp. Then for all but finitely many choices of $s$, the Dehn filled manifold $M(s)$ admits a complete hyperbolic structure, obtained by deforming the triangulation of $M$, and taking the completion of the resulting structure. The tips of the tetrahedra $\Delta, \Delta'$ spin asymptotically along the geodesic core of the filling solid torus of $M(s)$. As $\len(s)$ goes to infinity, the cross-ratios of the tetrahedra of $M(s)$ become uniformly close to those of $M$.  \qed
\end{theorem}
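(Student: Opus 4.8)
The plan is to derive the statement from Thurston's hyperbolic Dehn surgery theorem by deforming the given geometric triangulation $\tau$ directly, following Neumann--Zagier \cite{NeumannZagier} and Benedetti--Petronio \cite{BenedettiPetronio}, while keeping track of the quantitative conclusions. First I would recall that, because $\tau$ is geometric, the complete hyperbolic metric on $M$ corresponds to a solution $Z_0$ of the edge (gluing) equations at which every tetrahedron is non-degenerate and positively oriented. By Neumann--Zagier, the solution set of the edge equations is, near $Z_0$, a smooth complex manifold of dimension equal to the number of cusps of $M$, on which the logarithmic holonomies $u_1,\dots,u_k$ of a chosen set of meridians form local holomorphic coordinates vanishing at $Z_0$. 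Restricting to the slice $S$ cut out by $u_j=0$ for every cusp other than the distinguished cusp $C$ met by $\Delta$ and $\Delta'$ --- the locus where those cusps stay complete --- yields a smooth holomorphic curve through $Z_0$. Writing $u$ and $v$ for the logarithmic holonomies of the meridian $\mu$ and longitude $\lambda$ of $C$, on $S$ the coordinate $u$ parametrises the curve and $v=v(u)$ is holomorphic with $v(0)=0$ and $v'(0)=\tau_0$, where $\tau_0$ is the cusp modulus of $C$ in the complete structure, so $\operatorname{Im}\tau_0\neq 0$. Normalising the horospherical cross-section of $C$ at $Z_0$ so that $\mu$ and $\lambda$ act by translations by $a$ and $a\tau_0$, one has $\len(p\mu+q\lambda)=|a|\,|p+q\tau_0|$.

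Next I would solve the Dehn filling equation on $S$. For a slope $s=p\mu+q\lambda$ with $\gcd(p,q)=1$, performing $s$-filling with cone angle $2\pi$ about the core is the single equation $p\,u+q\,v(u)=2\pi i$. Setting $g(u)=p\,u+q\,v(u)$ we have $g(0)=0$ and $g'(0)=p+q\tau_0$, whose modulus equals $\len(s)/|a|$. Writing $g(u)=g'(0)\,u+h(u)$ with $h$ holomorphic and $h(u)=O(u^2)$ on a fixed disc about $0$ (with implied constant depending only on $M$), a contraction-mapping (equivalently, Rouch{\'e}) argument shows that for $\len(s)$ sufficiently large there is a unique solution $u_s$ in the disc $|u|\le C/\len(s)$ for a constant $C=C(M)$; in particular $u_s\to 0$ uniformly as $\len(s)\to\infty$.

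Then I would read off the geometry. The point $u_s\in S$ is a solution $Z_s$ of the edge equations, and since $Z$ depends holomorphically on $u$ near $Z_0$ we get $\|Z_s-Z_0\|=O(|u_s|)=O(1/\len(s))$; hence for $\len(s)$ large every tetrahedron of $Z_s$ is still non-degenerate and positively oriented (an open condition), and the cross-ratios of the tetrahedra converge to those of $Z_0$ with a bound uniform over all such slopes --- this is the last assertion of the theorem. Developing $\tau$ with shape parameters $Z_s$ produces a hyperbolic structure on $M$ with no singularity along interior edges (by the edge equations) that is complete at every cusp except $C$ and incomplete at $C$; because $p\,u_s+q\,v_s=2\pi i$, the metric completion glues a solid torus onto $C$ along which $s$ bounds a meridian disc, with core a closed geodesic of complex length determined by $(u_s,v_s)$, yielding a complete hyperbolic metric on $M(s)$ \cite{Thurston:Notes, BenedettiPetronio}. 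In this completion the two ideal vertices of $\Delta$ and $\Delta'$ on $C$ are no longer completed to ideal points: their tips wind infinitely around the core geodesic, i.e.\ spin asymptotically along it, and no other tetrahedron is affected. Finally, only finitely many slopes on the Euclidean cusp torus $C$ have length below the threshold produced above, which is exactly the ``all but finitely many $s$'' statement.

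The step I expect to be the main obstacle is the bookkeeping that ties the Neumann--Zagier parameter $u$, the cusp modulus $\tau_0$, and the slope length $\len(s)$ together tightly enough to obtain the uniform estimates, together with the identification of the metric completion of the deformed structure with the closed manifold $M(s)$ carrying a genuine complete hyperbolic metric (including the spinning picture of the tips). The latter is precisely Thurston's Dehn surgery theorem in the triangulated form of \cite{NeumannZagier, BenedettiPetronio}, and it is here that the hypothesis that $M$ admit a geometric triangulation is indispensable: a non-geometric triangulation --- one with a flat or negatively oriented tetrahedron at $Z_0$ --- need not remain positively oriented under the deformation, so the argument would break down.
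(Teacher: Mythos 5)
Your argument is correct and is essentially the proof the paper has in mind: the paper states this theorem without proof, citing exactly the Neumann--Zagier deformation-variety argument and the Benedetti--Petronio treatment of Thurston's Dehn surgery theorem that you spell out (smoothness of the gluing variety at the geometric solution, the filling equation $pu+qv(u)=2\pi i$ solved for $u_s=O(1/\len(s))$, openness of positive orientation, and the completion/spinning picture). Your write-up also correctly identifies where the geometric-triangulation hypothesis is indispensable, so there is nothing to add.
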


In particular, since $M$ admits a geometric ideal triangulation, the cross-ratios of its tetrahedra have strictly positive imaginary part. This is an open condition. Thus for $s$ large enough, the triangulation of $M(s)$ also has cross-ratios with strictly positive imaginary part. It follows that the incomplete, spun triangulation of $M(s)$ is built of geometric tetrahedra. However, we are not interested in incomplete triangulations. We will use the incomplete spun triangulation to build a complete geometric ideal triangulation.

\subsection{Dehn filling and spun triangulations}

The following proposition is essentially Proposition~8 of Gu{\'e}ritaud--Schleimer \cite{GueritaudSchleimer}.

\begin{proposition}\label{Prop:SpunSolidTorus}
Let $X$ be a solid torus with $\bdy X$ a punctured torus, with boundary $\bdy X$ triangulated by two ideal triangles. Let $m\subset \bdy X$ be the meridian of $X$. The following are equivalent.
\begin{enumerate}
\item A complete hyperbolic structure on $X$ is obtained by taking the completion of a spun triangulation consisting of two tetrahedra $\Delta$ and $\Delta'$, where one face of $\Delta$ and one face of $\Delta'$ form the two ideal triangles making up $\bdy X$. 
\item The exterior dihedral angles $a$, $b$, and $c$ on the edges of the triangulation satisfy $a,b\in(-\pi,\pi)$, $c\in(0,\pi)$, and $a+b+c=\pi$, and also 
  \[ n_aa + n_bb + n_cc > 2\pi,\]
where $n_a$, $n_b$, $n_c$ denote the number of times the meridian $m\subset \bdy X$ of $X$ crosses the edge with angle $a$, $b$, $c$, respectively. 
\end{enumerate}
Moreover, if the hyperbolic structure exists on $X$, then it is unique. 
\qed
\end{proposition}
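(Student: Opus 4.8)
The plan is to deduce \refprop{SpunSolidTorus} from the layered–solid–torus results just established — \refprop{LSTAngleStructExists}, \refcor{LSTHypStructExists} — together with the Casson--Rivin theorem \refthm{CassonRivin}, reusing the hexagonal cusp bookkeeping of Lemmas~\ref{Lem:XYinTermsOfZ}--\ref{Lem:AngleStructHinge}. The first step is a dictionary. In a two–tetrahedron spun triangulation of $X$, the three edge classes meet $\bdy X$ in three slopes $p,q,r$ with pairwise geometric intersection $1$, hence forming a triangle in the Farey graph, and the meridian $m$ lies in a later triangle of the Farey geodesic issuing from $(p,q,r)$. I identify the exterior angles so that $c$ is carried by the edge whose slope disappears first along this geodesic — which is why $c\in(0,\pi)$, while $a,b$, carried by edges receiving contributions from both tetrahedra, need only lie in $(-\pi,\pi)$ — and so that $(n_a,n_b,n_c)=(i(m,p),i(m,q),i(m,r))$. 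With these identifications, condition~(2) is precisely \refeqn{DihedralAngleProperties} together with \refeqn{IntersectionCondition} for the data $((a,b,c),m)$.

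For (2)$\Rightarrow$(1): I would develop the spun triangulation directly from the angle data. Running the Farey geodesic from $(p,q,r)$ toward $m$ as in \refsec{LST}, but now letting it run bi-infinitely and accumulate on the point $m\in\bdy\HH^2$, the cusp cross-section is again a nested family of Euclidean hexagons; the analogue of the sequence $(z_i)$ of \reflem{ZSequence} is now an infinite, exponentially decaying sequence whose decay ratio is governed by the complex translation length of the would-be core geodesic, and the convexity and hinge inequalities of \reflem{ZSequence} — which by \refprop{LSTAngleStructExists} are equivalent to~(2) — are exactly what allow such a sequence to be chosen. Feeding it through the formulas of \reflem{XYinTermsOfZ} gives $\Delta,\Delta'$ shapes with strictly positive imaginary cross-ratio, glued so that their tips spin monotonically; that this infinite stack closes up to a complete structure on $X$ is the content of the hyperbolic Dehn filling picture of \refthm{ThurstonHypDF}, and one may cross-check existence against \refcor{LSTHypStructExists} for the layered solid torus with the same exterior angles.

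For (1)$\Rightarrow$(2): if completing the spun triangulation gives a complete structure, then $\Delta,\Delta'$ are positively oriented, so every dihedral angle lies in $(0,\pi)$; summing the two tetrahedra's contributions over the three edge classes gives $a+b+c=\pi$ with $c\in(0,\pi)$ (one contribution) and $a,b\in(-\pi,\pi)$, and the inequality $n_aa+n_bb+n_cc>2\pi$ follows by the dichotomy of Lemmas~\ref{Lem:HingeImpliesInequality} and~\ref{Lem:NoHingeCase}: a direction–switch in the spinning (a hinge) forces the inequality outright, while in its absence the convexity of the decaying angle sequence forces it, with $n_a,n_b,n_c$ in the role of the intersection numbers. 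Uniqueness follows because a complete structure compatible with the spun triangulation maximises the volume functional on the (nonempty) space of angle structures on that triangulation, and a verbatim repetition of \reflem{FlatTetrahedra} and \refcor{LSTVolumeMaximum} places the maximum in the interior; \refthm{CassonRivin} then identifies it uniquely.

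The step I expect to be the main obstacle is the bi-infinite development in (2)$\Rightarrow$(1): justifying that the exponentially decaying sequence of cusp angles assembles into an honest complete hyperbolic structure on the solid torus — in particular that the meridian $m$ is killed and the tips of $\Delta,\Delta'$ spin monotonically around a nondegenerate core geodesic — rather than merely producing a formal solution of the gluing equations over an edge-set accumulating on the core. This is precisely where the genericity built into the hypotheses (and into condition~(2)) is used, and where I would follow \cite{GueritaudSchleimer} most closely.
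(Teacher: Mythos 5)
The paper does not reprove this statement: \refprop{SpunSolidTorus} is stated with its proof deferred to \cite[Proposition~8]{GueritaudSchleimer}, and the only new work is done in the accompanying remark, which checks that relaxing Gu\'eritaud--Schleimer's hypothesis $a,b,c\in[0,\pi)$ to $a,b\in(-\pi,\pi)$, $c\in(0,\pi)$ does not affect their argument (the constraints $a+b+c=\pi$ and $a,b,c\in(-\pi,\pi)$ come from the Euclidean geometry of a horospherical cross-section of the puncture of $\bdy X$, and $a+b+c=\pi$ forces at least one angle to be positive, which is then labelled $c$). Your proposal instead tries to rederive the proposition from the layered-solid-torus machinery of \refsec{AngleStructures}, and this is where it breaks down. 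Statement (1) concerns a triangulation of $X$ by exactly two tetrahedra $\Delta,\Delta'$ whose tips spin around the core; your (2)$\Rightarrow$(1) argument instead develops a bi-infinite stack of tetrahedra following the Farey geodesic all the way to $m$, governed by an infinite decaying sequence $(z_i)$. That is a different triangulation of $X$ --- it has infinitely many tetrahedra of varying shape, one per Farey triangle, rather than two --- so even if the infinite development could be completed it would not produce the object in statement (1); and Lemmas~\ref{Lem:XYFirstTet}--\ref{Lem:AngleStructHinge} and \reflem{FlatTetrahedra} are proved only for finite layered solid tori terminating in the fold at $T_{N-1}$, so ``feeding an infinite sequence through'' them is not something the paper's results license. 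You flag this step yourself as the one where you would ``follow \cite{GueritaudSchleimer} most closely'' --- but that step is essentially the entire content of the proposition.

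The same issue affects (1)$\Rightarrow$(2) and uniqueness: deducing the inequality from the existence of the complete spun structure, and showing the volume maximum is attained in the interior, require an analysis of the angle-structure polytope of the two-tetrahedron spun triangulation itself (which is what Gu\'eritaud--Schleimer actually carry out); \reflem{NoHingeCase}, \reflem{FlatTetrahedra} and \refcor{LSTVolumeMaximum} cannot simply be invoked for a different triangulation. (\reflem{HingeImpliesInequality} is purely arithmetic in the Farey combinatorics and does transfer.) The two honest routes are the paper's --- cite \cite[Proposition~8]{GueritaudSchleimer} and verify, as the remark does, that only the angle ranges need adjusting --- or to write down the spun triangulation's edge equations and angle polytope directly and redo the volume-maximisation argument there. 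Your proposal does neither. The dictionary you set up between $(a,b,c;n_a,n_b,n_c)$ and $(\theta_p,\theta_q,\theta_r;i(m,\cdot))$ is correct, and it is exactly what the paper uses later in the proof of \refthm{DehnFillingGeneral} to swap the spun solid torus for a layered one, but it is not by itself a proof of the stated equivalence.
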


\begin{remark}
Proposition~8 of \cite{GueritaudSchleimer} is not quite stated the same as \refprop{SpunSolidTorus}, but an almost identical proof gives the result claimed here. One difference is that in \cite{GueritaudSchleimer}, they restrict to $a,b,c\in[0,\pi)$. However, this restriction is not required for the proof. What is required, if these form exterior dihedral angles of a solid torus as claimed, is that $a+b+c=\pi$ and $a,b,c \in (-\pi,\pi)$. These conditions follow from considering the Euclidean geometry of a horospherical neighbourhood of the puncture on $\bdy X$. Moreover, the condition $a+b+c=\pi$ forces one of $a,b,c$ to be strictly positive; we let this angle be denoted $c$. 
  
Now in the proof of \cite[Propostion~8]{GueritaudSchleimer}, it is shown that angle structures can be put onto $\Delta$ and $\Delta'$ to form the spun triangulation of $X$ if and only if $n_aa+n_bb+n_cc>2\pi$. In the case the inequality holds, it is shown that the volume functional takes its maximum on the interior of the space of such angle structures, meaning there exists a hyperbolic structure, and that structure is unique by the Casson--Rivin theorem, \refthm{CassonRivin}.
\end{remark}

\begin{theorem}\label{Thm:DehnFillingGeneral}
  Let $L$ be a hyperbolic fully augmented link with $n\geq 2$ crossing circles. Then there exist constants $A_1, \dots, A_n$ such that if $M$ is a manifold obtained by Dehn filling the crossing circle cusps of $S^3-L$ along slopes $s_1, \dots, s_n$ whose lengths satisfy $\len(s_i)\geq A_i$ for each $i=1, \dots, n$, then $M$ admits a geometric triangulation. Allowing some collection of $s_i=\infty$, i.e.\ leaving some crossing circle cusps unfilled, also admits a geometric triangulation.
\end{theorem}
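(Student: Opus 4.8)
The plan is to combine the geometric triangulation of the fully augmented link from \refprop{FALGeomTriang2Tet} with layered solid tori inserted at the crossing circle cusps, and to verify that the resulting triangulation carries an angle structure whose volume functional is maximised in the interior; then the Casson--Rivin theorem \refthm{CassonRivin} yields a complete hyperbolic metric, and since the triangulation was assembled from positively oriented tetrahedra, it is geometric. First I would set up the pieces. By \refprop{FALGeomTriang2Tet}, $S^3-L$ has a geometric triangulation $\tau_0$ in which exactly two ideal tetrahedra $\Delta_i, \Delta_i'$ meet the $i$-th crossing circle cusp, and the two boundary faces of these tetrahedra triangulate the cusp torus by two ideal triangles. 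Near such a cusp the triangulation is exactly of the form treated in \refprop{SpunSolidTorus}: there are three edge slopes on the cusp torus carrying exterior dihedral angles, and one checks from the geometric structure of the octahedral/prism pieces in \refprop{FALGeomTriang2Tet} that these angles $(a_i,b_i,c_i)$ satisfy $a_i+b_i+c_i=\pi$ with all three in $(-\pi,\pi)$ and at least one (call it $c_i$) in $(0,\pi)$. Crucially, the geometric structure on $S^3-L$ itself realises these as the dihedral angles of an actual hyperbolic structure, so for the \emph{unfilled} cusps we simply keep $\Delta_i,\Delta_i'$ as is.

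For each cusp we want to fill, with slope $s_i$, I would replace the two tetrahedra $\Delta_i,\Delta_i'$ near that cusp by a layered solid torus $T_i$ whose boundary is the triangulated cusp torus and whose meridian is $s_i$, as in \refsec{LST}. By \refcor{LSTHypStructExists}, provided the three triangles of the initial triangulation are separated from $s_i$ by at least one triangle in the Farey graph --- which holds for all but finitely many $s_i$, hence for $\len(s_i)$ larger than some $A_i$ --- there is a geometric triangulation of $T_i$ with prescribed exterior dihedral angles $\theta_p,\theta_q,\theta_r$ subject only to \eqref{Eqn:DihedralAngleProperties} and the intersection inequality \eqref{Eqn:IntersectionCondition}. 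I would choose the exterior angles of $T_i$ on the three boundary slopes to match the ambient angles $(a_i,b_i,c_i)$ coming from the rest of the triangulation, so that the edge equations of $\tau_0$ (restricted to edges meeting the cusp) glue consistently with those of $T_i$. The geometric triangulation of the filled manifold $M$ is then $\tau_0$ with each filled cusp's pair of tetrahedra excised and $T_i$ glued in its place.

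To run the Casson--Rivin machine I need an angle structure on the whole assembled triangulation. Start from the geometric angle structure of $S^3-L$ on the complement of the filled cusps; this already satisfies all edge conditions except at the edges that now lie inside the solid tori $T_i$ and at the boundary slopes where $T_i$ is attached. By \refprop{LSTAngleStructExists}, an angle structure on $T_i$ with the chosen boundary exterior angles exists precisely when $i(s_i,p)\theta_p+i(s_i,q)\theta_q+i(s_i,r)\theta_r>2\pi$; since the $\theta$'s are fixed and two of the combinations $\theta_p+\theta_r=\pi-\theta_q$, $\theta_q+\theta_r=\pi-\theta_p$ are positive (using $-\pi<\theta_p,\theta_q<\pi$), this inequality holds as soon as the intersection numbers are large enough, i.e.\ for $\len(s_i)$ exceeding a constant $A_i$ --- here I would invoke the estimate that geometric intersection number of $s_i$ with any fixed slope grows with $\len(s_i)$, or more simply that only finitely many slopes fail the inequality. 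Gluing these local angle structures to the ambient one gives a global angle structure, because the exterior angles were matched so the vertex and edge sums are correct. Finally, volume maximisation: by \refcor{LSTVolumeMaximum} and \reflem{FlatTetrahedra}, within the block $T_i$ the volume is maximised away from the stratum of all-flat tetrahedra, and on the ambient $S^3-L$ part the volume is maximised at the (unique) geometric point in the interior; combining, the global volume functional attains its maximum in the interior of the global angle-structure polytope. By \refthm{CassonRivin} this maximum is a complete hyperbolic structure, and since all tetrahedra have dihedral angles in $(0,\pi)$ they are positively oriented, so the triangulation of $M$ is geometric. Leaving some $s_i=\infty$ just means keeping $\Delta_i,\Delta_i'$ for those cusps, which is already handled.

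The main obstacle I expect is the bookkeeping that the exterior dihedral angles $(a_i,b_i,c_i)$ of $\tau_0$ at a crossing circle cusp genuinely satisfy the hypotheses $a_i,b_i\in(-\pi,\pi)$, $c_i\in(0,\pi)$, $a_i+b_i+c_i=\pi$ needed to feed \refprop{LSTAngleStructExists}/\refprop{SpunSolidTorus}, together with the verification that the global angle structure assembled from the ambient one and the $T_i$'s really does satisfy every edge equation --- in particular that no edge of $\tau_0$ running into a filled cusp acquires the wrong angle sum after the replacement. This is exactly the step where Gu\'eritaud--Schleimer's genericity conditions must be checked for \emph{our} specific triangulation of the fully augmented link, and it is why \refprop{FALGeomTriang2Tet} was engineered to have only two tetrahedra at each crossing circle.
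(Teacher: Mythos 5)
Your overall strategy (replace the two tetrahedra at each crossing circle cusp by a layered solid torus and run Casson--Rivin globally) differs from the paper's, which never runs a global angle-structure argument for this theorem: the paper invokes Thurston's hyperbolic Dehn filling theorem (\refthm{ThurstonHypDF}) to produce a \emph{spun} triangulation of $M(s)$ with positively oriented tetrahedra for long slopes, reads off from \refprop{SpunSolidTorus} that the exterior dihedral angles of the spun solid torus at each filled cusp satisfy the inequality $n_aa+n_bb+n_cc>2\pi$, and then uses \refcor{LSTHypStructExists} to swap the spun solid torus for a geometric layered solid torus with the \emph{same} boundary angles, glued in by isometry. The non-effectiveness of the constants $A_i$ comes entirely from Thurston's theorem. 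Your route bypasses Thurston's theorem, which would be a genuine improvement if it worked, but it has a gap at exactly the point you flag as "the main obstacle."

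The gap is in the volume maximisation step. You fix the ambient tetrahedra at the angles of the \emph{complete} structure on $S^3-L$ and match the boundary angles of each $T_i$ to those; this may well produce a point of $\calA(\tau)$, but Casson--Rivin requires the maximum of $\calV$ over the whole space $\calA(\tau)$ of the filled manifold to lie in the interior, and your argument that "each block is maximised in its interior, so the global maximum is interior" is not valid: the boundary exterior angles $(\theta_p,\theta_q,\theta_r)$ of each $T_i$ are shared variables coupling the blocks, so the global maximum is not assembled from separately maximised pieces. \refcor{LSTVolumeMaximum} and \reflem{FlatTetrahedra} only control the maximum of a layered solid torus with its boundary angles \emph{held fixed}; at the global maximum those boundary angles are free, and they could drift to the degenerate values (as in \reflem{ExtAngDoubleLST}) that force all tetrahedra of $T_i$ to be flat, or some ambient tetrahedron could degenerate. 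Ruling this out is precisely the content of Lemmas~\ref{Lem:VolMaxBor} and~\ref{Lem:VolMax2B} in the effective cases, and there it requires an explicit case analysis of the gluing equations linking the exterior angles of the different solid tori --- information that is not available for the ambient triangulation of a general fully augmented link, which is built by coning an unspecified convex polyhedron. Without that analysis (or without falling back on Thurston's theorem as the paper does), the claim that the maximum is interior is unsupported, and the proof does not close.
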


\begin{proof}
By \refprop{FALGeomTriang2Tet}, $S^3-L$ admits a geometric ideal triangulation with the property that each crossing circle meets exactly two ideal tetrahedra. By \refthm{ThurstonHypDF}, for any slope sufficiently long, the Dehn filling along that slope is obtained by taking the completion of a spun triangulation consisting of two tetrahedra. In particular, for the $j$-th twist region, there exists $A_j$ such that if the length of the slope is at least $A_j$, then Dehn filling yields a manifold with spun triangulation. This can be repeated sequentially for each crossing circle, giving constants $A_1, \dots, A_n$. 

For each crossing circle, consider the two tetrahedra that spin around the core of the Dehn filled solid torus. These two tetrahedra together form a spun triangulation of a solid torus. By \refprop{SpunSolidTorus}, this torus is unique, and the exterior dihedral angles $a,b,c$ must satisfy $n_aa+n_bb+n_cc>2\pi$, where $n_a$, $n_b$, $n_c$ denote the number of times the meridian meets the edge on the boundary with corresponding dihedral angle. Then \refcor{LSTHypStructExists} implies there exists a corresponding layered solid torus with the same dihedral angles along slopes on the boundary, and the same meridian, with a complete, geometric hyperbolic structure.

Because dihedral angles agree, each spun solid torus can be removed and replaced by the layered solid torus by isometry. The result is a geometric ideal triangulation of the Dehn filling of $S^3-L$. 
\end{proof}

We can now complete the proof of \refthm{HighlyTwisted} from the introduction.

\begin{theorem}\label{Thm:HighlyTwisted}
For every $n\geq 2$, there exists a constant $A_n$ depending on $n$, such that if $K$ is a link in $S^3$ with a prime, twist-reduced diagram with $n$ twist regions, and at least $A_n$ crossings in each twist region, then $S^3-K$ admits a geometric triangulation.
\end{theorem}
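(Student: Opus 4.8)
The plan is to deduce \refthm{HighlyTwisted} from \refthm{DehnFillingGeneral} by realising a highly twisted link complement as a Dehn filling of a fully augmented link. First I would set up the correspondence: given a link $K$ with a prime, twist-reduced diagram with $n$ twist regions, let $L$ be the fully augmented link obtained by adding a crossing circle to each twist region and deleting all bigon crossings, as described in \refsec{FullyAugLinks}. The complement $S^3-K$ is recovered from $S^3-L$ by Dehn filling each crossing circle cusp along the slope determined by the number of crossings that were removed from the corresponding twist region: if the $i$-th twist region had $c_i$ crossings, then filling the $i$-th crossing circle along the slope $1/\lceil c_i/2 \rceil$ (with a half-twist adjustment if $c_i$ is odd, handled by the freedom in choosing how to insert the crossing circle) reinserts exactly that twisting. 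This is the standard fact underlying the fully augmented link construction; I would cite \cite{FuterPurcell, Purcell:FullyAugmented} or \cite[Appendix]{Lackenby:AltVolume} for it.

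Next I would address hyperbolicity of $L$: since the diagram of $K$ is prime and twist-reduced with $n \geq 2$ twist regions, the associated fully augmented link $L$ is hyperbolic (this is a theorem of Futer--Purcell \cite{FuterPurcell}, or follows from the polyhedral decomposition of \refprop{FALSetup}, which exhibits an explicit hyperbolic structure). Thus \refthm{DehnFillingGeneral} applies to $L$, yielding constants $A_1, \dots, A_n$ (one per crossing circle) such that filling along slopes of length at least $A_i$ produces a manifold with a geometric triangulation.

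The final step is to translate the length condition $\len(s_i) \geq A_i$ into a condition on the number of crossings $c_i$. The slope $s_i = 1/k_i$ on the $i$-th crossing circle cusp, where $k_i \approx c_i/2$, has Euclidean length on the maximal cusp that grows linearly in $k_i$; more precisely, by \refprop{FALSetup}(3) the crossing circle cusp cross-section is a rectangle whose shape is controlled by the geometry of $L$, so $\len(1/k_i)$ is bounded below by a constant (depending only on $L$, hence only on $n$ in the crudest bound, or on the combinatorics of the diagram) times $k_i$. Hence there is a threshold $A_n$ such that $c_i \geq A_n$ for all $i$ forces $\len(s_i) \geq A_i$ for all $i$ simultaneously; here one must take $A_n$ large enough to dominate the worst case over the finitely many twist-reduced diagrams with $n$ twist regions, or more carefully, note that the constant depends on $L$ which depends on the diagram, so $A_n$ should be interpreted as depending on $n$ together with the diagram combinatorics, or one quantifies over the relevant family. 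I expect the main obstacle to be precisely this last bookkeeping point: the constants $A_i$ from \refthm{DehnFillingGeneral} depend on $L$, and $L$ depends on the diagram of $K$, not just on $n$, so stating a clean "constant $A_n$ depending only on $n$" requires either a uniform geometric estimate across all fully augmented links with $n$ crossing circles (plausible but needing justification, perhaps via a compactness or a direct cusp-area argument) or a slight reinterpretation of the dependence. Everything else is a routine assembly of the fully augmented link machinery with \refthm{DehnFillingGeneral}.
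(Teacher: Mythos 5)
Your approach is the same as the paper's: realise $S^3-K$ as a Dehn filling of the associated fully augmented link $L$ and apply \refthm{DehnFillingGeneral}. The one point you flag as the ``main obstacle'' --- that the constants from \refthm{DehnFillingGeneral} depend on $L$ rather than on $n$ alone --- is resolved in the paper exactly along the lines of your first suggestion, and the two facts needed are concrete, so no compactness or reinterpretation of the statement is required. First, the slope-length estimate is universal rather than $L$-dependent: if the $j$-th twist region has $m_j$ crossings, the corresponding filling slope has length at least $\sqrt{m_j^2+1}$ on the cusp, by \cite[Theorem~3.10]{FuterPurcell}; you do not need a constant controlled by the shape of the cusp rectangle. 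Second, for fixed $n$ there are only \emph{finitely many} fully augmented links with $n$ crossing circles, since a fully augmented diagram with $n$ crossing circles has a number of crossings bounded in terms of $n$; one then sets $A_n=\max_{k,j}A_{k,j}$ over this finite family of parent links $L_k$. Be careful that the relevant finiteness is of fully augmented links, not of twist-reduced diagrams with $n$ twist regions (of which there are infinitely many, as the twist regions can be arbitrarily long); the point is that all such diagrams collapse onto finitely many parent links after augmentation. With these two observations your argument closes up and coincides with the paper's proof.
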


\begin{proof}
If $K$ has a prime, twist-reduced diagram with $n\geq 2$ twist regions, then $S^3-K$ is obtained by Dehn filling a hyperbolic fully augmented link $L$ with $n$ crossing circles, where the Dehn filling is along slopes on each crossing circle. Let $m_j$ be the number of crossings in the $j$-th twist region. Then the length of the $j$-th slope is at least $\sqrt{m_j^2+1}$ by \cite[Theorem~3.10]{FuterPurcell}. 

Note that for fixed $n$, there are only finitely many fully augmented links with $n$ crossing circles. Fix one of these fully augmented links; call it $L_k$. By \refthm{DehnFillingGeneral}, there exist constants $A_{k,1}, \dots, A_{k,n}$ such that if the slope on the $j$-th crossing circle of $L_k$ has length at least $A_{k,j}$, for $j=1, \dots, n$, then the Dehn filling admits a geometric triangulation. Consider $A_n = \max\{A_{k,j}\}$, where the maximum is taken over all links $L_k$ with $n$ crossing circles. Then provided the number of crossings in each twist region of $K$ is at least $A_n$, the length of each slope on each crossing circle will be at least $A_n$, which implies the Dehn filling yields a geometric triangulation. 
\end{proof}

\section{Borromean rings and related links}\label{Sec:Borromean}

In the previous section, we completed the proof of \refthm{HighlyTwisted}, which is unfortunately not effective: the constants $A_1, \dots, A_n$ are unknown. In this section, by restricting the fully augmented links we consider, we are able to prove an effective result, giving an explicit family of hyperbolic 3-manifolds with geometric triangulations. This is similar in spirit to section~5 of \cite{GueritaudSchleimer}, in which Gu{\'e}ritaud--Schleimer show a similar result for Dehn filling one cusp of the Whitehead link. We extend first to the Borromean rings, which is a fully augmented link with two crossing circle cusps, and to the two other fully augmented links with exactly two crossing circle cusps.

\begin{figure}
  \includegraphics{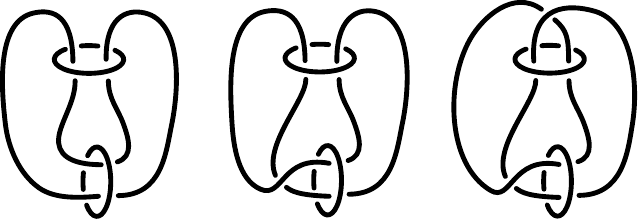}
  \caption{A picture of the Borromean rings as a fully augmented link and the other two fully augmented links with exactly two crossing circles.}
  \label{Fig:BorromeanRings}
\end{figure}

The augmented links we consider next are shown in \reffig{BorromeanRings}. The link on the left of \reffig{BorromeanRings} shows a fully augmented link with three link components; this is ambient isotopic to the Borromean rings. There are two different fully augmented links obtained by inserting half-twists into the crossing circles of the Borromean rings shown; these are the links in the middle and right of that figure. 

Following the procedure for decomposing fully augmented links into polyhedra as in \refsec{FullyAugLinks}, we find that all three links in \reffig{BorromeanRings} decompose into two ideal octahedra; the decomposition for the middle link is exactly the illustration shown in \reffig{PolyhedraConstruction}. 

\begin{lemma}\label{Lem:BorrDecomp}
Let $L$ be one of the three fully augmented links with exactly two crossing circles, as shown in \reffig{BorromeanRings}. Then $M=S^3-L$ has a decomposition into two regular ideal octahedra. Fix one of the two crossing circle cusps. The octahedra meet the fixed crossing circle cusp as follows.
\begin{itemize}
\item One vertex of each octahedron meets the crossing circle cusp. Taking such a vertex to infinity gives a square on $\RR^2$ in $\bdy\HH^3$. We may arrange that one square has corners at $(0,0)$, $(1,0)$, $(1,1)$, and $(0,1)$ in $\RR^2$, and the other has corners at $(1,0)$, $(2,0)$, $(2,1)$, and $(1,1)$ in $\RR^2$.
\item When the crossing circle does not encircle a half-twist, then the arc running from $(0,0)$ to $(0,1)$ projects to a meridian of the crossing circle. When the crossing circle encircles a half-twist (single crossing), the arc from $(0,0)$ to $(1,1)$ projects to a meridian.
\item In all cases, the arc from $(0,0)$ to $(2,0)$ projects to a longitude of the crossing circle, bounding a disc in $S^3$. 
\end{itemize}
\end{lemma}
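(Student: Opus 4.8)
The plan is to use the explicit polyhedral decomposition of fully augmented links from \refprop{FALSetup}, specialised to the three links in \reffig{BorromeanRings}, together with the observation (already recorded in the proof of \reflem{CrossingCircleCutoff}) that when a fully augmented link has exactly two crossing circles, each polyhedron is forced to be a regular ideal octahedron. First I would recall that each of the three links has a twist-reduced diagram with two twist regions, so by the construction of \refsec{FullyAugLinks} it is a fully augmented link with two crossing circles; then by \refprop{FALSetup} it decomposes into two identical right-angled ideal polyhedra. Each ideal vertex is $4$-valent, and since the circle packing has a rectangle cut out by the vertical white and shaded planes with only two white circles tangent to the parallel white planes, the combinatorics of the packing is rigidly determined: the polyhedron has exactly six ideal vertices, all $4$-valent, with the unique right-angled hyperbolic structure being the regular ideal octahedron. (Alternatively, this is exactly the content of the last paragraph of the proof of \reflem{CrossingCircleCutoff}, which I may cite directly: $R_1=R_2$ forces the regular ideal octahedron.) So $M=S^3-L$ decomposes into two regular ideal octahedra, establishing the first sentence.

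Next I would set up coordinates on the cusp. Fix a crossing circle cusp; by \refprop{FALSetup}(3), taking the corresponding ideal vertex of one octahedron to $\infty$ in the upper half-space model produces a rectangle on $\RR^2$ bounded by two parallel vertical white planes and two parallel vertical shaded planes, and since the octahedron is \emph{regular}, this rectangle is in fact a square. The two octahedra are glued along a white face, which is one of the vertical white planes; reflection in that plane carries one square to the adjacent square sharing an edge. After applying a similarity of $\CC$ I may normalise so that the first square has corners $(0,0),(1,0),(1,1),(0,1)$ and the reflected one has corners $(1,0),(2,0),(2,1),(1,1)$. This gives the first bullet. For the cusp cross-section of the crossing circle torus I then need to understand which arcs of this $2\times 1$ rectangle are meridians and longitudes. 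Here I would use the description of the cusp shape of a crossing circle in a fully augmented link (as in \cite{FuterPurcell} or \cite{Purcell:FullyAugmented}): the two vertical shaded planes are the two halves of the $2$-punctured disc bounded by the crossing circle, met along the arc that runs over the crossing circle, so the short side of the rectangle (the arc from $(0,0)$ to $(0,1)$, parallel to the shaded planes) is a meridian of the crossing circle precisely when no half-twist is inserted; when a half-twist \emph{is} inserted, the gluing of the two halves of the $2$-punctured disc is composed with a half-Dehn-twist, which shears the meridian by one unit along the longitude direction, so the meridian becomes the arc from $(0,0)$ to $(1,1)$. Finally, the arc from $(0,0)$ to $(2,0)$ runs once around the crossing-circle direction transverse to the $2$-punctured disc, i.e.\ it is a longitude; and since the crossing circle is an unknot bounding the $2$-punctured disc in $S^3$, this longitude bounds a disc in $S^3$. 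This gives the last two bullets.

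The step I expect to be the main obstacle is the precise identification of the meridian and longitude on the $2\times1$ rectangle, and in particular the shearing effect of a half-twist on the meridian. This requires carefully tracking how the two shaded ideal triangles (the two halves of the $2$-punctured disc) are identified in $M$, and how the reflection-plus-twist isometry fixing the white surface (described in the proof of \refprop{FALSetup}) interacts with the cusp cross-section; it is essentially the only place where the half-twist versus no-half-twist distinction enters, so the two cases must be handled separately and the half-unit shear verified explicitly from the gluing pattern. Everything else is a matter of unwinding the normalisations in \refprop{FALSetup} and \reflem{CrossingCircleCutoff}.
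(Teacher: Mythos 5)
Your proposal follows essentially the same route as the paper: read the decomposition off the circle packing of \refprop{FALSetup} to see two regular ideal octahedra, send the crossing-circle vertex to infinity to obtain the two side-by-side unit squares glued along a white face, and identify the longitude with the two shaded sides at the base and the meridian with a white side, sheared onto the diagonal by a half-twist via \cite[Proposition~3.2]{Purcell:FullyAugmented}. The only quibble is that \reflem{CrossingCircleCutoff} on its own does not deliver the octahedron claim (its conclusion is a disjunction), but your direct combinatorial argument from the four-circle packing covers this, which is also all the paper's proof does.
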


\begin{figure}[h]
  \import{figures/}{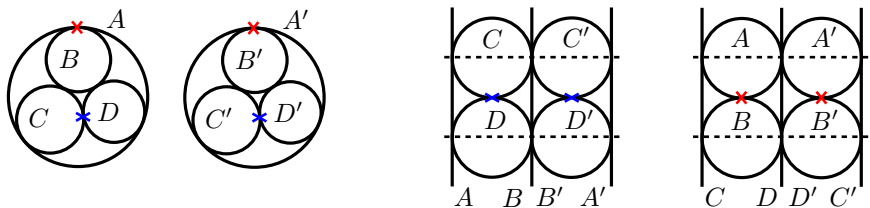}
  \caption{On the left are shown the two identical circle packings arising from the decomposition of the links of \reffig{BorromeanRings}. The $x$'s mark the crossing circle cusps in the two polyhedra. On the right, the two cusp neighbourhoods are shown, obtained by taking each pair of $x$'s to infinity. Dashed lines indicate shaded faces.}
  \label{Fig:BorrCirclePacking}
\end{figure}

\begin{proof}
The lemma is proved by considering the decomposition. White faces become the circles shown in \reffig{BorrCirclePacking}, left. The $x$'s on each circle packing indicate crossing circle cusps. Take one of these points to infinity to obtain the required square. Because the two polyhedra are glued along a white side, the squares line up side-by-side as claimed; see \reffig{BorrCirclePacking}, right. A longitude runs along two shaded faces, which runs along the base of both squares, as claimed. 

When there is no half-twist, the shaded face running across the bottom of the square is glued to the shaded face running across the top of the same square, and hence the base of each square is glued to the top of the same square to form a fundamental domain for the cusp torus. A meridian runs along a white side of a square.

When there is a half-twist, a shaded face running across the bottom of the square on the left is glued to the shaded face running across the top of the square on the right, and so a shearing occurs. See \cite[Proposition~3.2]{Purcell:FullyAugmented}.  The result is that a meridian runs across the diagonal of a square, as claimed.
\end{proof}

\begin{lemma}\label{Lem:BorromeanTet}
Let $M$ be the complement of one of the three fully augmented links with exactly two crossing circles. Then $M$ has a decomposition into exactly eight ideal tetrahedra, with four tetrahedra meeting each crossing circle cusp, two in each square of \reflem{BorrDecomp}.

The square bases are glued as follows. The square on the left of one cusp is glued to the square on the left of the other cusp by reflecting across the diagonal of negative slope. The other square, on the right of the first cusp, is glued to the square on the right of the other cusp by reflecting across the diagonal of positive slope, as shown in \reffig{BorrGluing}.
\end{lemma}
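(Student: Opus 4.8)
The plan is to refine the two regular ideal octahedron decomposition of \reflem{BorrDecomp} using the cut-off rectangles of \reflem{CrossingCircleCutoff}, and then to extract the combinatorics near the crossing circle cusps from the coordinate model. Write $O_1, O_2$ for the two octahedra. By \reflem{CrossingCircleCutoff}, in the two-crossing-circle case the two cut-off rectangles inside a given $O_i$ coincide in a single ideal square $S_i$; equivalently each $O_i$ is a square bipyramid whose two apices are the two crossing circle cusps and whose equatorial square is $S_i$, so $S_i$ splits $O_i$ into two ideal square pyramids $P_i^+$ and $P_i^-$, one over each crossing circle cusp, with common base $S_i$. Choose a diagonal of $S_1$ and the corresponding diagonal of $S_2$ under the reflection in the white faces carrying $O_1$ to $O_2$, as in \refprop{FALGeomTriang}, so that the triangulation will be symmetric. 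Coning each pyramid $P_i^{\pm}$ from its apex over the chosen diagonal of $S_i$ splits it into two ideal tetrahedra; hence $O_i$ is cut into four tetrahedra and $M$ into eight, each geometric since it sits inside a regular ideal octahedron. Every crossing circle cusp is the apex of exactly one pyramid in each of $O_1, O_2$, so it is met by exactly $2+2=4$ of these tetrahedra, two inside each octahedron, which in the coordinates of \reflem{BorrDecomp} are two in each of the two unit squares on $\RR^2$. The face identifications are inherited: triangular faces on $\bdy O_i$ are paired exactly as in the octahedral decomposition, while the two triangular faces of $P_i^+$ lying on $S_i$ are paired with the two triangular faces of $P_i^-$ lying on $S_i$, consistently because the diagonal of $S_i$ agrees from both sides.

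It remains to describe these last face pairings in the coordinate model, that is, the gluing of the ``square bases''. Putting a crossing circle vertex at infinity as in \reflem{BorrDecomp}, with the two octahedra contributing the unit squares with corners $(0,0),(1,0),(1,1),(0,1)$ (the ``left'' square) and $(1,0),(2,0),(2,1),(1,1)$ (the ``right'' square), the geodesic square beneath the chimney over the left square is $S_1$ and that beneath the chimney over the right square is $S_2$, each carrying its chosen diagonal. The pairing ``base of $P_i^+$ to base of $P_i^-$'' is the identity map on the interior square $S_i$, but the coordinate frames at the two crossing circle cusps are normalised separately, so this identity becomes a nontrivial isometry of the unit square to itself, permuting its corners. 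Tracing it through the normalisation of \reflem{BorrDecomp} at \emph{both} crossing circle cusps at once, and using that the longitude runs along the arc from $(0,0)$ to $(2,0)$, one finds that this map reverses the pair of sides of the square transverse to the longitude, hence is a reflection across a diagonal, and that the side-by-side placement of the two squares forces the diagonal to have negative slope for the left square and positive slope for the right square. This is precisely the content of \reffig{BorrGluing}.

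The counting in the first paragraph is routine once \reflem{BorrDecomp} and \reflem{CrossingCircleCutoff} are in hand; the delicate step is the second paragraph, where one must translate the identity identification of the interior square $S_i$, seen from its two apices, into the coordinate model simultaneously at both crossing circle cusps, keeping careful track of orientations so that the two diagonals emerge with opposite slopes. Drawing the cusp neighbourhoods explicitly, as in \reffig{BorrCirclePacking}, makes this bookkeeping transparent, and this is exactly what \reffig{BorrGluing} records.
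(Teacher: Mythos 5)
Your decomposition and counting argument follows the paper's route exactly: cut each regular ideal octahedron along its equatorial square (the coincident rectangles of \reflem{CrossingCircleCutoff}) into two pyramids apexed at the crossing circle vertices, cone each pyramid over a chosen diagonal, and count $2$ tetrahedra per pyramid, $4$ per octahedron, $8$ in total, with each tetrahedron meeting exactly one crossing circle cusp since the equatorial vertices are not crossing circle vertices. That half is correct and complete (indeed the two diagonals may even be chosen independently, since each lies in the interior of its own octahedron and only appears in the base-to-base identification within that octahedron).

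The gap is in the second paragraph, which is where the actual content of the lemma sits. You correctly reduce the problem to expressing the identity map $S_i\to S_i$ in the two separately normalised cusp coordinates, but the inference you offer --- ``this map reverses the pair of sides of the square transverse to the longitude, hence is a reflection across a diagonal'' --- is a non sequitur. A self-map of the square that interchanges the two sides transverse to the longitude with each other is \emph{not} a diagonal reflection (it would be a reflection in the vertical midline or a half-turn); and even the correct weaker statement, that the map exchanges the sides transverse to the longitude with the sides parallel to it (white sides with shaded sides), only narrows the gluing to four candidates: the two diagonal reflections and the two quarter-turns. Pinning down that it is a reflection, and that the fixed diagonal has negative slope on the left square and positive slope on the right, requires tracking individual ideal vertices, and you assert rather than perform this step (``the side-by-side placement \dots forces \dots''). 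The paper does the bookkeeping concretely: it names the four corners of each base square as tangency points of labelled circles in the packing of \reffig{BorrCirclePacking} (e.g.\ $A\cap C$, $A\cap D$, $B\cap D$, $B\cap C$), reads off that their cyclic order reverses between the two cusp normalisations while a specific pair of \emph{opposite} corners lands in the same positions, and concludes that the map is the reflection in the diagonal through that fixed pair --- which is what determines the slopes of the two diagonals. To close the gap you need to carry out this vertex-by-vertex identification (or an equivalent orientation-plus-fixed-point argument); the general symmetry considerations you invoke do not determine the answer.
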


\begin{figure}
  \import{figures/}{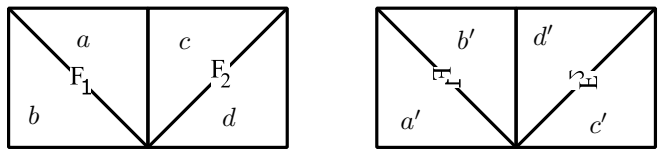}
  \caption{Shows how to glue the bases of the crossing circle cusps of fully augmented links with two crossing circles. Note we could choose the opposite diagonals instead.}
  \label{Fig:BorrGluing}
\end{figure}

\begin{proof}
The gluing is obtained by considering squares at the base of the octahedra in \reffig{BorrCirclePacking}. 
For the cusp shown in the middle-right of \reffig{BorrCirclePacking}, one square base has as vertices the points of intersection of circles $A\cap C$, $A\cap D$, $B\cap D$ and $B\cap C$ in anti-clockwise order. This is glued to a square in the opposite cusp meeting the same points of intersection: Note that the points $A\cap C$, $A\cap D$, $B\cap D$ and $B\cap C$ are now in clockwise order in the cusp on the right, with $A\cap C$ and $B\cap D$ in the same location in both. It follows that the squares are glued by a reflection in the negative diagonal.

The other square on the middle-right of \reffig{BorrCirclePacking} has as vertices the points of intersection $A'\cap C'$, $B'\cap C'$, $B'\cap D'$ and $A'\cap D'$ in anti-clockwise order. It is glued to a square on the right with the same vertices, but now $A'\cap C'$, $B'\cap C'$, $B'\cap D'$ and $A'\cap D'$ are in clockwise order on the right, with $A'\cap C'$ and $B'\cap D'$ in the same location. Thus the squares are glued by a reflection in the positive diagonal. 

To triangulate: choose both positive diagonals or both negative diagonals in the cusp on the middle-right of \reffig{BorrCirclePacking}. This splits the two squares into four triangles; either choice of diagonal will do, but we will choose the same diagonal in each square (as opposed to \reffig{BorrGluing} where different diagonals are marked). There are four tetrahedra lying over the four triangles in this cusp. Under the gluing, the diagonals and the squares are preserved, so the four triangles are mapped to four triangles in the second cusp. The four additional tetrahedra lie over these four triangles in the second cusp. 
\end{proof}

\section{Doubling layered solid tori}\label{Sec:DoublingLST}

When the boundary of $M$ is a once-punctured torus triangulated by just two ideal triangles, then we may glue a layered solid torus to $\bdy M$ to perform Dehn filling. In the case of the Borromean rings, the boundary of our manifold is a twice-punctured torus triangulated by four ideal triangles, in symmetric pairs. To perform Dehn filling, we need to modify the construction. This modification essentially appears at the end of Gu{\'e}ritaud and Schleimer~\cite{GueritaudSchleimer}, when they consider the Whitehead link. However, the construction applies much more generally than the Whitehead link application, so we walk through it carefully. 

There are two different modifications required, depending on the slope we wish to Dehn fill. Consider the cover $\RR^2$ of the twice punctured torus obtained by putting punctures at integral points $\ZZ^2\subset \RR^2$. Assume first that there are no half-twists, so a meridian $\mu$ of slope $1/0$ lifts to run from $(0,0)$ to $(0,1)$. A longitude $\lambda$ of slope $0/1$ lifts to run from $(0,0)$ to $(2,0)$. 
Then any slope $m=\ell/k = \ell\mu + k\lambda$ on the torus lifts to an arc beginning at $(0,0)$ and ending at $(2k,\ell)$. 

The two modifications depend on whether $\ell$ is even or odd. 
If $\ell$ is odd, the lift of the slope $\ell/k$ will only meet the points of $\ZZ^2$, which are lifts of punctures, at its endpoints.
In this case, we will take a double cover of a layered solid torus.  

\begin{lemma}\label{Lem:DoubleCoverSolidTorus}
Suppose $m=\ell/k = \ell\mu + k\lambda$ is a slope on the torus (with generators $\mu$, $\lambda$ as above) such that $\ell$ is odd, and $\ell/k\notin \{1/0, \pm 1\}$.

Consider first the layered solid torus $X$, constructed as follows. 
Begin in the Farey triangle with vertices $(1/0, 0/1, \pm 1/1)$ and step to the triangle with slope $\ell/2k$, building the corresponding layered solid torus $X$ as in \refsec{LST}.

Let $Y$ be the double cover of $X$. Then $Y$ satisfies the following properties. 
\begin{itemize}
\item The boundary of $Y$ is a twice-punctured torus, triangulated by four ideal triangles (in two symmetric pairs), lifting to give a triangulation of the cover $\RR^2$. The basis slope $\lambda$ lifts to run from $(0,0)$ to $(2,0)$ in $\RR^2$, and projects to run twice around the slope $0/1$ in $\bdy X$. The slope $\mu$ lifts to run from $(0,0)$ to $(0,1)$ in $\RR^2$.
Diagonals of the triangulation of $\bdy Y$ have positive or negative slope, depending on whether $m$ is positive or negative. 
\item The meridian of $Y$ is the slope $m = \ell\mu + k\lambda$.
\end{itemize}
\end{lemma}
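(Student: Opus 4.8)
The plan is to check first that the layered solid torus $X$ is well defined, then to realise the double cover $Y \to X$ and the induced boundary cover $\bdy Y \to \bdy X$ explicitly as quotients of $\RR^2$, and then to read off the listed properties; only the identification of the meridian of $Y$ needs a genuine (small) computation. For well-definedness: $\ell/k$ is a slope, hence in lowest terms, and since $\ell$ is odd, $\gcd(\ell,2k)=\gcd(\ell,k)=1$, so $\ell/2k$ is a slope. A short case check shows $\ell/2k$ is separated from the Farey triangle $(1/0, 0/1, \pm 1/1)$ by at least one triangle: the slopes of that triangle together with those of its three neighbours lie in $\{1/0,\ 0/1,\ \pm 1/1,\ \pm 2/1,\ \pm 1/2\}$, and each is excluded as a value of $\ell/2k$ --- $0/1,\pm1/1,\pm2/1$ because $\ell$ is odd, $1/0$ because otherwise $k=0$ and $\ell/k=1/0$, and $\pm1/2$ because otherwise $\ell/k=\pm1$. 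Hence the construction of \refsec{LST} produces a layered solid torus $X$ with $N\ge2$, whose outer boundary $\bdy X$ is a once-punctured torus carrying the two-triangle triangulation of $(1/0, 0/1, \pm 1/1)$, and whose meridian is the slope $\ell/2k$.

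Next I would model $\bdy X$ as $\RR^2/\ZZ^2$ punctured at $\ZZ^2$, so that its triangulation lifts to the standard triangulation of $\RR^2$ with vertices $\ZZ^2$: vertical edges of slope $1/0=\mu$, horizontal edges of slope $0/1=\lambda$, and diagonal edges of slope $\pm1/1$, the sign agreeing with the initial Farey triangle (hence with the sign of $m$). In this model the meridian $\ell/2k$ of $X$ is the primitive class $(2k,\ell)$. The cover $Y\to X$ is the unique connected double cover of the solid torus, again a solid torus whose core wraps twice; its restriction $\bdy Y\to\bdy X$ is the double cover of the torus $\bdy X$ corresponding to the index-two subgroup $\Lambda\le\pi_1(\bdy X)=\ZZ^2$ that is the preimage of $2\ZZ$ under $\phi\from\pi_1(\bdy X)\to\pi_1(X)=\ZZ$, where $\ker\phi=\langle(2k,\ell)\rangle$. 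This is the step that really uses $\ell$ odd: of the three index-two sublattices of $\ZZ^2$, only $\Lambda=\{(a,b):a\text{ even}\}=\langle(2,0),(0,1)\rangle$ contains $(2k,\ell)$. So $\bdy Y=\RR^2/\Lambda$, punctured at $\ZZ^2/\Lambda=\{(0,0),(1,0)\}$ --- a twice-punctured torus --- with deck transformation translation by $(1,0)$.

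The first three bullets are now immediate. The triangulation of $\bdy X$ lifts to the standard triangulation of the cover $\RR^2$; a $\Lambda$-fundamental domain is two unit squares, giving four ideal triangles that the deck translation interchanges in two symmetric pairs. Since $(0,1)\in\Lambda$, the slope $\mu$ lifts to the closed arc from $(0,0)$ to $(0,1)$; since $(1,0)\notin\Lambda$ but $(2,0)\in\Lambda$, the slope $\lambda$ lifts to the closed arc from $(0,0)$ to $(2,0)$ and hence projects to twice the slope $0/1$ of $\bdy X$; and the diagonals have direction $(1,\pm1)$, of positive or negative slope according to the sign of $m$.

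Finally, for the meridian of $Y$: the preimage in $Y$ of a meridian disk of $X$ consists of two disjoint meridian disks of $Y$ --- the standard picture for $(z,w)\mapsto(z,w^2)$ on $D^2\times S^1$ --- so the meridian of $Y$ is a lift of the meridian curve of $X$; its two lifts are disjoint parallel curves on $\bdy Y$, hence isotopic, so the slope is well defined. This lift is the class $(2k,\ell)=k(2,0)+\ell(0,1)=\ell\mu+k\lambda$ on $\bdy Y$, i.e.\ the slope $m$, as claimed. I expect no genuine obstacle; the only thing that needs care is keeping the two descriptions aligned --- the Farey/slope combinatorics controlling $X$ and its meridian $\ell/2k$ versus the lattice description of $\bdy Y$ --- and in particular noticing that the parity of $\ell$ is precisely what selects the correct index-two sublattice.
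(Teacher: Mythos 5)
Your proof is correct and follows essentially the same route as the paper: both arguments identify the boundary restriction of the double cover by deciding which of $\mu=1/0$ and $\lambda=0/1$ lifts to a closed curve (you via the unique index-two sublattice of $\ZZ^2$ containing $(2k,\ell)$, the paper via the parity of the intersection numbers $|2k|$ and $|\ell|$ with the meridian $\ell/2k$ --- the same computation), and then express the lifted meridian as $\ell\mu+k\lambda$. Your preliminary check that $\ell/2k$ avoids the slopes of the initial Farey triangle and its neighbours matches the paper's observation that $\ell/2k\notin\{0,1/0,\pm 1,\pm 2,\pm 1/2\}$, so nothing is missing.
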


\begin{proof}
Let $X$ denote the layered solid torus with a boundary triangulation that includes the slopes $0/1$ and $1/0$ and a diagonal $\pm 1/1$, with sign agreeing with the sign of $m$, and meridian $\ell/2k$.
Note that since $\ell$ is odd and $\ell \notin \{1/0, \pm 1\}$, we have $\ell/2k \notin \{0,1/0, \pm 1, \pm 2, \pm 1/2\}$, which were the excluded slopes for building a layered solid torus in \refsec{LST}.

Let $Y$ denote the double cover of $X$. 
The double cover of a solid torus is a solid torus, and the once-punctured torus boundary lifts to a twice-punctured torus, with triangles lifting to triangles. We need to show that the slopes behave as claimed. 
  
First, the slope $1/0$ and the meridian $\ell/2k$ of $X$ have geometric intersection number $|1\cdot 2k - \ell\cdot 0| = |2k|$, which is even, and thus $1/0$ is homotopic to an even power of the core of the solid torus $X$. Thus in the double cover $Y$, the slope $1/0$ lifts to a closed curve. As $1/0$ is an edge of a triangle on $\bdy X$, it will remain an edge of a triangle on $\bdy Y$, and lift to a generator of the fundamental group denoted $\mu$. We may take this to run from $(0,0)$ to $(0,1)$ in $\RR^2$. 

Next, the curve $0/1$ meets $\ell/2k$ a total of $|0\cdot 2k - \ell\cdot 1|= |\ell|$ times on $\bdy X$, which is odd. Therefore it lifts to an arc rather than a closed curve on $\bdy Y$, with endpoints on distinct punctures. Thus a second generator of the fundamental group of $\bdy Y$ is given by taking two lifts of $0/1$, end to end. Denote this generator by $\lambda$. Its lift runs from $(0,0)$ to $(2,0)$ in $\RR^2$. 

Finally we check that the meridian of $Y$ is the slope $m$, written in terms of $\mu$ and $\lambda$ as claimed. In $X$, the curve $\ell/2k$ bounds a disc. This lifts to bound a disc in $Y$ as well. However, note the lift runs $\ell$ times along $\mu$ and $k$ times along $\lambda$. Thus the meridian slope is as claimed. 
\end{proof}

If $\ell$ is even, say $\ell = 2s$ for some integer $s$, the lift of $m=\ell/k$ to $\RR^2$ is an arc running from $(0,0)$ through $(k, s) \in \ZZ^2$ to $(2k, 2s)\in \ZZ^2$. Thus it meets a lift of a puncture in its interior. In this case, taking a double cover of a layered solid torus will not suffice. Instead, we need to give a different construction. 

\begin{construction}\label{Constr:SideBySide}
Let $m=\ell/k$ be a slope such that $\ell$ is even, say $\ell=2s$, and $m\notin \{0/1, \pm 2/1\}$. 
Let $(T_0, \dots, T_N)$ be a sequence of triangles in the Farey triangulation where $T_0$ is a triangle with slopes $0/1$, $1/0$, and either $1/1$ or $-1/1$, with sign agreeing with the sign of $m$, and $T_N$ is a triangle with slopes $u$, $t$, and $s/k$.

Start with the triangulation of the twice-punctured torus consisting of two copies of the slopes of $T_0$ laying side-by-side. More precisely, fill $\RR^2-\ZZ^2$ with unit squares with diagonals matching that of $T_0$, and quotient by $(x,y) \mapsto (x+2,y)$ and $(x,y)\mapsto (x,y+1)$.

Inductively, for the $j$-th step across an edge in the Farey triangulation, attach two ideal tetrahedra to the twice-punctured torus, effecting two identical diagonal exchanges with the slopes of $T_{j-1}$, and producing a triangulation of a space homotopy equivalent to the product of the interval and the twice-punctured torus, with one boundary triangulated by two side-by-side copies of $T_0$, and the other triangulated by two side-by-side copies of $T_j$. Note that so far, this is identical to the procedure for the layered solid torus, only we are taking two copies of each tetrahedron instead of just one. See \reffig{SideBySide}.

\begin{figure}
  \includegraphics{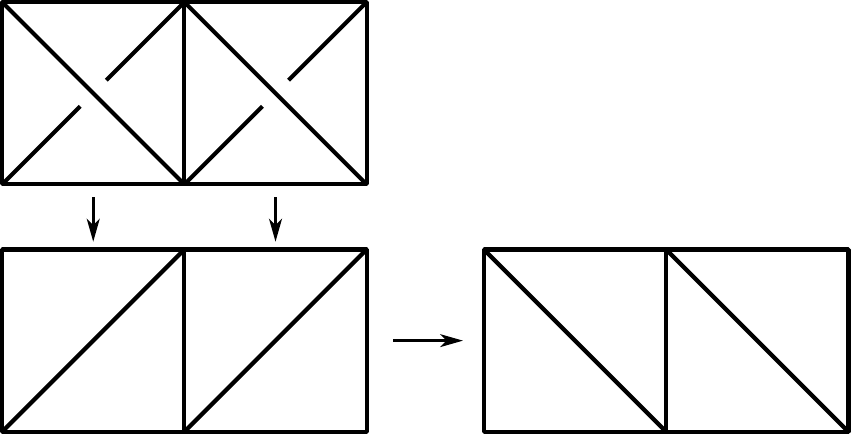}
  \caption{To create a solid torus with boundary a 2-punctured torus, at each step layer two identical tetrahedra onto the current boundary triangulation, effecting a diagonal exchange.}
  \label{Fig:SideBySide}
\end{figure}

This time, continue until $j=N$, so one boundary is labeled by slopes $u$, $t$, and $s/k$, repeated twice in each of two parallelograms lying side-by-side. Now obtain a solid torus as follows. First, identify the two slopes $s/k$ in the two boundary triangles. Then fill the remaining space with a single tetrahedron whose four faces are glued to the inner faces.
\end{construction}

\begin{remark}
Note that to construct a layered solid torus in \refsec{LST}, we needed to exclude slopes in $T_0$ and $T_1$ in the Farey graph. It is no longer necessary to exclude slopes in $T_1$ for the previous construction, because of the addition of extra tetrahedra corresponding to $j=N$ and a final tetrahedron after identifying diagonals of the tetrahedra corresponding to $j=N$. We still exclude slopes in $T_0$. 
\end{remark}

\begin{lemma}\label{Lem:DoubleLST}
Suppose $m=\ell/k = \ell\mu + k\lambda$ is such that $\ell$ is even and $m=\ell/k \notin\{0/1, \pm 2/1\}$. 
Then the triangulated space $Y$ constructed as above, by taking a sequence of side-by-side tetrahedra and attaching a final tetrahedron at the core, forms a solid torus satisfying:
\begin{itemize}
\item The boundary of $Y$ is a twice-punctured torus, triangulated by four triangles (in symmetric pairs), with basis slopes $\mu$ running over one edge of a triangle, lifting to run from $(0,0)$ to $(0,1)$ in $\RR^2$, and $\lambda$ running over two edges (and two punctures), lifting to run from $(0,0)$ to $(2,0)$ in $\RR^2$. Diagonal edges of the triangulation have positive or negative slope, where the sign is determined by the sign of $m$. 
\item The meridian of the solid torus $Y$ is the slope $m = \ell\mu + k\lambda$. 
\end{itemize}
\end{lemma}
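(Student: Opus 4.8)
The plan is to handle the two claims of the lemma separately: the description of $\bdy Y$ can be read off directly from the construction, while identifying the homeomorphism type of $Y$ and its meridian requires analysing the two ``capping'' moves at the inner end of the build.

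For the boundary, note that every move of Construction~\ref{Constr:SideBySide} --- each layering of a pair of tetrahedra, the identification of the two $s/k$ edges, and the attachment of the last tetrahedron --- takes place on the \emph{inner} side of the partially-built complex, so the outer twice-punctured torus, with its triangulation by two side-by-side copies of $T_0$, is left intact and equals $\bdy Y$. The statements about $\mu$, $\lambda$, and the diagonal slopes then follow by inspection of Construction~\ref{Constr:SideBySide}: $\mu$ runs over a single edge of slope $1/0$ and lifts to the segment from $(0,0)$ to $(0,1)$; $\lambda$ runs over two edges of slope $0/1$ and over two punctures and lifts to the segment from $(0,0)$ to $(2,0)$; and the two diagonal edges have slope $1/1$ or $-1/1$, with the sign chosen, just as $T_0$ was, to match the sign of $m$.

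For the second claim, I would first record, as in Construction~\ref{Constr:SideBySide} itself, that the doubled layering produces a triangulated complex $W$ homotopy equivalent to $F\times[0,1]$, where $F$ is the twice-punctured torus, with inner boundary triangulated by two side-by-side copies of $T_N=(u,t,s/k)$; this is just the single-tetrahedron bookkeeping of \refsec{LST}, carried out in each of the two side-by-side copies at once. The decisive point is the parity hypothesis: because $\ell=2s$ and $\gcd(\ell,k)=1$, the integer $k$ is odd, so the slope-$s/k$ edge, which is a loop on the once-punctured torus double-covered by $F$, lifts to $F$ as an \emph{arc} joining the two punctures; ``the two $s/k$ edges'' of the statement are precisely the two halves of this arc, one in each copy of $T_N$. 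Identifying them and then filling with the final tetrahedron is the doubled analogue of the fold of \refsec{LST} (compare \reffig{FoldM}; and compare \reflem{DoubleCoverSolidTorus}, which treats the complementary parity): it makes the loop that runs along this arc and back --- whose class is $2s\mu+k\lambda=\ell\mu+k\lambda=m$, not $s\mu+k\lambda$ --- null-homotopic in $Y$. Granting this, $\pi_1(Y)\cong\pi_1(F)/\langle\langle m\rangle\rangle\cong\ZZ$ since $m$ is primitive, and the kernel of $H_1(\bdy Y)\to H_1(Y)$ is generated by $m$; the loop theorem then gives an embedded disc in $Y$ with boundary of slope $m$, and a standard cut-and-cap argument (the complement of this disc is a simply connected $3$-manifold with sphere boundary, hence a ball) identifies $Y$ with the solid torus $S^1\times D^2$ and its meridian with $m$. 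Alternatively, and perhaps more transparently, one can exhibit the solid-torus structure directly, tracking how the tetrahedra spiral around a single core curve and checking that the last tetrahedron closes this spiral up, exactly as the fold does in \refsec{LST}; see \reffig{SideBySide}.

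The main obstacle is this middle step --- pinning down the effect of the two terminal moves. One must check that the last tetrahedron glues consistently (its four faces matching the four inner triangular faces with compatible edge identifications, a finite combinatorial verification), that the result is genuinely a solid torus and not some other piece carrying, say, an essential annulus or a non-manifold edge, and, most delicately, that the slope that the cap kills is $m$ itself rather than a proper divisor of it. This last point is exactly where the hypothesis that $\ell$ is even enters: it is what turns the slope-$s/k$ edge into a two-piece arc across the two copies, so that the fold kills the doubled class $m$. It is the twice-punctured analogue of the sign- and hinge-bookkeeping that was delicate in \refsec{LST}, and getting the doubled fold to kill exactly the class $m$ is the crux; once that is in hand the rest of the lemma follows formally.
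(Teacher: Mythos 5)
Your proposal follows essentially the same route as the paper: the paper's proof is exactly the two-step argument you give, namely reading the generators $\mu$ and $\lambda$ off the outer boundary (one edge of slope $1/0$, two edges of slope $0/1$) and then observing that the fold pinches the two slope-$s/k$ edges together, so the curve made trivial runs twice over that edge and has class $2s\mu+k\lambda=\ell\mu+k\lambda$. Your parity remark (that $\gcd(\ell,k)=1$ with $\ell$ even forces $k$ odd, so the $s/k$ edge is an arc between the two punctures) is a correct and useful elaboration of why the fold kills the doubled class rather than $s\mu+k\lambda$.

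One caveat on the extra material you add beyond the paper's proof: the identification $\pi_1(Y)\cong\pi_1(F)/\langle\langle m\rangle\rangle\cong\ZZ$ is not right as stated. Here $F$ is a twice-punctured torus, so $\pi_1(F)$ is free of rank $3$, and killing the normal closure of one primitive class does not give $\ZZ$; moreover the fold and the final tetrahedron identify more than just the curve $m$ (they also identify the two punctures' peripheral structure), so $Y$ is not simply $F\times I$ with a $2$-handle attached along $m$. The paper sidesteps this entirely: the fact that the construction yields a solid torus is built into Construction~\ref{Constr:SideBySide} by direct inspection (the doubled analogue of the fold in \refsec{LST}), which is the ``alternative, more transparent'' route you mention at the end and is the one you should actually use. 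With that substitution your argument matches the paper's.
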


\begin{proof}
Generators for the fundamental group of the boundary torus consist of edges $1/0$ in the original triangulation, and two copies of $0/1$ by construction. Denote the first generator by $\mu$ and the second, consisting of two edges, by $\lambda$.

The meridian of the solid torus is the curve that is homotopically trivial. This is the curve formed by pinching together two edges of slope $s/k$ on the inside boundary of triangulated space. Thus it runs twice over this edge. In terms of the generators $\mu$ and $\lambda$, the curve running twice over the edge $s/k$ of the innermost twice-punctured torus has slope $2s\mu + k\lambda$, or $\ell\mu + k\lambda$. 
\end{proof}

\begin{remark}[Symmetry of Construction~\ref{Constr:SideBySide}]
  Notice that the solid torus $Y$ from Construction~\ref{Constr:SideBySide} will admit an involution. This involution takes the innermost tetrahedron to itself (setwise), and for each of the other tetrahedra in the construction, it swaps the two tetrahedra that were layered together, corresponding to the same triangle in the Farey graph. We will give this solid torus an angle structure. We will choose the angles to be preserved under this involution. Thus, although tetrahedra are layered in pairs, and although there are two punctures on the boundary, the two cusp triangulations will be identical, with angles on any tetrahedron agreeing with the angles on its image under the involution. 
\end{remark}

\begin{lemma}\label{Lem:AngleStructDoubleLST}
Let $V$ be a solid torus with twice punctured torus boundary, and with triangulation either as in \reflem{DoubleCoverSolidTorus} or \reflem{DoubleLST}, depending on whether the meridian $m=\ell\mu + k\lambda$ has $\ell$ even or odd. We also assume $m\notin \{0/1, 1/0, \pm 1/1, \pm 2/1 \}$. 

Let $\{\theta_p, \theta_q, \theta_r\}$ be exterior dihedral angles along edges of the twice punctured torus, where each is repeated twice symmetrically, such that
\[
0 < \theta_r < \pi, \quad
-\pi < \theta_p, \theta_q < \pi, \quad \mbox{and} \quad \theta_p+\theta_q+\theta_r = \pi. \]
Suppose also that in the case $\ell$ is odd, intersection numbers satisfy
\[
i(p,\ell/2k)\theta_p + i(q,\ell/2k)\theta_q + i(r, \ell/2k)\theta_r > 2\pi.\]
Then there exists an angle structure on the triangulated solid torus with these exterior angles.
Conversely, if an angle structure exists and $\ell$ is odd, then the intersection numbers satisfy the above equation. 
\end{lemma}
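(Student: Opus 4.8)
The plan is to prove the two assertions by treating separately the two cases of the lemma according to the parity of $\ell$, which correspond to the two constructions of \reflem{DoubleCoverSolidTorus} (double cover of a layered solid torus) and \reflem{DoubleLST} (side‑by‑side layering plus a core tetrahedron). The odd case reduces cleanly to \refprop{LSTAngleStructExists}; the even case is built by hand, following the template of \refsec{AngleStructures}.

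\textbf{Case $\ell$ odd.} Here $V=Y$ is the double cover $f\colon Y\to X$ of the layered solid torus $X$ of \reflem{DoubleCoverSolidTorus}, whose boundary is a once‑punctured torus with slopes $p,q,r$ (those of $T_0=(0/1,1/0,\pm1/1)$, labelled so that $r$ is the first to disappear along the Farey path to $X$'s meridian $\ell/2k$, matching the convention of \refprop{LSTAngleStructExists}), and whose meridian is $\ell/2k$. As in the proof of \reflem{DoubleCoverSolidTorus}, $\ell$ odd together with the hypothesis $m\notin\{1/0,\pm1/1\}$ forces $\ell/2k\notin\{0/1,1/0,\pm1/1,\pm2/1,\pm1/2\}$ --- exactly the slopes excluded in \refsec{LST} --- so $\ell/2k$ is separated from $T_0$ by at least one triangle and \refprop{LSTAngleStructExists} applies to $X$ for any exterior angles satisfying \refeqn{DihedralAngleProperties} (which is precisely the $\theta$‑hypothesis of the present lemma); the symmetric angles $\theta_p,\theta_q,\theta_r$ on $\bdy Y$ descend to such exterior angles on $\bdy X$. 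The one fact I would establish is that angle structures transfer along $f$ in both directions. (i) Any angle structure on $X$ pulls back to one on $Y$: assign each tetrahedron of $Y$ the angles of the tetrahedron it covers; conditions (1) and (2) of \refdef{AngleStruct} concern a single tetrahedron, and (3) is preserved because $f$ is a local homeomorphism, trivial over a contractible neighbourhood of any edge, so the cyclic sequence of dihedral angles around an edge of $Y$ --- hence its sum --- equals that around its image edge. (ii) Any angle structure on $Y$ invariant under the nontrivial deck transformation $\iota$ descends to one on $X$: since $\iota$ acts freely and any simplicial involution of a simplex has a fixed point, $\iota$ fixes no edge or tetrahedron of $Y$, so each edge and tetrahedron of $X$ has exactly two lifts swapped by $\iota$; assign to each cell of $X$ the common value on its lifts, and read off the edge equations from $Y$ as in (i). Given these, the forward direction is immediate: the hypothesis $i(p,\ell/2k)\theta_p+i(q,\ell/2k)\theta_q+i(r,\ell/2k)\theta_r>2\pi$ is precisely the hypothesis of \refprop{LSTAngleStructExists} for $X$, so $X$ admits an angle structure with exterior angles $\theta_p,\theta_q,\theta_r$, and its $f$‑pullback is the desired angle structure on $V$. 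For the converse, given an angle structure $\alpha$ on $V=Y$ with these (symmetric) exterior angles, the average of $\alpha$ and $\iota^{*}\alpha$ lies in the convex space of angle structures on $Y$, is $\iota$‑invariant, and still has exterior angles $\theta_p,\theta_q,\theta_r$; it descends to an angle structure on $X$ with those exterior angles, whence \refprop{LSTAngleStructExists} forces the intersection‑number inequality.

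\textbf{Case $\ell$ even.} Now $V=Y$ is built by Construction~\ref{Constr:SideBySide}: pairs of tetrahedra layered two at a time along the Farey path $T_0,\dots,T_N=(u,t,s/k)$, followed by identifying the two $s/k$‑edges and gluing in a single core tetrahedron. I would build the angle structure to be invariant under the involution $\iota$ of that construction, so that the two tetrahedra of each layered pair carry equal angles and the two cusp cross‑sections coincide; then \reflem{HexagonCusp} and \reflem{XYinTermsOfZ} describe the angles at each cusp exactly as in \refsec{AngleStructures}. Concretely, set $z_0=\pi+\theta_p$ and $z_1=\pi-\theta_r$, choose a strictly decreasing sequence $z_2,\dots,z_N\in(0,\pi)$, and --- treating the terminal $s/k$‑identification as a fold --- define $x_j,y_j$ from consecutive $z$'s by the formulas of \reflem{XYinTermsOfZ} and \reflem{XYLastTet}, so that every interior edge created by the layered pairs has angle sum $2\pi$ (\reflem{ZSequence}). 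It then remains to give the core tetrahedron $\iota$‑invariant angles, to verify condition (3) for the interior edges produced by the identification together with the core tetrahedron, and to check that a suitable decreasing sequence always exists, with no intersection‑number condition needed.

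\textbf{Main obstacle.} The odd case is essentially formal once the covering‑space transfer of angle structures in (i) and (ii) is in place. The real work is the even case: identifying precisely how the core tetrahedron appears in the cusp cross‑section after the $s/k$‑identification, and confirming that its angles can always be chosen (respecting $\iota$) so that every newly interior edge has angle sum $2\pi$. I expect this bookkeeping to be the delicate point, and it is also where one must see that the extra step to $T_N$ together with the core tetrahedron provide enough slack --- in the spirit of \reflem{AngleStructHinge} and \reflem{NoHingeCase} --- that, unlike the odd case, no intersection‑number inequality is required.
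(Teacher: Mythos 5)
Your odd case is correct and is essentially the paper's argument: reduce to \refprop{LSTAngleStructExists} for the base layered solid torus with meridian $\ell/2k$ (checking, as you do, that this slope avoids the excluded triangles of the Farey graph), and lift the resulting angles through the double cover. Your covering-space bookkeeping (triviality of the cover over an edge neighbourhood, freeness of the deck transformation on cells) and the symmetrise-by-averaging argument for the converse are more careful than what the paper writes down; the averaging step is a genuinely useful addition, since the paper's one-line treatment implicitly assumes the angle structure on the cover is deck-invariant, whereas you handle an arbitrary one using convexity of $\calA(\tau)$.

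The even case, however, is left incomplete at exactly the point where the lemma says something beyond \refprop{LSTAngleStructExists}. You set up the $z$-sequence and the involution-invariance correctly, but you defer the two essential verifications: (a) what the edge equations force at the core tetrahedron once the two $s/k$-edges are identified, and (b) that a sequence meeting the resulting terminal condition always exists with no intersection-number hypothesis. The paper resolves (a) by computing the innermost hexagon: its interior angles are $2x_N+x_{N+1}$, $2y_N+z_{N+1}$ and $z_N$ (each occurring twice), and the total $4\pi$ together with $x_N+y_N+z_N=\pi$ gives $z_N=x_{N+1}+z_{N+1}$, so the terminal condition on the sequence is $0<z_{N+1}<z_N$ rather than $z_N=0$. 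Point (b) is then the observation that a positive decreasing sequence satisfying the hinge/convexity inequalities and ending at some $z_{N+1}\in(0,z_N)$ always exists --- rerun the construction of \reflem{AngleStructHinge}, noting that in the hinge-free case one no longer needs the convex sequence to reach $0$, which is precisely why the inequality \refeqn{IntersectionCondition} drops out when $\ell$ is even. Without (a) and (b) the existence claim for $\ell$ even is not established; as written, your proposal correctly identifies the delicate point but does not close it.
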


\begin{proof}
The case of the double cover of a layered solid torus follows from the same result for usual layered solid tori, \refprop{LSTAngleStructExists}. In this case, the angle structure exists for the layered solid torus; lift the angles to the double cover to obtain the result. This gives the proof when $\ell$ is odd. 

In the case that $\ell$ is even, we work with the side-by-side solid torus. For every pair of tetrahedra layered on at the $i$-th step of Construction~\ref{Constr:SideBySide}, where $1 \leq i \leq N$, label the dihedral angles of both tetrahedra by $x_i, y_i, z_i$ with $x_i + y_i + z_i = \pi$. Similarly for the last tetrahedron, label its angles $x_{N+1}, y_{N+1}, z_{N+1}$ with $x_{N+1}+y_{N+1}+z_{N+1}=\pi$.  As in \refsec{AngleStructures}, we build an angle structure on the side-by-side layered solid torus by finding a sequence $(z_0 = \pi + \theta_p, z_1 =\pi - \theta_r, \cdots, z_N, z_{N+1})$ with $z_i \in (0, \pi)$.

The side-by-side layered solid torus has two ideal vertices. The cusp triangulation of each of the two ideal vertices is still a sequence of hexagons, and these will be identical because of the symmetry of the solid torus. The cusp triangulations are constructed exactly as in the case of the layered solid torus for the first $N$ steps; these agree with \reffig{XYinTermsOfZ}. But the cusp triangulation differs from that of the usual layered solid torus at the innermost hexagon, where the last tetrahedron is attached. 

Because cusp triangulations agree before the last step, the dihedral angles $x_i, y_i, z_i$ for $2 \leq i \leq N$ satisfy the same conditions of \reflem{XYinTermsOfZ}. Similarly, $z_0=\pi+\theta_p$, $z_1=\pi-\theta_r$, and $x_1$, $y_1$ satisfy the conditions of \reflem{XYFirstTet}, with the same cusp pictures. Therefore, just as in \reflem{ZSequence}, for $1\leq i \leq N$ the sequence satisfies:
\[
\begin{cases}
  z_{i-1}> z_i + z_{i+1} & \mbox{ if $\Delta_i$ and $\Delta_{i+1}$ are labelled $RL$ or $LR$ (hinge condition)} \\
  z_{i-1} + z_{i+1} > 2z_i & \mbox{ if $\Delta_i$, $\Delta_{i+1}$ are $RR$ or $LL$ (convexity condition)} \\
\end{cases}
\]
Moreover, for the first two tetrahedra,
\[
\pi-\theta_p-2\theta_r <z_2 < \pi-\theta_p.
\]

Recall that to make the slope $m$ trivial in the side-by-side case, we identify the two edges of slope $(\ell/2)/k$ in the pair of tetrahedra at the $N$-th step. This corresponds to identifying a pair of opposite vertices in the innermost hexagon; see \reffig{SideBySideHexagon}.

\begin{figure}
  \import{figures/}{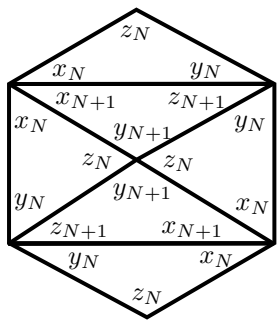}
  \caption{The innermost triangles of one of the two hexagons forming a cusp triangulation of a side-by-side solid torus.}
  \label{Fig:SideBySideHexagon}
\end{figure}

As in the proof of \reflem{XYLastTet}, the sum of the interior angles of the hexagon is $4\pi$. This gives the equation
\[ (2x_N + x_{N+1}) + (2y_N+z_{N+1}) + z_N = 2\pi, \mbox{ or } z_N = z_{N+1}+x_{N+1}.\]
Since $x_{N+1}>0$, this implies that $0 < z_{N+1} < z_N$. 
Thus for angle structure to exist on the side-by-side layered solid torus, we require that $0 < z_{N+1} < z_N$.
Finally, note again that by a downward induction, convexity and hinge equations imply that the sequence is strictly decreasing.

As in Lemmas~\ref{Lem:NoHingeCase} and~\ref{Lem:AngleStructHinge}, we find a sequence $(z_0, z_1, \dots, z_N, z_{N+1})$ satisfying the above conditions. In the layered solid torus case, we had to split into two cases, depending on whether or not a hinge existed. When there was no hinge, we needed a convex sequence with the last term equal to zero. However, in this case, we need a convex sequence with $z_{N+1}>0$, which can always be arranged. Thus we only need to show that a sequence satisfying the above requirements exits.

To find such a sequence, the same argument of \reflem{AngleStructHinge} can be used. Namely, let $h \in \{2, 3, \dots, N+1 \}$ be the smallest index such that $\Delta_h$ is a hinge of the form $RL$ or $LR$, or set $h=N+1$ if no such index exists.  Set $z_0=\pi+\theta_p$, $z_1=\pi-\theta_r$.  We can choose inductively a positive decreasing sequence $z_k$ such that $z_k>2z_{k-1}-z_{k-2}$  for $1 \leq k \leq h$.

The rest of the sequence $z_i$ is constructed backwards from $i=N+1$ to $i=h$. Consider a sequence $z'_i$. Set $1>z'_{N+1}>0$ and $z'_{N}=1$. For each $i$ such that $N-1 \geq i \geq h+1$, inductively choose $z'_i$ such that
$z'_i > z'_{i+1} + z'_{i+2}$ or
$z'_{i} + z'_{i+2} > 2z'_{i+1}$, depending on whether $\Delta_{i+1}$ is a hinge or not.

Now choose $\epsilon$ such that
$0 < \epsilon < (z_{h-1}-z_h)/z'_{h+1}$.
Set $z_i = \epsilon z'_i$ for $h+1 \leq i \leq N+1$. The sequence $z_i$ satisfies the required inequalities for $i<h$ and $i>h$. Because of our choice of $\epsilon$, it also satisfies the hinge condition $z_h < z_{h-1} - z_{h+1}$, or $z_h < z_{h-1} - \epsilon z'_{h+1}$. 
Thus we have found a sequence giving an angle structure.
\end{proof}

\begin{lemma}\label{Lem:VolMaxDoubleLST}
Let $V$ be the triangulated solid torus of \reflem{AngleStructDoubleLST}; in particular, the meridian $m$ of $V$ is not one of $\{0/1, 1/0, \pm 1/1, \pm 2/1\}$.
If the volume functional takes its maximum on the boundary of the space of angle structures, then all tetrahedra in the solid torus must be flat. Thus the volume functional is maximised in the interior. 
\end{lemma}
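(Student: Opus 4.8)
The plan is to follow the proof of \reflem{FlatTetrahedra} essentially verbatim, treating separately the two constructions of \reflem{AngleStructDoubleLST}, and then to deduce interior maximality from the fact that an all‑flat structure has zero volume while $\calA(V)\neq\emptyset$. Throughout I work with the angle structures on $V$ invariant under its involution, as fixed in \reflem{AngleStructDoubleLST} and the remark on symmetry following Construction~\ref{Constr:SideBySide}. When $\ell$ is odd this is quick: $V$ is the double cover of a layered solid torus $X$ (\reflem{DoubleCoverSolidTorus}), an invariant angle structure on $V$ is exactly the lift of one on $X$, and under this correspondence $\calV_V = 2\,\calV_X$ with boundary points going to boundary points. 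So a boundary maximum of $\calV_V$ yields a boundary maximum of $\calV_X$; since the exterior angles $\theta_p,\theta_q,\theta_r$ satisfy the hypotheses of \refprop{LSTAngleStructExists}, \reflem{FlatTetrahedra} applies to $X$ and forces every tetrahedron of $X$, hence of $V$, to be flat.

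When $\ell$ is even, $V$ is the side‑by‑side solid torus of \reflem{DoubleLST}, with layered pairs $\Delta_1,\dots,\Delta_N$ and an innermost tetrahedron $\Delta_{N+1}$. Suppose $\calV$ takes its maximum at a boundary point of $\calA(V)$. Some dihedral angle there equals $0$ or $\pi$, so by Rivin's theorem (as used in \reflem{FlatTetrahedra}) the corresponding tetrahedron is flat, and any tetrahedron with a zero angle is flat. As established inside the proof of \reflem{AngleStructDoubleLST}, the boundary point is recorded by a nonincreasing sequence $(z_0=\pi+\theta_p,\, z_1=\pi-\theta_r,\, z_2,\dots,z_N,\, z_{N+1})$ with entries in $[0,\pi]$, obeying the weak hinge and convexity conditions together with $z_N=x_{N+1}+z_{N+1}$, and with $x_i,y_i$ for $1\le i\le N$ determined from neighbouring $z_j$'s by the formulas of \reflem{XYinTermsOfZ} (and \reflem{XYFirstTet} when $i=1$), which in the side‑by‑side setting hold for $2\le i\le N$.

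Next I would run the \reflem{FlatTetrahedra} argument. First, $\Delta_1$ is never flat, since $z_1=\pi-\theta_r\in(0,\pi)$ while a flat tetrahedron has $z_1\in\{0,\pi\}$. So if some tetrahedron is non‑flat, let $\Delta_j$ be the first flat one (its mirror is then flat too, and $\Delta_{N+1}$ is its own mirror); then $j\ge 2$. If $2\le j\le N$, then $\Delta_{j-1}$ is not flat, so $z_{j-1}\in(0,\pi)$ (using $z_1\in(0,\pi)$ when $j=2$); monotonicity forces $z_j=0$, hence $z_{j+1}=0$, hence $x_j,y_j,z_j,z_{j+1}\in\{0,\pi\}$; substituting into the expression for $z_{j-1}$ coming from \reflem{XYinTermsOfZ} at index $j$ gives $z_{j-1}\in\{0,2\pi\}$, contradicting $z_{j-1}\in(0,\pi)$. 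If $j=N+1$, then $z_N\in(0,\pi)$ since $\Delta_N$ is not flat, and $y_{N+1}=\pi-z_N$ by the relation $z_N=x_{N+1}+z_{N+1}$ together with $x_{N+1}+y_{N+1}+z_{N+1}=\pi$, so $y_{N+1}\in(0,\pi)$ and $\Delta_{N+1}$ cannot be flat — again a contradiction. Hence no tetrahedron is non‑flat, i.e. every tetrahedron of $V$ is flat.

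To finish, I would argue as follows. In either case a boundary maximum forces all tetrahedra of $V$ to be flat, so $\calV=0$ there, since $\Lambda(0)=\Lambda(\pi)=0$. But $\calA(V)$ is nonempty by \reflem{AngleStructDoubleLST}, and at every interior point all dihedral angles lie in $(0,\pi)$, so each ideal tetrahedron has strictly positive volume and $\calV>0$; as $\overline{\calA(V)}$ is compact, $\calV$ attains its maximum, and that maximum is strictly positive, hence not at an all‑flat configuration. Therefore the maximum lies in the interior of $\calA(V)$. I expect the only point requiring genuine care is the innermost tetrahedron $\Delta_{N+1}$ of the side‑by‑side construction, which is absent from \reflem{FlatTetrahedra}: the identity $y_{N+1}=\pi-z_N$ is exactly what makes that case close. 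The rest is transcription of the $z$‑sequence description and the cusp‑triangulation formulas set up in the proof of \reflem{AngleStructDoubleLST}, with small values of $N$ meriting a quick separate check.
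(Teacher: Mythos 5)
Your overall strategy is the paper's: reduce the odd case to \reflem{FlatTetrahedra} via the double cover, and rerun the $z$-sequence argument for the side-by-side torus with a separate check on the innermost tetrahedron (your identity $y_{N+1}=\pi-z_N$ is exactly the right observation there). But one step is genuinely backwards. In the side-by-side case you first show $\Delta_1$ can never be flat (since $z_1=\pi-\theta_r$ is pinned in $(0,\pi)$), then show that a first flat tetrahedron $\Delta_j$ with $j\ge 2$ is impossible, and conclude ``hence no tetrahedron is non-flat, i.e.\ every tetrahedron of $V$ is flat.'' What your argument actually shows is that \emph{no} tetrahedron is flat; combined with Rivin's criterion this says a boundary maximum cannot exist at all, so the clause ``boundary maximum implies all flat'' becomes vacuously true and the final conclusion (maximum in the interior) still follows. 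The lemma as literally stated is therefore proved, but not in the form the paper goes on to use.

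The paper's proof deliberately does \emph{not} rule out $\Delta_1$ being flat: it allows the case $i-1=0$, in which the first flat tetrahedron is the first tetrahedron, and then monotonicity together with the weak hinge/convexity conditions forces every later $z_j$ into $\{0,\pi\}$, hence every tetrahedron flat. That non-vacuous ``one flat implies all flat'' statement is precisely what \reflem{ExtAngDoubleLST} and \reflem{VolMaxBor} invoke, in a setting where $V$ sits inside a larger triangulated manifold and the exterior angles $\theta_p,\theta_q,\theta_r$ are themselves coordinates of the ambient angle structure, free to degenerate to $0$ or $\pm\pi$ on the boundary of the ambient space of angle structures. Your proof, by fixing $\theta_r\in(0,\pi)$ once and for all, cannot be quoted there. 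To repair it, drop the claim that $\Delta_1$ is never flat and split on the index of the first flat tetrahedron as the paper does: $i\ge 2$ yields the contradiction $z_{i-1}\in\{0,2\pi\}$, while $i=1$ yields that all tetrahedra are flat. Finally, the paper closes with a check you omit: a maximum of the volume functional over \emph{all} angle structures, not just involution-invariant ones, must itself be invariant, since otherwise its image under the involution would be a second maximum, contradicting uniqueness of the maximiser.
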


\begin{proof}
Suppose the volume functional is maximised on the boundary for the double cover $V$ of a layered solid torus. There is a symmetry of the triangulated solid torus $V$ swapping triangles in pairs, changing the basepoint of the double covering. If the volume is maximised at a point in which angles are not preserved by this symmetry, then applying the symmetry gives two distinct maxima, contradicting the fact that the volume functional is convex. Thus a point of maximum for the double cover descends to a maximal point for the original layered solid torus. By \reflem{FlatTetrahedra}, if the volume functional is maximised on the boundary, all tetrahedra are flat. Thus the maximum is in the interior in this case. 

For the side-by-side triangulation, a point on the boundary of the space of angle structures corresponds to a sequence $(z_0,z_1,\dots, z_{N+1})$ satisfying hinge and convexity equations, but now with weak inequalities. This must be a non-increasing sequence.
Suppose the $i$-th tetrahedron is the first flat tetrahedron. Then $z_i\in \{0,\pi\}$ but $z_{i-1}\in (0,\pi)$ unless $i-1=0$. If $z_i=\pi$, then convexity implies $z_j=\pi$ for $j=i+1, \dots, h$, where $h$ is the next hinge index. The hinge condition then implies that all later $z_j=0$. Similarly, if $z_i=0$ then all later $z_j=0$.

Now consider $z_{i-1}$. We have $x_i, y_i, z_i, z_{i+1} \in \{0,\pi\}$. Thus by one of the formulas determining angles, as in \reflem{XYinTermsOfZ}, $z_{i-1}=2\pi$. But $0< z_{i-1}< \pi$ unless $i-1=0$. If $i-1=0$, the first flat tetrahedron is the first tetrahedron, so the entire solid torus consists of flat tetrahedra.

There is one final thing to check: above, we have restricted to angle structures in which a tetrahedron and its image under the involution preserving the side-by-side solid torus are given the same angles. We have shown that under this restriction, volume is maximised in the interior. However, note that if the volume were maximised for an angle structure on $Y$ that did not have symmetric angles, then applying the involution would give a distinct angle structure on $Y$ with the same volume, contradicting the fact that the volume functional has a unique maximum. Hence the maximum of the volume functional must occur at an angle structure that is preserved under our involution. 
\end{proof}

\begin{lemma}\label{Lem:ExtAngDoubleLST}
Let $V$ be the triangulated solid torus of \reflem{AngleStructDoubleLST}; in particular, the meridian $m$ of $V$ is not one of $\{0/1, 1/0, \pm 1/1, \pm 2/1\}$. Suppose $V$ is a subset of a triangulation of a 3-manifold $M$ such that the volume functional on $M$ is maximised at a point in which a tetrahedron of $V$ is flat. 
Then the exterior dihedral angles of $V$ satisfy: $(\theta_p,\theta_q,\theta_r)$ are equal to one of $(\pi, -\pi, \pi)$, $(-\pi, \pi, \pi)$, $(\pi,0,0)$, or $(0,\pi,0)$. Conversely, if the exterior dihedral angles satisfies one of these choices, then all tetrahedra in $V$ are flat. 
\end{lemma}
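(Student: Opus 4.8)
The plan is to combine the additivity of the volume functional over the decomposition $M = V \cup (M\setminus V)$ with the structural analysis of flat layered (and side-by-side) solid tori already carried out in \reflem{FlatTetrahedra} and \reflem{VolMaxDoubleLST}, and then to read off the exterior angles from the first tetrahedron $\Delta_1$ alone.

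First I would reduce to a statement about $V$. The triangulation of $M$ decomposes into $V$ and the complementary triangulated region, glued along the twice-punctured torus $\bdy V$; the volume functional on $M$ is the sum of the volume functionals on the two pieces, and any point of the (closed) angle-structure space of $M$ restricts to compatible such points on each piece. Hence if $\calV$ is maximised on $M$ at a point whose exterior angles on $\bdy V$ are $\theta_p,\theta_q,\theta_r$, then, holding those angles fixed, the restriction to $V$ maximises the volume functional on the space of angle structures of $V$ with those exterior angles. By hypothesis some tetrahedron of $V$ is flat at this point, so the maximum is attained on the boundary of that space. Running the arguments in the proofs of \reflem{FlatTetrahedra} and \reflem{VolMaxDoubleLST} --- Rivin's criterion forces a tetrahedron with a $0$ angle to also have a $\pi$ angle; the hinge and convexity equations propagate flatness forward along the layering; and the ``$z_{i-1}=2\pi$'' obstruction forces the first flat tetrahedron to be $\Delta_1$ --- gives that every tetrahedron of $V$ is flat and that the associated sequence $(z_0=\pi+\theta_p,\ z_1=\pi-\theta_r,\ z_2,\dots)$ is non-increasing with every tetrahedron entry in $\{0,\pi\}$.

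Next I would pin down the exterior angles. Since $\Delta_1$ is flat, $z_1=\pi-\theta_r\in\{0,\pi\}$, so $\theta_r\in\{0,\pi\}$. Recall $\Delta_1$ and $\Delta_2$ carry the same label, so their pair is $LL$ or $RR$ and \reflem{XYFirstTet} applies with $z_0=\pi+\theta_p$. If $\theta_r=0$, then $z_1=\pi$, so $x_1=y_1=0$ and monotonicity gives $z_2\in\{0,\pi\}$; the formula $x_1=\pi-(z_0+z_2)/2$ (or the same with $x_1$ and $y_1$ interchanged in the $RR$ case) forces $z_0=2\pi-z_2\in\{\pi,2\pi\}$, hence $\theta_p\in\{0,\pi\}$, and $\theta_q=\pi-\theta_p-\theta_r$ then yields $(\theta_p,\theta_q,\theta_r)\in\{(0,\pi,0),(\pi,0,0)\}$. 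If $\theta_r=\pi$, then $z_1=0$, so $\{x_1,y_1\}=\{0,\pi\}$ and monotonicity forces $z_2=0$; now $x_1=\pi-z_0/2$ and $y_1=z_0/2$, so flatness of $\Delta_1$ forces $z_0\in\{0,2\pi\}$, hence $\theta_p\in\{-\pi,\pi\}$ and $(\theta_p,\theta_q,\theta_r)\in\{(-\pi,\pi,\pi),(\pi,-\pi,\pi)\}$. This is exactly the claimed list; the argument is identical for the double-cover and side-by-side constructions because the cusp picture near $\Delta_1$, hence \reflem{XYFirstTet}, is the same in both, so no case split on the parity of $\ell$ is needed.

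For the converse, suppose the exterior angles of $V$ are one of the four listed triples. If $\theta_r=0$ then $z_1=\pi-\theta_r=\pi$, and since the other two angles of $\Delta_1$ are non-negative and sum to $0$, $\Delta_1$ is the flat tetrahedron $(0,0,\pi)$. If $\theta_r=\pi$ then $z_1=0$, so $\Delta_1$ has a $0$ angle; being at a maximum of the volume functional, Rivin's criterion forces $\Delta_1$ to also have a $\pi$ angle, so again $\Delta_1$ is flat. In either case the forward-propagation step of \reflem{FlatTetrahedra} (convexity carries a value $\pi$ forward until the next hinge, after which the hinge condition sends all later entries to $0$; a value $0$ is carried forward directly) shows every subsequent tetrahedron of $V$ is flat. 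The main obstacle I anticipate is the bookkeeping in the middle step --- tracking the $LL$ versus $RR$ alternatives (which interchange $x_1$ and $y_1$ but not the constraint on $z_0$), checking that the resulting monotone $\{0,\pi\}$-valued sequence is consistent with the hinge/convexity pattern forced by the meridian $m$, and handling the double-cover and side-by-side constructions uniformly; the volume-additivity reduction and the appeals to Rivin's criterion are routine once stated carefully.
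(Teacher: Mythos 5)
Your proposal is correct and follows essentially the same route as the paper: invoke \reflem{VolMaxDoubleLST} to propagate flatness to the whole solid torus, read the exterior angles off the outermost tetrahedron via the $z$-sequence ($z_0=\pi+\theta_p$, $z_1=\pi-\theta_r$) and the formulas of \reflem{XYFirstTet}, and run the same two-case analysis on $z_1\in\{0,\pi\}$; the converse likewise reduces to flatness of $\Delta_1$ plus \reflem{VolMaxDoubleLST}. The only differences are cosmetic --- you make the volume-additivity reduction to $V$ and the appeal to Rivin's criterion explicit where the paper leaves them implicit, and you solve for $\theta_p$ via $z_0$ where the paper solves for $\theta_q$ via $z_2+2x_1$ --- and both yield the same list of four triples.
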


\begin{proof}
By \reflem{VolMaxDoubleLST}, if one of the tetrahedra in the solid torus is flat then all of the tetrahedra in this solid torus are flat.

Next observe that the exterior dihedral angles of the solid torus satisfy:
$\theta_r = \pi - z_1$, $\theta_q = \pi-(z_2 + 2x_1)$ or $\theta_q=\pi-(z_2+2y_1)$, and $\theta_p = \pi - \theta_q - \theta_r$, where $z_1, x_1, y_1$ are the dihedral angles of the outermost tetrahedron in the flat layered solid torus, and $z_2$ is an angle in the next outermost tetrahedron. Exactly one of $x_1, y_1, z_1$ is $\pi$, and the other two are $0$.

If $z_1=0$, then $z_2=0$ because the $z_i$ form a nonincreasing nonnegative sequence. Then $\theta_r=\pi$ and $\theta_q$ is either $\pi$ or $-\pi$, depending on $x_1, y_1$. Since $\theta_p+\theta_q+\theta_r=\pi$, this implies that $(\theta_p,\theta_q,\theta_r) = (\pi, -\pi, \pi)$ or $(-\pi, \pi, \pi)$. 

Now suppose $z_1=\pi$. Then $x_1, y_1=0$, and $z_2$ is $0$ or $\pi$. In this case, $\theta_r = \pi-z_1 = 0$, $\theta_q$ is $0$ or $\pi$ depending on $z_2$, and $\theta_p = \pi-\theta_r  -\theta_q$ is $\pi$ if $\theta_q=0$ or $0$ if $\theta_q=\pi$. Therefore, $(\theta_p,\theta_q,\theta_r) = (\pi, 0, 0)$, or $(0,\pi,0)$.

Conversely, suppose the exterior dihedral angles are as in the lemma. Then $z_1=\pi-\theta_r$ must be $0$ or $\pi$. In either case, the angles of the outermost tetrahedron must then be a permutation of $(0,0,\pi)$, and hence that tetrahedron is flat. By \reflem{VolMaxDoubleLST}, if any tetrahedron is flat, all tetrahedra are flat.
\end{proof}

\subsection{The case of a half-twist}

In the case that a crossing circle encircles a half-twist, the cusp is still formed from two squares, but a meridian of the crossing circle cusp lifts to $\RR^2$ to run from $(0,0)$ to $(1,1)$.

Suppose first that $m$ is positive. Then apply a homeomorphism to the fully augmented link complement that reverses the direction of the crossing. The meridian $\mu = 1/0$ of this new link complement lifts to run from $(0,0)$ to $(-1,1)$, and the longitude $\lambda=0/1$ to run from $(0,0)$ to $(2,0)$. The vertical line from $(0,0)$ to $(0,1)$ is a lift of the slope $1/1$. Now given $\ell$, $k$ relatively prime, perform the construction of the solid torus in \reflem{DoubleCoverSolidTorus} or \reflem{DoubleLST} depending on whether $\ell$ is even or odd. Only now, lift $\mu$ to the curve running from $(0,0)$ to $(-1,1)$ in $\RR^2$. As the lift is purely topological, this gives a solid torus that can be used to perform the Dehn filling just as before. Moreover, Lemmas~\ref{Lem:AngleStructDoubleLST} and~\ref{Lem:VolMaxDoubleLST} still hold with their proofs unchanged in this case.

If $m$ is negative, then the meridian $\mu=1/0$ of the crossing circle cusp lifts to run from $(0,0)$ to $(1,1)$, the longitude $\lambda=0/1$ lifts to run from $(0,0)$ to $(2,0)$, and the slope $-1/1$ lifts to run from $(0,0)$ to $(0,1)$. Given $m=\ell/k$, again perform the construction of the solid torus of \reflem{DoubleCoverSolidTorus} or \reflem{DoubleLST}, depending on whether $\ell$ is even or odd, and lift $\mu$ to the curve running from $(0,0)$ to $(1,1)$ in $\RR^2$. Again \reflem{AngleStructDoubleLST} and \reflem{VolMaxDoubleLST} hold with proofs unchanged to give the required angle structures, with volume maximised in the interior. Thus the construction works equally well with or without a half-twist.

\subsection{A vertical construction}\label{Sec:Vertical}

Finally, the above work is sufficient to perform all Dehn fillings on the family of fully augmented links with exactly two twist regions, shown in \reffig{BorromeanRings}. However, in \refsec{2Bridge}, we will need to extend this construction to obtain a solid torus in which $\mu=1/0$ lifts to run from $(0,0)$ to $(0,2)$ and $\lambda=0/1$ lifts to run from $(0,0)$ to $(1,0)$. We treat that case in this subsection. 

Note that above, we constructed a solid torus by taking side-by-side tetrahedra, i.e.\ stacking identical tetrahedra horizontally, in the $x$-axis direction. More precisely, we tiled all of $\RR^2-\ZZ^2$ by unit squares cut through by a diagonal, layered on tetrahedra coming from a walk in the Farey graph, and then took the quotient by $(2\ZZ, \ZZ)$.

This construction could instead have been done by taking identical tetrahedra stacked in the $y$-axis direction. That is, quotient out by $(\ZZ, 2\ZZ)$. All the results above immediately hold for this construction. In particular, we have the following. 

\begin{lemma}\label{Lem:VerticalSideBySide}
  Let $m=\ell/k$ be such that $k$ is even, and $m\notin\{1/0, \pm 1/2\}$. 
  The ``vertical side-by-side'' solid torus, constructed by layering on tetrahedra in a path from $T_0 =(0/1,\pm 1/1, 1/0)$ to $T_N=(u,v,\ell/(k/2))$, has the following properties.
  \begin{enumerate}
  \item Its boundary is a twice-punctured torus, triangulated by four ideal triangles in two symmetric pairs, with basis slopes $\mu$ running over two edges of a triangle, lifting to run from $(0,0)$ to $(0,2)$ in $\RR^2$, and $\lambda$ running over one edge, lifting to run from $(0,0)$ to $(1,0)$ in $\RR^2$.
  \item The meridian of the solid torus is the slope $m = \ell\mu + k\lambda$.
  \item The triangulated solid torus admits an angle structure, with volume functional taking its maximum in the interior.
  \item In a volume maximising structure, if there is one flat tetrahedron, then all tetrahedra must be flat. Moreover, all tetrahedra are flat if and only if exterior dihedral angles are one of $(\theta_p,\theta_q,\theta_r) = (\pi, -\pi, \pi)$, $(-\pi, \pi, \pi)$, $(\pi,0,0)$, or $(0,\pi,0)$.\qed
  \end{enumerate}
\end{lemma}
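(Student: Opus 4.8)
The plan is to reduce the whole statement to the horizontal side-by-side construction of Construction~\ref{Constr:SideBySide} and its analysis in Lemmas~\ref{Lem:DoubleLST}, \ref{Lem:AngleStructDoubleLST}, \ref{Lem:VolMaxDoubleLST}, and \ref{Lem:ExtAngDoubleLST}, by applying the symmetry that interchanges the two coordinate axes of the cover $\RR^2$. Let $\phi\from\RR^2\to\RR^2$ be the reflection $\phi(x,y)=(y,x)$. This is an isometry of $\HH^2$ in the disc model carrying the Farey triangulation to itself; it exchanges the slopes $1/0$ and $0/1$, fixes $\pm1/1$, and in general induces the involution $a/b\mapsto b/a$ on $\QQ\cup\{1/0\}$, preserving the sign of each slope. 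It also carries the lattice generated by $(1,0)$ and $(0,2)$ to the lattice generated by $(2,0)$ and $(0,1)$, and carries the tiling of $\RR^2-\ZZ^2$ by unit squares with $(\pm1/1)$-diagonal to itself. Hence $\phi$ identifies the vertical side-by-side construction, built along the Farey path from $T_0=(0/1,\pm1/1,1/0)$ to $T_N=(u,v,\ell/(k/2))$, with the horizontal side-by-side construction built from $\phi(T_0)$ to $\phi(T_N)=(1/u,1/v,(k/2)/\ell)$.

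The first thing to check is that this is a legitimate instance of Construction~\ref{Constr:SideBySide}: since $\gcd(\ell,k)=1$ and $k$ is even, $\ell$ is odd, so the reduced slope $(k/2)/\ell$ of $\phi(T_N)$ has even numerator, placing us in the $\ell$-even case treated in \reflem{DoubleLST}; and the excluded set $m\notin\{1/0,\pm1/2\}$ maps under $\phi$ onto $\{0/1,\pm2/1\}$, exactly the slopes excluded for Construction~\ref{Constr:SideBySide}. Given this, parts (1) and (2) follow by applying \reflem{DoubleLST} to the horizontal solid torus and pushing the conclusion forward by $\phi$: the boundary is a twice-punctured torus with four triangles in two symmetric pairs; since $\phi$ exchanges $\mu=1/0$ and $\lambda=0/1$, the basis slope $\mu$ now lifts to run over two edges from $(0,0)$ to $(0,2)$ and $\lambda$ over one edge from $(0,0)$ to $(1,0)$; the diagonals carry the sign of $m$; and the disc-bounding curve, which for the horizontal torus is the meridian supplied by \reflem{DoubleLST}, transports to the slope $\ell\mu+k\lambda=m$.

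For parts (3) and (4), the point is that $\phi$ induces a simplicial isomorphism of triangulated solid tori, and the space of angle structures $\calA(\tau)$, the volume functional $\calV$, the property of a tetrahedron being flat, and the assignment of an exterior dihedral angle to each of the three boundary slopes $r,p,q$ (ordered so that $r$ is the first slope to leave the Farey path, $p$ the second, and $q$ the one that remains) are all combinatorial data invariant under such an isomorphism. Here one uses that $\phi$, being a Farey isometry, carries the geodesic $\gamma$ and its turning sequence for the vertical construction to those for the horizontal one, the only change being a global exchange of $L$'s and $R$'s — which is harmless, since the hinge and convexity conditions of \reflem{ZSequence} and the formulas of \reflem{XYinTermsOfZ} are symmetric under $L\leftrightarrow R$ (it merely exchanges $x_i\leftrightarrow y_i$). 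Therefore \reflem{AngleStructDoubleLST}, in the $\ell$-even side-by-side case where no intersection-number hypothesis is needed (one only has to produce a hinge/convexity sequence with $z_{N+1}>0$, which always exists), together with \reflem{VolMaxDoubleLST}, gives (3); and \reflem{VolMaxDoubleLST} together with \reflem{ExtAngDoubleLST} gives (4): a single flat tetrahedron forces all tetrahedra flat, and all tetrahedra are flat precisely when $(\theta_p,\theta_q,\theta_r)$ is one of $(\pi,-\pi,\pi)$, $(-\pi,\pi,\pi)$, $(\pi,0,0)$, $(0,\pi,0)$ — the same list, since it refers only to the labels $p,q,r$, which $\phi$ respects.

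I expect the only real obstacle to be the bookkeeping of this dictionary: verifying that the axis-swap sends a valid vertical construction (correct initial triangle, correct parity of $k$, correct excluded slopes, correct identification of the meridian) to a valid instance of Construction~\ref{Constr:SideBySide}, and tracking how $\mu$, $\lambda$, and the $r,p,q$-labelled exterior angles transform. Once that is pinned down there is no new analytic content; everything is transported verbatim from the horizontal results. The orientation-reversal of $\phi$ causes no difficulty, since angle structures, the volume functional, and flatness make no reference to orientation.
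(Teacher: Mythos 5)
Your proposal is correct and is essentially the paper's own argument made explicit: the paper simply notes that the vertical construction is the horizontal one of Construction~\ref{Constr:SideBySide} performed in the $y$-direction (quotienting by $(\ZZ,2\ZZ)$ instead of $(2\ZZ,\ZZ)$) and asserts that all the preceding results transfer, which is exactly the content of your reflection $\phi(x,y)=(y,x)$ together with the dictionary you verify for the excluded slopes, the parity condition, the Farey path, and the meridian.
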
 

Similarly, if $\ell/k$ is a slope such that $k$ is odd, we may take a double cover of a layered solid torus to produce a solid torus whose boundary is a twice-punctured torus, only now with a fundamental domain that consists of two squares stacked vertically rather than horizontally, as follows.

\begin{lemma}\label{Lem:VerticalDoubleCover}
  Let $m=\ell/k$ be such that $k$ is odd and $m\notin\{0/1, \pm 1/1\}$. Then the (vertical) double cover $Y$ of the layered solid torus $X$ constructed from a walk in the Farey graph from $T_0=(0/1,1/0,\pm 1/1)$ to slope $(2\ell)/k$ has the following properties.
  \begin{itemize}
  \item The boundary of $Y$ is a twice-punctured torus, triangulated by four ideal triangles (in two symmetric pairs), lifting to give a triangulation of the cover $\RR^2$. The basis slope $\mu$ lifts to run from $(0,0)$ to $(0,2)$ in $\RR^2$, and projects to run twice around the slope $1/0$ in $\bdy X$.  The basis slope $\lambda$ lifts to run from $(0,0)$ to $(1,0)$.
  \item The meridian of $Y$ is the slope $m = \ell\mu + k\lambda$.
  \end{itemize}
\end{lemma}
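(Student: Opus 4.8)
The plan is to deduce the lemma from \reflem{DoubleCoverSolidTorus} by interchanging the roles of the two coordinate directions $1/0$ and $0/1$ throughout, equivalently by applying the reflection $a/b\mapsto b/a$ of the Farey graph (which fixes $\pm 1/1$ and swaps $0/1$ with $1/0$, and is induced by a homeomorphism of the once-punctured torus). As noted in the discussion preceding this lemma, every result established for the ``horizontal'' double cover applies verbatim to this ``vertical'' version, so the work is just to identify the correct layered solid torus and to track the two coordinate directions. First I would check that the hypotheses make the layered solid torus $X$ with meridian $(2\ell)/k$ well defined, i.e.\ that $(2\ell)/k$ avoids the slopes of $T_0=(0/1,1/0,\pm 1/1)$ and of the triangles adjacent to $T_0$, namely $\{0/1,1/0,\pm 1/1,\pm 2/1,\pm 1/2\}$. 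Since $\gcd(\ell,k)=1$ and $k$ is odd, $\gcd(2\ell,k)=1$, so $(2\ell)/k$ is in lowest terms with even numerator and odd denominator; the slopes on this list with odd or zero numerator are then automatically avoided, while $(2\ell)/k\neq\pm 2/1$ because $m\neq\pm 1/1$ and $(2\ell)/k\neq 0/1$ because $m\neq 0/1$. The sign of $\pm 1/1$ in $T_0$ is chosen to agree with the sign of $m$, so that the path in the Farey graph from $T_0$ to $(2\ell)/k$ is geodesic.

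Next I would transcribe the argument of \reflem{DoubleCoverSolidTorus} with the two axes swapped. The solid torus $X$ has $\pi_1 X\cong\ZZ$ generated by its core, and $Y$ is its unique connected double cover, again a solid torus; the induced double cover $\bdy Y\to\bdy X$ of the once-punctured torus boundary is connected (the longitude maps onto $\ZZ/2$), so $\bdy Y$ is a twice-punctured torus, and the two ideal triangles of $\bdy X$ pull back to four ideal triangles permuted in two pairs by the free deck involution, lifting to a triangulation of $\RR^2$. Now $i(1/0,(2\ell)/k)=|k|$ is odd, so the slope $1/0$ is an odd power of the core of $X$ and lifts to an arc on $\bdy Y$ with endpoints on the two distinct punctures; joining two such lifts end to end gives a closed curve $\mu$ that projects to run twice around $1/0$ in $\bdy X$, and that we may arrange to run from $(0,0)$ to $(0,2)$ in $\RR^2$. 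Dually $i(0/1,(2\ell)/k)=|2\ell|$ is even, so $0/1$ lifts to a closed curve $\lambda$ on $\bdy Y$, which we arrange to run from $(0,0)$ to $(1,0)$.

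Finally, the meridian of $Y$: writing $\mu_X=1/0$ and $\lambda_X=0/1$ for the standard basis of $\bdy X$, the meridian of $X$ is the slope $(2\ell)/k=2\ell\,\mu_X+k\,\lambda_X$, which bounds a disc in $X$; this disc lifts to a disc in $Y$. Since one copy of the lift of $\mu_X$ is half of $\mu$, while $\lambda_X$ lifts to a full copy of $\lambda$, the boundary of this lifted disc is $\ell\,\mu+k\,\lambda=m$, which is therefore the meridian of $Y$, as claimed. I expect the only delicate point to be the consistent bookkeeping of the parities and of the swap of the two coordinate directions; there is no new geometric content beyond \reflem{DoubleCoverSolidTorus}.
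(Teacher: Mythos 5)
Your proposal is correct and follows essentially the same route as the paper: compute the geometric intersection numbers of $1/0$ and $0/1$ with the meridian $2\ell/k$ of $X$ to decide which generator lifts to a closed curve and which to an arc joining the two punctures, assemble the basis $\mu,\lambda$ for $\bdy Y$ accordingly, and lift the meridian disc to read off $\ell\mu+k\lambda$. Your added check that $2\ell/k$ avoids the excluded slopes $\{0/1,1/0,\pm 1/1,\pm 2/1,\pm 1/2\}$, and your explicit remark that $2\ell$ copies of $\mu_X$ amount to $\ell$ copies of $\mu$, are welcome clarifications but not departures from the paper's argument.
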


\begin{proof}
Let $X$ be a layered solid torus with meridian slope $2\ell/k$, where $k$ is odd. Let $Y$ be the (vertical double) cover of $X$.

The slope from $(0,0)$ to $(0,1)$ is a generator in the solid torus $X$. It meets the meridian $2\ell/k$ of $X$ a total of $|1 \cdot k - 0 \cdot 2\ell | = |k|$ times, which is odd. Therefore the curve from $(0,0)$ to $(0,1)$ lifts to an arc in $\partial Y$. Thus a generator of the fundamental group of $\partial Y$ is given by taking two lifts of the curve from $(0,0)$ to $(0,1)$, lined up end-to-end. Denote the resulting closed curve in $Y$ by $\mu$. Its lift runs from $(0,0)$ to $(0,2)$ in $\mathbb{R}^2$.

The slope from $(0,0)$ to $(1,0)$ is a generator in the solid torus $X$. This curve and the meridian of $X$, of slope $2\ell/k$, have geometric intersection number $|0 \cdot k - 2 \cdot \ell| = |2\ell|$, which is even. Thus the curve from $(0,0)$ to $(1,0)$ is homotopic to an even power of the core of $X$. Therefore a second generator of the fundamental group of $\partial Y$ is given by taking the lift of  the curve from $(0,0)$ to $(1,0)$. Denote this generator by $\lambda$. Its lift runs from $(0,0)$ to $(1,0)$ in $\mathbb{R}^2$.

The meridian $2\ell/k$ of $X$ lifts to bound a disc in $Y$. Note that the lift runs $2\ell$ times along $\mu$ and $k$ times along $\lambda$. So in the basis for $Y$, the meridian has the form $\ell/k = \ell\mu + k\lambda$. 
\end{proof}

\section{Dehn filling the Borromean rings}\label{Sec:DFBorromean}

In this section we finish the proof that triangulations of Dehn fillings of the crossing circles of the Borromean rings are geometric, for appropriate choices of slopes, and similarly for the other fully augmented link complements shown in \reffig{BorromeanRings}.

\begin{lemma}\label{Lem:TriangBorFilling}
Let $M$ be one of the fully augmented link complements with exactly two crossing circles. 
Let $m_1, m_2$ be slopes such that
\[ m_1, m_2 \notin \{ 0/1, 1/0, \pm 1/1, \pm 2/1 \}. \]
Then the Dehn filling of $M$ on its crossing circle cusps along slopes $m_1$ and $m_2$, denoted $M(m_1,m_2)$, admits a topological triangulation built by gluing together two triangulated solid tori that are either both double covers of layered solid tori, or one double cover and one solid torus with the side-by-side consruction of \reflem{DoubleLST}, or two solid tori of that form.
\end{lemma}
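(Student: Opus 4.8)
The plan is to take the eight-tetrahedron ideal triangulation of $M$ produced in \reflem{BorromeanTet}, in which exactly four tetrahedra meet each crossing circle cusp (two in each of the two squares of \reflem{BorrDecomp}), and to realize the two Dehn fillings by gluing a triangulated solid torus to each crossing circle cusp. Fixing a crossing circle of $M$, I would write its filling slope as $m_i=\ell_i\mu_i+k_i\lambda_i$ in the meridian--longitude basis normalized in \reflem{BorrDecomp}, so that $\mu_i$ lifts to the arc from $(0,0)$ to $(0,1)$ (or to $(1,1)$ if that crossing circle bounds a half-twist) and $\lambda_i$ lifts to the arc from $(0,0)$ to $(2,0)$. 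The hypothesis $m_i\notin\{0/1,1/0,\pm1/1,\pm2/1\}$ puts $m_i$ in the range covered by \refsec{DoublingLST}, so I set $V_i$ equal to the double cover of a layered solid torus of \reflem{DoubleCoverSolidTorus} when $\ell_i$ is odd, and equal to the side-by-side solid torus of Construction~\ref{Constr:SideBySide} and \reflem{DoubleLST} when $\ell_i$ is even, using the half-twist variant of those constructions when a half-twist is present (this only alters the lift of $\mu_i$ and is purely topological). In every case $\bdy V_i$ is a twice-punctured torus triangulated by four ideal triangles in two symmetric pairs, whose two basis slopes lift to the arc from $(0,0)$ to $(0,1)$ (respectively from $(0,0)$ to $(1,1)$) and the arc from $(0,0)$ to $(2,0)$, whose diagonal edges all have a single fixed slope-sign, and whose meridian is exactly $m_i$.

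The second step is to identify $\bdy V_i$ with the $i$-th crossing circle cusp torus of $M$ as triangulated punctured tori, so that gluing $V_i$ on by that identification glues the triangulation of $M$ to those of $V_1$ and $V_2$ into an ideal triangulation of $M(m_1,m_2)$; since $V_i$ has meridian $m_i$, this is the asserted Dehn filling. By \reflem{BorrDecomp} and \reflem{BorromeanTet} the cusp torus of $M$ is the same pair of squares with the same basis slopes, and the only remaining freedom is the choice of one diagonal in each of the two squares; by \reflem{BorrGluing} this choice at one crossing circle cusp fixes it, with the same slope-signs, at the other. Thus the needed identifications exist precisely when the two diagonals of $M$ and the starting Farey triangle $T_0$ of each solid torus can be chosen so that all diagonals have matching slopes. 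When $m_1$ and $m_2$ have the same sign this is immediate: build each $V_i$ from the standard $T_0$ and give every diagonal of $M$ that common sign. When the signs differ, I would build one of the two solid tori instead from the variant starting at the Farey triangle $(0/1,1/0,\pm1/1)$ of the opposite sign; the only effect is that the Farey geodesic toward the triangle carrying $m_i$ crosses the edge $(0/1,1/0)$ one extra time at the outset, the excluded-slope hypotheses of \reflem{DoubleCoverSolidTorus} and \reflem{DoubleLST} still hold, and the resulting solid torus is still of the asserted type, now with the opposite-sign diagonal on its outer boundary.

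The main obstacle I anticipate is precisely the bookkeeping in this matching step: normalizing \reflem{BorrDecomp} simultaneously at both crossing circle cusps, tracking slopes and diagonals through \reflem{BorrGluing} and through the half-twist homeomorphism, and verifying that one coherent choice of the two octahedral diagonals together with the two starting triangles realizes the required twice-punctured-torus identifications for every pair of signs of $m_1$ and $m_2$. Once that is settled, the three stated cases are read off from the parities of $\ell_1$ and $\ell_2$: both $V_i$ double covers of layered solid tori when both $\ell_i$ are odd, one double cover and one side-by-side solid torus when the parities differ, and two side-by-side solid tori when both are even.
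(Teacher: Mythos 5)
There is a genuine gap in the first and second steps: you never remove the tetrahedra of $M$ that meet the crossing circle cusps. You propose to keep the eight-tetrahedron triangulation of $M$ and ``glue the triangulation of $M$ to those of $V_1$ and $V_2$'' along ``the $i$-th crossing circle cusp torus of $M$.'' But the cusp cross-section is a torus, not a subcomplex of the ideal triangulation, and there is nothing there to glue a punctured-torus boundary to. The correct move (and the one the paper makes) is to delete the open star of each crossing-circle ideal vertex --- the four tetrahedra meeting that vertex, forming a cone over the two squares of \reflem{BorrDecomp} --- and glue $V_i$ in along the twice-punctured torus formed by those two squares. Since \emph{every} tetrahedron of $M$ meets a crossing circle cusp, deleting both stars leaves only the two squares themselves (which, by \reflem{BorromeanTet}, are shared between the two cusps), so that $M(m_1,m_2)$ is literally $V_1$ glued to $V_2$ along a twice-punctured torus. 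Your version, taken at face value, would retain the crossing-circle ideal vertices and produce a complex with $8+|V_1|+|V_2|$ tetrahedra that is not the Dehn filling; and even if repaired it would not yield the conclusion actually being proved, namely a triangulation \emph{built from two solid tori only} --- a count that the later edge-equation computations in \reflem{AnglesBorFilling} depend on.

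The rest of your argument is essentially the paper's. Choosing $V_i$ by the parity of $\ell_i$ (double cover of a layered solid torus versus the side-by-side construction), handling half-twists by changing only the lift of $\mu$, and reading off the three cases of the statement from the two parities all match. Your resolution of the sign-mismatch case --- restarting one solid torus at the opposite-sign Farey triangle $(0/1,1/0,\mp 1/1)$ so its outer diagonal flips --- is the same as the paper's Case~2, which phrases it as inserting two extra tetrahedra between the squares that effect a diagonal exchange and are then absorbed as an extra layer of the second solid torus. Fixing the proof therefore only requires replacing ``glue onto the cusp torus of $M$'' with ``delete all tetrahedra meeting the crossing circle cusps, observe that only the two identified squares remain, and attach the two solid tori along them following the identifications of \reflem{BorromeanTet}.''
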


\begin{proof}
The slopes $m_1$ and $m_2$ on the two crossing circle cusps each determine a triangulation of a solid torus by \reflem{DoubleCoverSolidTorus} or \reflem{DoubleLST}. To perform Dehn filling, we remove interiors of all edges, faces, and tetrahedra meeting a crossing circle cusp, and replace them by one of these two triangulated solid tori.

Removing interiors of edges, faces, and tetrahedra meeting the two crossing circle cusps removes all but two squares from the manifold, namely the squares shown in \reffig{BorrGluing}, identified as shown in \reflem{BorromeanTet}. We wish to attach triangulated solid tori to these squares. There are two cases to consider. 

\vspace{.1in}

\textbf{Case 1: The triangulations on the boundary of the solid tori agree.}
If $M$ is the Borromean rings complement, this is the case that the slopes $m_1$ and $m_2$ are both positive, or the case that the slopes $m_1$ and $m_2$ are both negative. In this case, the squares of \reffig{BorrGluing} are triangulated by the same diagonals. If $M$ has one or two half-twists, again the sign of $m_1$ and $m_2$ determine the diagonals. This is the case that the choice of triangulation gives the same diagonals. 

In this case, the corresponding solid tori have boundary triangulation that matches the triangulation on the squares. Moreover, when we attach the two solid tori to opposite sides of the squares, following the gluing instructions of \reffig{BorrGluing}, their triangulations match the given triangulations, giving a topological triangulation of the Dehn filling.

\textbf{Case 2: The triangulations disagree.}
For example, this is the case that $m_1$ and $m_2$ have opposite signs when $M$ is the Borromean rings complement. 

In this case, the triangulations of the solid tori will induce triangulations of the two squares with opposite diagonals. To glue these together, we add two identical extra tetrahedra between the two squares, with the tetrahedra effecting a diagonal exchange on the squares.

Because both tetrahedra are attached along exactly two faces to the second solid torus, in fact they form an extra layer on the solid torus, equivalent to changing the initial triangulation on the boundary in the construction from $(0,1,1/0)$ to $(0,-1,1/0)$, or vice versa. Thus we glue these two tetrahedra to the second solid torus. Then the second solid torus has the form of \reflem{DoubleCoverSolidTorus} or \reflem{DoubleLST}, except with triangulation on its boundary consisting of diagonals having opposite sign from the slope. 
\end{proof}

\begin{lemma}\label{Lem:AnglesBorFilling}
Suppose the exterior dihedral angles on the $i$-th solid torus are denoted $\alpha_i$ for the diagonal edges, and $\theta_i$ for the horizontal edges, for $i=1,2$. If an angle structure exists on $M(m_1,m_2)$ then $\alpha_1=-\alpha_2$ and
\[ \theta_1 = \theta_2+\alpha_2, \quad \theta_2 = \alpha_1+\theta_1. \]
\end{lemma}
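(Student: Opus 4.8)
The plan is to restrict a hypothetical angle structure on $M(m_1,m_2)$ to each of the two solid tori out of which it is built, and then read off the stated identities from the gluing. By \reflem{TriangBorFilling}, $M(m_1,m_2)$ is obtained by gluing two triangulated solid tori $V_1$ and $V_2$ along a twice-punctured torus $S$; when the two induced boundary triangulations disagree a pair of extra tetrahedra is layered on, and, as in the proof of that lemma, I would absorb them into $V_2$ so that in every case $M(m_1,m_2)=V_1\cup_S V_2$. The triangulation of $S$ has three kinds of edge: the diagonals, the longitudinal (``horizontal'') edges, and the meridional (``vertical'') edges.

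First I would note that an angle structure on $M(m_1,m_2)$ restricts to an angle structure on each $V_i$: all dihedral angles are positive, the vertex sums are $\pi$, and the edge sums are $2\pi$ at every edge interior to $V_i$, since each such edge is also interior to $M(m_1,m_2)$. Hence the exterior dihedral angles of $V_i$ --- namely $\alpha_i$ on the diagonal edges, $\theta_i$ on the longitudinal edges, and some $\beta_i$ on the meridional edges --- are of the kind considered in \reflem{AngleStructDoubleLST}, and in particular satisfy the cusp relation $\alpha_i+\theta_i+\beta_i=\pi$. The only further constraints linking $V_1$ to $V_2$ come from the requirement that the dihedral angles around each edge of $S$ sum to $2\pi$, since these edges lie in the interior of $M(m_1,m_2)$ but on the boundary of each $V_i$.

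Next I would evaluate those constraints using the explicit description of the gluing map in \reflem{BorromeanTet}, which on each square is a reflection in a diagonal, hence fixes the diagonal edges of $S$ and interchanges its longitudinal and meridional edges. The angle-sum condition at a diagonal edge of $S$ reads $(\pi-\alpha_1)+(\pi-\alpha_2)=2\pi$, which gives $\alpha_1=-\alpha_2$. Carrying out the analogous computation at the longitudinal and meridional edge classes of $S$, and then using $\alpha_i+\theta_i+\beta_i=\pi$ to eliminate $\beta_1$ and $\beta_2$, should produce the two remaining identities $\theta_1=\theta_2+\alpha_2$ and $\theta_2=\alpha_1+\theta_1$, which are compatible with each other once $\alpha_1=-\alpha_2$ is known.

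I expect the bookkeeping in this last step to be the main obstacle. One must correctly determine which boundary edges of $V_1$ and $V_2$ become identified under the gluing of \reflem{BorromeanTet}, and take account of any identifications among the edges of the squares that are already forced by the face pairings of the octahedral decomposition of $M$, so that the collection of dihedral-angle contributions around each edge of $S$ is assembled without error. Once this correspondence is in hand, the three relations follow as immediate linear consequences of the edge-sum conditions along $S$ together with the per-piece cusp relations $\alpha_i+\theta_i+\beta_i=\pi$.
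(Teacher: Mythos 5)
Your proposal follows the paper's proof essentially verbatim: restrict the angle structure to the two solid tori, observe via \reflem{BorromeanTet} that the gluing fixes diagonal edges and interchanges horizontal with vertical edges, and read off the three relations from the $2\pi$ edge-sum conditions along the two squares. The only step you defer --- that both horizontal edges of one solid torus and both vertical edges of the other lie in a single edge class, so the sums read $2(\pi-\theta_1)+2(\theta_2+\alpha_2)=2\pi$ and $2(\pi-\theta_2)+2(\theta_1+\alpha_1)=2\pi$ --- is exactly the bookkeeping the paper carries out, and it yields the stated identities.
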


\begin{proof}
Angle structures for each of the triangulated solid tori will come from \reflem{AngleStructDoubleLST}, once we decide on exterior dihedral angles. Angle structures on the solid tori will induce an angle structure on the entire manifold if and only if the edge equations are satisfied for edges that lie on the two squares, on the boundaries of the solid tori.

Each solid torus has six edges on its boundary, and we assign three angles, giving a pair of symmetric edges the same angle. On the first solid torus, the horizontal edges all have the same exterior angle, denoted $\theta_1$. The diagonal edges have the same exterior angle, denoted $\alpha_1$, and the vertical edges have the same exterior angle, which must be $\pi-\alpha_1-\theta_1$. Similarly denote the horizontal, diagonal, and vertical exterior angles on the second solid torus by $\theta_2$, $\alpha_2$, and $\pi-\theta_2-\alpha_2$, respectively.

Diagonal edges are glued to diagonal edges. Thus the sum of interior angles satisfies
\[ (\pi-\alpha_1)+(\pi-\alpha_2)= 2\pi, \quad \mbox{or} \quad \alpha_1=-\alpha_2. \]

Horizontal edges with exterior angle $\theta_1$ are identified to vertical edges with exterior angle $\pi-\theta_2-\alpha_2$; both vertical edges in the second solid torus lie in this edge class. Similarly horizontal edges with exterior angle $\theta_2$ are identified to the vertical edges of the first solid torus. Thus the sum of interior angles around these two edge classes satisfy:
\[ 2(\pi-\theta_1) + 2(\theta_2+\alpha_2) = 2\pi, \quad \mbox{or} \quad
\theta_1 = \theta_2 + \alpha_2, \quad \mbox{and} \]
\[ 2(\pi-\theta_2) + 2(\theta_1+\alpha_1) = 2\pi, \quad \mbox{or} \quad
\theta_2 = \theta_1+\alpha_1. \qedhere \]
\end{proof}

\begin{lemma}\label{Lem:DFBorAngleStructExists}
Let $M$ denote the complement of a fully augmented link with two crossing circles, as in \reffig{BorromeanRings}, and let $M(m_1,m_2)$ denote its Dehn filling along slopes $m_1$, $m_2$ on the crossing circle cusps. If $m_1, m_2$ satisfy
\[ m_1, m_2 \notin \{0/1, 1/0, \pm 1/1/, \pm 2/1\}, \]
then $M(m_1, m_2)$ admits an angle structure.
\end{lemma}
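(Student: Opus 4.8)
The plan is to write down an explicit angle structure. First I would invoke \reflem{TriangBorFilling} to present $M(m_1,m_2)$ as a union of two triangulated solid tori $V_1$ and $V_2$, each either a double cover of a layered solid torus (\reflem{DoubleCoverSolidTorus}) or a side-by-side solid torus (\reflem{DoubleLST}), glued along the two boundary squares. An angle structure on $M(m_1,m_2)$ is the same as a choice of angle structure on each $V_i$ whose exterior dihedral angles agree across the identified faces. Writing $\theta_i$, $\alpha_i$, and $\pi-\theta_i-\alpha_i$ for the exterior angles on the horizontal, diagonal, and vertical edges of $V_i$, \reflem{AnglesBorFilling} says this matching amounts exactly to the linear relations $\alpha_1=-\alpha_2$ and $\theta_1=\theta_2+\alpha_2$.

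The key observation is that the diagonal slope of each $V_i$ is never the first slope covered in the Farey walk defining the underlying layered solid torus: that walk starts at the triangle $(0/1,1/0,\pm 1/1)$ and proceeds toward a slope of the same sign as the diagonal, so it never crosses the edge $(0/1,1/0)$, and hence the first slope removed is $0/1$ or $1/0$, never $\pm 1/1$. I would therefore simply set $\alpha_1=\alpha_2=0$ and $\theta_1=\theta_2=\pi/2$; equivalently, put exterior angle $\pi/2$ on every horizontal and vertical edge and $0$ on every diagonal edge of both solid tori. These values satisfy the relations of \reflem{AnglesBorFilling} trivially. For each $V_i$ one then checks the hypotheses of \reflem{AngleStructDoubleLST}: the three exterior angles form a permutation of $(\pi/2,\pi/2,0)$, summing to $\pi$, with the angle on the first-covered slope equal to $\pi/2\in(0,\pi)$ by the observation above and the other two lying in $(-\pi,\pi)$. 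When $V_i$ is side-by-side ($\ell_i$ even) that is all that is required. When $V_i$ is a double cover of a layered solid torus with meridian $\ell_i/2k_i$ ($\ell_i$ odd), the additional intersection-number inequality must hold; with the diagonal angle equal to $0$ it collapses to $\tfrac{\pi}{2}\bigl(i(0/1,\ell_i/2k_i)+i(1/0,\ell_i/2k_i)\bigr)>2\pi$, i.e.\ $|\ell_i|+2|k_i|>4$, which follows from $m_i=\ell_i/k_i$ in lowest terms with $\ell_i$ odd and $m_i\notin\{1/0,\pm 1/1\}$: if $|\ell_i|=1$ then $|k_i|\ge 2$, and otherwise $|\ell_i|\ge 3$, so $|\ell_i|+2|k_i|\ge 5$. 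Applying \reflem{AngleStructDoubleLST} to $V_1$ and $V_2$ yields angle structures whose exterior angles match across the shared squares, and these assemble into the desired angle structure on $M(m_1,m_2)$.

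The calculation is short, so there is no serious obstacle; the points needing care are (i) confirming that the first-covered slope is always $0/1$ or $1/0$ and that the half-twist cases are included — here one uses that the labels ``horizontal/diagonal/vertical'' in \reflem{AnglesBorFilling} refer to the internal $(0/1,1/0,\pm 1/1)$ structure of the solid torus, not to the cusp picture, so the argument goes through verbatim as in the treatment of the half-twist cases; and (ii) checking that the underlying layered solid tori are built from slopes at Farey distance at least one from $(0/1,1/0,\pm 1/1)$, so that \reflem{AngleStructDoubleLST} (via \refprop{LSTAngleStructExists}) applies — this again follows from the excluded slopes $m_i\notin\{0/1,1/0,\pm1/1,\pm2/1\}$. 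If one prefers not to exhibit angles explicitly, an alternative is to keep the two-parameter family $(\theta_1,\alpha_1)$ with $\alpha_2=-\alpha_1$, $\theta_2=\theta_1+\alpha_1$, use \reflem{HingeImpliesInequality} to dispatch the intersection inequalities whenever a hinge occurs, and reduce the remaining no-hinge cases to explicit linear inequalities via \reflem{NoHingeCase} and check they are jointly satisfiable; but the symmetric choice above avoids that casework.
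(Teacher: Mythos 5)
Your choice $\alpha_1=\alpha_2=0$ breaks down precisely in Case~2 of \reflem{TriangBorFilling}, i.e.\ when the two solid tori would induce opposite diagonals on the gluing squares (for the Borromean rings, when $m_1$ and $m_2$ have opposite signs). In that case two extra tetrahedra are layered onto one of the solid tori to effect a diagonal exchange, and after absorbing them that solid torus is built from a Farey walk that starts at $(0,\mp 1,1/0)$ and immediately crosses the edge $(0/1,1/0)$ into $(0,\pm 1,1/0)$. Its first covered slope $r$ is therefore the \emph{diagonal} $\mp 1/1$, so the hypothesis $0<\theta_r<\pi$ of \refprop{LSTAngleStructExists} (hence of \reflem{AngleStructDoubleLST}) forces the diagonal's exterior angle to be strictly positive; equivalently, with $\alpha=0$ the outermost tetrahedron has $z_1=\pi-\theta_r=\pi$ and is flat, and no angle structure with those exterior angles exists. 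Your key observation that ``the diagonal is never the first slope covered'' is true only for a solid torus whose boundary diagonal has the same sign as its meridian, which is exactly what fails for the modified torus in Case~2. The same issue recurs in the half-twist variants whenever the extra layer is needed. This is why the paper's proof does not take $\alpha_i=0$: it chooses $\alpha_1=\pi/6$, $\theta_1=5\pi/9$ (so $\alpha_2=-\pi/6$, $\theta_2=13\pi/18$), which keeps the diagonal angle of the relevant torus in $(0,\pi)$ while still satisfying the gluing relations of \reflem{AnglesBorFilling} and the intersection inequality.

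The same-sign portion of your argument is fine: there the first covered slope is $0/1$ or $1/0$, your angles satisfy the matching equations trivially, and your reduction of the intersection condition to $|\ell_i|+2|k_i|>4$ is correct and correctly verified from the excluded slopes. The repair is small --- perturb the diagonal angle to a small positive value $\alpha_1=\epsilon$ (adjusting $\theta_2=\theta_1+\epsilon$, $\alpha_2=-\epsilon$) and re-check the linear intersection inequality, noting that for the torus whose first covered slope is the diagonal the needed bound follows as in the paper's computation --- but as written the proof does not cover all slope pairs allowed by the hypotheses.
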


\begin{proof}
If an angle structure exists, it must satisfy the equations of \reflem{AnglesBorFilling}. In addition, there is an angle structure on each of two solid tori, with exterior angles denoted by $\theta_{p_1}$, $\theta_{q_1}$, $\theta_{r_1}$ in the first solid torus, satisfying
\begin{equation}\label{Eqn:ThetaEqns}
\theta_{p_1}+\theta_{q_1}+\theta_{r_1}=\pi, \quad -\pi < \theta_{p_1}, \theta_{q_1} < \pi, \quad 0<\theta_{r_1}<\pi,
\end{equation}
and denoted by $\theta_{p_2}$, $\theta_{q_2}$, $\theta_{r_2}$ in the second solid torus, and satisfying similar conditions.

In addition, in the special case that $m_i=\ell_i/k_i$ and $\ell_i$ is odd, and moreover the path in the Farey graph from the initial triangulation has no hinges, then we require the slopes $p_i$, $q_i$, $r_i$ and $\ell_i/(2k_i)$ to satisfy the intersection condition \refeqn{IntersectionCondition}.

Provided we can find any angles that simultaneously satisfy all the above, we will have proved the lemma.

The difficulty is that the angles $\theta_{p_i}$, $\theta_{q_i}$, and $\theta_{r_i}$ have different relationships with angles $\alpha_i$, $\theta_i$ and $\pi-\alpha_i-\theta_i$ depending on whether $m_1,m_2$ have the same or different sign, and on whether a crossing circle has a half-twist.

Suppose first that there are no half-twists, i.e.\ $M$ is the Borromean rings complement, and that $m_1$ and $m_2$ have the same sign; for concreteness, say they are both positive. In this case, we build the two solid tori corresponding to $m_1$ and $m_2$ by starting in the triangle in the Farey graph with vertices $0$, $1$, $1/0$, and go up. In this case, $1$ is not covered in the first step, so $r=0$ or $r=1/0$, and $1$ is the slope $p$ or $q$. If both solid tori have hinges, then the intersection condition will either be automatically satisfied for the layered solid tori we construct, or it is unnecessary for the side-by-side tori we construct. So the more difficult remaining case is when $m_1=\ell_1/k_1$ with $\ell_1$ odd, and there are no hinges in the path from the triangle $(0,1,1/0)$ to $\ell_1/(2k_1)$, and similarly for $\ell_2/(2k_2)$.

No hinges means the slope $\ell_i/(2k_i)$ is of the form $1/2n$ for $n$ a positive integer, or of the form $n/1$; the second is impossible because $1\neq 2k_i$. Thus the path in the Farey graph consists only of $L$s, and goes from $(0,1,1/0)$ to $1/(2k_i)$. Then we know that $r_i$, the first slope covered, corresponds to $1/0$, which is the vertical edge in the initial triangulation, labeled with exterior angle $\pi-\alpha_i-\theta_i$. The slope $p_i$, the second slope covered, corresponds to $1/1$, which is the diagonal edge in the initial triangulation, labeled with exterior angle $\alpha_i$. Thus the slope $q_i$ corresponds to $0$, the horizontal edge, with exterior angle $\theta_i$. The required equations then become: $\theta_i=\theta_{q_i}$, $\alpha_i=\theta_{p_i}$, $\pi-\theta_i-\alpha_i=\theta_{r_i}$ satisfy \refeqn{ThetaEqns}, as well as intersection conditions:
\[
i(1/(2k_i), 1/1)\alpha_i + i(1/(2k_i),0/1)\theta_i + i(1/(2k_i),1/0)(\pi-\alpha_i-\theta_i) > 2\pi, \]
or
\[ (2k_i-1)\alpha_i + \theta_i + 2k_i(\pi-\alpha_i-\theta_i) > 2\pi.\]

There are many solutions to these equations. For example, set $\alpha_1=\pi/6$, $\theta_1=5\pi/9$, and $\pi-\alpha_1-\theta_1=5\pi/18$, and $\alpha_2=-\alpha_1=-\pi/6$, $\theta_2=\alpha_1+\theta_1=13\pi/18$, and $\pi-\alpha_2-\theta_2=4\pi/9$. This gives an angle structure, as desired. 

The case that both $m_1$ and $m_2$ are negative is similar.

Next suppose there are no half-twists, but $m_1$ and $m_2$ have opposite signs; say $m_1>0$ and $m_2<0$. Then we insert an extra tetrahedron onto the first solid torus. Thus the construction of this solid torus now starts at the Farey triangle $(0,-1,1/0)$ and immediately crosses into the triangle $(0,1,1/0)$. It follows that $r=-1$, corresponding to exterior angle $\alpha_1$. The only case in which the intersection condition comes up is if the path in the Farey triangulation only steps $L$, and the slope $m_1 = 1/(2k_1)$. Then in this case, $p=1/0$, corresponding to exterior angle $\pi-\alpha_1-\theta_1$, and $q=0$, corresponding to exterior angle $\theta_1$. The intersection condition is similar to above; the only difference is that we need to ensure we have a solution in which $\alpha_1$ now lies strictly between $0$ and $\pi$. But notice we already found such a solution in the previous case. Thus the same angles in the previous case still work to give an angle structure in this case. Notice that $\alpha_2<0$, but because $m_2<0$ as well, $\alpha_2$ does not correspond to the exterior angle on slope $r_2$, the first slope covered, and so $-\pi < \alpha_2 < 0$ works in this case.

In the case that there is one half-twist, the half-twist changes the names of the slopes in the framing: the meridian of the unfilled manifold is now a diagonal, the longitude runs over two horizontal segments. However, we still assign the same exterior angles $\alpha_1$ to the diagonal, $\theta_1$ to the horizontal. In fact, this gives the same required equations as above, both in the case of $m_1$ and $m_2$ having the same sign, and $m_1$ and $m_2$ having opposite signs, and so the same choices of angles will give an angle structure.

Finally, when there are two half-twists, again we change the framing on both solid tori, but ensuring the triangulations match up will again give the same required equations, and so the solution above always gives an angle structure. 
\end{proof}

\begin{lemma}\label{Lem:VolMaxBor}
Let $M$ denote the complement of one of the fully augmented links with two crossing circles, shown in \reffig{BorromeanRings}, and let $M(m_1,m_2)$ denote its Dehn filling along slopes $m_1$, $m_2$ on the crossing circle cusps. Suppose $m_1, m_2$ satisfy
\[ m_1, m_2 \notin \{0/1, 1/0, \pm 1/1/, \pm 2/1\}. \]
Then, for the space of angle structures on the triangulation of $M(m_1,m_2)$ from above, the volume functional takes a maximum in the interior.
\end{lemma}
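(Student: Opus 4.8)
The plan is to argue by contradiction in the by-now familiar way, reducing to the single-solid-torus results \reflem{VolMaxDoubleLST} and \reflem{ExtAngDoubleLST}. As recalled in \refsec{AngleStructures}, the volume functional $\calV$ on $\overline{\calA(\tau)}$ attains its maximum either in the interior of $\calA(\tau)$ — the desired conclusion — or on $\partial\overline{\calA(\tau)}$. Suppose the latter. By \reflem{DFBorAngleStructExists} the space $\calA(\tau)$ is nonempty, and at any point of this open polytope every tetrahedron has all three dihedral angles in $(0,\pi)$ and hence strictly positive volume, so $\calV>0$ there. Therefore the assumed boundary maximum has value at least this positive number; in particular, at the maximising point not all tetrahedra are flat, yet (as in the proof of \reflem{FlatTetrahedra}, using Rivin's criterion) some tetrahedron has a zero angle and is therefore flat.

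Next I would invoke the structure of the triangulation from \reflem{TriangBorFilling}: $M(m_1,m_2)$ is the union of two triangulated solid tori $V_1$ and $V_2$, each of the type covered by \reflem{AngleStructDoubleLST}, \reflem{VolMaxDoubleLST}, and \reflem{ExtAngDoubleLST} (any two extra tetrahedra arising in Case~2 are absorbed into one of the $V_i$), and every tetrahedron of $M(m_1,m_2)$ lies in $V_1$ or in $V_2$. Hence the flat tetrahedron lies in one of them, say $V_1$. By \reflem{VolMaxDoubleLST}, every tetrahedron of $V_1$ is then flat, and by \reflem{ExtAngDoubleLST} the exterior dihedral angles $(\theta_{p_1},\theta_{q_1},\theta_{r_1})$ of $V_1$ are one of $(\pi,-\pi,\pi)$, $(-\pi,\pi,\pi)$, $(\pi,0,0)$, $(0,\pi,0)$.

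Now I would run a finite case check. Using the identification set up in the proof of \reflem{DFBorAngleStructExists} between the labelled angles $\theta_{p_i},\theta_{q_i},\theta_{r_i}$ and the geometric exterior angles $\alpha_i$ (diagonal), $\theta_i$ (horizontal), $\pi-\alpha_i-\theta_i$ (vertical) of $V_i$ — which depends only on the signs of $m_1,m_2$ and on whether a crossing circle carries a half-twist — together with the gluing relations $\alpha_2=-\alpha_1$ and $\theta_2=\theta_1+\alpha_1$ of \reflem{AnglesBorFilling}, one reads off the exterior angles of $V_2$ from those of $V_1$. In each of the finitely many resulting cases one of two things happens: either $(\theta_{p_2},\theta_{q_2},\theta_{r_2})$ is again one of the four tuples above, or some exterior angle of $V_2$ falls outside $[0,\pi]$ (equivalently, some interior angle becomes negative), which is impossible at a point of $\overline{\calA(\tau)}$. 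In the second alternative we already have a contradiction; in the first, the converse direction of \reflem{ExtAngDoubleLST} forces every tetrahedron of $V_2$ to be flat as well. Thus the entire triangulation of $M(m_1,m_2)$ consists of flat tetrahedra at the maximising point, so $\calV=0$ there, contradicting the positive lower bound from the first paragraph. Hence the maximum of $\calV$ is attained in the interior.

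I expect the main obstacle to be the bookkeeping in the case check of the third paragraph: for each sign pattern of $(m_1,m_2)$ and each half-twist configuration, one must keep straight which of the three edge classes of $V_i$ plays the role of $r_i$ (the first slope covered in the layering), and then verify that the four admissible tuples for $V_1$ propagate to admissible tuples for $V_2$ under \reflem{AnglesBorFilling}. This is routine given the preceding lemmas, but it is the one point where an explicit computation, rather than a purely structural argument, is required.
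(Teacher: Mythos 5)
Your proposal is correct and follows essentially the same route as the paper: a boundary maximum forces a flat tetrahedron, \reflem{VolMaxDoubleLST} and \reflem{ExtAngDoubleLST} then flatten one whole solid torus and pin its exterior angles to one of the four tuples, and the gluing relations of \reflem{AnglesBorFilling} propagate flatness to the second solid torus, contradicting the existence of positive-volume angle structures. The case check you defer is exactly the short computation the paper carries out (in every case the angles of $V_2$ land on one of the four flat tuples, so your ``angle out of range'' alternative never actually occurs), and your handling of the extra tetrahedra from Case~2 of \reflem{TriangBorFilling} by absorbing them into one solid torus matches the paper's setup.
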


\begin{proof}
Consider a point on the boundary of the space of angle structures. Because it is on the boundary, it contains a flat tetrahedron, with angles $0$, $0$, and $\pi$. Because the triangulation of $M(m_1,m_2)$ is built of two triangulated solid tori, one of the tetrahedra in one of the solid tori is flat. Then by \reflem{VolMaxDoubleLST}, all of the tetrahedra in this solid torus are flat. 

By \reflem{ExtAngDoubleLST}, we know that the exterior dihedral angles of the flat solid torus $(\theta_p,\theta_q,\theta_r)$ are either $(\pi, -\pi, \pi)$, $(-\pi, \pi, \pi)$, $(\pi, 0, 0)$, or $(0,\pi,0)$.

Suppose that $(\theta_p,\theta_q,\theta_r) = (\pi, -\pi, \pi)$ or $(-\pi, \pi, \pi)$. In either case, this implies that $\alpha_1 = \pm \pi$, so $-\alpha_2 = \alpha_1 = \pm \pi$, and $\theta_2 = \alpha_1+\theta_1 = 0$ (it cannot be $2\pi$ since we restrict to exterior angles between $-\pi$ and $\pi$), by \reflem{AnglesBorFilling}. Then the third angle satisfies $\pi-\alpha_2-\theta_2 = 0$. So the exterior dihedral angles of the second solid torus are $(\pi, 0,0)$. It follows from \reflem{ExtAngDoubleLST} that the second solid torus must also be flat. Thus such an angle structure has zero volume, and cannot maximise volume.

Now suppose that $(\theta_p,\theta_q,\theta_r) = (0, 0, \pi)$, up to permutation. Then $\alpha_1$ is $0$ or $\pi$, so $\alpha_2 = -\alpha_1$ is $0$ or $-\pi$, and $\theta_2 = \alpha_1+\theta_1$ is $0$ or $\pi$. In any case, the exterior dihedral angles must all be either $0$ or $\pm\pi$, which again implies that the second layered solid torus is flat. As before the angle structure cannot maximise volume. 
\end{proof}

\begin{theorem}\label{Thm:MainBorromean}
Let $L$ be a fully augmented link with exactly two crossing circles, as in \reffig{BorromeanRings}. Let $M$ be the manifold obtained by Dehn filling the crossing circles of $S^3-L$ along slopes $m_1, m_2 \in (\QQ\cup\{1/0\})- \{0, 1/0, \pm 1, \pm 2\}$.
Then $M$ admits a geometric triangulation.
\end{theorem}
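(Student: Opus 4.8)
The plan is to assemble the lemmas of this section with the Casson--Rivin theorem, \refthm{CassonRivin}; at this point essentially all of the work has already been done. First I would observe that the hypothesis $m_1, m_2 \in (\QQ\cup\{1/0\}) - \{0, 1/0, \pm 1, \pm 2\}$ is exactly the hypothesis $m_1, m_2 \notin \{0/1, 1/0, \pm 1/1, \pm 2/1\}$ appearing in Lemmas~\ref{Lem:TriangBorFilling}, \ref{Lem:DFBorAngleStructExists}, and~\ref{Lem:VolMaxBor}. Since each of those lemmas is stated uniformly for all three fully augmented links of \reffig{BorromeanRings} (zero, one, or two half-twists), no case analysis is needed here.

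Next I would invoke \reflem{TriangBorFilling} to fix a topological ideal triangulation $\tau$ of $M = M(m_1,m_2)$, built by gluing two triangulated solid tori --- double covers of layered solid tori and/or side-by-side solid tori as in \reflem{DoubleLST} --- along the two squares of \reflem{BorromeanTet}, possibly with two auxiliary layer tetrahedra inserted when the induced diagonals disagree. Because we fill only the crossing circle cusps and $L$ carries at least one further cusp coming from the knot strands, $M$ is an orientable $3$-manifold with boundary a union of tori, so \refthm{CassonRivin} applies to the pair $(M,\tau)$.

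Now by \reflem{DFBorAngleStructExists} the space $\calA(\tau)$ is nonempty, and the volume functional $\calV\from\calA(\tau)\to\RR$ is convex, so it either attains its maximum at an interior point of $\calA(\tau)$ or its supremum over $\overline{\calA(\tau)}$ is attained only on the boundary. By \reflem{VolMaxBor} the latter cannot happen: a boundary point would contain a flat tetrahedron, hence by \reflem{VolMaxDoubleLST} all tetrahedra of one of the two solid tori would be flat, and then the edge equations of \reflem{AnglesBorFilling} together with the classification of flat exterior angles in \reflem{ExtAngDoubleLST} would force the other solid torus to be flat as well, yielding a point of zero volume, which cannot maximise $\calV$. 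Therefore $\calV$ attains its maximum at a point $p$ in the interior of $\calA(\tau)$. Finally, by \refthm{CassonRivin}, $p$ corresponds to the complete hyperbolic metric on the interior of $M$: the ideal tetrahedra with shapes determined by $p$ glue up without shearing to recover $M$ with its complete structure, and since $p$ is an interior point every dihedral angle lies strictly in $(0,\pi)$, so every tetrahedron is positively oriented with strictly positive volume. Hence $\tau$ is a geometric triangulation of $M$.

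The main obstacle is not in this final argument but was already cleared in the preceding lemmas; in particular the genuine content lies in \reflem{VolMaxBor} (and its inputs \reflem{VolMaxDoubleLST} and \reflem{ExtAngDoubleLST}), which rules out degeneration of the volume maximiser. The only points I would be careful to verify in writing up the theorem are that $\tau$ really is a triangulation of the closed-up manifold (the content of \reflem{TriangBorFilling}, including that no further flat tetrahedra are needed across the gluing squares) and that the remaining, unfilled cusps of $M$ are indeed tori so that \refthm{CassonRivin} is applicable.
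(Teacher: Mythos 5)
Your proposal is correct and follows exactly the paper's argument: cite \reflem{TriangBorFilling} for the topological triangulation, \reflem{DFBorAngleStructExists} for a nonempty angle structure space, \reflem{VolMaxBor} to place the volume maximum in the interior, and then conclude with the Casson--Rivin theorem. The extra remarks about toroidal boundary and the internal logic of \reflem{VolMaxBor} are accurate but not needed beyond what the cited lemmas already provide.
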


\begin{proof}
By \reflem{TriangBorFilling}, $M(m_1,m_2)$ admits a topological triangulation. By \reflem{DFBorAngleStructExists}, this triangulation admits an angle structure. By \reflem{VolMaxBor} the volume functional takes its maximum on the interior of the space of angle structures. Then the Casson--Rivin theorem, \refthm{CassonRivin}, implies that the triangulation is geometric.
\end{proof}

\section{Fully augmented 2-bridge links}\label{Sec:2Bridge}

In this section we consider links obtained by fully augmenting the standard diagram of a 2-bridge link, which we call fully augmented 2-bridge links for short. These admit a decomposition into two identical, totally geodesic, right-angled ideal polyhedra as in \refsec{FullyAugLinks}. In this case, the polyhedra have a particularly nice form: they are built by gluing finitely many regular ideal octahedra. The construction is illustrated carefully in \cite[Section~4]{Purcell:Slope}. We review it briefly here.

A 2-bridge link has one of two forms, depending on whether there is an even or odd number of twist regions; see \reffig{2BridgeForms}. As before, we augment each twist region with a crossing circle, and remove all even pairs of crossings from the corresponding twist region, leaving one or zero crossings encircled by each crossing circle. When there is one crossing, we say the crossing circle has a half-twist. In fact, we will not consider half-twists here, so assume the fully augmented 2-bridge link has no half-twists.

\begin{figure}
\includegraphics{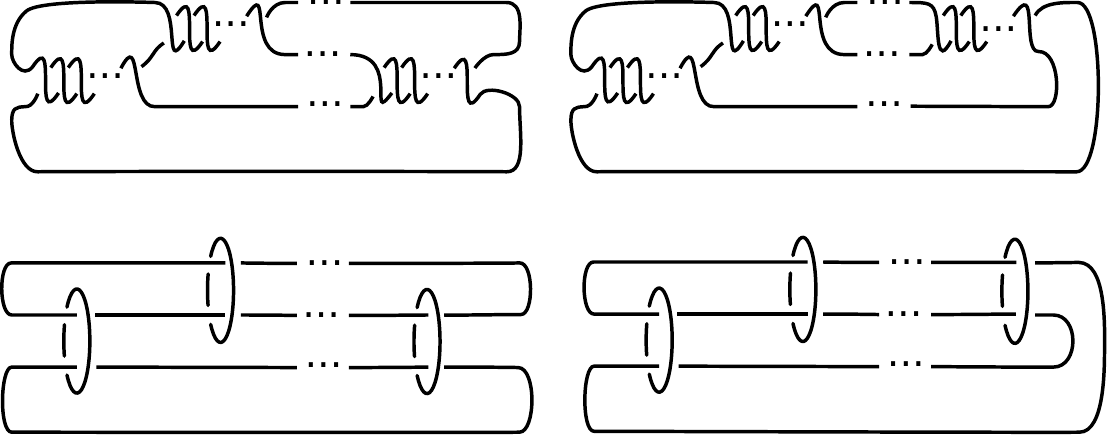}
\caption{The two forms of a 2-bridge link above, and the forms of the fully augmented 2-bridge link (without half-twists) below.}
\label{Fig:2BridgeForms}  
\end{figure}

To obtain the polyhedra, cut the fully augmented 2-bridge link along the geodesic surface of the projection plane. This cuts each of the $2$-punctured disks bounded by a crossing circle in half. The 3-sphere is cut into two pieces, one above and one below the projection plane. For each half, we cut open half discs and flatten them in the projection plane. Lastly, shrink the link components to ideal vertices. See \reffig{2BridgeCirclePacking}, left. The circle packing giving the polyhedral decomposition of a fully augmented $2$-bridge link is shown in \reffig{2BridgeCirclePacking}, right.

\begin{figure}
  \import{figures/}{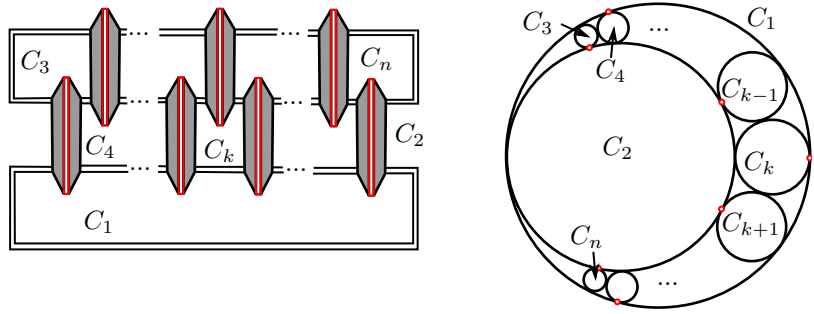}
  \caption{How to decompose a fully augmented 2-bridge link into two polyhedra. On the right, the corresponding circle packing.}
  \label{Fig:2BridgeCirclePacking}
\end{figure}

\begin{lemma}\label{Lem:2BridgeCusps}
  The cusp shapes of any fully augmented 2-bridge link complement with no half-twists consist of a $1\times 2$ block of squares, if the cusp is the first or the last in the diagram, or a $2\times 2$ block of squares for all other cusps. 
\end{lemma}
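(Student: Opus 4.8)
The plan is to read off the cusp shapes directly from the circle packing description of the polyhedral decomposition of a fully augmented 2-bridge link, which is set up in \refsec{FullyAugLinks} and reviewed via \cite[Section~4]{Purcell:Slope}. Recall from \refprop{FALSetup} that the complement decomposes into two identical right-angled ideal polyhedra, determined by a circle packing, and that each ideal vertex is 4-valent, so taking an ideal vertex of a polyhedron to $\infty$ in the upper half space model produces a rectangle cut out by two pairs of parallel vertical planes (white and shaded). Since all the circles in the packing of a fully augmented 2-bridge link arise as a union of regular ideal octahedra glued in a row (as shown in \reffig{2BridgeCirclePacking}), every such rectangle is in fact a square, and each cusp is tiled by finitely many of these squares, two in each polyhedron that the cusp meets.

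First I would set up the circle packing carefully, following \reffig{2BridgeCirclePacking}, labelling the white circles in a horizontal row corresponding to the regions of the projection plane between consecutive crossing circles, with shaded circles (from the 2-punctured discs) dual to them. The key combinatorial point is: how many white circles are tangent to (equivalently, how many squares meet) a given crossing circle cusp in one polyhedron. For a crossing circle in the interior of the diagram, the corresponding ideal vertex of each polyhedron is adjacent to the shaded 2-punctured disc on each side, and the white regions on either side of it, contributing two squares per polyhedron, hence a $2\times 2$ block after gluing the two polyhedra along white faces as in \reflem{BorrDecomp} and \reffig{BorrCirclePacking}. For a crossing circle at the first or last position of the diagram, the 2-bridge form (see \reffig{2BridgeForms}) means that on one side the strands close up without another crossing circle, so only one square per polyhedron meets that cusp, giving a $1\times 2$ block.

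Concretely, I would argue as follows. Arrange a polyhedron so the chosen crossing circle vertex is at $\infty$; by \refprop{FALSetup}(3) the two white vertical planes and two shaded vertical planes cut out a rectangle, and the four corner vertices do not project to crossing circles. Because the polyhedron is a union of regular ideal octahedra meeting along squares (the standard decomposition of \cite{Purcell:Slope}), the horoball cross-section of each octahedron vertex is a unit square, and the cross-section of the crossing-circle vertex is the union of these unit squares along the row of octahedra adjacent to it. An interior crossing circle is adjacent to two octahedra (one for each neighbouring twist region on the projection plane), an end crossing circle to just one. Then I would invoke \reflem{BorromeanTet}-style gluing: the two polyhedra are glued along the white faces, which doubles the number of squares in the direction of the longitude, producing a $2\times 2$ block for interior cusps and a $1\times 2$ block for the two end cusps. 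Finally I would note that the non-crossing-circle cusps are not asserted by the lemma, so no further analysis is needed.

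The main obstacle is bookkeeping: making precise the claim that exactly one (resp.\ two) octahedron vertex meets an end (resp.\ interior) crossing circle cusp, and that gluing the two polyhedra along white faces exactly doubles one dimension of the square block rather than the other. This requires carefully tracking which faces are white (projection plane) and which are shaded (2-punctured discs) around the crossing circle vertex, and identifying the longitude of the crossing circle (which bounds a disc in $S^3$) with the direction along which the two polyhedra are stacked, exactly as in \reflem{BorrDecomp}. Once the correspondence between the octahedra of the decomposition, the squares in the cusp cross-section, and the twist regions of the 2-bridge diagram is pinned down, the statement follows; the regular ideal octahedron's cusp cross-section being a $1\times 1$ square is what forces every block to be made of genuine squares.
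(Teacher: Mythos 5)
Your proposal is correct and follows essentially the same route as the paper: send each crossing circle tangency of the circle packing to infinity, observe that the resulting rectangle in one polyhedron consists of one unit square for an end crossing circle and two for an interior one, and then double this by gluing the two polyhedra along white faces. Your framing via the row of regular ideal octahedra (one octahedron meeting an end crossing circle vertex, two meeting an interior one) is just a repackaging of the same count that the paper performs directly on the circle packing.
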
 

\begin{proof}
The fully augmented 2-bridge link has a circle packing built of two tangent circles labelled $C_1$ and $C_2$, and a string of circles $C_3, \dots, C_n$, each tangent to both $C_1$ and $C_2$, and $C_j$ tangent to $C_{j-1}$ and $C_{j+1}$ for $j=4, \dots, n-1$, as in \reffig{2BridgeCirclePacking}, right. This circle packing describes each of the two polyhedra making up the link complement.

We need to determine the cusp shapes of the crossing circle cusps. In each polyhedron, the crossing circle cusps correspond to tangencies in the circle packing betweeen $C_2$ and $C_3$, between $C_4$ and $C_1$, and more generally, between $C_{2}$ and $C_{2k+1}$, and between $C_1$ and $C_{2k}$. We take each of these tangent points to infinity to determine the cusp shape. There are two cases.

\textbf{Case 1.} Consider the first and last crossing circles, corresponding to tangencies of $C_2$ and $C_3$, and of either $C_2$ and $C_n$ or $C_1$ and $C_n$ if $n$ is odd or even, respectively. 

Take the point of tangency to infinity. In the case of $C_2$ and $C_3$, there are two circles, $C_1$ and $C_4$, tangent to both $C_2$ and $C_3$, and tangent to each other. Thus the circle packing forms a square, similar to the case of the Borromean rings. When we glue across white faces, we glue an identical square coming from the second polyhedron, and the cusp becomes a $1\times 2$ rectangle. See \reffig{2BridgeCusp}, left. The case of $C_n$ is similar.

\begin{figure}
\import{figures/}{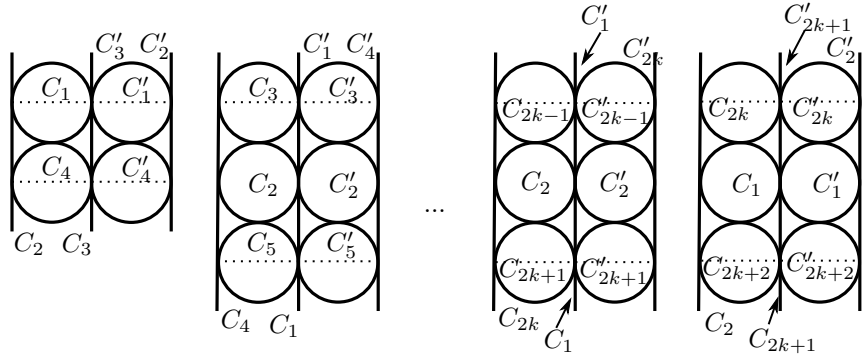}
  \caption{Left to right: The first cusp, the next cusp, the $(2k)$-th cusp, and the $(2k+1)$-th cusp.}
  \label{Fig:2BridgeCusp}
\end{figure}

\textbf{Case 2.} For tangencies between circles $C_1$ and $C_{2k}$ or $C_2$ and $C_{2k+1}$, where the circles $C_{2k}$ or $C_{2k+1}$ are not the first or last such circles, when we take the point of tangency to infinity we see a pattern as shown in \reffig{2BridgeCusp}, right. That is, in the first case, circles $C_1$ and $C_{2k}$ become parallel lines. Between them are three circles tangent to both parallel lines, namely $C_{2k-1}$, $C_2$, and $C_{2k+1}$. These circles form the cusp circle packing. Again glue a white face to another white face to obtain the full cusp shape, shown in \reffig{2BridgeCusp}, middle-right. In this case, the cusp shape is a $2\times 2$ rectangle. The case of $C_2$ and $C_{2k+1}$ is similar, and is illustrated in \reffig{2BridgeCusp}, far right. 
\end{proof}

Notice that each crossing circle cusp is tiled by ideal pyramids over a square base. The first and last crossing circle cusps are tiled by two pyramids, the others by four.

The fully augmented 2-bridge link is obtained by gluing all these pyramids together according to the following pattern.

\begin{lemma}\label{Lem:2BridgeGlue}
Consider the fully augmented 2-bridge link, with no half-twists, and cusp shapes as in \reflem{2BridgeCusps}. The gluing is as follows.
\begin{itemize}  
\item The first cusp, which is a $1\times 2$ rectangle, is glued to the top half of the second cusp, with left side gluing by a reflection in the diagonal of positive slope, and right side gluing by reflection in the diagonal of negative slope.

\item The bottom half of the second cusp, another $1\times 2$ rectangle, is glued to the top half of the third cusp, with the left side gluing by reflection in the negative diagonal, and the right side gluing by reflection in the positive diagonal.

\item Inductively, the $1\times 2$ bottom half of the $k$-th cusp glues to the $1\times 2$ top half of the $(k+1)$-th cusp, with left side gluing by reflection in one diagonal, and the right side gluing by reflection in the other diagonal. See \reffig{2BridgeGluing}.
Importantly, diagonals glue to diagonals, and horizontal lines glue to vertical and vice-versa.

\item Finally, on the last $2\times 2$ rectangle, the bottom $1\times 2$ rectangle glues to the final crossing circle cusp, a $1\times 2$ rectangle, again with vertical edges gluing to horizontal and horizontal to vertical, and diagonals gluing to diagonals.
\end{itemize}
\end{lemma}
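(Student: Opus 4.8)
The plan is to run the circle-packing bookkeeping used for the Borromean rings in \reflem{BorromeanTet}, now along the entire chain of octahedra making up the two polyhedra of a fully augmented 2-bridge link. Recall from the proof of \reflem{2BridgeCusps} that these two identical polyhedra are determined by the circle packing with $C_1$, $C_2$ tangent to one another and a chain $C_3, \dots, C_n$ each tangent to $C_1$, $C_2$, and to its neighbours in the chain, and that the $k$-th crossing circle cusp corresponds to a tangency point involving $C_2$ if $k$ is odd and $C_1$ if $k$ is even, with the first and last cusps the endpoint cases. First I would, for each $k$, send that tangency point to infinity, exhibiting the cusp cross-section of the $k$-th crossing circle inside one polyhedron as a single square (for $k=1$ and $k=n$) or as two squares side by side (otherwise), bounded by arcs of the four neighbouring circles; the white faces of the polyhedron lift to these circles and to the two parallel lines through infinity, while the shaded 2-punctured disc of that crossing circle, cut along the projection plane, lifts to the remaining (vertical) sides.

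Next I would glue the two polyhedra along white faces, which is precisely the reflective symmetry of \refprop{FALSetup}(3) and \refprop{FALGeomTriang}. Under reflection across a white face the square or squares of the $k$-th cusp in one polyhedron go to those in the other and match along a white edge, so assembling them reproduces the cusp tori of \reflem{2BridgeCusps} --- a $1\times 2$ rectangle for $k=1,n$ and a $2\times 2$ rectangle otherwise --- and pins down which edges of the square tiling are white (arcs of the $C_i$ and the lines through infinity) and which are shaded (sides of the 2-punctured disc).

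The substance of the proof is the third step: locating where the cusp neighbourhood of the $k$-th crossing circle meets that of the $(k+1)$-st. The octahedron between them is cut out by the circles tangent at both the $k$-th and the $(k+1)$-st tangency points, and its crossing-circle vertices are exactly these two. Following the computation of \reflem{BorromeanTet} verbatim, I would label the vertices of the relevant squares by tangency points $C_i\cap C_j$ and observe that the same four tangency points bound the square on the $k$-side and on the $(k+1)$-side, but occur in the opposite cyclic order; hence the gluing of one half of the $k$-th cusp to one half of the $(k+1)$-st is forced to be a reflection in a diagonal. The orientation reversal coming from the slice along the projection plane, together with the alternation of the octahedra along the chain $C_3, C_4, \dots$, then swaps the positive and negative diagonals from one step to the next and interchanges the horizontal and vertical edge classes, giving the pattern in the statement; an induction on $k$ propagates it, with the first and last cusps the degenerate ends at which a full $1\times 2$ block, rather than half of a $2\times 2$ block, appears. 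The figures \reffig{2BridgeCirclePacking}, \reffig{2BridgeCusp}, and \reffig{2BridgeGluing} carry most of the geometry.

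The main obstacle I expect is organisational rather than conceptual: one must keep straight the parity of $k$ (which swaps the roles of $C_1$ and $C_2$, hence which pair of opposite sides of each square is white) while simultaneously tracking the orientation reversal across the projection plane and the alternation of the distinguished diagonal along the chain, so that the uniform conclusion ``diagonals glue to diagonals and horizontals to verticals'' comes out with the correct sign for every $k$. Once the two-octahedron base case is checked directly, exactly as in \reflem{BorromeanTet}, the rest is a routine induction along the chain.
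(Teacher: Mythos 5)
Your proposal is correct and follows essentially the same route as the paper: both reduce the lemma to the circle-packing bookkeeping of \reflem{BorromeanTet}, labelling the vertices of each square by tangency points $C_i\cap C_j$ and observing that the same four points appear on the two sides of a gluing in opposite cyclic order with two of them fixed, which forces a reflection in a diagonal. The paper simply carries out this computation explicitly for the two squares of the first cusp and declares the remaining squares similar, which is the base-case-plus-routine-propagation structure you describe.
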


\begin{figure}
\includegraphics{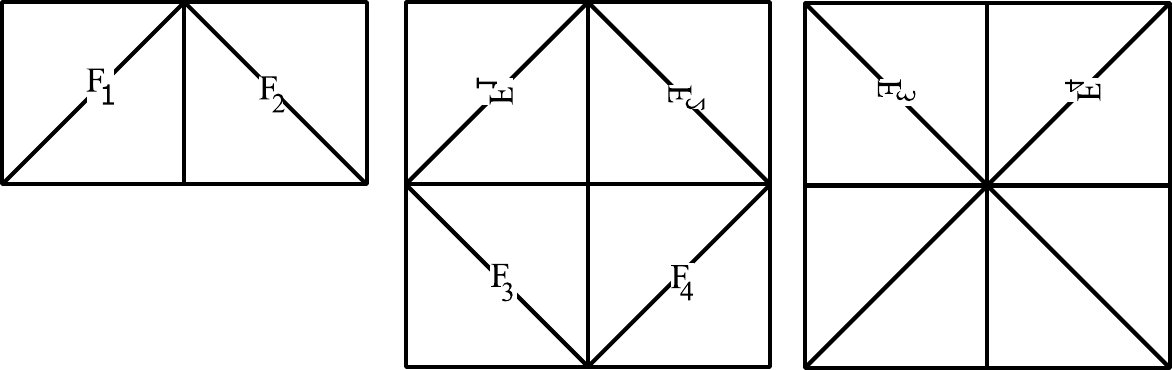}
\caption{How the square faces of pyramids glue to each other.}
  \label{Fig:2BridgeGluing}
\end{figure}

\begin{proof}
As in the case of the link with two crossing circles, this result is obtained by observing the intersections of circles in the circle packings; refer to \reffig{2BridgeCusp}. For the first cusp, the square on the left has ideal vertices $C_1\cap C_2$, $C_2\cap C_4$, $C_3\cap C_4$, $C_3\cap C_1$ in anti-clockwise order. These map to the top square in the middle left, but note this list of vertices now runs in clockwise order, with $C_2\cap C_4$ and $C_3\cap C_1$ in the same locations. It follows that the gluing is a reflection in the diagonal of positive slope. For the square on the right, ideal vertices $C_1'\cap C_3'$, $C_3'\cap C_4'$, $C_2'\cap C_4'$, $C_2'\cap C_1'$ in anti-clockwise order are mapped to the top right square in the middle-left, only now in clockwise order. The gluing is a reflection in the diagonal of negative slope. 

The rest of the squares are treated similarly to obtain the result.
\end{proof}

To Dehn fill, we remove the interiors of the pyramids meeting the cusp, leaving only the square base of each pyramid behind. We will then put in a triangulated solid torus in which the chosen slope bounds a meridian and the boundary is triangulated to match the triangulation on the original manifold.

We know how to fill in a solid torus in the case of the $1\times 2$ rectangles: we just take the double cover of the layered solid torus as in \reflem{DoubleCoverSolidTorus} or the side-by-side solid torus as in \reflem{DoubleLST}.

For the $2\times 2$ rectangle, we see in \reflem{2BridgeSolidTorus} that the double cover of one of the solid tori we have already encountered will always work.

\begin{lemma}\label{Lem:2BridgeSolidTorus}
Suppose $a/b$ is a slope, and $a/b \notin\{0/1, 1/0, \pm 1/1\}$. 
\begin{enumerate}
\item If $a$ is odd and $b$ is even, then let $Y$ be the double cover of the vertical side-by-side solid torus $X$ from \reflem{VerticalSideBySide}, constructed so that $X$ has meridian $a/2b$.
\item If $a$ is even and $b$ is odd, or if $a$ is odd and $b$ is odd, then let $Y$ be the double cover of the horizontal side-by-side solid torus $X$ from \reflem{DoubleLST}, constructed so that $X$ has meridian $2a/b$. 
\end{enumerate}
In either case, $Y$ is a triangulated solid torus whose boundary consists of eight ideal triangles in four symmetric pairs, forming a $2\times 2$ square. The slope $a/b$ is a meridian of $Y$.
\end{lemma}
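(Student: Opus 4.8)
The plan is to reduce everything to a short covering-space computation, in the spirit of the proofs of \reflem{DoubleCoverSolidTorus}, \reflem{DoubleLST}, \reflem{VerticalSideBySide} and \reflem{VerticalDoubleCover}. Write $a/b$ in lowest terms; since $a/b\notin\{0/1,1/0,\pm 1/1\}$ and $\gcd(a,b)=1$, the pair $(a,b)$ has exactly one of the three parities (odd,\,even), (even,\,odd), (odd,\,odd), which are precisely cases (1) and (2) of the statement.

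The first step is to check that $X$ is well defined. In case (1) we apply \reflem{VerticalSideBySide} with meridian $a/(2b)$: the denominator $2b$ is even, $\gcd(a,2b)=1$ since $a$ is odd, and $a/(2b)\notin\{1/0,\pm 1/2\}$ because $b=0$ would force $a/b=1/0$ while $b=\pm 1$ is impossible for even $b$. In case (2) we apply \reflem{DoubleLST} with meridian $2a/b$: the numerator $2a$ is even, $\gcd(2a,b)=1$ since $b$ is odd, and $2a/b\notin\{0/1,\pm 2/1\}$ because $a=0$ forces $a/b=0/1$ and $a=b=\pm 1$ forces $a/b=\pm 1/1$. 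This is precisely where doubling the numerator or denominator of $a/b$ pushes the meridian of $X$ off the slopes excluded in those lemmas, which is why the $2\times 2$ case excludes fewer slopes than the $1\times 2$ case. In either case $X$ is a triangulated solid torus whose outer boundary is a twice-punctured torus with four triangles in two symmetric pairs, all diagonal edges carrying the sign of $a/b$.

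The second step is to pass to the connected double cover $Y\to X$, which is again a solid torus; its outer boundary double-covers that of $X$, so $\bdy Y$ has eight boundary triangles, permuted in four pairs by the deck transformation. To see how the meridian and the boundary transform, work in $\pi_1(X)\cong\ZZ$ and write the meridian of $X$ as $a\mu_X+2b\lambda_X$ in case (1), $2a\mu_X+b\lambda_X$ in case (2). The meridian maps to $0$ while $\{[\mu_X],[\lambda_X]\}$ still generates $\ZZ$, forcing $([\mu_X],[\lambda_X])=\pm(2b,-a)$ in case (1) and $\pm(b,-2a)$ in case (2). Reading off parities, exactly one generator — $\lambda_X$ in case (1), $\mu_X$ in case (2) — has odd image and hence does not lift to a loop of $\bdy Y$; its two lifts, joined end to end, give a curve $\lambda_Y$ (resp.\ $\mu_Y$) of $\bdy Y$ projecting to $2\lambda_X$ (resp.\ $2\mu_X$), while the other generator lifts to a loop which we take as the complementary basis curve. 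Comparing with the lattice descriptions of \reflem{DoubleLST} and \reflem{VerticalSideBySide}, this identifies $\bdy Y$ with $\RR^2/\langle(0,2),(2,0)\rangle$ — a $2\times 2$ block of squares, not a $4\times 1$ strip — each cut by a diagonal of the common sign. Finally, the meridian disc of $X$ lifts to a meridian disc of $Y$; expressing its boundary in the basis $\{\mu_Y,\lambda_Y\}$, using that $\lambda_Y$ projects to $2\lambda_X$ in case (1) and $\mu_Y$ to $2\mu_X$ in case (2), gives $a\mu_Y+b\lambda_Y$, so the meridian of $Y$ is the slope $a/b$.

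The only genuinely delicate point is the interplay in the last step between the parity of $a$ or $b$, the choice to double the denominator (taking $a/(2b)$) versus the numerator (taking $2a/b$), and the requirement that the double cover yield a $2\times 2$ block rather than an elongated strip; once the two cases are organised so this matches up, the remainder is routine parity and intersection-number bookkeeping of exactly the kind already carried out in \refsec{DoublingLST}.
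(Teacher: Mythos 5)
Your proposal is correct and follows essentially the same route as the paper: identify which boundary generator of $X$ has odd intersection number with (equivalently, odd image in $\pi_1(X)\cong\ZZ$ under) the meridian $a/2b$ or $2a/b$, unwrap that generator in the double cover to turn the $1\times 2$ fundamental domain into a $2\times 2$ block, and rewrite the lifted meridian disc's boundary in the new basis to get slope $a/b$. Your explicit verification that the meridian of $X$ avoids the excluded slopes of Lemmas~\ref{Lem:VerticalSideBySide} and~\ref{Lem:DoubleLST} is a small addition the paper leaves implicit, but the argument is otherwise the same.
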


\begin{proof}

\noindent\underline{Case 1.} Suppose $a/b$ is of the form odd over even.

Let $X$ be the triangulated solid torus that is a ``vertical'' side-by-side of a layered solid torus, constructed so that the meridian of $X$ has slope $a/2b:=m_X$, as in \reflem{VerticalSideBySide}. Note that to build such a triangulation, we walk to the slope $a/b$ in the Farey graph, then identify edges of slope $a/b$ in the final layered tetrahedra, and insert one more tetrahedron. In $\RR^2$, the slope $1/0:=\mu_X$ in $X$ lifts to run from $(0,0)$ to $(0,2)$, and $0/1:=\lambda_X$ lifts to run from $(0,0)$ to $(1,0)$. The meridian of $X$, the slope $a/2b=m_X$, lifts to run from $(0,0)$ to $(2b,2a)$. 

Let $Y$ denote the (horizontal) double cover of $X$. We will show $Y$ gives the required Dehn filling. To do so, we need to show that $a/b$, now written as a slope on $Y$, in the basis for $Y$, bounds a disc. 

In the solid torus $X$, the slope $\mu_X=1/0$ lifts to the curve running from $(0,0)$ to $(0,2)$ in $\RR^2$. The meridian slope $m_X=a/2b$ meets the slope $\mu_X$ a total of
$|a\cdot 0 - 2b\cdot 1| = |2b|$ times, which is even. Thus the curve $\mu_X$ is homotopic to an even power of the core of $X$. Therefore it lifts to a generator of the fundamental group of $\partial Y$. Denote this generator by $\mu_Y = 1/0$. When we lift $\bdy Y$ to $\RR^2$, the lift of $\mu_Y$ still runs from $(0,0)$ to $(0,2)$.

The meridian slope $m_X=a/2b$ of $X$ meets the slope $\lambda_X=0/1$ a total of $|2b\cdot 0 - a\cdot 1| = |a|$ times, which is odd. Thus a second generator of the fundamental group of $\partial Y$ is given by taking two lifts of the curve $\lambda_X=0/1$, lined up end-to-end. Denote this generator by $\lambda_Y$. When we lift $\bdy Y$ to $\RR^2$, $\lambda_Y$ lifts to run from $(0,0)$ to $(0,2)$ in $\mathbb{R}^2$, i.e.\ twice the lift of the corresponding generator in $\bdy X$.

The meridian $m_X=a/2b$ of $X$ lifts to bound a disc in $Y$. Note that the lift runs $a$ times along $\mu_Y$ and $b$ times along $\lambda_Y$. This means the meridian of $Y$ is the slope $a/b$, as desired. 

\vspace{.1in}

\noindent\underline{Case 2.} Suppose $a/b$ is of the form even over odd or is of the form odd over odd.

Let $X$ be the triangulated solid torus that is a ``horizontal'' side-by-side solid torus, constructed so that the meridian of $X$ has slope $2a/b:=m_X$. Note that to build such a triangulation, we walk to the slope $a/b$ in the Farey graph, as in \reflem{DoubleLST}. The solid torus $X$ has generators of the fundamental group given by slopes $\mu_X=1/0$, lifting to run from $(0,0)$ to $(0,1)$ in the $\RR^2$ cover of $\bdy X$, and $\lambda_X = 0/1$, lifting to run from $(0,0)$ to $(2,0)$. The meridian $2a/b$ lifts to run from $(0,0)$ to $(2b,2a)$. 

Let $Y$ denote the (vertical) double cover of $X$. We will show $Y$ gives the required Dehn filling. To do so, we need to show that $a/b$, written in the basis for $Y$, bounds a disc. 

In $X$, the slope $\mu_X = 1/0$ meets the meridian slope $m_X=2a/b$ a total of $|1 \cdot b - 0 \cdot 2a| = |b|$ times, which is odd. Thus $\mu_X$ in $X$ lifts to an arc in $\partial Y$. A generator of the fundamental group of $\partial Y$ is given by taking two lifts of this curve, lined up end-to-end. Denote the resulting closed curve in $Y$ by $\mu_Y$. Its lift runs from $(0,0)$ to $(0,2)$ in $\mathbb{R}^2$.

In $X$, the slope $\lambda_X=0/1$ meets the meridian slope $m_X=2a/b$ of $X$ a total of $|0 \cdot b - 1 \cdot 2a| = |2a|$ times, which is even. Thus the curve $\lambda_X$ is homotopic to an even power of the core of $X$, and it lifts to a closed curve in $Y$. A second generator of the fundamental group of $\partial Y$ is given by taking this lift of $\lambda_X$. Denote it by $\lambda_Y$. When we lift $\bdy Y$ to $\RR^2$, the lift of $\lambda_Y$ is the same as the lift of $\lambda_X$: it runs from $(0,0)$ to $(0,2)$ in $\mathbb{R}^2$. 

The meridian $m_X=2a/b$ of $X$ lifts to bound a disc in $Y$. Note that the lift runs $a$ times along $\mu_Y$ and $b$ times along $\lambda_Y$, hence has the slope $a/b$, as desired.
\end{proof}

\begin{lemma}\label{Lem:AngleStructQuadLST}
Let $a/b \in \QQ\cup\{1/0\}$ be such that $a/b\notin\{0/1, 1/0, \pm 1/1\}$. Let $Y$ denote the triangulated solid torus with meridian $a/b$ of \reflem{2BridgeSolidTorus}. Let $\{\theta_p,\theta_q,\theta_r\}$ be exterior dihedral angles along the boundary of the four-punctured torus forming the $2\times 2$ square, each repeated four times symmetrically, satisfying:
\[ 0<\theta_r<\pi, \quad
-\pi < \theta_p,\theta_q < \pi, \quad
\theta_p+\theta_q+\theta_r = \pi.
\]
Then there exists an angle structure on the triangulated solid torus of \reflem{2BridgeSolidTorus} with these exterior angles. 
\end{lemma}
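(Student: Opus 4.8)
The plan is to build the angle structure on $Y$ by lifting one from the smaller solid torus that $Y$ double covers. Recall from \reflem{2BridgeSolidTorus} that $Y$ is the double cover of a side-by-side solid torus $X$: in Case~(1), $X$ is the vertical side-by-side solid torus of \reflem{VerticalSideBySide} with meridian $a/2b$, and in Case~(2), $X$ is the horizontal side-by-side solid torus of \reflem{DoubleLST} with meridian $2a/b$. In either case the triangulation of $Y$ is the lift of that of $X$ under the covering map $\pi\from Y\to X$, so $\pi$ restricts to a combinatorial isomorphism from each tetrahedron of $Y$ onto a tetrahedron of $X$, and $\bdy Y\to\bdy X$ is a double cover of the twice-punctured torus boundary by the four-punctured torus boundary.

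First I would descend the prescribed exterior angles to $\bdy X$. The triangulation of $\bdy X$ has edges of three slopes $p,q,r$ (each appearing twice, in a symmetric pair), and over each such slope lie exactly four edges of $\bdy Y$. Since the hypothesis assigns one and the same exterior dihedral angle to all four edges of $\bdy Y$ over a given slope, these angles descend to well-defined exterior dihedral angles $\theta_p,\theta_q,\theta_r$ on $\bdy X$ satisfying $0<\theta_r<\pi$, $-\pi<\theta_p,\theta_q<\pi$, and $\theta_p+\theta_q+\theta_r=\pi$.

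Next I would verify that the meridian of $X$ is not one of the slopes excluded in \reflem{AngleStructDoubleLST}. In Case~(1), $a$ odd and $b$ even (hence $|b|\geq 2$), together with $a,b\neq 0$ (from $a/b\notin\{0/1,1/0\}$), force $a/2b\notin\{1/0,0/1,\pm 1/1,\pm 1/2\}$; in Case~(2), $b$ is odd and $a\neq 0$, and with the hypothesis $a/b\neq\pm 1/1$ (needed only when $a$ is odd), $2a/b\notin\{0/1,1/0,\pm 1/1,\pm 2/1\}$. Moreover $X$ is a side-by-side solid torus, not a double cover of a layered solid torus, so \reflem{AngleStructDoubleLST} — and, for the vertical construction of Case~(1), the same argument applied as in \refsec{Vertical} — imposes no intersection-number condition. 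Hence there is an angle structure on $X$ whose exterior dihedral angles are $\theta_p,\theta_q,\theta_r$.

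Finally I would pull this structure back along $\pi$, assigning to each tetrahedron of $Y$ the dihedral angles of its image tetrahedron in $X$. Every requirement of \refdef{AngleStruct} is local, and $\pi$ is a local homeomorphism carrying the link of each interior edge of $Y$ isomorphically onto the link of its image and the link of each ideal vertex of $Y$ isomorphically onto the link of its image; therefore all dihedral angles of $Y$ lie in $(0,\pi)$, the angles sum to $\pi$ around each ideal vertex, and they sum to $2\pi$ around each interior edge. The exterior dihedral angles on $\bdy Y$ are then exactly the prescribed $\theta_p,\theta_q,\theta_r$, each repeated on the four edges of its slope class. The only delicate point is the bookkeeping in the middle two paragraphs — correctly matching the parities of $a$ and $b$ with the horizontal/vertical side-by-side construction and with the excluded slopes, so that the correct earlier lemma applies — since the lifting step itself is purely formal.
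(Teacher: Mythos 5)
Your proposal is correct and follows essentially the same route as the paper: the paper's proof likewise observes that $Y$ double covers a (vertical or horizontal) side-by-side solid torus $X$ whose meridian avoids the excluded slopes $\{0/1,1/0,\pm 1/1,\pm 1/2,\pm 2/1\}$, invokes \reflem{AngleStructDoubleLST} or \reflem{VerticalSideBySide} to get an angle structure on $X$, and lifts it. You simply make explicit the slope bookkeeping and the locality of the lifting step that the paper dismisses as automatic.
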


\begin{proof}
This is automatic from Lemma~\ref{Lem:AngleStructDoubleLST} or~\ref{Lem:VerticalSideBySide}: our solid torus is the double cover of a vertical or horizontal side-by-side solid torus $X$, with meridian of $X$ not one of the slopes $\{0/1,1/0,\pm 1/1, \pm 1/2, \pm 2/1\}$. For such solid tori, the angle structure exists, so it exists for the double cover by lifting angles. 
\end{proof}

\begin{lemma}\label{Lem:VolMaxQuadLST}
Let $T$ be the triangulated solid torus of \reflem{2BridgeSolidTorus}.
If the volume functional has its maximum in the boundary of the space of angle structures, then all tetrahedra of $T$ must be flat. Hence, the volume functional takes its maximum in the interior. 

Moreover, all tetrahedra are flat if and only if exterior angles $(\theta_p, \theta_q, \theta_r)$ are one of $(\pi,0,0)$, $(0,\pi,0)$, $(-\pi,\pi,\pi)$ or $(\pi,-\pi,\pi)$.
\end{lemma}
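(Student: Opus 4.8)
The plan is to reduce to the volume-maximisation statements already proved for side-by-side solid tori, exploiting the covering symmetry of $T$. Recall from \reflem{2BridgeSolidTorus} that $T=Y$ is a double cover of a side-by-side solid torus $X$: in one case $X$ is the vertical side-by-side solid torus of \reflem{VerticalSideBySide} (with meridian $a/2b$), and in the other it is the horizontal side-by-side solid torus of \reflem{DoubleLST} (with meridian $2a/b$). In both cases the hypothesis $a/b\notin\{0/1,1/0,\pm 1/1\}$ ensures that the meridian of $X$ avoids the slopes $\{0/1,1/0,\pm1/1,\pm1/2,\pm2/1\}$ excluded in those lemmas. The triangulation of $Y$ is the pullback of the triangulation of $X$, so $Y$ carries a free involution $\sigma$, the deck transformation of $Y\to X$, which swaps the two tetrahedra of $Y$ lying over each tetrahedron of $X$ and identifies the boundary data, so that the exterior dihedral angles $(\theta_p,\theta_q,\theta_r)$ of $Y$ are the same as those of $X$. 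The involution $\sigma$ acts on $\overline{\calA(Y)}$ by permuting coordinates and preserves the volume functional $\calV$, since $\calV$ is a sum of Lobachevsky contributions over the tetrahedra.

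First I would prove the conditional statement. Suppose $\calV$ attains its maximum over $\overline{\calA(Y)}$ at a point $p^*\in\partial\calA(Y)$. Because $\calV$ has a unique maximum on $\overline{\calA(Y)}$ and $\sigma$ preserves both $\overline{\calA(Y)}$ and $\calV$, the maximiser $p^*$ is fixed by $\sigma$; hence the two tetrahedra over each tetrahedron of $X$ receive equal angles at $p^*$, so $p^*$ descends to a point $\bar p\in\overline{\calA(X)}$ with $\calV(p^*)=2\,\calV(\bar p)$. Since every point of $\overline{\calA(X)}$ lifts to a point of $\overline{\calA(Y)}$ of exactly twice the volume, $\bar p$ maximises the volume functional over $\overline{\calA(X)}$, and as some tetrahedron of $Y$ is flat at $p^*$, so is some tetrahedron of $X$ at $\bar p$, whence $\bar p\in\partial\calA(X)$. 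Thus the volume functional on the side-by-side solid torus $X$ attains its maximum on the boundary of $\calA(X)$. By \reflem{VolMaxDoubleLST} (horizontal case) or part~(4) of \reflem{VerticalSideBySide} (vertical case), every tetrahedron of $X$ is then flat, hence every tetrahedron of $T=Y$ is flat.

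To conclude, note that $\calA(Y)$ is nonempty by \reflem{AngleStructQuadLST}, so it contains angle structures all of whose dihedral angles lie in $(0,\pi)$, and these have strictly positive volume, whereas an all-flat structure has zero volume. Therefore the maximum of $\calV$ over the compact set $\overline{\calA(Y)}$ cannot be attained only on the boundary, so it is attained in the interior. Finally, the tetrahedra of $Y$ are flat exactly when those of $X$ are, and --- since the exterior angles of $X$ and of $Y$ coincide --- by \reflem{ExtAngDoubleLST} (horizontal case) or part~(4) of \reflem{VerticalSideBySide} (vertical case) this happens precisely when $(\theta_p,\theta_q,\theta_r)$ is one of $(\pi,0,0)$, $(0,\pi,0)$, $(-\pi,\pi,\pi)$, or $(\pi,-\pi,\pi)$.

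I expect the only genuine work to be the bookkeeping in the first paragraph: checking that $Y\to X$ really is a free $\ZZ/2$-cover whose quotient triangulation, together with its exterior-angle labelling, is exactly that of the side-by-side solid torus $X$, and that the two parity cases for $a/b$ send the meridian of $X$ outside the excluded slope sets, so that the side-by-side lemmas apply without change. Granting that, the symmetry-and-descent argument is the same one used in \reflem{VolMaxDoubleLST} and in the proof of \refthm{MainBorromean}, and presents no new difficulty.
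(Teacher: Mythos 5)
Your proposal is correct and takes essentially the same route as the paper, which simply cites Lemmas~\ref{Lem:VolMaxDoubleLST}, \ref{Lem:ExtAngDoubleLST} and \ref{Lem:VerticalSideBySide} and leaves the double-cover descent implicit; your symmetry-and-uniqueness argument for descending a boundary maximiser from $Y$ to $X$ is exactly the one the paper itself uses in the proof of \reflem{Lem:VolMaxDoubleLST}. The parity bookkeeping confirming that the meridian of $X$ avoids the excluded slopes is also as the paper intends, so nothing further is needed.
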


\begin{proof}
This follows immediately from the similar fact for side-by-side solid tori, Lemmas~\ref{Lem:VolMaxDoubleLST} and~\ref{Lem:ExtAngDoubleLST}, or in the vertical side-by-side case by \reflem{VerticalSideBySide}.
\end{proof}

After removing pyramids from the fully augmented 2-bridge link, and putting in triangulated solid tori satisfying the above lemmas, we have a triangulation of a Dehn filling. To obtain an angle structure, we need gluing equations to be satisfied. Since we already know gluing equations inside the solid tori, we only need to ensure gluing equations hold on the boundaries of these solid tori, where they glue to each other. 

\begin{lemma}\label{Lem:GluingEquations}
Let $L$ be a fully-augmented 2-bridge link with $n>2$ crossing circles (and no half-twists). Let $s_1, \dots, s_n$ be slopes that are all positive or all negative, and further
\[ s_1, s_n \notin \{0/1, 1/0, \pm 1/1, \pm 2/1\}, \mbox{ and }
s_2, \dots, s_{n-1} \notin \{0/1, 1/0, \pm 1/1\}.\]
Label horizontal edges in all crossing circle cusps by $\theta_j$, diagonals by $\alpha_j$, and verticals by $\pi-\theta_j-\alpha_j$. (These are exterior angles). 
Let $T$ be the triangulation of the Dehn filling of $S^3-L$ along these slopes obtained by inserting solid tori as above.
Then if there is an angle structure, exterior angles on the solid tori must satisfy:
\begin{equation}\label{Eqn:Diagonal2B}
  \mbox{Diagonal equations: } \alpha_i = -\alpha_{i+1} \quad \mbox{for all $i$}
\end{equation}
\begin{equation}\label{Eqn:Interior2B}
  \mbox{Interior equations: }  2(\theta_i+\alpha_i) -\theta_{i-1}  -\theta_{i+1} = 0 \quad \mbox{for $2\leq i \leq n-1$}
\end{equation}
\begin{equation}\label{Eqn:End2B}
  \mbox{End equations: } \theta_1+\alpha_1 - \theta_2=0 , \mbox{ and }   \theta_n+\alpha_n -\theta_{n-1} = 0.
\end{equation}
\end{lemma}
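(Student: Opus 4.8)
The plan is to transcribe the edge–equation computation of \reflem{AnglesBorFilling} from two crossing circles to a chain of $n$. First recall the structure of $T$. Since all of $s_1,\dots,s_n$ have the same sign, the diagonal edges of every solid torus built in \reflem{2BridgeSolidTorus} (for $2\le i\le n-1$) and in \reflem{DoubleCoverSolidTorus} or \reflem{DoubleLST} (for $i=1$ and $i=n$) have that common sign, so each glues directly onto the square faces left by the pyramids of $S^3-L$ in the pattern of \reflem{2BridgeGlue}, diagonals matching diagonals; no correction tetrahedra as in Case~2 of \reflem{TriangBorFilling} are needed. Hence $T=T_1\cup\cdots\cup T_n$, where the $1\times 2$ bottom face of $T_i$ is identified with the $1\times 2$ top face of $T_{i+1}$, with $T_1$ and $T_n$ having twice–punctured–torus boundary and $T_2,\dots,T_{n-1}$ having four–punctured–torus ($2\times 2$) boundary. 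By \reflem{AngleStructDoubleLST} and \reflem{AngleStructQuadLST}, each $T_j$ admits the angle structures whose exterior angles are $\theta_j$ on horizontal edges, $\alpha_j$ on diagonal edges, and $\pi-\theta_j-\alpha_j$ on vertical edges, where the abstract slopes $p,q,r$ of those lemmas are matched with the horizontal, diagonal and vertical edges exactly as in the proof of \reflem{DFBorAngleStructExists}.

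Next I reduce to the interface edges. Suppose $T$ has an angle structure. Then condition~(3) of \refdef{AngleStruct} holds around every edge of $T$ interior to $M$. Each such edge either is interior to a single $T_j$ — where condition~(3) is automatic from the angle structures supplied by \reflem{AngleStructDoubleLST} or \reflem{AngleStructQuadLST} — or it lies on one of the $n-1$ square interfaces between consecutive solid tori. So the only restrictions on $(\theta_j,\alpha_j)$ come from condition~(3) at the interface edges; and by \reflem{2BridgeGlue} these split into diagonal edge classes (diagonals glue to diagonals) and horizontal–vertical edge classes (horizontals glue to verticals and vice versa). In every case condition~(3) says that the sum of the \emph{interior} dihedral angles around the edge class — each equal to $\pi$ minus the appropriate exterior angle — is $2\pi$.

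For a diagonal class at the $i$th interface, only $T_i$ and $T_{i+1}$ meet it, contributing interior angles $\pi-\alpha_i$ and $\pi-\alpha_{i+1}$, so $(\pi-\alpha_i)+(\pi-\alpha_{i+1})=2\pi$, i.e.\ $\alpha_i=-\alpha_{i+1}$; this is \refeqn{Diagonal2B} for $i=1,\dots,n-1$. The horizontal–vertical classes come in two types. Two of them are \emph{end} classes, one meeting only $T_1$ and $T_2$ and one meeting only $T_{n-1}$ and $T_n$: at the first end, the vertical edges of $T_1$ contribute interior angle $\pi-(\pi-\theta_1-\alpha_1)=\theta_1+\alpha_1$ and the horizontal edges of $T_2$ contribute interior angle $\pi-\theta_2$, each with the same multiplicity (the factor-of-two bookkeeping of \reflem{AnglesBorFilling}), so the angle sum $2(\theta_1+\alpha_1)+2(\pi-\theta_2)=2\pi$ gives $\theta_1+\alpha_1-\theta_2=0$, and symmetrically $\theta_n+\alpha_n-\theta_{n-1}=0$; these are \refeqn{End2B}. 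The remaining $n-2$ classes are \emph{interior} classes, one straddling each $2\times 2$ cusp $T_i$ with $2\le i\le n-1$ and reaching across the two interfaces adjacent to $T_i$ into $T_{i-1}$ and $T_{i+1}$: the cusp $T_i$ contributes its vertical-edge interior angle $\theta_i+\alpha_i$ twice while each of the neighbours $T_{i-1}$ and $T_{i+1}$ contributes its horizontal-edge interior angle ($\pi-\theta_{i-1}$ and $\pi-\theta_{i+1}$ respectively) once, so $2(\theta_i+\alpha_i)+(\pi-\theta_{i-1})+(\pi-\theta_{i+1})=2\pi$, which is \refeqn{Interior2B}. A count confirms consistency: there are $2(n-1)$ horizontal–vertical slots (two at each interface), used up by the $2$ end classes once each and the $n-2$ interior classes twice each, $2+2(n-2)=2(n-1)$.

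The main obstacle is the combinatorial bookkeeping behind the previous paragraph: one must trace, through the reflection-in-a-diagonal identifications of \reflem{2BridgeGlue} and the internal structure of the solid tori of \reflem{2BridgeSolidTorus}, exactly how the six boundary edges of each end solid torus and the twelve boundary edges of each $2\times 2$ solid torus are merged into the diagonal and horizontal–vertical classes — in particular that the two end classes meet only two solid tori whereas each interior class meets three, and that the multiplicities are one horizontal edge from each $1\times 2$ half against two vertical edges of each $2\times 2$ cusp, producing the $2$ in \refeqn{Interior2B} and in \refeqn{End2B}. This is exactly the two–crossing–circle bookkeeping of \reflem{AnglesBorFilling} run along a chain; no new geometric input is needed beyond keeping interior and exterior dihedral angles straight via $\mathrm{interior}=\pi-\mathrm{exterior}$.
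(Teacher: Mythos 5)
Your proposal is correct and follows essentially the same route as the paper: both read off the edge classes at the square interfaces from the gluing pattern of \reflem{2BridgeGlue} (diagonals to diagonals, horizontals to verticals), note that edges interior to a single solid torus already satisfy the $2\pi$ condition by construction, and compute the angle sums around the diagonal, end, and interior classes with exactly the multiplicities you give. Your explicit count of the $2(n-1)$ horizontal--vertical slots is a small added consistency check not in the paper, but the argument is otherwise the same.
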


\begin{proof}
This follows from the gluing description given above.

Diagonal edges map to diagonal edges, and these are the only edges in this edge class. Thus for all $i$, $(\pi-\alpha_i) +(\pi-\alpha_{i+1}) = 2\pi$, which implies $\alpha_i=-\alpha_{i+1}$, giving \eqref{Eqn:Diagonal2B}.

For the first end equations, the vertical edges with angles $\pi-\theta_1-\alpha_1$ in the first $1\times 2$ cusp glue to the horizontal edges on the left side of the first $2\times 2$ cusp, labelled $\theta_2$. Note that both vertical edges and both horizontal edges are glued to the same edge class. Thus $2(\pi-(\pi-\theta_1-\alpha_1)) + 2(\pi-\theta_2) = 2\pi$. This gives the first end equations in \eqref{Eqn:End2B}.

For the interior equations, the horizontal edge with angle $\theta_{i-1}$ in the $(i-1)$-st cusp glues to both vertical edges in the $i$-th cusp, with angles $\pi-\theta_i-\alpha_i$. In turn, both vertical edges in the $i$-th cusp glue to the horizontal edge with angle $\pi - \theta_{i+1}$ in the $(i+1)$-st cusp. Note that this is true for $2 \leq i \leq n-1$. Thus we require that $\pi - \theta_{i-1} + 2(\pi -(\pi -\theta_i-\alpha_i)) + \pi - \theta_{i+1} = 2\pi$ for $2 \leq i \leq n-1$ . This gives the interior equations in \eqref{Eqn:Interior2B}. 

For the last end equations, both horizontal edges with angle $\theta_{n-1}$ in the last $2 \times 2$ cusp glue to both vertical edges with angle $\pi-\theta_n-\alpha_n$ in the last $1 \times 2$ cusp. Thus $2(\pi-\theta_{n-1})+2(\pi-(\pi-\theta_n-\alpha_n)) = 2\pi$, giving the last end equation in \eqref{Eqn:End2B}.
\end{proof}

\begin{lemma}\label{Lem:AngleNonempty2B}
For the triangulation on the Dehn filling of the fully augmented 2-bridge link given above, the space of angle structures is non-empty. 
\end{lemma}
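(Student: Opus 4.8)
The plan is to exhibit a single angle structure by hand. By \reflem{GluingEquations}, an angle structure on the triangulation $T$ amounts to a choice of exterior dihedral angles $\theta_i$ (horizontal), $\alpha_i$ (diagonal) and $\pi-\theta_i-\alpha_i$ (vertical) on each of the $n$ inserted solid tori, subject to two kinds of conditions: (a) the diagonal, interior and end equations \refeqn{Diagonal2B}, \refeqn{Interior2B}, \refeqn{End2B}; and (b) for each solid torus, the hypotheses of \reflem{AngleStructQuadLST} (for an interior $2\times2$ cusp) or \reflem{AngleStructDoubleLST} (for a $1\times2$ end cusp), namely that the angle on the first-covered edge of its defining Farey walk lies in $(0,\pi)$, the other two lie in $(-\pi,\pi)$, the three sum to $\pi$, and — only when an end cusp is a double cover of a layered solid torus, i.e.\ its slope $\ell/k$ has $\ell$ odd — an intersection inequality $i(p,\ell/2k)\theta_p+i(q,\ell/2k)\theta_q+i(r,\ell/2k)\theta_r>2\pi$. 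Once (a) and (b) hold, the resulting (strict) angle structures on the solid tori patch together to a global one, so $\calA(T)\neq\emptyset$.

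First I would try the constant choice $\alpha_i=0$ and $\theta_i=\pi/2$ for all $i$. Then \refeqn{Diagonal2B}, \refeqn{Interior2B} and \refeqn{End2B} are satisfied trivially, and on every solid torus the exterior-angle triple is $(\pi/2,\pi/2,0)$ with the degenerate value $0$ sitting on the diagonal edge.

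The one point that genuinely needs an argument is that on each solid torus the diagonal is never the first-covered slope, so that $0$ always lands on a ``$p$'' or ``$q$'' edge and not on the ``$r$'' edge. This is where the sign hypothesis on $s_1,\dots,s_n$ enters. Each solid torus is built from a Farey walk starting at $T_0=(0/1,\pm1/1,1/0)$ with the diagonal $\pm1/1$ carrying the common sign of the slopes; a geodesic heading toward a slope of that sign cannot first cross the edge $(0/1,1/0)$ opposite the diagonal, since crossing it would confine the walk to slopes of the opposite sign. Hence the first edge crossed is $(0/1,\pm1/1)$ or $(\pm1/1,1/0)$, so the first slope to disappear is $1/0$ or $0/1$, never $\pm1/1$. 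For the $1\times2$ end cusps this identification of edges with slopes is already carried out in the proof of \reflem{DFBorAngleStructExists}, and for the side-by-side solid tori underlying \reflem{2BridgeSolidTorus} and \reflem{DoubleLST} the same reasoning applies verbatim. Consequently $(\pi/2,\pi/2,0)$ meets the angle hypotheses of \reflem{AngleStructQuadLST} and \reflem{AngleStructDoubleLST}.

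Finally I would check the intersection inequality for an end cusp with $\ell$ odd. There the solid torus is a double cover of a layered solid torus with meridian $\ell/2k$, which is already in lowest terms since $\ell$ is odd and $\gcd(\ell,k)=1$, so $i(\ell/2k,0/1)=|\ell|$ and $i(\ell/2k,1/0)=2|k|$. With our angles the inequality reduces to $\tfrac{\pi}{2}\bigl(|\ell|+2|k|\bigr)>2\pi$, i.e.\ $|\ell|+2|k|>4$, and the exclusions $s_i\notin\{0/1,1/0,\pm1/1\}$ force $|\ell|\geq1$, $|k|\geq1$ and rule out $(|\ell|,|k|)=(1,1)$, giving $|\ell|+2|k|\geq5$. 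When $\ell$ is even the end solid torus is side-by-side and needs no intersection condition, and the interior $2\times2$ solid tori of \reflem{2BridgeSolidTorus} are double covers of side-by-side solid tori, so none arises there either; thus (b) holds for every piece. The main obstacle is purely bookkeeping: matching the combinatorial horizontal/diagonal/vertical edges of the cusp squares to the slopes $\{0/1,\pm1/1,1/0\}$ of the initial Farey triangle uniformly across all $n$ solid tori, handling the two ends — where $\pm2/1$ is also excluded and the odd/even dichotomy matters — slightly differently from the interior $2\times2$ cusps.
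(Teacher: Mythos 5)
Your proof is correct and follows essentially the same route as the paper's: set $\alpha_j=0$ (legitimate because the common sign of the slopes forces the first-covered Farey slope to be $0/1$ or $1/0$, never $\pm 1/1$) and take all $\theta_j$ equal to a common value, then invoke \reflem{AngleStructDoubleLST} and \reflem{AngleStructQuadLST} on each solid torus. Your explicit check of the intersection inequality $\tfrac{\pi}{2}(|\ell|+2|k|)>2\pi$ for an end cusp whose slope has odd numerator is a detail the paper's proof leaves implicit, and it is a worthwhile addition.
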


\begin{proof}
Because the signs of all the slopes agree, say all are positive, the solid tori are constructed by starting in the Farey triangulation in the triangle $(0,1,1/0)$, and moving either across the edge from $0$ to $1$ or from $1$ to $1/0$. In either case, the slope $1/1$ cannot correspond to the slope covered first, so $\alpha_j$ will never correspond to the slope $\theta_{r_j}$; it will be $\theta_{p_j}$ or $\theta_{q_j}$. Then set all $\alpha_j=0$. This is in the required range of \reflem{AngleStructQuadLST}. 

Because $\alpha_j=0$ for all $j$, the end equations imply $\theta_1=\theta_2$, and the interior equations imply $2\theta_2=\theta_1+\theta_3$, hence $\theta_3=\theta_1$. Inductively assume $\theta_j=\theta_1$ for $j\leq k$ and $k\leq n-1$. Then $2\theta_k=\theta_{k-1}+\theta_{k+1}$, hence $\theta_{k+1}=\theta_1$ as well. Finally, the end equations imply $\theta_{n}=\theta_{n-1}=\theta_1$. So all angles $\theta_j=\theta_1$.

Now by \reflem{AngleStructQuadLST} in the case of the $2\times 2$ square, or by \reflem{AngleStructDoubleLST} in the case of the $1\times 2$ square, an angle structure exists on the solid tori. 
By choice of angles, these satisfy the gluing equations required in \reflem{GluingEquations}. So this gives an angle structure. 
\end{proof}

\begin{lemma}\label{Lem:VolMax2B}
  Volume is maximised in the interior of space of angle structures.
\end{lemma}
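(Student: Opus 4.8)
The plan is to argue by contradiction, following the template of \reflem{VolMaxBor}, but propagating the consequences of a single flat tetrahedron along the whole chain of solid tori by means of the gluing equations of \reflem{GluingEquations}. So suppose $\calV$ attains its maximum over the (compact) closure of the space of angle structures at a boundary point $p_0$. By the result of Rivin used in the proof of \reflem{FlatTetrahedra}, at such a maximum every tetrahedron carrying a zero angle also carries an angle $\pi$, so $p_0$ makes at least one tetrahedron flat. The triangulation of the Dehn filling is assembled from triangulated solid tori $V_1, \dots, V_n$, where $V_1$ and $V_n$ are the $1\times 2$ solid tori of \reflem{DoubleCoverSolidTorus} or \reflem{DoubleLST} and $V_2, \dots, V_{n-1}$ are the $2\times 2$ solid tori of \reflem{2BridgeSolidTorus}. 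The flat tetrahedron lies in some $V_{i_0}$, so by \reflem{VolMaxDoubleLST} (if $i_0\in\{1,n\}$) or \reflem{VolMaxQuadLST} (otherwise) every tetrahedron of $V_{i_0}$ is flat, and by \reflem{ExtAngDoubleLST}, respectively the last sentence of \reflem{VolMaxQuadLST}, the exterior dihedral angles of $V_{i_0}$ --- the triple $(\theta_{i_0}, \alpha_{i_0}, \pi-\theta_{i_0}-\alpha_{i_0})$ of horizontal, diagonal and vertical exterior angles --- form one of $(\pi,-\pi,\pi)$, $(-\pi,\pi,\pi)$, $(\pi,0,0)$, $(0,\pi,0)$; in particular all three lie in $\{0,\pm\pi\}$.

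Next I would propagate this along the whole chain. Solving the linear system \refeqn{Diagonal2B}--\refeqn{End2B} exactly as in the proof of \reflem{AngleNonempty2B}, one finds that every solution satisfies $\alpha_j = (-1)^{j-1}\alpha_1$ for all $j$, together with $\theta_j = \theta_1$ for $j$ odd and $\theta_j = \theta_1+\alpha_1$ for $j$ even. Since $\alpha_{i_0}\in\{0,\pm\pi\}$ we get $\alpha_1\in\{0,\pm\pi\}$, hence $\alpha_j\in\{0,\pm\pi\}$ for every $j$. Since $\theta_{i_0}\in\{0,\pm\pi\}$ and $\theta_{i_0}$ equals $\theta_1$ or $\theta_1+\alpha_1$, while $\theta_1$ is itself an exterior angle and so lies in $[-\pi,\pi]$, we deduce $\theta_1\in\{0,\pm\pi\}$; then $\theta_1+\alpha_1$, being an exterior angle in $[-\pi,\pi]$ and a sum of two elements of $\{0,\pm\pi\}$, also lies in $\{0,\pm\pi\}$. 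Thus for every $j$ all three exterior angles $\theta_j$, $\alpha_j$, $\pi-\theta_j-\alpha_j$ of $V_j$ lie in $\{0,\pm\pi\}$.

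Finally I would conclude that every $V_j$ is flat. Because the slopes $s_1,\dots,s_n$ all have the same sign, each $V_j$ is built starting from the Farey triangle $(0/1, 1/0, \pm 1/1)$, and (as noted in the proof of \reflem{AngleNonempty2B}) the diagonal slope $1/1$ is never the first slope covered; hence the exterior angle $\theta_{r_j}$ on the first covered slope equals $\theta_j$ or $\pi-\theta_j-\alpha_j$, so $\theta_{r_j}\in\{0,\pm\pi\}$, and since an exterior angle $\theta_r$ lies in $[0,\pi]$ we get $\theta_{r_j}\in\{0,\pi\}$. Therefore the outermost tetrahedron of $V_j$ has dihedral angle $z_1=\pi-\theta_{r_j}\in\{0,\pi\}$ along that slope: if $z_1=\pi$ its other two nonnegative angles sum to $0$, so it is flat; if $z_1=0$ then, again by the Rivin input to \reflem{FlatTetrahedra}, at the maximum $p_0$ it also carries an angle $\pi$, so it is flat. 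Either way $V_j$ contains a flat tetrahedron, so \reflem{VolMaxDoubleLST} or \reflem{VolMaxQuadLST} forces every tetrahedron of $V_j$ to be flat. As $j$ was arbitrary, $p_0$ makes every tetrahedron of the triangulation flat, so $\calV(p_0)=0$. But by \reflem{AngleNonempty2B} the space of angle structures is nonempty, and any angle structure assigns each tetrahedron all three angles in $(0,\pi)$, hence the positive volume of a nondegenerate ideal tetrahedron, so $\max\calV>0$ --- contradicting $\calV(p_0)=0$. Hence the maximum is attained in the interior. The main obstacle is the middle paragraph: one must keep careful track of which of the three exterior angles of each $V_j$ is horizontal, diagonal or vertical, and invoke the bounds $-\pi<\theta_p,\theta_q<\pi$ and $0<\theta_r<\pi$ at each stage to rule out spurious values such as $\pm 2\pi$ as the equalities are propagated along the chain.
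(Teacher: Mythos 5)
Your proof is correct and follows essentially the same strategy as the paper's: one flat tetrahedron forces its solid torus to be flat with exterior angles in $\{0,\pm\pi\}$, the gluing equations of \reflem{GluingEquations} propagate this degeneracy to every solid torus, and then every tetrahedron is flat, contradicting the positivity of the maximal volume. Your closed-form solution of the linear system ($\alpha_j=(-1)^{j-1}\alpha_1$, $\theta_j=\theta_1$ or $\theta_1+\alpha_1$) neatly replaces the paper's five-case analysis, and your use of Rivin's lemma to handle the $z_1=0$ case is a valid substitute for matching against the four flat tuples of \reflem{VolMaxQuadLST}.
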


\begin{proof}
Suppose volume is not maximised in the interior. Then there is a flat tetrahedron, say in the $i$-th solid torus a tetrahedron is flat. By \reflem{VolMaxQuadLST} the $i$-th solid torus must be a flat solid torus.

A solid torus is flat if and only if the exterior angles $\alpha_i$, $\theta_i$, $\pi-\alpha_i-\theta_i$ are $(0,0,\pi)$ or $(\pi,-\pi,\pi)$, up to permutation, by \reflem{VolMaxQuadLST}.

Cases:
\begin{enumerate}
\item[(1)] $\alpha_i=\theta_i=0$
\item[(2)] $\alpha_i=0, \theta_i=\pi$
\item[(3)] $\alpha_i=\pi, \theta_i=0$
\item[(4)] $\alpha_i=\pi, \theta_i=-\pi$
\item[(5)] $\alpha_i=-\pi, \theta_i=\pi$.
\end{enumerate}

Case (1):
Suppose that $\alpha_i=\theta_i=0$. Then $\alpha_j=0$ for all $j$ by the diagonal equations \eqref{Eqn:Diagonal2B}. As in the proof of \reflem{AngleNonempty2B}, this implies that $\theta_j=\theta_1$ for all $j$. In particular, $\theta_1=\theta_i=0$, so all $\theta_j=0$, so all the solid tori are flat by \reflem{VolMaxQuadLST}. 

Case (2): 
Suppose that $\alpha_i = 0$ and $\theta_i = \pi$. Then $2\theta_i=\theta_{i-1}+\theta_{i+1}$ by the interior equations  \eqref{Eqn:Interior2B}. Since $\theta_i = \pi$, we have $2\pi=\theta_{i-1}+\theta_{i+1}$, which implies that both $\theta_{i-1}$ and $\theta_{i+1}$ are $\pi$. Hence all solid tori are flat by \reflem{VolMaxQuadLST}.

Case (3):
Suppose that  $\alpha_i=\pi$ and $\theta_i=0$.  By the interior equations \eqref{Eqn:Interior2B}, we have $\theta_{i+1}+\theta_{i-1}-2\theta_i = 2\pi$. Now $\theta_i=0$ gives $\theta_{i-1}+\theta_{i+1}=2\pi$, which implies that $\theta_{i-1}$ and $\theta_{i+1}$ are $\pi$. Then diagonal equations plus these results imply $\alpha_{i-1}=-\pi$ and $\theta_{i-1}=\pi$. This is case (5). We show below that all tetrahedra are flat. 

Case (4):
Suppose that $\alpha_i = \pi$ and $\theta_i = -\pi$.

First suppose $i$ is even, where $1 < i \leq n$. Then $\alpha_j = \pi$ for $j=2k$ and $\alpha_j = -\pi$ for $j = 2k+1$. In particular, $\alpha_1=-\pi$.
By the end equations \eqref{Eqn:End2B}, we have $\theta_2 = \theta_1 - \pi$.
By the interior equations \eqref{Eqn:Interior2B}, we have
$2\theta_j-\theta_{j-1}-\theta_{j+1} = 2\pi$ for $j=2k+1$, and
$2\theta_j-\theta_{j-1}-\theta_{j+1} = - 2\pi$ for $j=2k$. In particular, when $j=2k=2$, this implies $\theta_1=\theta_3$. 

Now inductively assume that $\theta_{2j+1} = \theta_1$ for $j \leq k$ and $\theta_{2j}  = \theta_1 - \pi$ for $j \leq k$, and $2k+3 \leq n-1$. 
Then the interior equations imply:
\[
2\theta_{2k+1}-\theta_{2k}-\theta_{2k+2} =
2\theta_1 - \theta_1 + \pi - \theta_{2k+2} = 2\pi,
\]
and so $\theta_{2k+2} = \theta_1 - \pi$.
Moreover,
\[
2\theta_{2k+2} - \theta_{2k+1} - \theta_{2k+3}
= 2(\theta_1 - \pi) - \theta_1 - \theta_{2k+3} = -2\pi,
\]
thus
$\theta_{2k+3} = \theta_1$.
Finally, the end equation implies:
\[
\theta_n = \begin{cases}
  \theta_{n-1} - \pi = \theta_1-\pi \mbox{ if $n$ even} \\
  \theta_{n-1} + \pi = (\theta_1-\pi) +\pi = \theta_1 \mbox{ if $n$ odd}.
\end{cases}
\] 

Since our fixed $i$ is even, we have $\theta_1 = \theta_i +\pi = -\pi + \pi = 0$. This implies that $\theta_j = 0$ for all $j$ even and $\theta_j = -\pi$ for all $j$ odd. It follows that all tetrahedra are flat by \reflem{VolMaxQuadLST}.

Now suppose $i$ is odd, $\alpha_i=\pi$, $\theta_i=-\pi$.
Then $\alpha_j=-\pi$ for $j$ even and $\alpha_j=\pi$ for $j$ odd. In particular, $\alpha_1=\pi$. As above, the end and interior equations imply that $\theta_j=\theta_1$ when $j$ is odd, and $\theta_j=\theta_1+\pi$ when $j$ is even. So again, $-\pi = \theta_i=\theta_1 = \theta_j$ for all $j$ odd, and $0=\theta_1+\pi= \theta_j$ for all $j$ even. Thus again all tetrahedra are flat. 

Case (5):
Now suppose that $\alpha_i = -\pi$ and $\theta_i = \pi$. This case is similar to Case (4) above.
When $i$ is odd, one can show $\theta_{2j+1}=\theta_1$ and $\theta_{2j}=\theta_1-\pi$ for all $j$, implying $\theta_1=\theta_i=\pi =\theta_{2j+1}$, and $\theta_{2j}=0$. Thus all tetrahedra are flat. 

When $i$ is even, one can show $\theta_{2j+1}=\theta_1$ and $\theta_{2j}=\theta_1+\pi$ for all $j$, implying $\pi = \theta_i=\theta_1+\pi$, so $\theta_1=\theta_{2j+1}=0$, and $\theta_{2j}=0+\pi$, so again all tetrahedra are flat. 
\end{proof}

\begin{theorem}\label{Thm:Main2Bridge}
Let $L$ be a fully-augmented 2-bridge link with $n>2$ crossing circles (and no half-twists). Let $s_1, s_2, \dots, s_n\in \QQ\cup\{1/0\}$ be slopes, one for each crossing circle, that are all positive or all negative. Suppose finally that $s_1$ and $s_n$ are the slopes on the crossing circles on either end of the diagram, and the slopes satisfy: 
\[ s_1, s_n \notin \{0/1, 1/0, \pm 1/1, \pm 2/1\}, \mbox{ and }
s_2, \dots, s_{n-1} \notin \{0/1, 1/0, \pm 1/1\}.\]
Then the manifold obtained by Dehn filling $S^3-L$ along these slopes on its crossing circles admits a geometric triangulation.
\end{theorem}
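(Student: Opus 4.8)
The plan is to assemble the pieces established in the preceding lemmas into a single geometric triangulation of the Dehn filling and then invoke the Casson--Rivin theorem, \refthm{CassonRivin}. Throughout, note that the Dehn filled manifold still has torus cusps (those coming from the components of the 2-bridge link itself), so \refthm{CassonRivin} applies once a suitable ideal triangulation has been produced.

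First I would build the topological triangulation. Starting from the decomposition of $S^3 - L$ into two copies of the right-angled ideal polyhedron of \refsec{2Bridge}, each crossing circle cusp is tiled by ideal pyramids over square bases --- two pyramids for the first and last cusps, four for the interior ones --- by \reflem{2BridgeCusps} and the discussion following it. I would delete the interiors of all pyramids meeting the crossing circle cusps, leaving the square faces behind, and glue in triangulated solid tori: for the $1\times 2$ cusps at the two ends, a double cover of a layered solid torus (\reflem{DoubleCoverSolidTorus}) or a side-by-side solid torus (\reflem{DoubleLST}); and for each interior $2\times 2$ cusp, the solid torus of \reflem{2BridgeSolidTorus}. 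In every case the solid torus is chosen so that the prescribed slope $s_j$ is its meridian, which is possible precisely because $s_j$ avoids the excluded slopes in the hypotheses. Since all the $s_j$ have the same sign, the diagonal edges in the boundary triangulations of these solid tori all carry that same sign, so the gluing pattern of \reflem{2BridgeGlue} --- in which diagonals are glued to diagonals and horizontals to verticals --- matches the boundary triangulations along the square faces, and the result is a genuine topological ideal triangulation $\tau$ of the Dehn filling.

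Next I would produce an angle structure on $\tau$. Labeling the exterior angles in the $j$-th crossing circle cusp by $\theta_j$ on horizontal edges, $\alpha_j$ on diagonals, and $\pi - \theta_j - \alpha_j$ on verticals, \reflem{AngleStructQuadLST} (with \reflem{AngleStructDoubleLST} for the $1\times 2$ ends) gives an angle structure on each solid torus provided the angles obey $0 < \theta_r < \pi$, $-\pi < \theta_p,\theta_q < \pi$, $\theta_p + \theta_q + \theta_r = \pi$ with $\{p,q,r\}$ matched appropriately to the horizontal, diagonal, and vertical edge classes; these glue to an angle structure on $\tau$ exactly when the diagonal, interior, and end equations of \reflem{GluingEquations} hold. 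As in \reflem{AngleNonempty2B}, the sign hypothesis guarantees that $\alpha_j$ is never the distinguished positive angle $\theta_r$, so one may set every $\alpha_j = 0$; the gluing equations then force all $\theta_j$ equal to a common value in $(0,\pi)$, which satisfies all constraints. Hence $\calA(\tau) \neq \emptyset$.

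Finally, by \reflem{VolMax2B} the volume functional $\calV$ attains its maximum in the interior of $\calA(\tau)$: a maximum on the boundary would force some tetrahedron in some solid torus to be flat, hence by \reflem{VolMaxQuadLST} that entire solid torus to be flat with exterior angles a permutation of one of $(\pi,0,0)$, $(0,\pi,0)$, $(-\pi,\pi,\pi)$, $(\pi,-\pi,\pi)$, and then the case analysis in \reflem{VolMax2B} propagates flatness through the gluing equations to every solid torus, giving total volume zero --- impossible at a maximum since $\calV$ is strictly positive somewhere in the nonempty space $\calA(\tau)$. Applying the Casson--Rivin theorem, the interior maximizer corresponds to a complete hyperbolic metric realized by ideal tetrahedra all of whose dihedral angles lie in $(0,\pi)$; that is, $\tau$ is a geometric triangulation of the Dehn filling. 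The main obstacle in this argument is ruling out the boundary maximum --- the bookkeeping in \reflem{VolMax2B} showing that one flat solid torus forces all of them flat --- but since that lemma is already available, the proof of the theorem itself is a short assembly of the above.
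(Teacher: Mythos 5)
Your proposal is correct and follows essentially the same route as the paper: delete the pyramids at the crossing circle cusps, insert the triangulated solid tori of Lemmas~\ref{Lem:DoubleCoverSolidTorus}, \ref{Lem:DoubleLST} and \ref{Lem:2BridgeSolidTorus} with meridians $s_j$, then cite \reflem{AngleNonempty2B} for a nonempty angle structure space, \reflem{VolMax2B} for the interior maximum of the volume functional, and \refthm{CassonRivin} to conclude geometricity. The extra detail you supply (matching diagonals via the sign hypothesis, and the explicit contradiction that an all-flat structure has zero volume while $\calA(\tau)\neq\emptyset$ contains positive-volume points) is exactly the content the paper delegates to the preceding lemmas.
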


\begin{proof}
With these slopes, there exists a triangulated solid torus with meridian $s_j$ and boundary triangulated by a number of triangles matching that on the crossing circle boundary. Topologically, the Dehn filling is given by triangulating the solid tori and gluing them together. By \reflem{AngleNonempty2B}, the result admits an angle structure. By \reflem{VolMax2B}, the volume is maximised in the interior of the angle structure. The Casson--Rivin Theorem, \refthm{CassonRivin}, then implies that the triangulation is geometric. 
\end{proof}

\bibliographystyle{amsplain}
\bibliography{biblio}

\end{document}